 \theoremstyle{plain}
\newtheorem*{theorem*}{Theorem}
\newtheorem*{claim*}{Claim}
\newtheorem*{lemma*}{Lemma}
\newtheorem{thm}{Theorem}[section]
\newtheorem{theorem}[thm]{Theorem}
\newtheorem{lemma}[thm]{Lemma}
\newtheorem{cor}[thm]{Corollary}
\newtheorem{corollary}[thm]{Corollary}
\newtheorem{prop}[thm]{Proposition}
\newtheorem{proposition}[thm]{Proposition}
\theoremstyle{definition}
\newtheorem{definition}[thm]{Definition}
\newtheorem{remark}[thm]{Remark}
\newtheorem{Open questions}[thm]{Open questions}
\newtheorem{Open question}[thm]{Open question}
\newtheorem{Open problems}[thm]{Open problems}
\newtheorem{Open problem}[thm]{Open problem}
\definecolor{magenta}{rgb}{.5,0,.5} 
\definecolor{dred}{rgb}{.5,0,0} 
\definecolor{green}{rgb}{0,.5,0} 
\definecolor{blue}{rgb}{0,0,1} 
 \definecolor{black}{rgb}{0,0,0} 
\definecolor{dgreen}{rgb}{0,.3,0} 
\definecolor{vdred}{rgb}{.3,0,0} 
\definecolor{red}{rgb}{1,0,0} 
\definecolor{gray}{rgb}{.5,.5,.5} 
\definecolor{cerulean}{rgb}{0,.48,.65} 
\definecolor{gold}{rgb}{0.80,0.58,0.05} 
\definecolor{orange}{rgb}{1.00,0.50,0} 
\definecolor{pink}{rgb}{1.00,0.08,0.58}
\def \N{\mathbb{N}}
\def \Z{\mathbb{Z}}
\def \R{\mathbb{R}}
\def \C{\mathbb{C}}
\def \SL{\textup{SL}}
\def \GL{\textup{GL}}
\def \Area{\textup{Area}}
\def \Out{\textup{Out}}
\def \Aut{\textup{Aut}}
\def \Inn{\textup{Inn}}
\def \CAT(0){\textup{CAT(0)}}
\def \Diam{\textup{Diam}}
\def \link{\textup{link}}
\def \star{\textup{star}}
\def\isom{\cong}
\def\onto{{\kern3pt\to\kern-8pt\to\kern3pt}}
 \newcommand{\set}[1]{\left\{#1\right\}}
\newcommand{\restricted}[1]{\left|_{#1} \right.}
\newcommand{\abs}[1]{\left|#1\right|}
\renewcommand{\ni}{\noindent}
\renewcommand{\ss}{\smallskip}
\newcommand*{\vcenteredhbox}[1]{\begingroup
	\setbox0=\hbox{#1}\parbox{\wd0}{\box0}\endgroup}
\def\*{^{\star}}
\begin{document}
 
 \title{Dehn functions of mapping tori of right-angled Artin groups}
\author{Kristen Pueschel and Timothy  Riley}
\date \today

\begin{abstract}
	The algebraic mapping torus $M_{\Phi}$ of a group $G$ with an automorphism $\Phi$ is the HNN-extension of $G$ in which conjugation by the stable letter performs $\Phi$.    
	We classify the Dehn functions of $M_{\Phi}$ in terms of $\Phi$ for a number of   right-angled Artin groups $G$, including all $3$-generator  right-angled Artin groups and $F_k \times F_l$ for all $k,l \geq 2$.  
	\ss
	\\
	\footnotesize{\ni  2010 Mathematics Subject
		Classification:  20F65, 20F36   \\ \ni \emph{Key words and phrases:} Dehn function, right-angled Artin group, mapping torus}
\end{abstract}

\maketitle

\section{Our results} \label{our results}

When studying mapping tori, a natural question is how  the maps used to define them determine their geometry. The paradigm is the Nielsen--Thurston classification of topological mapping tori.  When $S$ is a compact orientable surface of genus at least $2$ and $f:S \to S$ is a homeomorphism, the mapping torus is $(S \times [0,1])/ \!\! \sim$, with $\sim$ defined so that $ (x, 1) \sim (f(x),0)$ for all $x \in S$. The classification is that, up to isotopy, $f$ is of one of three types:  (1) reducible, in which case the mapping torus  contains an incompressible torus,  (2) periodic,  in which case the mapping torus admits an $\mathbb{H}^2 \times \mathbb{R}$ structure, and (3) psuedo-Anosov, in which case the mapping torus admits a hyperbolic structure. 

Here, we study algebraic mapping tori of right-angled Artin groups.  For a finitely presented group $G= \langle X  \! \mid \!   R \rangle$ and an injective endomorphism $\Phi: G \to G$, the \textit{algebraic mapping torus} is the group $$M_{\Phi} \ := \  \langle X, t \mid   {R}, \  t^{-1}xt= \Phi(x),~ \forall x \in X \rangle.$$ 
In this article,  $\Phi$ will always be an automorphism, so  $M_{\Phi} = G \rtimes_{\Phi}  \langle t \rangle$, and  $G$ will always be  a \emph{right-angled Artin group} (`RAAG')---that is, $G$ is encoded by a finite graph $\Gamma$ with vertex set $X$ in that $G$ is presented by  $$\langle X \mid uv=vu \text{ when } (u, v) \text{ is an edge in } \Gamma  \rangle.$$Algebraic mapping tori arise naturally as fundamental groups of topological mapping tori of surfaces or complexes. 

The lens through which we will study  the geometry of $M_{\Phi}$ will be the \emph{Dehn function} (which we will always consider qualitatively---that is, up to an equivalence relation $\simeq$:  for $f,g: \mathbb{N} \to \mathbb{N}$, write $f\preceq g$ when there exists $C>0$ such that $f(n) \leq Cg(Cn+C) +Cn+C$ for all $n \in \mathbb{N}$, and write  $f \simeq g$ when $f \preceq g$ and $g \preceq f$). The Dehn function is an invariant of finitely presentable groups which can be framed either as an algorithmic complexity measure for the word problem or as an isoperimetric function  recording the  minimal area of discs spanning loops as a function of the lengths of the loops. (More details are in Section~\ref{sec: vK diagrams,corridors, DF}.)

Our study of Dehn functions for mapping tori of RAAGs is motivated by the following two classifications.    
The first concerns $G=\mathbb{Z}^k$, the RAAG associated to the complete graph with $k$ vertices.

\begin{theorem*}[Bridson--Gersten \cite{BridsonGersten}, Bridson--Pittet \cite{BridsonPittet}]  Suppose $\Phi \in \Aut(\Z^k) = \GL(k,\Z)$. If $\Phi$ has an eigenvalue $\lambda$ with $|\lambda| \neq 1$, then the Dehn function of the mapping torus  $M_{\Phi}$ is exponential. Else, the Dehn function of $M_{\Phi}$ is polynomial of degree $c+1$, where $c \times c$ is the size of the largest Jordan block in the Jordan Canonical form of the matrix associated to $\Phi$. 
\end{theorem*}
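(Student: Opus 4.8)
The two cases need quite different tools, so I would separate them at the outset and first recast the unit-circle case in nilpotent terms. If no eigenvalue of $\Phi$ has modulus $\ne 1$, then every eigenvalue is an algebraic integer all of whose $\mathbb{Q}$-conjugates---being themselves eigenvalues of $\Phi$---also lie on the unit circle, so Kronecker's theorem makes each a root of unity and some power $\Phi^N$ is unipotent, with the same Jordan-block sizes as $\Phi$. Since $M_{\Phi^N}=\Z^k\rtimes_{\Phi^N}\Z$ embeds in $M_\Phi$ with index $N$ and Dehn functions are commensurability invariants up to $\simeq$, it suffices to treat $M_{\Phi^N}$. Writing $\Phi^N=I+\nu$ with $\nu^{c}\ne 0=\nu^{c+1}$, one checks $\gamma_{j}(M_{\Phi^N})=(\Phi^{-N}-I)^{j-1}(\Z^k)$, which is nonzero exactly for $j\le c$ (since $\Phi^{-N}-I$ is $-\nu$ times a unipotent unit, hence has nilpotency index $c$), so $M_{\Phi^N}$ is torsion-free nilpotent of class exactly $c$.

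In the polynomial case I would then prove $\operatorname{Dehn}_{M_{\Phi^N}}\simeq n^{c+1}$ from both sides. The upper bound $\preceq n^{c+1}$ is the polynomial isoperimetric inequality for class-$c$ nilpotent groups (Gromov; Gersten--Holt--Riley); one fills a length-$n$ loop by a collecting process along the lower central series, the dominant cost being the $\preceq n^{c}$ top-weight (central) commutators that accumulate, each needing $\preceq n$ further moves to gather, giving $n^{c+1}$. For the lower bound $\succeq n^{c+1}$ I would exhibit relators $w_n$ of length $\asymp n$ built from iterated commutators of $t$ with $\Z^k$-generators so that $w_n$ represents roughly the $n^{c}$-th power of a generator of $\gamma_c$, and bound their area from below either by a van Kampen/corridor argument generalising the $n^3$ lower bound for the integral Heisenberg group, or by a cohomological (Stokes-type) argument pushing $M_{\Phi^N}$ onto a model nilpotent Lie group and integrating a top-weight form.

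In the exponential case the upper bound $\preceq e^{n}$ follows because $M_\Phi$ is polycyclic, or directly: in a null-homotopic word of length $n$, shuffle every $t^{\pm1}$ to one end using $xt=t\Phi(x)$ and $xt^{-1}=t^{-1}\Phi^{-1}(x)$; each $\Z^k$-generator is then replaced by a $\Phi^{m}$-image with $|m|\le n$, hence a vector of norm $\le C^{n}$, the shuffle costs exponentially many applications of the $t$-relations, and what remains is a null-homotopic word in $\Z^k$ of length $\preceq C'^{\,n}$ that fills quadratically in its length. For the lower bound, replacing $\Phi$ by $\Phi^{-1}$ if necessary we may assume $\Phi$ has an eigenvalue $|\lambda|>1$; choosing $v\in\Z^k$ whose pairing with a (real or imaginary part of a) left $\lambda$-eigenvector is nonzero, the $\Z^k$-word length of $\Phi^{n}(v)$ grows like $|\lambda|^{n}$. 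The word
\[
 w_n \ := \ t^{-n}\,v\,t^{n}\,v'\,t^{-n}\,v^{-1}\,t^{n}\,v'^{-1}
\]
(with $v,v'$ read via fixed $\Z^k$-words) equals $[\Phi^{n}(v),v']=1$ in $\Z^k$, hence is a relator of length $\asymp n$; a $t$-corridor analysis of an arbitrary van Kampen diagram for $w_n$ shows the corridors issuing from its $t$-letters must successively spell out $\Phi^{0}(v),\Phi^{1}(v),\dots,\Phi^{n}(v)$, forcing $\succeq\sum_{j\le n}|\lambda|^{j}\asymp|\lambda|^{n}$ cells.

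The main obstacle is the two lower bounds. In the exponential case the corridor count is most delicate when $\Phi$ is Anosov with $\det=\pm1$: there is then no abelian quotient exhibiting distortion of $\Z^k$, and one must argue within van Kampen diagrams that the contracting eigendirection cannot be played off against the expanding one to fill $w_n$ cheaply. In the polynomial case the subtlety is that nilpotency class $c$ alone does not force Dehn function $n^{c+1}$---the higher Heisenberg groups $H_{2m+1}$ with $m\ge 2$ have class $2$ yet quadratic Dehn function---so the lower bound must genuinely use features special to $\Z^k\rtimes_\Phi\Z$, such as the fact that its centre coincides with the last nonzero term of its lower central series and is undistorted.
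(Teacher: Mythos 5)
This statement is quoted background in the paper (attributed to \cite{BridsonGersten} and \cite{BridsonPittet}); the paper itself gives no proof, so there is nothing internal to compare your argument against. Judged on its own terms, your architecture is the right one and matches the literature: Kronecker to pass to a unipotent power, finite-index invariance of the Dehn function, nilpotency of $\Z^k\rtimes_{\Phi^N}\Z$ with the class read off the Jordan structure, the Gromov/Gersten--Holt--Riley upper bound $n^{c+1}$ for class-$c$ nilpotent groups, and in the exponential case a shuffling argument for the upper bound and a $t$-corridor argument (essentially the paper's Lemma~\ref{BridsonGersten} and the same device as its Lemma~\ref{part2}) for the lower bound. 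One concrete slip: if the largest Jordan block is $c\times c$ then $\nu^{c-1}\neq 0=\nu^{c}$, not $\nu^{c}\neq 0=\nu^{c+1}$; under your convention $\gamma_{j}=\nu^{j-1}(\Z^k)$ is nonzero for $j\le c+1$ and the class would be $c+1$, so your indexing as written yields Dehn function $n^{c+2}$. This is an off-by-one, not a conceptual error, but it needs fixing since the exponent is the whole point of the statement.

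The genuine gap is the polynomial lower bound $\delta(n)\succeq n^{c+1}$. You correctly observe that nilpotency class $c$ does not imply this (higher Heisenberg groups are class $2$ with quadratic Dehn function), which means neither of your two proposed routes is yet an argument: ``generalise the Heisenberg $n^3$ corridor count'' and ``integrate a top-weight form on the Lie group model'' are names of strategies, and the second one is precisely the content of Bridson--Pittet \cite{BridsonPittet} --- it requires producing a closed left-invariant $2$-form on the Mal'cev completion whose primitive has growth exactly of order $c$ in the appropriate grading, and verifying that the word $w_n$ bounds a cycle over which the form integrates to $\asymp n^{c+1}$ while any filling disc of area $A$ integrates to $\preceq A$. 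Nothing in your write-up identifies which structural feature of $\Z^k\rtimes_\Phi\Z$ (as opposed to a general class-$c$ group) makes this work; your closing remark that the centre coincides with the last term of the lower central series and is undistorted is the right kind of observation, but it is not connected to either proposed method. Note also that the weaker lower bound actually used in this paper, $n^2\max_i|\Phi^{\pm n}(k_i)|\preceq\delta(n)$, gives only $n^{c+1}$ when the largest Jordan block is realised on a generator's orbit and in general gives $n^{d+2}$ where $n^{d}$ is the growth of generators --- so it does not by itself recover the sharp exponent either. The exponential-case lower bound is fine modulo the usual care in choosing $v$ and $v'$ off the contracting subspace, which you do address.
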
  

The second concerns $G=F_k$, the rank-$k$ free group, i.e.\ the RAAG associated to  the graph with $k$ vertices and no edges. An automorphism $\Phi$ of $F_k$ is \emph{atoroidal} when there are no periodic conjugacy classes---that is, for all $w \in F_k$ and   $n \in \Z$, if  $w$ and $\Phi^n(w)$ are conjugate, then $w=1$ or $n=0$. 

\begin{theorem*}[Bestvina--Handel \cite{Bestvina-Handel}, Brinkmann \cite{Brinkmann},  Bridson--Groves \cite{BridsonGroves}] Suppose $\Phi \in \Aut(F_k)$. The mapping torus $M_{\Phi}$ is hyperbolic (that is, has linear Dehn function) if and only if $\Phi$ is atoroidal.  All other $M_{\Phi}$ have quadratic Dehn functions.
\end{theorem*}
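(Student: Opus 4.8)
\textbf{The forward implication and the quadratic lower bound.} The plan is to prove both implications and then, in the non-atoroidal case, to sandwich the Dehn function between a quadratic lower bound and a quadratic upper bound valid for every $\Phi\in\Aut(F_k)$. Suppose $\Phi$ is not atoroidal, so $\Phi^n(w)=g\inv w g$ for some $w\in F_k\ssm\{1\}$, some $n\neq 0$ and some $g\in F_k$. Rewriting the defining relations as $t^{-n}wt^n=\Phi^n(w)$, a one-line computation shows that $s:=gt^{-n}$ commutes with $w$. The subgroup $\langle w,s\rangle$ surjects onto $M_\Phi/F_k\cong\Z$ (as $s\mapsto -n\neq 0$) with kernel containing the infinite cyclic group $\langle w\rangle$, so it is free abelian of rank $2$. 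Hence $M_\Phi$ contains $\Z^2$ and is not hyperbolic; in particular, if $M_\Phi$ is hyperbolic then $\Phi$ is atoroidal. Invoking the theorem of Gromov---with proofs by Bowditch, Ol'shanskii and Papasoglu---that a finitely presented group with subquadratic Dehn function is hyperbolic, a non-hyperbolic group has Dehn function $\succeq n^2$, so $\delta_{M_\Phi}\succeq n^2$.

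\textbf{The reverse implication: atoroidal $\Rightarrow$ hyperbolic.} Here I would follow Brinkmann and apply the Bestvina--Feighn combination theorem. Represent $\Phi$ by a relative train track map $f\colon\Gamma\to\Gamma$ in the sense of Bestvina--Handel and present $M_\Phi$ as the fundamental group of the associated mapping-torus graph of spaces; the vertex and edge groups are free, hence hyperbolic, so the combination theorem gives hyperbolicity of $M_\Phi$ as soon as the annuli (hallways-flare) condition is verified. The crux is that atoroidality is exactly what rules out a non-flaring family of annuli: analysing the exponentially-growing and non-exponentially-growing strata of a relative train track representative of $\Phi$ (and, where needed, of $\Phi\inv$), one shows that annuli whose core curves do not grow in length under iterating $f$ would produce a nontrivial conjugacy class fixed, up to conjugacy, by a power of $\Phi$, contrary to atoroidality. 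A hyperbolic group has linear Dehn function, completing this implication.

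\textbf{The quadratic upper bound, and conclusion.} It remains to show that $\delta_{M_\Phi}\preceq n^2$ for every $\Phi\in\Aut(F_k)$; this is the Bridson--Groves theorem and is the main obstacle. The approach is the corridor method: in a van Kampen diagram over the mapping-torus presentation the $t$-edges organise into disjoint $t$-corridors, and the diagram's area is bounded in terms of the total length of these corridors, so it suffices to bound that total length by $O(n^2)$ when the boundary has length $n$. A naive estimate fails because applying $\Phi^{\pm 1}$ can stretch words exponentially; the remedy is to use an \emph{improved relative train track} representative of $\Phi$ (Bestvina--Feighn--Handel), for which growth along each stratum is as tame as possible, and then to induct on the stratification, showing that corridors carrying exponential growth must occur in cancelling pairs so that only linear stretching is realised across the disc---the non-abelian, multi-stratum analogue of the Bridson--Gersten and Bridson--Pittet analysis of $\Z^k\rtimes\Z$. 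Combining: if $\Phi$ is atoroidal the second step gives $\delta_{M_\Phi}\simeq n$; otherwise the first and third give $n^2\preceq\delta_{M_\Phi}\preceq n^2$, i.e.\ $\delta_{M_\Phi}\simeq n^2$.
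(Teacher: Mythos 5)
This statement is quoted in the paper as background, attributed to Bestvina--Handel, Brinkmann, and Bridson--Groves; the paper gives no proof of it, so there is no internal argument to compare against. Your sketch faithfully reproduces the architecture of the cited proofs. The elementary direction is complete and correct as written: from $\Phi^n(w)=g\inv wg$ you correctly extract the commuting pair $w$, $s=gt^{-n}$, verify $\langle w,s\rangle\cong\Z^2$ via the retraction to $\langle t\rangle$, and conclude non-hyperbolicity together with the quadratic lower bound from the isoperimetric gap theorem. The two hard ingredients --- atoroidal implies hyperbolic (Brinkmann, via relative train tracks and the Bestvina--Feighn combination theorem) and the universal quadratic upper bound (Bridson--Groves, via corridors and improved relative train tracks) --- remain sketches that defer the substantial work to those papers, which is exactly how the present paper treats them, so nothing further is required here.
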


RAAGs can be viewed as   interpolating between free abelian   and free groups, so it is natural to look to extend the above theorems to other RAAGs.  We  thank Karen Vogtmann for suggesting this problem. 

A classification of the Dehn functions of all RAAGs remains out of reach. Here we complete the classification for three-generator RAAGs and all groups $F_k \times F_l$ where $k,l \geq 2$.

For $\mathbb{Z}$, $\mathbb{Z}^2$, $\mathbb{Z}^3$, $F_2$, and $F_3$, the theorems above classify the Dehn functions of $M_{\Phi}$.  The remaining  three-generator RAAGs are $F_2 \times\mathbb{Z}$ and $\mathbb{Z}^2 \ast \mathbb{Z}$. Here are our results.

\begin{theorem} \label{F2xZ thm} Suppose $\Psi \in \Aut(F_2 \times \mathbb{Z})$.  Let $\psi \in \Aut(F_2)$  be the map induced by $\Psi$ via the map $F_2 \times \mathbb{Z} \onto F_2$ killing the $\Z$ factor.  Let   $\psi_{ab} \in \Aut(\Z^2)$ be the map induced via  the abelianization map $F_2 \to \Z^2$,   $g \mapsto g_{ab}$.
	
	Let $p: F_2 \times \mathbb{Z} \to  \mathbb{Z}$ be projection to the second factor. Exactly one of the following holds:
	\begin{enumerate}
		\item There exists $g \in F_2$ and $m \in \N$  such that $\psi_{ab}^m(g_{ab}) = g_{ab}$ and $p(\Psi^m(g)) \neq 0$, in which case $M_{\Psi}$ has cubic Dehn function. \label{thm1,cubic}
		\item $M_{\Psi}$ has quadratic Dehn function. \label{thm1,quadratic}
	\end{enumerate}
\end{theorem}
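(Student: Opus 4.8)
\emph{Structure and reductions.} Since $\{1\}\times\mathbb{Z}$ is the centre of $F_2\times\mathbb{Z}$ it is $\Psi$-invariant, so $\Psi(z)=z^{\pm1}$, $\Psi$ descends to $\psi\in\Aut(F_2)$ on the quotient $F_2$, and $\Psi(g)=\psi(g)z^{\delta(g)}$ for $g\in F_2$, where $\delta\colon F_2\to\mathbb{Z}$ is a homomorphism (so it factors through $g\mapsto g_{ab}$). The plan starts by noting that replacing $\Psi$ by a power, conjugating it in $\Aut(F_2\times\mathbb{Z})$, and altering the stable letter ($t\mapsto th$, $h\in F_2$) each preserve both the $\simeq$-class of the Dehn function of $M_\Psi$ and which of \eqref{thm1,cubic}, \eqref{thm1,quadratic} holds; I use these to reduce to $\Psi(z)=z$, putting $M_\Psi$ in a \emph{central} extension $1\to\mathbb{Z}\to M_\Psi\to M_\psi\to 1$ with $M_\psi=F_2\rtimes_\psi\mathbb{Z}$. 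Two facts about $M_\psi$ drive everything: $M_\psi$ always has quadratic Dehn function --- no automorphism of $F_2$ is atoroidal, since $[a,b]$ is always a periodic conjugacy class, so this is the Bestvina--Handel/Brinkmann/Bridson--Groves theorem --- and $F_2\le M_\psi$ is undistorted exactly when $\psi$ is polynomially growing, which holds whenever $\psi_{ab}$ fixes a nonzero vector; via $\Out(F_2)\cong\GL(2,\mathbb{Z})$ this is the situation governing \eqref{thm1,cubic}. The identity $p(\Psi^m(h))=\sum_{i=0}^{m-1}\bar\delta(\psi_{ab}^i(h_{ab}))$ for $h_{ab}$ in the $\psi_{ab}$-periodic locus then shows that \eqref{thm1,cubic} is equivalent, after passing to a power, to ``$\psi$ fixes some primitive $g\in F_2$ with $\bar\delta(g_{ab})\ne0$'', and that it is incompatible with \eqref{thm1,quadratic}.

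\emph{Upper bounds.} From the central extension and the quadratic Dehn function of $M_\psi$ I would get $\delta_{M_\Psi}(n)\preceq n\,\delta_{M_\psi}(n)\preceq n^3$ in all cases: project a length-$n$ loop to $M_\psi$, fill it there with $\preceq n^2$ cells, lift the filling, and repair the single resulting $z$-discrepancy by transporting $z$'s across the diagram at cost $\preceq n$ each --- this is the $t$-corridor argument for mapping tori of Section~\ref{sec: vK diagrams,corridors, DF} applied with $G=F_2\times\mathbb{Z}$. To sharpen this to $\preceq n^2$ when \eqref{thm1,quadratic} holds, I would track the $z$-discrepancy through the corridor structure: it is the value of the extension $2$-cocycle on the $M_\psi$-filling, and the failure of \eqref{thm1,cubic} is exactly the statement that this cocycle is a coboundary on the (quadratically large) pieces coming from the periodic locus of $\psi_{ab}$, while along the exponentially distorted $F_2$-direction the relevant corridors are short and contribute only linearly. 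Assembling these two regimes so that the total stays $\preceq n^2$ is the more delicate half of the upper bound.

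\emph{Lower bounds, and the main obstacle.} When \eqref{thm1,quadratic} holds, $M_\Psi$ contains $\mathbb{Z}^2$ (e.g.\ $\langle z,t\rangle$ or $\langle [a,b],z\rangle$), so is not word-hyperbolic, whence $\delta_{M_\Psi}\succeq n^2$ by the hyperbolicity gap theorem. When \eqref{thm1,cubic} holds, after the reductions $\psi(g)=g$, $\delta(g)=c\ne0$, the subgroup $\langle g,z,t\mid[g,z],[z,t],t^{-1}gt=gz^{c}\rangle\le M_\Psi$ is commensurable to the integral Heisenberg group $H_3$, and the loop $g^{n}t^{n}g^{-n}t^{-n}$ has length $\asymp n$ but bounds only discs of area $\succeq n^3$ in it. \textbf{The crux is transmitting this cubic lower bound to $M_\Psi$.} Because $\psi_{ab}(g_{ab})=g_{ab}$ does not make $\langle g,z,t\rangle$ a retract of $M_\Psi$, the plan is: (i) use $\Out(F_2)\cong\GL(2,\mathbb{Z})$ and the structure of reducible/finite-order mapping classes of $\Sigma_{1,1}$ to arrange, after a power, that $\psi$ literally fixes a primitive $g$; (ii) choose $g$ (primitive, $\psi$-fixed, with $g_{ab}$ spanning the fixed line) so that $c=\delta(g)$ divides $\delta$ of a complementary free generator, which lets one build a homomorphism from a finite-index subgroup of $M_\Psi$ onto $\langle g,z,t\rangle$ restricting to the identity there; (iii) conclude $\delta_{M_\Psi}\succeq n^3$ from this retraction --- or, if a clean retraction is unavailable, from the general lower bound for central extensions by $\mathbb{Z}$ whose defining $2$-cocycle restricts to a nonzero multiple of the symplectic class on an undistorted $\mathbb{Z}^2\le M_\psi$. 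The real work is checking that the normalisations in (i)--(ii) preserve hypothesis \eqref{thm1,cubic}, i.e.\ that the arithmetic of $\delta$ and the periods of $\psi_{ab}$ can be made to line up.
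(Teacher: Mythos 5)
Your skeleton (reduce to $\Psi(z)=z$, view $M_\Psi$ as a central extension of $M_\psi=F_2\rtimes_\psi\Z$, use the quadratic Dehn function of $M_\psi$ to get a cubic filling everywhere, then sharpen) matches the paper's, and your bookkeeping identity $p(\Psi^m(h))=\sum_{i=0}^{m-1}\bar\delta(\psi_{ab}^i(h_{ab}))$ correctly reconciles condition \eqref{thm1,cubic} with a normalised form of $\Psi$. But both of the hard steps are left as genuine gaps.

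For the quadratic upper bound in case \eqref{thm1,quadratic}, saying that the cocycle ``is a coboundary on the quadratically large pieces'' and that ``the relevant corridors are short'' in the exponentially distorted direction is not an argument: a priori you have $\simeq n^2$ charges, each needing a partial $c$-corridor of length up to $\simeq n$ to discharge, which is exactly the cubic bound you are trying to beat. The paper needs two separate substantial mechanisms here. When $\psi_{ab}$ has a non-unit eigenvalue, it invokes Farb's relative hyperbolicity of $M_\psi$ with respect to $\langle [a,b],t\rangle\cong\Z^2$ and Osin's linear relative isoperimetric function to produce fillings in which only \emph{linearly} many $2$-cells carry charges (the quadratically many $\Z^2$-cells lift uncharged), so the discharge cost is $n\cdot n$. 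When $\psi_{ab}$ is unipotent and the obstruction in \eqref{thm1,cubic} vanishes, there are genuinely quadratically many charges, and the paper pairs oppositely charged capping faces joined by partial $b$-corridors using Hall's Marriage Theorem, so that each charge travels only the length of a $b$-corridor already present in the diagram. Neither idea is recoverable from your sketch, and without one of them the $\preceq n^2$ claim is unsupported.

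For the cubic lower bound in case \eqref{thm1,cubic}, you correctly identify the crux but your primary route --- building a retraction of (a finite-index subgroup of) $M_\Psi$ onto the Heisenberg subgroup $\langle g,z,t\rangle$ --- is both doubtful (killing a complementary generator of $F_2$ compatibly with $\psi$ and $\delta$ is not generally possible) and unnecessary. The Bridson--Gersten lower bound (Lemma~\ref{BridsonGersten} in the paper, Theorem~4.1 of \cite{BridsonGersten}) applies directly: $K=\langle g,z\rangle\cong\Z^2$ is quasi-isometrically embedded in $F_2\times\Z$ and $\Psi$-invariant after your normalisation, with $|\Psi^n(g)|\simeq n$ because $\Psi(g)=gz^c$, $c\neq0$, giving $n^2\cdot n\preceq\delta_{M_\Psi}(n)$ with no retraction needed. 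Your fallback (``the general lower bound for central extensions whose cocycle is nonzero on an undistorted $\Z^2$'') is morally this statement, but as written it is not pinned to any proved result, so the lower bound also remains open in your proposal.
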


\begin{theorem} \label{Z2astZ} Suppose $\Psi \in \Aut(\mathbb{Z}^2 \ast \mathbb{Z})$. Suppose $\Phi \in  \Aut(\mathbb{Z}^2 \ast \mathbb{Z})$    restricts to an automorphism on the $\mathbb{Z}^2$ factor and satisfies $[\Psi] = [\Phi] \in \Out(\mathbb{Z}^2 \ast \mathbb{Z})$.  Let $\phi$ be the restriction of  $\Phi$ to the $\mathbb{Z}^2$-factor. Exactly one of the following holds:
	\begin{enumerate}
		\item $\phi$ is finite order, in which case $M_{\Psi}$ has quadratic Dehn function. \label{two1}
		\item $\phi$ has an eigenvalue $\lambda$ such that $|\lambda| \neq 1$, in which case $M_{\Psi}$ has exponential Dehn function. \label{two2}
		\item  $M_{\Psi}$ has cubic Dehn function. \label{Case 3}  \label{two3}
	\end{enumerate}
\end{theorem}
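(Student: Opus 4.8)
The plan is to replace $\Psi$ by the normal form $\Phi$, exhibit $M_\Phi$ as an amalgam over an undistorted infinite cyclic subgroup, and then read off $\delta_{M_\Psi}$ from the two vertex groups, one of which is $\mathbb{Z}^2\rtimes_\phi\mathbb{Z}$ whose Dehn function is governed by the Bridson--Gersten--Pittet theorem. First, since $[\Psi]=[\Phi]$ the stable letters of $M_\Psi$ and $M_\Phi$ differ by an inner automorphism, so $M_\Psi\cong M_\Phi$ and I work with $\Phi$. Write $\mathbb{Z}^2\ast\mathbb{Z}=\langle a,b\rangle\ast\langle c\rangle$. As $\Phi$ is an automorphism fixing the free factor $\langle a,b\rangle$, Kurosh's theorem forces $\Phi(\langle c\rangle)$ to be conjugate to $\langle c\rangle$, so $\Phi(c)=hc^{\varepsilon}h^{-1}$ for some $h\in\mathbb{Z}^2\ast\mathbb{Z}$ and $\varepsilon=\pm1$; and since the normalizer of $\langle a,b\rangle$ in $\mathbb{Z}^2\ast\mathbb{Z}$ is $\langle a,b\rangle$ itself, the restriction $\phi=\Phi|_{\langle a,b\rangle}\in\GL(2,\mathbb{Z})$ depends only on $[\Psi]$. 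The three cases \eqref{two1}--\eqref{two3} therefore partition $\GL(2,\mathbb{Z})$: up to conjugacy a matrix is of finite order, or has an eigenvalue of absolute value $\neq1$, or is $\pm$ a nontrivial unipotent matrix, the only other possibility --- eigenvalues of absolute value $1$ that are not roots of unity --- being excluded by Kronecker's theorem. These regimes are mutually exclusive, and we shall see they yield pairwise inequivalent Dehn functions.

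Next I would set up the amalgam. Since $\Phi$ carries the free splitting $\langle a,b\rangle\ast\langle c\rangle$ to the conjugate splitting $\langle a,b\rangle\ast h\langle c\rangle h^{-1}$, it induces an automorphism $\tau$ of the Bass--Serre tree $T$ of $\mathbb{Z}^2\ast\mathbb{Z}$ with $\tau(g\cdot x)=\Phi(g)\cdot\tau(x)$ for all $g\in\mathbb{Z}^2\ast\mathbb{Z}$; hence $M_\Phi=(\mathbb{Z}^2\ast\mathbb{Z})\rtimes_\Phi\langle t\rangle$ acts on $T$, with $t$ acting as $\tau$. The quotient graph is a single edge. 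Computing stabilizers, the $\langle a,b\rangle$-vertex $v_1$ is fixed by $\tau$, so $\mathrm{Stab}_{M_\Phi}(v_1)=\langle a,b,t\rangle\cong\mathbb{Z}^2\rtimes_\phi\mathbb{Z}=:Q$; the other vertex group $V:=\mathrm{Stab}_{M_\Phi}(v_2)$ is $\langle c,\,zt\rangle$ for a suitable $z\in\langle a,b\rangle$, which is $\mathbb{Z}^2$ or the Klein-bottle group; and the edge group is $\langle zt\rangle\cong\mathbb{Z}$. Thus
\[
M_\Phi\ \cong\ Q\ \ast_{\langle zt\rangle}\ V ,
\]
and the edge group is undistorted in both vertex groups, since $\langle zt\rangle$ maps isomorphically onto $Q/\langle a,b\rangle\cong\mathbb{Z}$ and is a direct $\mathbb{Z}$-factor of a subgroup of index at most $2$ in $V$.

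Now for the bounds. For the lower bounds: $M_\Phi$ contains $\mathbb{Z}^2$ and so is not hyperbolic, giving $\delta_{M_\Phi}\succeq n^2$; and the retraction $\rho\colon M_\Phi\to Q$ with $a,b,t\mapsto a,b,t$ and $c\mapsto1$ --- well defined precisely because $\Phi(c)$ is conjugate to $c^{\pm1}$ --- shows $\delta_{M_\Phi}\succeq\delta_Q$. By Bridson--Gersten \cite{BridsonGersten} and Bridson--Pittet \cite{BridsonPittet}, $\delta_Q\simeq n^2$ in case \eqref{two1}, $\delta_Q$ is exponential in case \eqref{two2}, and $\delta_Q\simeq n^3$ in case \eqref{two3} (largest Jordan block of size $2$). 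For the upper bounds: in case \eqref{two2}, $M_\Phi$ is a mapping torus of the finitely presented group $\mathbb{Z}^2\ast\mathbb{Z}$, which has quadratic Dehn function, and the standard $t$-corridor argument then gives $\delta_{M_\Phi}\preceq$ exponential. In cases \eqref{two1} and \eqref{two3} I would invoke a combination estimate for Dehn functions of amalgams over undistorted edge groups: a diagram bounding a loop of length $n$ decomposes along $T$ into filled loops $p_1,\dots,p_k$ in cosets of $Q$ and $V$ with $\sum_i|p_i|\preceq n$, together with $O(n^2)$ of correction area from ladders along the undistorted cyclic edge group; since $\delta_Q$ and $\delta_V$ are polynomials of degree $d\le3$ and $\sum_i|p_i|\preceq n$, the elementary inequality $\sum_i|p_i|^{d}\le\big(\sum_i|p_i|\big)^{d}$ yields $\delta_{M_\Phi}\preceq n^{d}$, i.e.\ $\preceq n^2$ in case \eqref{two1} and $\preceq n^3$ in case \eqref{two3}. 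Together with the lower bounds this makes $\delta_{M_\Phi}$ quadratic, exponential, and cubic respectively, confirming also that the three cases are genuinely distinct.

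The main obstacle is the combination estimate, and in particular the degree bookkeeping in case \eqref{two3}: a crude decomposition into at most $n$ vertex-group loops of length at most $n$ each would only give $\delta_{M_\Phi}\preceq n\cdot n^3=n^4$, so one must genuinely exploit that the total syllable length along $T$ is $O(n)$ and that the $\langle zt\rangle$-corridors are cheap because $\langle zt\rangle$ is undistorted --- ruling out a ``$\langle zt\rangle$-corridor stacked inside a Heisenberg vertex group'' that would promote the cube to a quartic. Carrying this out carefully via van Kampen diagrams and corridors adapted to the Bass--Serre tree is where the real work lies.
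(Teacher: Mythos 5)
Your argument breaks at its first structural step: it is not true that $\Phi(\langle c\rangle)$ must be conjugate to $\langle c\rangle$. The Laurence--Servatius generating set for $\Aut(\mathbb{Z}^2\ast\mathbb{Z})$ contains the transvections $\psi_a\colon c\mapsto ca$ and $\psi_b\colon c\mapsto cb$ (fixing $a$ and $b$), and $ca$ is not conjugate to $c^{\pm1}$ in $\langle a,b\rangle\ast\langle c\rangle$ (it is cyclically reduced of syllable length $2$). Kurosh/Grushko uniqueness pins down the conjugacy class of the freely indecomposable non-cyclic factor $\mathbb{Z}^2$, but not of the infinite cyclic free factor --- just as $y\mapsto yx$ in $F_2=\langle x\rangle\ast\langle y\rangle$ moves the conjugacy class of $\langle y\rangle$. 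In general one can only normalize to $\Phi(c)=cz$ with $z\in\langle a,b\rangle$ (Lemma~\ref{get Phi} and Proposition~\ref{groomZ2*Z}). Consequently $\Phi$ does not induce an automorphism of the Bass--Serre tree of the splitting, $M_\Phi$ is not an amalgam $Q\ast_{\langle zt\rangle}V$, and your retraction $\rho\colon M_\Phi\to Q$ killing $c$ is not a homomorphism: the relation $t^{-1}ct=cz$ maps to $1=z$. Your construction covers only the degenerate subcase $z=1$; the transvections $\psi_a,\psi_b$ are exactly what make the theorem hard.

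Two further remarks. The lower bounds survive without the retraction, because $\langle a,b\rangle\cong\mathbb{Z}^2$ is $\Phi$-invariant and quasi-isometrically embedded, so Lemma~\ref{BridsonGersten} applies directly; this is what the paper does. The upper bounds do not survive: even where your amalgam exists, the asserted combination estimate $\delta_{A\ast_C B}\preceq\max(\delta_A,\delta_B,n^2)$ for an undistorted cyclic edge group is not a citable theorem, and, as you observe, the naive decomposition yields only $n^4$ in case (3). Sharpening that bound is the entire content of the paper's treatment of this case: it passes to the quotient obtained by killing $b$ (which, unlike killing $c$, is always well defined), proves a quadratic area bound there realizing any prescribed valid $c$-pairing via the alternating-corridor analysis, and then re-inserts $b$ with partial corridors of linear length via the electrostatic model, paying one extra factor of $n$. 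As written, the proposal rests on a false structural claim and defers the genuinely difficult step.
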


Theorem~\ref{Z2astZ} is effective in that given a $\Psi$, a $\Phi$ as per the statement is easy to produce: see Lemma~\ref{get Phi}.

Suppose $F$ is a  free group with a finite  basis  $X$.  For $x\in F$, $|x|$ denotes the length of the reduced word on $X^{\pm 1}$ representing $x$. The \emph{growth} $g_{\Phi, X}: \N \to \N$ of an automorphism $\Phi: F  \to F$  is defined by  $g_{\Phi, X}(n) := \max_{x\in X}\{|\Phi^n(x)|\}$. While the growth type of $g_{\Phi, X}$ does not depend on the choice of   $X$, it is not invariant under inner automorphisms. For example, the automorphism  $\phi: a \mapsto b^{-1}ab,~~ b \mapsto b$ has linear growth, whereas $\psi: a \mapsto a,~~ b\mapsto b$ has constant growth.  The \emph{cyclic growth} $g_{\Phi}^{cyc}$ of an automorphism accounts for this issue; it describes the growth of (all) conjugacy classes under iteration of automorphisms, and  is invariant under inner automorphisms. (Details are in Section~\ref{subsec:growth}.)

We classify the Dehn functions of mapping tori of products $F_k \times F_l$ of free groups with $k,l \geq 2$ as follows.

\begin{theorem} \label{F_k x F_l} If $G= F_k \times F_l,$ where $k,l \geq 2$, and $\Psi \in \Aut(F_k \times F_l)$, then we can find  
	$\phi_1 \in \Aut(F_k)$ and   $\phi_2 \in \Aut(F_l)$ such that
	$\Phi = \phi_1 \times \phi_2$ satisfies  $[\Phi] = [\Psi^2]$  in $\Out(F_k \times F_l)$.  The  Dehn functions  of the associated mapping tori satisfy $\delta_{M_{\Phi}} \simeq \delta_{M_{\Psi}}$ and their asymptotics can be read off $\phi_1$ and $\phi_2$ in that:  
	\begin{enumerate}
		\item \label{dos}  If $[\phi_i^p] = [\textup{Id}] \in \Out(F_k)$ for some $p\in \mathbb{N}$, and $i$ either 1 or 2, then  $\delta_{M_{\Psi}}(n) \simeq n^2$.
		\item\label{uno} If $n^{d_1} \simeq g_{\phi_1}^{cyc}(n) \preceq g_{\phi_2}^{cyc}(n)$ for some $d_1 \geq 1$, then  $\delta_{M_{\Psi}}(n) \simeq n^{d_1+2}$, and likewise with the indices $1$ and $2$ interchanged.		
		\item\label{quatro} If   $g_{\phi_1}^{cyc}(n) \simeq g_{\phi_2}^{cyc}(n) \simeq 2^n$, then $\delta_{M_{\Psi}}$  grows exponentially.
	\end{enumerate}
\end{theorem}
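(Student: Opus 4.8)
\emph{Strategy and reduction.} The plan is to reduce to the split case $\Phi=\phi_1\times\phi_2$ and then treat the three growth regimes separately, playing $t$-corridor bookkeeping in van Kampen diagrams against the free-by-cyclic input $\delta_{M_i}\preceq n^2$ for $M_i:=F\rtimes_{\phi_i}\langle t\rangle$ (Bridson--Groves \cite{BridsonGroves}). For the reduction: since $F_k$ and $F_l$ are directly indecomposable and centreless, uniqueness of direct-product decomposition shows that every automorphism of $F_k\times F_l$ sends the two free factors to conjugates of themselves, up to a swap when $k=l$; hence $\Psi^2$ preserves the splitting up to conjugacy, which produces $\phi_1,\phi_2$ with $[\phi_1\times\phi_2]=[\Psi^2]$. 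Mapping tori of automorphisms in a common outer class are isomorphic, and $M_{\Psi^2}$ is the index-two subgroup $\langle F_k\times F_l,\ t^2\rangle$ of $M_\Psi$, so $\delta_{M_\Phi}\simeq\delta_{M_{\Psi^2}}\simeq\delta_{M_\Psi}$ because Dehn functions are quasi-isometry invariants. Finally, perturbing $\phi_1$ by an inner automorphism changes neither $[\Phi]$ nor $g_{\phi_1}^{cyc}$, so in the polynomial regime I would use Section~\ref{subsec:growth} to arrange that the honest growth of $\phi_1$ equals $g_{\phi_1}^{cyc}\simeq n^{d_1}$ and is realized by a basis element of $F_k$; from now on I assume $\Phi=\phi_1\times\phi_2$.

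\emph{Case (1).} If $[\phi_1^p]=[\textup{Id}]$, write $\phi_1^p$ as $x\mapsto cxc\inv$; conjugating the stable letter $t^p$ by $(c,1)\in F_k$ kills the action on $F_k$ and identifies $M_{\Phi^p}$ with the product $F_k\times(F_l\rtimes_{\phi_2^p}\langle t\rangle)$ of a free group with a free-by-cyclic group. The product formula for Dehn functions, together with \cite{BridsonGroves}, gives $\delta_{M_{\Phi^p}}\simeq\max\{n,\,n^2,\,\delta_{F_l\rtimes\langle t\rangle}\}\simeq n^2$, so $\delta_{M_\Phi}\simeq n^2$ by finite index. The hypothesis on $\phi_2$ is symmetric.

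\emph{Cases (2) and (3), upper bounds.} Here I would use the decomposition $M_\Phi=F_k\rtimes M_2$, in which $M_2\onto\langle t\rangle$ acts on $F_k$ by powers of $\phi_1$. Given a null-homotopic word $w$ of length $n$, push its $F_k$-letters to the front past the $M_2$-letters: past a generator of $F_l$ the move is free (using the relations $[a_i,b_j]=1$), past $t$ it rewrites $a_i$ as $\phi_1^{\pm1}(a_i)$. Done one letter at a time in a consistent order, each $F_k$-letter crosses a net of at most $n$ stable letters, hence becomes some $\phi_1^{\pm m}(a_i)$ of length $\le g_{\phi_1}(n)$, and the total rewriting cost is $\preceq n\cdot n\cdot g_{\phi_1}(n)$. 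One is left with $w\equiv v\cdot u$, where $v$ is a word in $F_k$ of length $\preceq n\,g_{\phi_1}(n)$ and $u$ is the original subword of $w$ in $F_l$ and $t$, of length $\le n$; the semidirect-product structure forces $u=1$ in $M_2$ and $v=1$ in $F_k$, so $u$ is filled with area $\preceq\delta_{M_2}(n)\preceq n^2$ and $v$ freely with area $\preceq|v|$. As $g_{\phi_1}(n)\simeq n^{d_1}$ in case~(2) and $\simeq 2^n$ in case~(3), this gives $\delta_{M_\Phi}(n)\preceq n^{d_1+2}$ and $\preceq 2^{O(n)}$ respectively; crucially the possibly exponential $\phi_2$ never enters, since the $M_2$-syllables of $w$ are never unrolled.

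\emph{Cases (2) and (3), lower bounds, and the main obstacle.} For case~(2), with $a$ a basis element of $F_k$ realizing the growth $n^{d_1}$ and $b$ a basis element of $F_l$, I would take $w_N:=[\,t^{-N}a^Nt^N,\ b^N\,]$, a loop of length $\simeq N$ representing $[(\phi_1^N(a))^N,\,b^N]=1$. A $t$-corridor analysis of an arbitrary filling should show that the corridors transporting the two $t^{\mp N}$-blocks have total length $\gtrsim\sum_{j\le N}|(\phi_1^j(a))^N|\simeq N^{d_1+2}$, while independently the part of the diagram realizing the commutation inside $F_k\times F_l$ must contain a grid of area $\gtrsim N\cdot|(\phi_1^N(a))^N|_{cyc}\simeq N^{d_1+2}$; either way $\delta_{M_\Phi}(n)\succeq n^{d_1+2}$. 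For case~(3), with $a$, $b$ basis elements realizing the respective exponential growths, the loops $[\,t^{-N}at^N,\ t^{-N}bt^N\,]$ of length $\simeq N$ represent $[\phi_1^N(a),\phi_2^N(b)]=1$ and force a commutation grid of area $\gtrsim|\phi_1^N(a)|_{cyc}\cdot|\phi_2^N(b)|_{cyc}$, which is exponential in $N$, so $\delta_{M_\Phi}$ grows exponentially. I expect the case~(2) upper bound to be the main obstacle: the pushing must be organized so that $\phi_1$ is only ever applied to iterates $\phi_1^m(a_i)$ of genuine generators---which grow polynomially---and never to long words, whose $\phi_1$-images could grow exponentially; pinning down the resulting $n^{d_1+2}$ bound, together with the reconciliation of honest and cyclic growth from the reduction, is the technical heart. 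The corridor-forcing behind the lower bounds is more routine but still needs care to exclude cheaper fillings.
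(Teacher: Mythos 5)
Your overall architecture matches the paper's: reduce to $\Phi=\phi_1\times\phi_2$ via $\Psi^2$, kill the action on one factor in the periodic case, prove upper bounds by shuffling the slow factor's letters past the stable letters and invoking Bridson--Groves for $F_l\rtimes_{\phi_2}\mathbb{Z}$, and prove lower bounds by a $t$-corridor stack analysis of commutators of conjugated powers. Your route to the splitting (uniqueness of the Remak decomposition for centerless, directly indecomposable factors) is a legitimate alternative to the paper's use of the Laurence--Servatius generating set (Lemma~\ref{first part of thm}), and is arguably cleaner. You also correctly identify the reconciliation of $g_{\phi_1}$ with $g^{cyc}_{\phi_1}$ as essential for the upper bound; the paper executes this with Levitt's fixed-point results (Lemmas~\ref{equiv2} and~\ref{no change to Dehn fns}), replacing $\phi_i$ by a power and an inner representative admitting a nontrivial fixed point, for which honest and cyclic growth agree. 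You flag this but do not supply the mechanism.

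The genuine gap is in your case~(2) lower bound. In $w_N=[t^{-N}a^Nt^N,\,b^N]$ you take $b$ to be an arbitrary basis element of $F_l$. A filling is free to route every $t$-corridor emanating from the first $t^{-N}$ block around the arc labelled $b^{-N}$ (ending on the last $t^{N}$ block) rather than around $a^{N}$; the $j$-th such corridor then has length comparable to $N\|\phi_2^{\pm j}(b)\|$, not $N\|\phi_1^{\pm j}(a)\|$. If $\phi_2$ fixes $b$ (entirely possible), all these corridors have length $\simeq N$, the leftover commutation is between $a^N$ and a word of length $\simeq N$, and one obtains a filling of area $\simeq N^2$ --- so $w_N$ does not witness the claimed lower bound. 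The fix, which is exactly what the paper's Lemma~\ref{part2} does, is to choose the $F_l$-element $y$ with $\|\phi_2^n(y)\|\succeq n^{d_1}$, which the hypothesis $g^{cyc}_{\phi_1}\preceq g^{cyc}_{\phi_2}$ guarantees is possible; but by Levitt such a $y$ need not be a basis element, which is precisely why the paper works with cyclic growth of arbitrary elements rather than of generators. Your fallback argument (``independently, the commutation grid has area $\gtrsim N\cdot|(\phi_1^N(a))^N|$'') is not valid either: a filling can first conjugate $(\phi_1^N(a))^N$ back to $a^N$ along $t$-corridors and perform the commutation at linear length, so no large grid is forced; the only robust lower bound comes from the corridor lengths themselves. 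The same remark applies to your case~(3) grid argument, which is nonetheless rescued there because the corridors around both letters are exponentially long once $a$ and $b$ are chosen to realize exponential cyclic growth.
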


As we will explain in Section~\ref{subsec:growth}, the three cases in this theorem are exhaustive and mutually exclusive.

Since all automorphisms of $F_2$ are periodic or have cyclic growth that is linear or exponential, this implies:
\begin{corollary} \label{F_2 x F_2}
	If $G= F_2 \times F_2,$ and $\Psi \in \Aut(G)$, then $M_{\Psi}$ has quadratic, cubic, or exponential Dehn function. 
\end{corollary}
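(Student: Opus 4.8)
The plan is to deduce Corollary~\ref{F_2 x F_2} directly from Theorem~\ref{F_k x F_l} by examining which of the three cases can occur when $k=l=2$. First I would recall the structure of $\Aut(F_2)$: every automorphism of $F_2$ is, up to an inner automorphism, realized by a train-track representative, and the standard classification (via the action on $\Z^2 = (F_2)_{ab}$, together with the fact that $\Out(F_2) \isom \GL(2,\Z)$) shows that an outer automorphism of $F_2$ is either finite order, or has linear cyclic growth, or has exponential cyclic growth---there is no intermediate polynomial behavior because a $2\times2$ integer matrix either is elliptic (finite order), or parabolic (a single nontrivial Jordan block, giving linear growth), or hyperbolic (an eigenvalue off the unit circle, giving exponential growth). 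This is the content of the sentence preceding the corollary, so I may invoke it.

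Next I would apply Theorem~\ref{F_k x F_l}. Given $\Psi \in \Aut(F_2 \times F_2)$, the theorem produces $\phi_1, \phi_2 \in \Aut(F_2)$ with $\Phi = \phi_1 \times \phi_2$ and $\delta_{M_\Psi} \simeq \delta_{M_\Phi}$. Now I would split into cases according to the growth types of $[\phi_1]$ and $[\phi_2]$ in $\Out(F_2)$. If either $[\phi_i]$ is finite order (equivalently $[\phi_i^p] = [\textup{Id}]$ for some $p$), then case~\eqref{dos} of Theorem~\ref{F_k x F_l} gives $\delta_{M_\Psi}(n) \simeq n^2$. Otherwise both $[\phi_1]$ and $[\phi_2]$ have infinite order, hence each has cyclic growth either linear ($\simeq n$) or exponential ($\simeq 2^n$). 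If at least one of them---say $\phi_1$---has linear cyclic growth and $g_{\phi_1}^{cyc}(n) \preceq g_{\phi_2}^{cyc}(n)$ (which holds automatically, since $n \preceq 2^n$ and $n \simeq n$), then case~\eqref{uno} with $d_1 = 1$ gives $\delta_{M_\Psi}(n) \simeq n^3$; the symmetric subcase where $\phi_2$ is the one with linear (and minimal) growth is handled by the ``indices interchanged'' clause. Finally, if both $\phi_1$ and $\phi_2$ have exponential cyclic growth, case~\eqref{quatro} gives that $\delta_{M_\Psi}$ grows exponentially.

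Assembling these, every $\Psi \in \Aut(F_2 \times F_2)$ falls into exactly one of: quadratic, cubic, or exponential Dehn function, which is the assertion. I should also note explicitly that these three cases are exhaustive and mutually exclusive for $F_2 \times F_2$: exhaustiveness follows because the only cyclic growth types available for $\Aut(F_2)$ are ``finite order,'' linear, and exponential, so the trichotomy for $(\phi_1, \phi_2)$ above covers all possibilities; mutual exclusivity follows from the corresponding statement in Theorem~\ref{F_k x F_l} together with the fact that $n^2$, $n^3$, and $2^n$ are pairwise inequivalent under $\simeq$.

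There is essentially no obstacle here---the corollary is a bookkeeping consequence of the main theorem once one knows the classification of cyclic growth rates in $\Aut(F_2)$. The only point requiring a little care is confirming the ordering hypothesis $g_{\phi_1}^{cyc}(n) \preceq g_{\phi_2}^{cyc}(n)$ in case~\eqref{uno} can always be arranged by possibly swapping the two factors, so that the case with one linear and one exponential growth is genuinely covered (it is: take the linearly growing one as $\phi_1$). So the proof will be short, and its substance lives entirely in Theorem~\ref{F_k x F_l} and in the growth dichotomy for rank-two free groups.
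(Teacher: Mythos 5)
Your argument is correct and is exactly the paper's: the corollary follows from Theorem~\ref{F_k x F_l} together with the fact (stated in the sentence preceding the corollary) that every automorphism of $F_2$ is periodic or has linear or exponential cyclic growth, so the three cases of the theorem yield quadratic, cubic, or exponential Dehn function respectively. Your write-up merely makes explicit the case bookkeeping that the paper leaves implicit.
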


The case $G = F_k \times \mathbb{Z}$ when $k \geq 3$, stands in the way of a full classification of Dehn functions of mapping tori over $F_k \times F_l$.  It differs from  $F_k \times F_l$ with $k,l\geq 2$ because $F_k\times \mathbb{Z}$ has  non-trivial center, which results in additional transvections. What we can say about the   Dehn functions $\delta$ of mapping tori of $F_k \times \mathbb{Z}$ is that they satisfy $n^2 \preceq \delta(n) \preceq n^3$. The cubic upper bound comes by recognizing $M_{\Psi}$ as a central extension of $M_{\psi}$ and then applying Corollary~\ref{cor: electrostatic upperbounds}.  The quadratic lower bound comes from the presence of a $\Z^2$-subgroup: the square of the stable letter commutes with the $\Z$-factor. In  special cases we can determine the Dehn function.

For all   $\Psi \in \Aut(F_k \times \Z)$, there exists $\Phi \in \Aut(F_k \times \Z)$ with the form $\Phi : x_i \mapsto \phi(x_i)c^{k_i},  \ c \mapsto c,$ such that $[\Psi^2] = [\Phi]$ in $\Out(F_k \times \Z)$. 
\begin{enumerate}
	\item If $\phi$ is atoroidal, then $M_{\Psi}$ has quadratic Dehn function by Theorem \ref{cor: electrostatic upperbounds}, because the base of the central extension is hyperbolic and maximal trees have linear diameter.
	\item If there is $w \in F_k$ such that $\Phi(w)=wc^k$, then $M_{\Psi}$ has cubic Dehn function by Theorem \ref{BridsonGersten}. 
\end{enumerate}

Our techniques for  $F_2 \times \mathbb{Z}$ do not apply to $F_k \times \mathbb{Z}$  for $k \geq 3$. We heavily use the isomorphism   ${\Out(F_2) \cong \textup{GL}(2, \mathbb{Z})}$ and the fact that  for any given $\phi \in \Aut(F_2)$ some iterate $[\phi]^m$ fixes the conjugacy class $\left[a^{-1}b^{-1}ab\right]$.  These fail in higher rank. 

Another example to investigate next is the RAAG whose graph is the path with four vertices and three edges.

\section{Overview}

This article is organized as follows. In Section~\ref{Preliminaries}, we give    background on Dehn functions and on  corridors in van~Kampen diagrams.  In Section~\ref{sec:electrostatic model} we review the \textit{electrostatic model} of Gersten and Riley from \cite{GerstenRiley}. We prove Theorems~\ref{F2xZ thm}, \ref{Z2astZ}, and  \ref{F_k x F_l} in Sections~\ref{F2xZ}, \ref{Z2*Z} and \ref{HigherRank}, respectively.   

Here is an overview of our strategy.  Given a RAAG $G$, we organize its automorphisms  $\Phi$  into cases, chosen so that within each case we can  present $M_{\Phi}$ in a manner which facilitates analysis of its Dehn function.  In some cases we find it convenient to replace $\Phi$ by a power; this, in turn, replaces  $M_{\Phi}$ by a finite index subgroup, which does not qualitatively change the Dehn function. 

In the setting of Theorem~\ref{F2xZ thm}, our presentation expresses $M_{\Phi}$ as a central extension of another mapping torus  $M_{\phi}$.  Then we use what Gersten and Riley called an electrostatic model in \cite{GerstenRiley} to get upper bounds on the the Dehn function of $M_{\Phi}$.  The idea is that a van~Kampen diagram over $M_{\phi}$ can be `charged' by  elements of the kernel of the extension (elements of the center of $M_{\Phi}$). The diagram is then `inflated' by adding in suitable corridors to connect up these charges and get a van Kampen diagram over $M_{\Phi}$.  This  leads to diagrams of cubic area (as a function of their boundary length) and so a cubic upper bound on the Dehn function.  For certain $\Phi$,  we improve this estimate to quadratic by noticing that $M_{\phi}$ is hyperbolic relative to a $\mathbb{Z}^2$ subgroup that receives no charges. This implies that only linearly many charges appear in the diagram, and thereby that the resulting van~Kampen diagram over $M_{\Phi}$ has quadratic area.  For  other $\Phi$ we define \textit{partial corridors} in van Kampen diagrams and then use Hall's Marriage Theorem to give a special pattern for discharging the diagrams, which again improves the Dehn function upper bound to quadratic.

As for obtaining the matching lower bounds, the Dehn function of $M_{\Phi}$ is always at least quadratic because $M_{\Phi}$  is not hyperbolic.   For certain $M_{\Phi}$, a result of Bridson and Gersten (see Lemma~\ref{BridsonGersten})    improves this to a cubic lower bound by identifying a suitable quasi-isometrically embedded abelian subgroup of $G$ to  which the action of $\Phi$ restricts. 

For Theorem~\ref{Z2astZ} the main innovation is for a case where  even though the $M_{\Phi}$ are not central extensions, $\Phi$  is such that one generator $b$ commutes with all but a particular generator $c$ that forms corridors. By specifying how  $c$-corridors have to join in van~Kampen diagrams over the quotient of $M_{\Phi}$ in which $b$ is killed, we are able to apply the electrostatic model to regions complementary to the $c$-corridors. We then define \textit{alternating corridors} which string together two types of partial corridors. We show that these alternating corridors can intersect themselves and each other at most once, and that every 2-cell in the diagram is contained in some alternating corridor. This lets us show that the area of the van~Kampen diagram in the quotient is at most quadratic in the length of the boundary word. The electrostatic model then produces a van~Kampen diagram with at most cubic area.

For the lower bounds of Theorem~\ref{F_k x F_l} we exhibit a family of words such that any van~Kampen diagram for one of these words has area we can bound below on account of having a belt of corridors of controlled length. For the upper bound we estimate the number of relators that need to be applied to convert a word $w$ representing the identity over the mapping torus of $F_k \times F_l$ to a word $v$ with $|v| \leq |w|$ that represents the identity in $F_k \rtimes_{\phi_1} \mathbb{Z}$, and then we use the  fact that the Dehn function of  $F_k \rtimes_{\phi_1} \mathbb{Z}$ is at most quadratic. The upper and lower bounds on the Dehn function are derived from two different notions of free group automorphism growth, which we reconcile by appealing to a number of results in the literature.

\section{Preliminaries} \label{Preliminaries}

We write $\abs{w}$ to denote the length of a word $w$.  Our conventions are $a^t := t^{-1} a t$ and $[a,b] := a^{-1} b^{-1} a b$. 

\subsection{Van Kampen diagrams, corridors, and Dehn functions} \label{sec: vK diagrams,corridors, DF}
These topics feature in many surveys, for instance \cite{BridsonHaefliger}.  Here are the essentials.  

Suppose $G=\langle X \! \mid  \! R \rangle$ is a finitely presented group (so  $X$ and $R$ are finite). Suppose $w$ is a word on $X \cup X^{-1}$ such that $w=1$ in $G$. A \emph{van~Kampen diagram} $\Delta$ for  $w$ is a simply-connected planar 2-complex with edges labeled by elements of $X$ and directed so that the following holds.    When traversing $\partial \Delta$ counterclockwise from some base vertex, we read off $w$, and around the boundary of each  2-cell in one direction or the other and from a suitable base vertex, we read an element of $R$.   (If an edge is traversed in the direction of its orientation, the positive generator is implied, and if  against its orientation, the inverse of the generator.) The 1-skeleton  $\Delta^{(1)}$ of $\Delta$ has the path metric in which every edge has length 1. The area of $\Delta$ is the number of 2-cells it has.  $\Area(w)$  denotes the minimum  area among  all van~Kampen diagrams with boundary word $w$. 

The \emph{Dehn function}  $\delta: \mathbb{N} \to \mathbb{N}$ of $\langle X \! \mid  \! R \rangle$ is  $ \delta(n)    :=    \max\set{\Area(w) \mid  \abs{w} \leq n \text{ and } w =1 \text{ in } G }$.

Up to the   equivalence relation $\simeq$ defined in Section~\ref{our results}, the Dehn function does not depend on the choice of finite presentation for $G$, and, moreover,  is a quasi-isometry invariant among finitely presented groups. In particular, we will need: 

\begin{prop}\label{finite index}
	If $G$ is finitely presented and $H\leq G$ is a finite index subgroup, then $H$ is also finitely presentable and $G$ and $H$ have equivalent Dehn functions.
\end{prop}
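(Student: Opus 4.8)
The plan is to establish the two assertions of Proposition~\ref{finite index} by the standard Reidemeister--Schreier machinery together with a quasi-isometry argument. First I would prove that $H$ is finitely presentable: since $G = \langle X \mid R\rangle$ is finitely presented and $[G:H] < \infty$, pick a finite Schreier transversal $T$ for $H$ in $G$ and apply the Reidemeister--Schreier rewriting process. This produces a finite generating set for $H$ (the nontrivial Schreier generators $g_{t,x} = tx\,\overline{tx}^{\,-1}$, of which there are at most $|T|\cdot|X|$) and a finite set of defining relators (one family coming from rewriting the $|T|$ conjugates $tRt^{-1}$ of the relators of $G$, plus the relators trivializing the Schreier generators that equal the identity). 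Hence $H$ is finitely presented. This step is routine and can be cited rather than carried out in detail.

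Next I would argue that $G$ and $H$ have equivalent Dehn functions. The cleanest route is to invoke quasi-isometry invariance of the Dehn function, which was recorded in the paragraph preceding the statement: a finite-index subgroup $H \leq G$, equipped with any word metric, is quasi-isometric to $G$ (both act properly cocompactly by isometries on the Cayley graph of $G$, or directly: the inclusion $H \hookrightarrow G$ is a $(1,0)$-quasi-isometric embedding with image coarsely dense since every element of $G$ lies within bounded distance of $H$, the bound being $\max_{t \in T}|t|$). Since the Dehn function is a quasi-isometry invariant among finitely presented groups, and we have just shown both $G$ and $H$ are finitely presented, we conclude $\delta_G \simeq \delta_H$.

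Alternatively, if one prefers a self-contained combinatorial proof not appealing to the general QI-invariance statement, I would give direct inequalities in both directions. For $\delta_H \preceq \delta_G$: given a word $w$ over the Schreier generators representing $1$ in $H$, substitute the expressions $g_{t,x} = tx\,\overline{tx}^{\,-1}$ to obtain a word $\widehat{w}$ over $X^{\pm 1}$ of length $\preceq |w|$ representing $1$ in $G$; a van~Kampen diagram for $\widehat{w}$ over $\langle X \mid R\rangle$ of area $\leq \delta_G(|\widehat{w}|)$ can then be converted, relator by relator, into a diagram over the Reidemeister--Schreier presentation of $H$ with a bounded blow-up in area. The reverse inequality $\delta_G \preceq \delta_H$ is symmetric, using that $G$ itself is (trivially) commensurable to $H$ and rewriting in the other direction, or more simply noting that $H$ is finite-index in $G$ so the same argument applies with roles of ``ambient group'' played by $H$ after passing through the common Cayley-graph geometry.

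The main obstacle is purely bookkeeping: controlling the area blow-up in the relator-by-relator conversion between van~Kampen diagrams over the two presentations, i.e.\ verifying that rewriting a single defining relator of one presentation costs only a bounded number of cells in the other. This is where all the work lies, and it is precisely the content of the quasi-isometry invariance theorem for Dehn functions, so in practice I would simply cite that theorem (as the excerpt already announces it) rather than reprove it here. I would therefore present the proof in the short form: finite presentability of $H$ via Reidemeister--Schreier, then $H$ quasi-isometric to $G$, then quote QI-invariance of $\delta$.
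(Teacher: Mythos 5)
Your proposal is correct and follows essentially the same route as the paper, which states this proposition as an immediate consequence of the quasi-isometry invariance of the Dehn function recorded in the preceding sentence (finite presentability of $H$ via Reidemeister--Schreier and the finite-index-implies-quasi-isometric observation being the standard supporting facts you supply). No further comparison is needed.
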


\textit{Corridors} appear in van~Kampen diagrams over a presentation $\langle X \! \mid  \! R \rangle$  when there is some $a \in X$ such that all relators $r \in R$ in which $a$ appears can be expressed as $w_1a^{\pm 1}w_2a^{\mp 1}w_3$ where $w_1$, $w_2$, and $w_3$ are words not containing $a^{\pm 1}$. Such presentations naturally arise for HNN-extensions, with $a$ being the  stable letter. Suppose $\Delta$ is a van~Kampen diagram for a word $w$ over such a presentation and that $w$ contains an $a$. This edge is either in the \textit{thin part} of that diagram---that is, this edge is not in the boundary of any 2-cell (as in Figure \ref{fig:thin})---or it is in the \textit{thick part} and there is a 2-cell in $\Delta$ with that edge in its boundary, as in Figure \ref{fig:thick}. This 2-cell will have exactly one other $a$-edge. In turn, this other $a$-edge either is in $\partial\Delta$ or is common with another 2-cell. This continues likewise and eventually must end elsewhere in the boundary. The resulting collection of 2-cells is an \emph{$a$-corridor}. The number of 2-cells involved is the \emph{length} of the corridor. An $a$-corridor is \textit{reduced} if it contains no two 2-cells sharing an $a$-edge for which the word around the boundary of their union is freely reducible to the identity in the group. 

\begin{remark} \label{reduced remark}
	Many of  the presentations we will work with  will have the form $\langle X, a \mid R, \ x^a = w_x; \  x \in X  \rangle$ where $X$ is some alphabet (not containing $a$), and $R$ and $\set{w_x \mid x \in X}$ are  sets of words on $X^{\pm 1}$.  An $a$-corridor in a diagram over such a presentation is reduced exactly when the word along the bottom is reduced.
\end{remark}

\begin{figure}[!ht]
	\centering
	\begin{subfigure}[t]{4cm}
		\centering
		\includegraphics[scale=0.27]{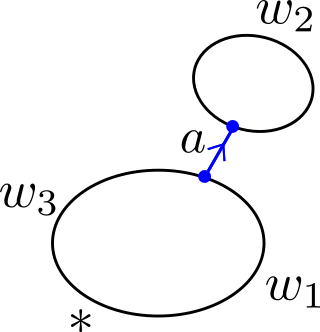}
		\caption{$a$-edge in the thin part of $\Delta$}
		\label{fig:thin}
	\end{subfigure}
	\begin{subfigure}[t]{4cm}
		\centering
		\includegraphics[scale=0.3]{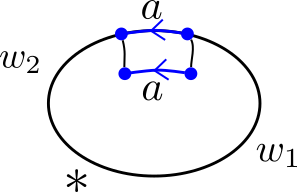} 
		\caption{$a$-edges in the thick part of $\Delta^{\prime}$}
		\label{fig:thick}
	\end{subfigure}
	\label{fig:thinthick}
	\begin{subfigure}[t]{5.7cm}
		\centering
		\includegraphics[scale=0.45]{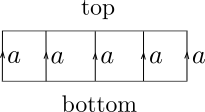}
		\caption{$a$-corridors inherit an orientation from edges labeled by $a$.}
		\label{fig:corridortopbottom}
	\end{subfigure}
	\caption{Corridors}
\end{figure}

Suppose $\Delta$ is a van Kampen diagram with $N$ $a$-corridors.  Then $N$ is at most half  the length of the boundary (at most half the number of $a^{\pm 1}$  in $w$). 
Since $a$-corridors cannot cross, removing all the $a$-corridors leaves $N+1$ connected subdiagrams called \textit{$a$-complementary regions}. The words around the perimeters of each of these regions contain no $a^{\pm 1}$. So analysis of the  lengths of the $a$-corridors and of the areas of the $a$-complementary regions can lead to estimates on the area of $\Delta$.

The \emph{dual tree} to the set of $a$-corridors has vertices corresponding to $a$-complementary regions, and has an edge between two vertices when an $a$-corridor borders the two corresponding $a$-complementary regions. (There is no vertex corresponding to the outside of the van Kampen diagram.)

\begin{definition}\label{def:partialcorridor} A letter $a$ forms \textit{partial corridors} when all the defining relations which contain both $a$ and $a^{-1}$ have the form of a corridor relation, $awa^{-1}=w^{\prime}$ for some $w$ and $w^{\prime}$ without $a$. A partial corridor is a maximal set of 2-cells joined by corridor relations as above.
	We refer to such 2-cells which contain one or more $a$  or $a^{-1}$   (but not both) in their boundary words as \textit{capping faces}, since they cap off partial corridors. 
\end{definition}

An $a$ in the boundary of a van~Kampen diagram will either be connected by a full $a$-corridor to another edge labeled by $a$ in the boundary, or   begins a partial $a$-corridor ending at one of the capping faces. An $a$-edge on a capping face is either connected to the boundary via a partial $a$-corridor (possibly of length zero), or is connected to an $a$-edge of another capping face via a partial $a$-corridor.

Like standard corridors, partial corridors cannot cross.  However,  there is no  immediate  control on the number of partial corridors in terms of $|w|$, since they may begin and end within the diagram.

\subsection{General bounds on Dehn functions of mapping tori of RAAGs}  
RAAGs  are (bi)automatic   \cite{HermillerMeier} and so have either linear or quadratic Dehn functions.  A finitely presented group is hyperbolic  if and only if it has linear Dehn function. Finite-rank free groups are hyperbolic.   Non-free RAAGs have $\Z^2$ subgroups and so are not hyperbolic (e.g.\ \cite{BridsonHaefliger} and  references therein).   So  RAAGs have either linear or  quadratic Dehn functions, the former case only occurring for  free RAAGs.  This will be useful for the following lemma.  

\begin{lemma} \label{Exponential}
	If $G$ is a non-free \textup{RAAG} and $\Psi \in \Aut(G)$, then the Dehn function  of $M_{\Psi}$ satisfies  $ n^2 \preceq \delta(n) \preceq 2^n$.  \end{lemma}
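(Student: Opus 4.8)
The plan is to establish the two bounds separately, using the structural facts about RAAGs recorded just above the lemma. For the lower bound $n^2 \preceq \delta(n)$, I would argue that $M_\Psi$ is not hyperbolic and then invoke the fact (the ``gap theorem'' for Dehn functions, or here more simply the (bi)automaticity discussion) that a non-hyperbolic group has at least quadratic Dehn function. To see $M_\Psi$ is not hyperbolic, note that since $G$ is a non-free RAAG, $G$ contains a subgroup isomorphic to $\Z^2$, generated by two commuting vertex generators $u,v$ with $(u,v)$ an edge of $\Gamma$. Now $G$ itself embeds in $M_\Psi = G \rtimes_\Psi \langle t\rangle$ as a retract (killing $t$), so $M_\Psi$ contains $\Z^2$; hence $M_\Psi$ is not hyperbolic and $\delta(n) \succeq n^2$. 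One small subtlety: I should make sure $G \hookrightarrow M_\Psi$ is actually injective, which holds because $M_\Psi$ is an HNN-extension (indeed a semidirect product with $\Z$) with base $G$, so $G$ injects by Britton's lemma.

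For the upper bound $\delta(n) \preceq 2^n$, the idea is to build van~Kampen diagrams over the presentation $\langle X, t \mid R, \ x^t = \Phi(x),\ \forall x \in X\rangle$ by first resolving all the $t$-corridors. Given a word $w$ of length $\le n$ representing the identity in $M_\Psi$, it contains at most $n/2$ occurrences of $t^{\pm 1}$, hence a diagram has at most $n/2$ many $t$-corridors, and these corridors partition the diagram into at most $n/2 + 1$ many $t$-complementary regions. Each side of each $t$-corridor is a word over $X^{\pm1}$; crucially, passing across a single $t$-corridor 2-cell replaces a letter $x$ by $\Phi^{\pm1}(x)$ (a word of bounded length $\le L$, where $L = \max_{x\in X}\{|\Phi(x)|, |\Phi^{-1}(x)|\}$), so after crossing $k$ corridors a word of length $\ell$ becomes a word of length at most $\ell L^k \le \ell L^{n/2}$, which is exponential in $n$. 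Thus each $t$-complementary region has boundary length at most exponential in $n$, and its area is bounded by $\delta_G$ evaluated on that length; since $G$ is a RAAG it is automatic with at most quadratic Dehn function, so the area of each region is at most (exponential in $n$) squared, still exponential in $n$. The total length of all the $t$-corridors is likewise at most exponentially bounded (it is bounded by the sum of the lengths of the boundaries of the complementary regions). Summing the at most $n/2+1$ regions and the corridor 2-cells gives $\Area(w) \preceq 2^n$.

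The step I expect to be the main (if modest) obstacle is the bookkeeping in the upper bound: one must track carefully how word lengths grow as one moves from the boundary of $\Delta$ inward across successive $t$-corridors, and confirm that the ``innermost'' complementary region still has only exponentially-bounded boundary length. This is where using the dual tree of the $t$-corridors helps: its depth is at most the number of corridors, i.e.\ at most $n/2$, so an induction on distance from the root in that tree gives the $L^{n/2}$ length bound cleanly. Everything else is a direct assembly: bound on number of corridors from $|w|$, bound on region areas from quadraticity of $\delta_G$, and a final summation. I would also remark that this exponential upper bound is of course very far from sharp in general --- it is just the crude bound that holds for every such mapping torus --- which is exactly what the lemma claims.
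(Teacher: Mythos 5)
Your proposal is correct and follows essentially the same route as the paper: the lower bound is obtained exactly as in the paper (a $\mathbb{Z}^2$ subgroup of the non-free RAAG $G$ survives in $M_{\Psi}$, so $M_{\Psi}$ is not hyperbolic and $\delta(n)\succeq n^2$), and your corridor-by-corridor resolution with the $L^{n/2}$ length growth followed by a quadratic filling over $G$ is the geometric rendering of the paper's algebraic argument of shuffling all $t^{\pm1}$ to one side, bounding the resulting $t$-free word by $|w|C^{|w|}$, and then invoking the quadratic Dehn function of the RAAG.
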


\begin{proof} Suppose $G$ is a non-free RAAG.  So $G$ has a  finite presentation $\langle X \! \mid \! R \rangle$ derived from a graph with at least one edge.  Then $G$ and hence $M_{\Psi}$ will contain a $\mathbb{\Z}^2$ subgroup. This implies that $M_{\Psi}$ is not hyperbolic and therefore $n^2 \preceq \delta(n)$ (again, \cite{BridsonHaefliger} and  references therein).	
	
	A word $w$ on the generators of $$M_{\Psi} \ = \  \langle X, t \mid   {R}, \  t^{-1}xt= \Psi(x),~ \forall x \in X \rangle$$  can be expressed as $t^{k_0}a_1t^{k_1}\cdots a_mt^{k_m}$ for some $a_1, \ldots, a_m \in X^{\pm 1}$ and some $k_1, \ldots, k_m \in \Z$.  Suppose $w$ represents the identity in $M_{\Psi}$. Then shuffling all the $t^{\mp 1}$ to the right, replacing each $a_i$ by the freely reduced word representing $\Phi^{\pm 1}(a_i)$ does not change the element of $M_{\Psi}$ represented. Eventually we arrive at $u t^{k_0 + \cdots + k_m} $ where $u$ is a word on  $X^{\pm 1}$ that represents $1$ in $G$ and $k_0 + \cdots + k_m =0$.  Applying $\Psi^{\pm 1}$ in this way to a word on $X^{\pm 1}$ increases its length by at most a constant factor, specifically by at most  $C:=\max_{a \in X}  |\Psi^{\pm 1}(a)|$.  So $m C^{\abs{k_0}+ \cdots + \abs{k_m}} \leq \abs{w} C^{\abs{w}}$ is an upper bound on both $\abs{u}$ and on the number of relation applications needed to convert $w$ to $u$.   
	
	The Dehn function of $G$ is at most quadratic, so $u$ can be reduced to the empty word using at most  a constant times $\abs{u}^2$ defining relations. Thus $\Area(w)$ is at most a constant times  $\abs{w} C^{\abs{w}} + (\abs{w} C^{\abs{w}})^2$, and therefore $\delta(n) \preceq 2^n$. \end{proof}

Our next lemma is the special case of Theorem~4.1 of  \cite{BridsonGersten} in which (in the notation of \cite{BridsonGersten}) $G=H$ and $K$ is quasi-isometrically embedded. We will call on this repeatedly to establish lower bounds on the Dehn functions.

\begin{lemma}[adapted from Bridson--Gersten~\cite{BridsonGersten}] \label{BridsonGersten} Suppose $K  = \langle k_1, \dots, k_m \rangle$ is a  quasi-isometrically embedded infinite abelian subgroup of a finitely presented group $G$.   If $\Phi \in \Aut(G)$ and  $\Phi(K) = K$, then the Dehn function  $\delta$  of ${\langle G, t~ |~ g^t=\Phi(g) \rangle}$ satisfies $${n^2\max_{1 \leq i \leq m}{ \abs{\Phi^{\pm  n}(k_i)}} \  \preceq \ \delta(n)}.$$ Equivalently, if $\phi = \Phi \restricted{K}$ is associated to the matrix $A$, and if 
	\begin{enumerate}
		\item $\phi$ has an eigenvalue $\lambda$ such that $|\lambda| \neq 1$, then $M_{\Phi}$ has exponential Dehn function. \label{BGexp}
		\item $\phi$ only has eigenvalues $\lambda$ such that $|\lambda|=1$, then if the largest Jordan block for $A$ is $c \times c$, then $n^{c+1} \preceq \delta(n)$. \label{BGpoly}
	\end{enumerate}
\end{lemma}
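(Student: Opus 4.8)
The plan is to deduce this statement from the quoted Theorem~4.1 of Bridson--Gersten \cite{BridsonGersten} by verifying that its hypotheses are met and then translating the conclusion into the stated linear-algebra dichotomy. First I would record the setup: $K = \langle k_1,\dots,k_m\rangle$ is an infinite abelian subgroup of the finitely presented group $G$, quasi-isometrically embedded, and $\Phi\in\Aut(G)$ with $\Phi(K)=K$. The mapping torus $M_\Phi = \langle G,t \mid g^t = \Phi(g)\rangle$ restricts over $K$ to give a subgroup containing the mapping torus $M_\phi$ of $\phi := \Phi\restricted{K}$, and since $K$ is QI-embedded in $G$, so is this subgroup (one needs that $\langle K,t\rangle$ sits QI-embedded in $M_\Phi$, which follows because corridors over the stable letter give a product-like distortion-free structure, but for the purposes of the lemma this is exactly the content one imports from \cite{BridsonGersten}). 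The key inequality to extract is $n^2\max_i |\Phi^{\pm n}(k_i)| \preceq \delta_{M_\Phi}(n)$: a word of the form $[k_i^{\,\Phi^{n}}, k_j]$ (or the appropriate commutator witnessing that the two QI-embedded abelian directions must be "filled in") has length $\simeq n + \max_i|\Phi^{\pm n}(k_i)|$ yet requires area $\simeq n^2\max_i|\Phi^{\pm n}(k_i)|$ to bound a disc; this is the standard Bridson--Gersten lower-bound argument via counting $t$-corridors of length $\gtrsim n$ each of which must be crossed $\gtrsim \max_i|\Phi^{\pm n}(k_i)|$ times.

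Next I would translate $\max_i |\Phi^{\pm n}(k_i)|$ into spectral data of the matrix $A$ representing $\phi$ on $K\cong\Z^m$. Here $|\Phi^{\pm n}(k_i)|$ is, up to the QI-embedding constants, the word length in $G$, which is comparable to the $\Z^m$-word length of $A^{\pm n}e_i$, i.e.\ comparable to $\|A^{\pm n}e_i\|$ in any fixed norm on $\R^m$. So $\max_i|\Phi^{\pm n}(k_i)| \simeq \|A^n\| + \|A^{-n}\|$ (operator norm). Now the asymptotics of $\|A^n\|$ are governed by the Jordan form: if $A$ has an eigenvalue $\lambda$ with $|\lambda|>1$ then $\|A^n\|$ grows like $|\lambda|^n$ up to polynomial factors, giving exponential growth; symmetrically, if some $|\lambda|<1$ then $\|A^{-n}\|$ grows exponentially. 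Since $\det A = \pm 1$ (as $\phi\in\Aut(\Z^m)=\GL(m,\Z)$), having an eigenvalue of modulus $\neq 1$ forces one of these two cases, so $\max_i|\Phi^{\pm n}(k_i)| \succeq 2^n$, and combined with the displayed inequality we get $\delta_{M_\Phi}(n)\succeq 2^n$; together with the a priori exponential upper bound for such mapping tori (or just the statement "exponential Dehn function") this gives case~\eqref{BGexp}. If instead every eigenvalue has modulus exactly $1$, then $\|A^{\pm n}\|$ grows polynomially, and precisely, if the largest Jordan block has size $c\times c$, then $\|A^n\|\simeq n^{c-1}$ — this is the classical computation that a unipotent-part Jordan block $J$ of size $c$ satisfies $\|J^n\|\simeq n^{c-1}$, and eigenvalues that are roots of unity of modulus $1$ only contribute bounded oscillation. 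Plugging $\max_i|\Phi^{\pm n}(k_i)|\simeq n^{c-1}$ into $n^2\max_i|\Phi^{\pm n}(k_i)|\preceq \delta(n)$ yields $n^{c+1}\preceq\delta(n)$, which is case~\eqref{BGpoly}.

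The only genuine subtlety — and the step I would flag as the main obstacle — is making the transition from word length $|\Phi^{\pm n}(k_i)|$ \emph{in $G$} to the Euclidean growth $\|A^{\pm n}e_i\|$ rigorous: a priori the word metric on $G$ restricted to $K$ could be wildly distorted, but the quasi-isometric embedding hypothesis on $K$ is exactly what rules this out, since $K$ QI-embedded means its intrinsic metric (which for $\Z^m$ is bi-Lipschitz to the Euclidean norm) agrees with the induced metric up to multiplicative and additive constants. Once that is in hand, everything else is the cited Bridson--Gersten bound plus the elementary spectral/Jordan-form asymptotics of $\|A^n\|$, which I would state as a short lemma or simply cite. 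I would also double-check the "Equivalently" in the statement is literally an equivalence under $\simeq$: the displayed inequality $n^2\max_i|\Phi^{\pm n}(k_i)|\preceq\delta(n)$ is the master statement, and items~\eqref{BGexp} and \eqref{BGpoly} are its two specializations once one knows $\max_i|\Phi^{\pm n}(k_i)|$ is exponential in the non-$|\lambda|=1$ case and $\simeq n^{c-1}$ otherwise; conversely those two cases exhaust $\GL(m,\Z)$, so no information is lost.
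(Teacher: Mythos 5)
Your proposal matches the paper's treatment: the paper gives no proof of this lemma, stating explicitly that it is the special case of Theorem~4.1 of Bridson--Gersten in which $G=H$ and $K$ is quasi-isometrically embedded, exactly as you do. The extra detail you supply (the corridor-counting lower bound, the use of quasi-isometric embedding to compare $|\Phi^{\pm n}(k_i)|$ with $\|A^{\pm n}e_i\|$, and the Jordan-form asymptotics $\|A^n\|\simeq n^{c-1}$ versus exponential growth that turn the master inequality into the two stated cases) is correct and is precisely what the paper leaves implicit.
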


The following lemma allows us to specialize to convenient $\Psi$ when analyzing the Dehn functions of mapping tori. We include the proof because it is brief and the result is vital to this paper.
\begin{lemma}[Bogopolski \cite{Bogopolski}] \label{lemma: simplifying automorphisms doesn't change DF} The following mapping tori have equivalent Dehn functions:
	\begin{enumerate} 
		\item $M_{\Psi}$ and $M_{\Psi^n}$, for any $n \in \mathbb{N}$.
		\item $M_{\Psi}$ and $M_{\Psi^{-1}}$.
		\item $M_{\Psi_1}$ and $M_{\Psi_2}$ when $\Psi_1$ and $\Psi_2$ are conjugate in $\Out(G)$.
	\end{enumerate}
\end{lemma}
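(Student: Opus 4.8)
The plan is to prove each of the three items separately, in all cases moving between a mapping torus and a quasi-isometric one via Proposition~\ref{finite index}. For item~(1), I would show that $M_{\Psi^n}$ is a finite index subgroup of $M_{\Psi}$. Concretely, inside $M_{\Psi} = G \rtimes_{\Psi} \langle t\rangle$ the subgroup $G \rtimes_{\Psi^n} \langle t^n\rangle$ has index $n$ (the cosets are represented by $t^0, t^1, \dots, t^{n-1}$, since every element of $M_\Psi$ is uniquely $g t^k$ with $g \in G$), and conjugation by $t^n$ on $G$ is exactly $\Psi^n$, so this subgroup is isomorphic to $M_{\Psi^n}$. Proposition~\ref{finite index} then gives $\delta_{M_\Psi} \simeq \delta_{M_{\Psi^n}}$.

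For item~(2), the cleanest route is to exhibit an explicit isomorphism $M_{\Psi} \to M_{\Psi^{-1}}$. Sending $g \mapsto g$ for $g \in G$ and $t \mapsto t^{-1}$ carries the relation $t^{-1} g t = \Psi(g)$ to $t g t^{-1} = \Psi(g)$, i.e.\ $t^{-1} \Psi(g) t = g$, equivalently $t^{-1} h t = \Psi^{-1}(h)$ after setting $h = \Psi(g)$; since $\Psi$ is an automorphism this is precisely the defining relation of $M_{\Psi^{-1}}$. An isomorphism of groups certainly preserves the Dehn function up to $\simeq$ (indeed it is preserved up to actual equality of the presentations obtained by the induced change of generators, or at worst one invokes the quasi-isometry invariance noted before Proposition~\ref{finite index}). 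Note item~(2) could also be deduced from item~(1) combined with item~(3) only if $\Psi$ and $\Psi^{-1}$ were conjugate in $\Out(G)$, which need not hold, so a direct argument is needed.

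For item~(3), write $[\Psi_2] = [\alpha][\Psi_1][\alpha]^{-1}$ in $\Out(G)$ for some $\alpha \in \Aut(G)$, meaning $\Psi_2 = \iota_w \circ \alpha \circ \Psi_1 \circ \alpha^{-1}$ for some inner automorphism $\iota_w = (g \mapsto w g w^{-1})$. I would first reduce to the case $\alpha = \mathrm{Id}$: the map $g \mapsto \alpha(g)$, $t \mapsto t$ is an isomorphism $M_{\Psi_1} \to M_{\alpha \Psi_1 \alpha^{-1}}$, as one checks on relations. So it suffices to show $M_{\Psi}$ and $M_{\iota_w \Psi}$ have equivalent Dehn functions for any $w \in G$. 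Here the isomorphism is $g \mapsto g$ for $g \in G$ and $t \mapsto w t$ (or $t \mapsto t w^{-1}$, depending on conventions): then the new relation $g^{wt} = t^{-1} w^{-1} g w t = \Psi(w^{-1} g w) = \Psi(w)^{-1} \Psi(g) \Psi(w)$ must be matched with $(\iota_{w'} \Psi)(g)$ for a suitable $w'$, so one picks the stable letter substitution so that the inner part comes out to exactly $\iota_w$; the verification is a short bookkeeping exercise with $a^t := t^{-1}at$. In all three items the only real content is producing the right isomorphism or finite-index subgroup; I do not anticipate a genuine obstacle, but the step most likely to require care is item~(3), specifically getting the conjugating element on the correct side so that the substitution $t \mapsto wt$ vs.\ $t\mapsto tw^{-1}$ and the direction of $\iota_w$ are consistent with the paper's convention $a^t = t^{-1}at$.
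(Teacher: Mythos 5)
Your proposal is correct and follows essentially the same route as the paper: item (1) via the index-$n$ subgroup $G\rtimes_{\Psi^n}\langle t^n\rangle$ and Proposition~\ref{finite index}, item (2) via the isomorphism fixing $G$ and sending $t\mapsto t^{-1}$, and item (3) via an explicit isomorphism twisting $G$ by the conjugating automorphism and replacing $t$ by $t$ times a suitable element of $G$. The only difference is organizational: the paper packages item (3) as a single isomorphism $x\mapsto\eta(x)$, $t\mapsto t\,\Psi_1(\eta(h))$ with the verification written out, whereas you factor it into the outer and inner parts; your "bookkeeping" step does close correctly (with $\iota_w(g)=wgw^{-1}$ one takes $t\mapsto tw^{-1}$).
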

\begin{proof}[Proof, following \cite{Bogopolski}] 
	Map $M_{\Psi}$ onto $\langle t \rangle = \Z$ by killing $G$ and then onto $\mathbb{Z}/n\mathbb{Z}$ by the natural quotient map. The kernel of this composition is the index-$n$ subgroup $M_{\Psi^n}$. By Proposition \ref{finite index}, $M_{\Psi}$ and $M_{\Psi^n}$ have equivalent Dehn functions.
	
	As $w^t = \Psi(w)$ for all $w \in G$, it follows that $w^{t^{-1}} = \Psi^{-1}(w)$, so mapping $t \mapsto t^{-1}$ and fixing $G$ gives an isomorphism $M_{\Psi} \to M_{\Psi^{-1}}$. Thus $M_{\Psi}$ and $M_{\Psi^{-1}}$ have equivalent Dehn functions.  
	
	If $\Psi_1$ and $\Psi_2$ are  conjugate  in $\Out(G)$, there exists $\eta \in \Aut(G)$ and $h\in G$ such that $\Psi_2(g) = \eta^{-1}(\Psi_1(\eta(g^h)))$  for all $g \in G$. We will show that $M_{\Psi_1}$ and $M_{\Psi_2}$ are isomorphic. Consider $F: M_{\Psi_2} \to M_{\Psi_1}$ given by $x \mapsto \eta(x)$ for $x \in G$ and $t \mapsto t~\hat{h}$, where $\hat{h}:=\Psi_1(\eta(h))$.  It is a homomorphism because the relators $(g^{-1})^t\Psi_2(g)$ for $g \in G$ are mapped to the identity  in $M_{\Psi_1}$. Indeed,
	\begin{eqnarray*}F\left((g^{-1})^t\Psi_2(g)\right) \ = \  F\left((g^{-1})^t\eta^{-1}(\Psi_1(\eta(g^h)))\right) \  = \  \eta(g^{-1})^{t\hat{h}}\Psi_1( \eta(g^h)) \\
		\ = \  (\eta(g)^{-1})^{t\hat{h}} \Psi_1(\eta(g))^{\hat{h}} \ = \  \left((\eta(g)^{-1})^{t} \Psi_1(\eta(g))\right)^{\hat{h}} \  = \  1^{\hat{h}} \ =  \  1.
	\end{eqnarray*}
	It is certainly onto. This homomorphism has inverse given by $x \mapsto \eta^{-1}(x)$ for $x \in G$ and $t \mapsto t~\eta^{-1}(\hat{h}^{-1})$, so it is an isomorphism.
\end{proof}

\subsection{Growth and automorphims of \texorpdfstring{$\mathbb{Z}^2$}{Z2} }\label{growth}

For a matrix $A \in  \SL(2, \Z)$, let $|| A||$ denote the maximum of the absolute values of the entries in $A$.   
We say $A \in  \SL(2, \Z)$ has linear growth when the function  $\N \to \N$ mapping $n \mapsto ||A^n||$ is $\simeq$-equivalent to  $n \mapsto n$.

The following lemmas will allow us to specialize to convenient cases of $\Phi$ when analyzing Dehn functions of mapping tori $M_{\Phi}$ of $F_2 \times \mathbb{Z}$ and  $\mathbb{Z}^2 \ast \mathbb{Z}$. 

\begin{lemma}\label{linear growth} If $A \in \SL(2, \mathbb{Z})$ has linear growth, then there   are integers $\alpha$ and $k$ such that $k>0$ and $A^k$ is conjugate to $\left(\begin{smallmatrix}
	1 & \alpha \\
	0 & 1 \end{smallmatrix}\right)$  in  $\SL(2,\mathbb{Z})$. 
\end{lemma}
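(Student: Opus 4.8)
The statement is a classical fact about $\SL(2,\mathbb{Z})$ matrices of polynomial (linear) growth, so the plan is to use the interplay between the growth of $\|A^n\|$ and the eigenvalues of $A$. First I would observe that if $A$ has an eigenvalue $\lambda$ with $|\lambda| \neq 1$, then since $\det A = 1$ the two eigenvalues are $\lambda, \lambda^{-1}$ with $|\lambda| > 1$, and $\|A^n\|$ grows like $|\lambda|^n$, i.e.\ exponentially, contradicting linear growth. Hence every eigenvalue of $A$ has modulus $1$; combined with the fact that eigenvalues of $A$ are algebraic integers satisfying a degree-$2$ equation $x^2 - (\operatorname{tr} A) x + 1 = 0$ with $\operatorname{tr} A \in \mathbb{Z}$, this forces $\operatorname{tr} A \in \{-2,-1,0,1,2\}$.

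Next I would dispose of the cases $\operatorname{tr} A \in \{-1,0,1\}$: here the eigenvalues are primitive roots of unity (of order $6$, $4$, $3$ respectively) other than $\pm 1$, so $A$ is diagonalizable over $\mathbb{C}$ with eigenvalues on the unit circle, hence $\|A^n\|$ is \emph{bounded} — in fact $A$ has finite order (it satisfies $A^3 = I$, $A^4 = I$, or $A^6 = I$ respectively, up to sign). A bounded sequence is not $\simeq$-equivalent to $n \mapsto n$, so these cases are excluded by the linear-growth hypothesis. The remaining possibility is $\operatorname{tr} A = \pm 2$, i.e.\ $A$ has a repeated eigenvalue $\varepsilon = \pm 1$. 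Then $\varepsilon^{-1} A$ has trace $2$ and determinant $1$, so it is either the identity (excluded, as $\|A^n\|$ would be bounded) or a non-diagonalizable matrix with both eigenvalues equal to $1$. Passing to $k = 2$ if necessary to make $\varepsilon^k = 1$, we may assume $A^k$ has trace $2$, determinant $1$, and is not the identity, hence is conjugate in $\GL(2,\mathbb{Q})$ to $\left(\begin{smallmatrix} 1 & 1 \\ 0 & 1 \end{smallmatrix}\right)$; a standard argument (choosing an eigenvector with primitive integer entries and extending to a $\mathbb{Z}$-basis) upgrades this to conjugacy in $\SL(2,\mathbb{Z})$ to $\left(\begin{smallmatrix} 1 & \alpha \\ 0 & 1 \end{smallmatrix}\right)$ for some integer $\alpha \neq 0$ — namely $A^k$ fixes a primitive vector $v$, we complete $\{v,w\}$ to a basis of $\mathbb{Z}^2$ with $\det = 1$, and in this basis $A^k = \left(\begin{smallmatrix} 1 & \alpha \\ 0 & 1 \end{smallmatrix}\right)$ since the first column is $v$ and the determinant and trace conditions pin down the rest.

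The main obstacle, such as it is, is the bookkeeping in the last step: getting conjugacy over $\mathbb{Z}$ rather than over $\mathbb{Q}$, which requires that the fixed vector be chosen primitive and that the completing vector be chosen to make the change-of-basis matrix lie in $\SL(2,\mathbb{Z})$ (divide out the gcd, and adjust sign). One should also be slightly careful that the hypothesis is linear growth, not merely ``not exponential'': this is what rules out the bounded/finite-order cases $\operatorname{tr} A \in \{-2,-1,0,1\}$ (with $A \neq \varepsilon I$) and forces $\alpha \neq 0$, so that $A^k$ genuinely has the stated Jordan form rather than being trivial. Everything else is routine linear algebra over $\mathbb{Z}$.
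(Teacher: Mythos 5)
Your proof is correct, but it reaches the key structural fact by a different route than the paper. The paper simply cites Theorem~2.1 of Bridson--Gersten to conclude that some power $A^k$ equals $I+N$ with $N\neq 0$ and $N^2=0$, and then spends all of its effort on the integral linear algebra: it conjugates $N$ (rather than $A^k$) into strictly upper-triangular form by taking a primitive vector in $\ker N$ and completing it to an $\SL(2,\Z)$ basis via Bezout. You instead re-derive the unipotence from scratch: non-unit eigenvalues give exponential growth, complex unit eigenvalues force $\mathrm{tr}(A)\in\{-1,0,1\}$ and hence finite order, so only $\mathrm{tr}(A)=\pm2$ survives, and squaring kills the sign. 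Your endgame (primitive fixed vector, complete to a unimodular basis, read off the form from $\det$ and $\mathrm{tr}$) is essentially identical to the paper's; your opening is more self-contained but costs a case analysis the paper avoids by citation. Two small points of care. First, your assignment of the orders of the roots of unity to the traces $-1,0,1$ is permuted ($\mathrm{tr}=-1$ gives order $3$, $\mathrm{tr}=1$ gives order $6$), though this does not affect the conclusion that $A$ has finite order. Second, your claim that a bounded sequence is not $\simeq$-equivalent to $n\mapsto n$ is actually false for the relation $\simeq$ as this paper defines it (the additive term $Cn+C$ makes every bounded function equivalent to a linear one); but this is harmless, since in those finite-order cases the conclusion of the lemma holds anyway with $\alpha=0$ and $k$ the order of $A$, so no genuine gap results.
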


\begin{proof}
	As $A$ has linear growth,  Theorem~2.1 of \cite{BridsonGersten} tells us that there exists an integer $k >0$ such that  $A^k$ is $I+N$ for some non-zero matrix $N$ such that $N^2=0$.   
	As $N^2=0$, the trace of $N$ is zero, and    $N=\left( \begin{smallmatrix}
	a & b \\
	c & -a\\
	\end{smallmatrix}\right)$ for some integers $a, b, c$  not all zero such that   $a^2 = -bc$.  If $a=c=0$, then the result holds with $\alpha =b$.  So assume they are not both zero. Notice that  $N  
	\left(\begin{smallmatrix}
	a   \\
	c   \end{smallmatrix}\right) 
	= 
	\left(\begin{smallmatrix}
	0   \\
	0   \end{smallmatrix}\right)$.  So there are coprime integers  $p$ and $q$ (in particular not both zero) with  
	$N  
	\left(\begin{smallmatrix}
	p   \\
	q   \end{smallmatrix}\right) 
	= 
	\left(\begin{smallmatrix}
	0   \\
	0   \end{smallmatrix}\right)$.
	By Bezout, there are $r, s \in \Z$ such that $ps-qr=1$, and so   
	$B:= \left(\begin{smallmatrix}
	p   & r \\
	q   & s  \end{smallmatrix}\right)$ is in $\SL(2, \mathbb{Z})$. 
	And then $B^{-1}NB = \left(\begin{smallmatrix}
	0 & \alpha \\
	0 & 0 \end{smallmatrix}\right)$
	where $\alpha = 2ars +bs^2 -cr^2$, and the result follows. 
\end{proof}

\begin{lemma}\label{finite order examples} 
	Suppose $A \in \SL(2, \Z)$ has eigenvalues $\lambda^{\pm 1}$ with $\abs{\lambda}=1$, then either $\lambda \in \R$  or $A$ has finite order dividing six.
\end{lemma}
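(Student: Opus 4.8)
The matrix $A\in\SL(2,\Z)$ has determinant $1$, so its characteristic polynomial is $\lambda^2-(\operatorname{tr}A)\lambda+1$, whose roots are $\lambda^{\pm1}$. The plan is to split into cases according to whether the eigenvalues are real or non-real, and in the non-real case to pin down the trace exactly using integrality.

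\emph{Real case.} If the eigenvalues are real, then since $\abs{\lambda}=1$ and $\lambda\in\R$ we get $\lambda\in\{1,-1\}$, so $\lambda\in\R$ and we are in the first alternative of the statement; there is nothing more to prove. (One need not even worry here about whether $A$ is diagonalizable.)

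\emph{Non-real case.} Suppose $\lambda\notin\R$. Write $\lambda=e^{i\theta}$ with $\theta\in(0,\pi)$; then the trace is $t:=\operatorname{tr}A=\lambda+\lambda^{-1}=2\cos\theta$. Since $A$ has integer entries, $t\in\Z$, and since $\theta\in(0,\pi)$ we have $t=2\cos\theta\in(-2,2)$, forcing $t\in\{-1,0,1\}$. I would then read off the order of $A$ from its minimal polynomial in each subcase: if $t=0$ the minimal polynomial is $\lambda^2+1$, a divisor of $\lambda^4-1$, so $A^4=I$; if $t=1$ the minimal polynomial is $\lambda^2-\lambda+1$, a divisor of $\lambda^6-1$ (indeed $\lambda^6-1=(\lambda^2-1)(\lambda^2+\lambda+1)(\lambda^2-\lambda+1)$), so $A^6=I$; if $t=-1$ the minimal polynomial is $\lambda^2+\lambda+1$, a divisor of $\lambda^3-1$, so $A^3=I$. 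In every subcase the order of $A$ divides $6$, which gives the second alternative. (Here I use that the minimal polynomial of $A$ equals its characteristic polynomial since the two eigenvalues $\lambda,\lambda^{-1}$ are distinct when $\lambda\notin\R$, so $p(A)=0$ for $p$ the characteristic polynomial, and $p$ divides $\lambda^6-1$ in each of the three subcases.)

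\emph{Expected main obstacle.} There is essentially no hard step: the only thing to be a little careful about is the bookkeeping that $\theta\in(0,\pi)$ is legitimate (i.e.\ that $\lambda$ and $\lambda^{-1}=\bar\lambda$ really are the two eigenvalues, which follows from $\det A=1$ and $\abs\lambda=1$), and that "$A$ satisfies its characteristic polynomial" is available via Cayley--Hamilton so that the divisibility of polynomials translates into $A^k=I$. So the "obstacle" is merely ensuring the three residue classes of $t$ are handled cleanly; everything is elementary.
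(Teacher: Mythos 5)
Your argument takes essentially the same route as the paper's: both exploit $\det A=1$ to get the characteristic polynomial $\lambda^2-(\operatorname{tr}A)\lambda+1$, and in the non-real case use integrality of the trace together with the negativity of the discriminant to force $\operatorname{tr}A\in\{-1,0,1\}$. The only difference is at the final step: the paper diagonalizes $A$ over $\C$ as $\left(\begin{smallmatrix}e^{i\theta}&0\\0&e^{-i\theta}\end{smallmatrix}\right)$ and reads the order off $\theta$, whereas you invoke Cayley--Hamilton and divisibility of the characteristic polynomial into $\lambda^k-1$. Your version is marginally cleaner (no need to discuss diagonalizability), but the two arguments are the same in substance.

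There is, however, a genuine problem in the subcase $\operatorname{tr}A=0$ --- one that your proof shares with the paper's. There the characteristic polynomial is $\lambda^2+1$, so $A^2=-I$ and $A$ has order exactly $4$ (e.g.\ $A=\left(\begin{smallmatrix}0&-1\\1&0\end{smallmatrix}\right)$). Your own computation gives $A^4=I$, yet you then assert that ``in every subcase the order of $A$ divides $6$''; since $4\nmid 6$, this does not follow, and indeed the lemma as stated is false. The correct uniform conclusion from your three subcases ($A^4=I$, $A^6=I$, $A^3=I$) is that $A$ has finite order dividing twelve, equivalently order in $\{1,2,3,4,6\}$. This is not a defect of your strategy but of the statement being proved; the fix is to weaken ``dividing six'' to ``dividing twelve'' (or simply ``finite order''), and correspondingly to adjust the place where the lemma is applied --- in Lemma~\ref{pres lemma} one should pass to $\Psi^{12}$ rather than $\Psi^6$ to guarantee that the induced map on $\Z^2$ becomes trivial when $\phi_{ab}$ has order $4$.
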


\begin{proof}
	As $A \in \SL(2, \Z)$,  $\lambda^2-\mbox{tr}(A) \lambda+1 =0$.
	So if  $\lambda$ is not real, then the discriminant $\mbox{tr}(A)^2-4<0$, and as $A$ has only integer entries, $\mbox{tr}(A)\in \{0,  \pm 1\}$.    Suppose then that  $\lambda = x+y i$ and $\lambda^{-1} = x-y i$.  Then, as $\mbox{tr}(A)= 2x$, we find $x \in \{0, \pm \frac{1}{2}\}$.  It follows that  $A$ is conjugate in $\SL(2, \C)$ to $\left( \begin{smallmatrix}
	e^{i\theta} & 0 \\
	0 & e^{-i \theta }  \\
	\end{smallmatrix} \right)$
	where  $\theta$ is $\pm \pi/2$, $\pm 2\pi/3$, or $\pm\pi/3$, and so $A$ has order dividing $6$. 
\end{proof}

\section{The electrostatic model for central extensions} \label{sec:electrostatic model}
Gersten and Riley's electrostatic model of \cite{GerstenRiley} is a method of constructing van~Kampen diagrams for central extensions. We will use it and variants to obtain upper bounds on the Dehn functions of some mapping tori.

Suppose a group $\Gamma$  is a central extension  $1 \to \mathbb{Z} \to  {\Gamma} \to \overline{\Gamma} \to 1$ with kernel $\Z = \langle c \rangle$. If $\overline{\Gamma}$ has presentation $$\mathcal{P}_{\overline{\Gamma}} \ = \ \langle X  \! \mid \! r_1= \cdots = r_n=1  \rangle,$$ then for some $k_1, \ldots, k_n \in \Z$, ${\Gamma}$ has presentation $${\mathcal{P}_{\Gamma} \ = \ \langle X, c \mid r_1=c^{k_1}, \ \dots, \ r_n=c^{k_n}, \ [c,x]=1,~\forall x \in X \rangle}.$$  

Suppose $w \in F(X \cup \{c\})$. Since $c$ is central, $w = \overline{w}c^m$ in $\Gamma$, for some $m \in \Z$ and $\overline{w}$ is $w$ with all $c^{\pm 1}$ removed. If $w$  represents the identity in $\Gamma$, the word $\overline{w} \in F(X)$ represents the identity in $\overline{\Gamma}$. We will describe how to construct a van~Kampen diagram for  $w$ over  $\mathcal{P}_{\Gamma}$ from  a diagram $\overline{\Delta}$ for $\overline{w}$ over $\mathcal{P}_{\overline{\Gamma}}$. 

We read  a  defining relator $r_{i_\sigma}$ clockwise or counterclockwise from an appropriate vertex $\ast_\sigma$  around the boundary of each 2-cell $\sigma$ in $\overline{\Delta}$. Now `charge'  every 2-cell: insert $|k_{i_\sigma}|$  loops at $\ast_{\sigma}$ each labeled with $c$'s and oriented in such a way that around the interior of the 2-cell we now read $r_{i_\sigma}c^{-k_{i_\sigma}}$ (to reflect  the relation  $r_{i_\sigma}=c^{k_{i_\sigma}}$), as in Figure \ref{fig:electrostatic charging}. If $C:= \max_i |k_i|$, then  at most $C \Area(\overline{\Delta})$  such loops labeled by $c$ are introduced by charging.  

\begin{figure}[!ht]
	\centering
	\includegraphics[scale=0.2]{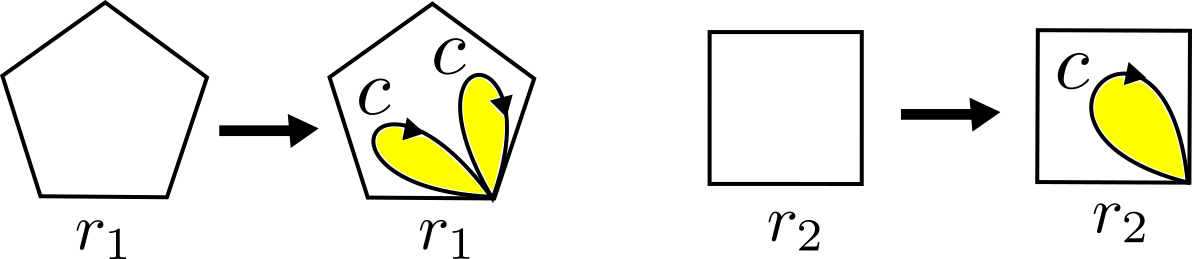}
	\caption{How `charges' would appear if $r_1=c^2$ and $r_2=c$.}
	\label{fig:electrostatic charging}
\end{figure}

To discharge, pick a \emph{geodesic spanning tree} $\mathcal{T}$ in $\overline{\Delta}^{(1)}$---that is, a maximal tree such that the distance in the tree from any vertex to the base vertex of $\overline{\Delta}$ is the same as its distance in $\overline{\Delta}^{(1)}$. In \cite{GerstenRiley}, for each introduced $c$-edge, a $c$-corridor is added which follows $\mathcal{T}$ to the root of the tree.  (Figures 4--7 in \cite{GerstenRiley} show how these corridors appear.) Each $c$-corridor has length bounded above by $\textup{Diam}(\overline{\Delta})$. This produces a diagram $\Delta^{\prime}$ for $\overline{w}c^m$ in $\Gamma$ with area at most $C \Area(\overline{\Delta})(\Diam(\overline{\Delta})+1)$.

As $w=\overline{w}c^m$ in   $\mathcal{P}_{\Gamma}$, there is a van~Kampen diagram $\Theta$ for $wc^{-m}\overline{w}^{-1}$ over $\mathcal{P}_{\Gamma}$. Since the arrangement of generators other than $c$ is the same in $w$ and in $\overline{w}c^m$, $\Theta$ can be filled with $c$-corridors and $\Area( wc^{-m}\overline{w}^{-1}) \leq |w|^2$. To get a diagram $\Delta$ for $w$ we wrap the diagram $\Theta$ around $\overline{\Delta}$ as in Figure \ref{fig: annularwrap}.    

\begin{figure}[!ht]
	\centering
	\includegraphics[scale=0.22]{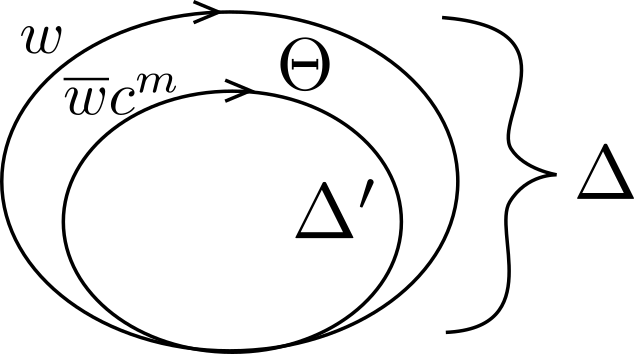}
	\caption{Constructing $\Delta$ from $\Delta^{\prime}$ and $\Theta$}
	\label{fig: annularwrap}
\end{figure}

This leads to the following theorem.

\begin{thm}[Gersten--Riley \cite{GerstenRiley}]\label{GerstenRiley} Suppose we have a central extension $1 \to \mathbb{Z} \to  \Gamma  \to \overline{\Gamma} \to 1$ of a finitely presented group $\overline{\Gamma}$, and $f,g : \N \to \N$ are functions such that for every   word $\overline{w}$ representing the identity in $\overline{\Gamma}$, there exists a van Kampen diagram $\overline{\Delta}$ such that $\Area(\overline{\Delta})\leq f(|\overline{w}|)$ and  the diameter $\Diam(\overline{\Delta})$ of the 1-skeleton of $\overline{\Delta}$ is at most $g(|\overline{w}|)$.  Then the Dehn function of $\Gamma$ is bounded above by a constant times $f(n)(g(n)+1)+n^2$.
\end{thm}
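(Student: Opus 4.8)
The plan is to follow the construction described in the paragraphs immediately preceding the theorem statement and to verify that it produces a van~Kampen diagram of the claimed area. Given a word $w \in F(X \cup \{c\})$ representing the identity in $\Gamma$ with $|w| \leq n$, first form $\overline{w}$ by deleting all $c^{\pm 1}$; since $c$ is central, $w = \overline{w}c^m$ in $\Gamma$ for some $m \in \Z$, and $\overline{w}$ represents the identity in $\overline{\Gamma}$. Note $|\overline{w}| \leq |w| \leq n$ and $|m| \leq |w| \leq n$. By hypothesis choose a van~Kampen diagram $\overline{\Delta}$ for $\overline{w}$ over $\mathcal{P}_{\overline{\Gamma}}$ with $\Area(\overline{\Delta}) \leq f(|\overline{w}|) \leq f(n)$ and $\Diam(\overline{\Delta}^{(1)}) \leq g(|\overline{w}|) \leq g(n)$ (using that $f, g$ are nondecreasing up to $\simeq$, or simply working with $f(n), g(n)$ throughout).

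Next I would carry out the charging step: at the basepoint $\ast_\sigma$ of each 2-cell $\sigma$ of $\overline{\Delta}$, insert $|k_{i_\sigma}|$ loops labeled by $c$ with the orientation making the boundary label of $\sigma$ read $r_{i_\sigma}c^{-k_{i_\sigma}}$, which is a valid relator of $\mathcal{P}_\Gamma$. Setting $C := \max_i |k_i|$, this introduces at most $C\,\Area(\overline{\Delta}) \leq C f(n)$ many $c$-loops, and the underlying $1$-complex is unchanged. For the discharging step, fix a geodesic spanning tree $\mathcal{T} \subseteq \overline{\Delta}^{(1)}$ and, for each introduced $c$-edge, attach a $c$-corridor running along $\mathcal{T}$ from $\ast_\sigma$ to the root; this is legitimate because $[c,x]=1$ for all $x \in X$, so $c$-corridors can be pushed across any edge of $\overline{\Delta}$. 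Each such corridor has length at most $\Diam(\overline{\Delta}^{(1)}) \leq g(n)$, so the resulting diagram $\Delta'$ has boundary word $\overline{w}c^m$ (all the inserted $c$'s having been routed to the root and cancelled in pairs, leaving a net $c^m$) and
\[
\Area(\Delta') \ \leq \ \Area(\overline{\Delta}) + C\,\Area(\overline{\Delta})\bigl(\Diam(\overline{\Delta}^{(1)})+1\bigr) \ \leq \ C' f(n)\bigl(g(n)+1\bigr)
\]
for a suitable constant $C'$ depending only on $\mathcal{P}_\Gamma$.

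Finally I would glue on the correction annulus $\Theta$: since $w$ and $\overline{w}c^m$ differ only by the positions of $c^{\pm 1}$ letters (with $c$ central), there is a van~Kampen diagram $\Theta$ for $w c^{-m} \overline{w}^{-1}$ over $\mathcal{P}_\Gamma$ built entirely from $c$-corridors, and $\Area(\Theta) \leq |w|^2 \leq n^2$. Wrapping $\Theta$ around $\Delta'$ as in Figure~\ref{fig: annularwrap} yields a van~Kampen diagram $\Delta$ for $w$ with $\Area(\Delta) = \Area(\Delta') + \Area(\Theta) \leq C' f(n)(g(n)+1) + n^2$. Since $w$ was an arbitrary null-homotopic word of length at most $n$, this bounds $\delta_\Gamma(n)$, up to $\simeq$, by $f(n)(g(n)+1)+n^2$, as claimed. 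The main point requiring care — rather than a genuine obstacle — is the bookkeeping in the discharging step: verifying that routing all inserted $c$-edges along the geodesic tree to the root produces a planar diagram whose boundary reads exactly $\overline{w}c^m$, i.e.\ that the $c$-corridors can be nested consistently without forcing extra cancellation or crossings. This is handled exactly as in \cite{GerstenRiley} (their Figures~4--7), so I would cite that construction rather than reprove it.
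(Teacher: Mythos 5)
Your proposal is correct and follows essentially the same route as the paper, which proves this theorem via exactly the construction you describe (charging each 2-cell of $\overline{\Delta}$, discharging along a geodesic spanning tree with $c$-corridors of length at most $\Diam(\overline{\Delta})$, and wrapping the annular diagram $\Theta$ of area at most $|w|^2$ around the result). The area bookkeeping and the deferral of the planarity of the discharging corridors to \cite{GerstenRiley} match the paper's treatment.
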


To use Theorem~\ref{GerstenRiley}, we need simultaneous control on both area and diameter of diagrams.  This is available in the setting we will be concerned with thanks to the following theorem of Papasoglu.  The radius $r(\Delta)$ of a van~Kampen diagram $\Delta$ is the minimal $N$ such that  for every vertex in $\Delta$ there is a path of length at most $N$ in the 1-skeleton of $\Delta$ from that vertex to $\partial \Delta$.  Since one can travel between any two vertices by concatenating shortest paths to the boundary with a path part way around the boundary, 
\begin{equation}
\textup{Diam}(\Delta) \ \leq  \  2r(\Delta) + |\partial \Delta|. \label{diam by rad}  
\end{equation}

\begin{thm}[Papasoglu \cite{Papa}]\label{Papa}
	For a group $G$ given by a finite presentation in which every relator has length at most three,  if $\Delta$ is a minimal area van Kampen diagram such that $|\partial \Delta |= n$ and $\Area(\Delta) \leq M n^2$, then  $r(\Delta)  \leq 12Mn$.
\end{thm}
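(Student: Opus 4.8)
The plan is to exploit a deepest vertex of $\Delta$ together with the area‑minimality: around such a vertex one finds many nested short level loops, and refilling the disc inside one of them — cheaply, thanks to a quadratic isoperimetric inequality — then forces the radius to be \emph{linear} rather than merely (as a naive count would give) quadratic in $n$.

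First I would peel $\Delta$ by distance to the boundary. Let $\rho\colon\Delta^{(0)}\to\Z_{\ge 0}$ be the distance to $\partial\Delta$ in $\Delta^{(1)}$, extended affinely over edges and $2$-cells. Because each relator has length $\le 3$, any two vertices of a $2$-cell are joined by an edge, so $\rho$ varies by at most $1$ on each $2$-cell; hence each $2$-cell $\sigma$ sits in exactly one \emph{layer} $\mathcal{A}_j:=\set{\sigma:\min_{\partial\sigma}\rho=j}$ and $\sum_{j\ge 0}\abs{\mathcal{A}_j}=\Area(\Delta)$. Pick $v_\ast$ with $\rho(v_\ast)=R:=r(\Delta)$ (if $R=0$ we are done); since every edge lying on no $2$-cell is traversed by $\partial\Delta$ (so its endpoints are at distance $0$), the vertex $v_\ast$ is incident to some $2$-cell. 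For $0\le j\le R-1$ the level set $\rho^{-1}(j+\tfrac12)$ is a union of circles (an arc would have to end on $\partial\Delta$, where $\rho\equiv 0$), and one of them, $\gamma_j$, separates $v_\ast$ from $\partial\Delta$ and bounds a disc $D_j\ni v_\ast$. Since $\gamma_j$ meets only $2$-cells of $\mathcal{A}_j$, crossing each in at most one arc, $\abs{\gamma_j}\le\abs{\mathcal{A}_j}$; as $\abs{\gamma_j}\ge1$ this already gives the crude estimate $R\le\sum_{j=0}^{R-1}\abs{\mathcal{A}_j}\le\Area(\Delta)\le Mn^2$. The remaining work upgrades this quadratic bound to a linear one.

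Here I would induct on $\Area(\Delta)$, reading the hypothesis as: $\delta_G(n)\le Mn^2$, so an area-minimal filling of \emph{any} word of length $m$ has area $\le Mm^2$ (this is what lets the inductive hypothesis be applied to a sub-diagram). The base case $\Area(\Delta)=0$ is trivial. Suppose $R>12Mn$. For a constant $\varepsilon$ to be fixed, at most $\Area(\Delta)/(\varepsilon n)\le Mn/\varepsilon$ of the indices $j\in\{0,\dots,R-1\}$ satisfy $\abs{\mathcal{A}_j}>\varepsilon n$, so the least $j^\ast$ with $\abs{\mathcal{A}_{j^\ast}}\le\varepsilon n$ has $j^\ast\le Mn/\varepsilon$, and then $\abs{\gamma_{j^\ast}}\le\varepsilon n$. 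Replacing $\gamma_{j^\ast}$ by the edge-loop $L$ through the endpoints at $\rho=j^\ast+1$ of the edges it crosses (consecutive such vertices being joined by an edge of the $2$-cell between the corresponding arcs) gives $\abs{L}\le\abs{\gamma_{j^\ast}}\le\varepsilon n$; let $D$ be the subdiagram bounded by $L$ on the $v_\ast$-side. Then $\abs{\partial D}\le\varepsilon n<n$, $\Area(D)<\Area(\Delta)$ (as $L$ misses $\partial\Delta$), and $r(D)\ge\rho(v_\ast)-(j^\ast+1)\ge R-Mn/\varepsilon-1$; and — $\Delta$ being area-minimal — $D$ is an area-minimal filling of the word around $L$, so $\Area(D)\le M\abs{\partial D}^2$. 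The inductive hypothesis applies to $D$ and yields $r(D)\le 12M\abs{\partial D}\le12M\varepsilon n$. Hence $R\le(12\varepsilon+\varepsilon^{-1})Mn+1$, and a suitable choice of $\varepsilon$ (for instance $\varepsilon=1/\sqrt{12}$, giving $12\varepsilon+\varepsilon^{-1}<7$) forces $R\le 12Mn$ for all but finitely many $n$, the small cases being checked by hand — contradicting $R>12Mn$.

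The step I expect to be the genuine obstacle is exactly the passage from the quadratic count to the linear bound: there is no one-shot estimate, and one is forced to combine area-minimality of $\Delta$ with the global quadratic isoperimetric inequality (to refill a thin sub-disc below its ambient cost) inside an induction. Two further points need care: the topology near $\gamma_{j^\ast}$, where the level circle must be traded for the nearby edge-loop $L$ without losing control of $\abs{\partial D}$ or of the radius of $D$; and the constants in the averaging step, which is where the stated value $12$ — comfortably, with room to spare — emerges.
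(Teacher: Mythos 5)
Your argument is, in substance, Papasoglu's own proof (the paper only cites \cite{Papa} and does not reprove the result): layer the diagram by distance to $\partial\Delta$, average over layers to find a thin one, trade the level circle for a nearby short edge-loop, and use area-minimality plus the \emph{global} quadratic isoperimetric inequality to control the inner subdiagram inductively --- and your reading of the hypothesis as $\delta_G(m)\le Mm^2$ for all $m$ (rather than a bound on the single diagram $\Delta$) is the correct one, matching how Corollary~\ref{Papa cor} applies the theorem. The constant you extract, about $2\sqrt{12}\,Mn+1\le 7Mn+1$, sits comfortably inside the stated $12Mn$ apart from degenerate small values of $Mn$, which you rightly set aside.
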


Every finitely presentable group has such a presentation, and changing between two finite presentations of a group alters diameter and area by at most a multiplicative constant, so, in the light of \eqref{diam by rad}, Theorem~\ref{Papa} gives us:  

\begin{cor} \label{Papa cor}
	If a finitely presented group $G$ has Dehn function bounded above by a quadratic function, then there exists $K>0$ such that for every word of length $n$ representing the identity, there is a van~Kampen diagram whose area is at most  $Kn^2$ and whose diameter is at most $Kn$.     
\end{cor}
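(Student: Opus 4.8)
The plan is to reduce to Papasoglu's theorem (Theorem~\ref{Papa}) by passing to a presentation with relators of length at most three, and then transport the resulting diameter bound back to an arbitrary finite presentation. First I would fix a finite presentation $\langle X \mid R \rangle$ of $G$ and, by introducing a new generator for each length-two subword of each relator together with the obvious defining relation, produce a finite presentation $\langle X' \mid R' \rangle$ of $G$ in which every relator has length at most three. (This is the standard triangulation trick; it only increases the number of generators and relators by a bounded amount.) Since $G$ has Dehn function bounded above by a quadratic function, and the Dehn function is independent of the finite presentation up to $\simeq$, there is $M_0>0$ such that $\delta_{\langle X' \mid R' \rangle}(n) \leq M_0 n^2 + M_0 n + M_0 \leq M n^2$ for a suitable $M$ and all $n \geq 1$ (absorbing the lower-order terms, and handling $n=0$ trivially).

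Next, given a word $w$ of length $n$ over $X'$ representing the identity, take a minimal-area van~Kampen diagram $\Delta$ over $\langle X' \mid R' \rangle$. Then $|\partial \Delta| = n$ and $\Area(\Delta) \leq \delta_{\langle X' \mid R' \rangle}(n) \leq M n^2$, so Theorem~\ref{Papa} applies and yields $r(\Delta) \leq 12Mn$. Combining this with inequality~\eqref{diam by rad} gives $\Diam(\Delta) \leq 2 \cdot 12Mn + n = (24M+1)n$, so $\Delta$ has area at most $Mn^2$ and diameter at most $(24M+1)n$; setting $K_1 := 24M+1 \geq M$ gives the conclusion for the triangulated presentation.

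Finally I would transfer this back to a general finite presentation $\mathcal{P}$ of $G$. There is a standard dictionary between van~Kampen diagrams over $\mathcal{P}$ and over $\langle X' \mid R' \rangle$: a word $u$ of length $n$ over the generators of $\mathcal{P}$ can be rewritten (by inserting bounded-length words in the auxiliary generators) as a word $u'$ over $X'$ with $|u'| \leq C_1 n$ representing the same element, and a van~Kampen diagram over $\langle X' \mid R' \rangle$ for $u'$ can be converted into one over $\mathcal{P}$ for $u$ by collapsing the subdiagrams coming from the auxiliary relators; this changes area and diameter by at most a multiplicative constant $C_2$. Applying the previous paragraph to $u'$ and pulling back, we obtain for $u$ a van~Kampen diagram over $\mathcal{P}$ with area at most $C_2 M (C_1 n)^2$ and diameter at most $C_2 K_1 C_1 n$; taking $K := C_2 C_1^2 (M + K_1)$ works. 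I expect the main obstacle — really the only genuine point requiring care — to be this last transfer step: one must check that the conversion between the two presentations simultaneously controls area \emph{and} diameter by a multiplicative constant (the area statement is routine, but one should verify that collapsing the auxiliary subdiagrams does not blow up distances to the boundary). The excerpt in fact flags exactly this ("changing between two finite presentations of a group alters diameter and area by at most a multiplicative constant"), so it may be cited as a known fact rather than reproved in detail.
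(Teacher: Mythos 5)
Your proposal is correct and follows essentially the same route as the paper: triangulate to a presentation with relators of length at most three, apply Theorem~\ref{Papa} to a minimal-area diagram (whose area is quadratically bounded by hypothesis), convert the radius bound to a diameter bound via \eqref{diam by rad}, and transfer back using the fact that changing finite presentations alters area and diameter by at most a multiplicative constant. The paper states this argument in one sentence and, like you, cites the presentation-change fact rather than reproving it.
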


We are now ready to deduce:

\begin{corollary}\label{cor: electrostatic upperbounds} 
	All mapping tori $M_{\Phi}$ of $F_2 \times \mathbb{Z}$ have at most a cubic Dehn function.
\end{corollary}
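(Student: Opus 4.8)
The plan is to realize an arbitrary mapping torus $M_\Phi$ of $G = F_2 \times \mathbb{Z}$ as a central extension and then invoke Theorem~\ref{GerstenRiley} together with Corollary~\ref{Papa cor}. First I would observe that $\mathbb{Z} = \langle c \rangle$, the second factor of $F_2 \times \mathbb{Z}$, is the center of $G$, and since it is a characteristic subgroup it is preserved by $\Phi$. Moreover, up to replacing $\Phi$ by $\Phi^2$ (which by Lemma~\ref{lemma: simplifying automorphisms doesn't change DF} does not change the Dehn function of the mapping torus up to $\simeq$), I may assume $\Phi$ fixes $\langle c \rangle$ pointwise rather than merely setwise, since $\Phi$ acts on $\langle c\rangle \cong \mathbb{Z}$ as $\pm 1$. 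Consequently $\langle c \rangle$ is central in $M_\Phi$ as well (it commutes with the $F_2$ part of $G$ and with the stable letter $t$), so we have a central extension $1 \to \mathbb{Z} \to M_\Phi \to \overline{M_\Phi} \to 1$, where $\overline{M_\Phi} = M_\Phi / \langle c \rangle$.

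Next I would identify the quotient $\overline{M_\Phi}$ as a mapping torus of $F_2$. Killing $c$ in $G = F_2 \times \mathbb{Z}$ gives $F_2$, and $\Phi$ descends to an automorphism $\phi \in \Aut(F_2)$ via $F_2 \times \mathbb{Z} \twoheadrightarrow F_2$ (this is the map $\psi$ in the notation of Theorem~\ref{F2xZ thm}). A short diagram chase shows $\overline{M_\Phi} \cong M_\phi = F_2 \rtimes_\phi \mathbb{Z}$. Now $M_\phi$ is a mapping torus of a free group, hence by the Bestvina--Handel/Brinkmann/Bridson--Groves theorem quoted in the introduction it has Dehn function at most quadratic (it is either hyperbolic or has quadratic Dehn function). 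Therefore, by Corollary~\ref{Papa cor}, there is a constant $K$ such that every word $\overline{w}$ of length $n$ representing the identity in $\overline{M_\Phi}$ bounds a van~Kampen diagram $\overline{\Delta}$ with $\Area(\overline{\Delta}) \le K n^2$ and $\Diam(\overline{\Delta}) \le K n$.

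Finally I would feed these bounds into Theorem~\ref{GerstenRiley} with $f(n) = K n^2$ and $g(n) = K n$: the conclusion is that the Dehn function of $M_\Phi$ is bounded above by a constant times $f(n)(g(n)+1) + n^2 = K n^2 (Kn + 1) + n^2$, which is $\simeq n^3$. This gives the desired cubic upper bound for $M_\Phi$ when $\Phi$ fixes $c$; the general case follows because passing to $\Phi^2$ changes $M_\Phi$ only up to a finite-index subgroup and hence preserves the Dehn function up to $\simeq$ by Proposition~\ref{finite index} (equivalently, by Lemma~\ref{lemma: simplifying automorphisms doesn't change DF}).

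I do not expect any serious obstacle here — the statement is essentially an assembly of machinery already in place. The one point requiring a little care is the reduction ensuring $\langle c \rangle$ is genuinely central in $M_\Phi$ (not just normal): this is exactly why one passes to $\Phi^2$, and one should check that $\phi$ is indeed well-defined as an automorphism of $F_2$ (injectivity follows since $\Phi$ preserves the center and is an automorphism). Everything else is a direct citation of Theorem~\ref{GerstenRiley}, Corollary~\ref{Papa cor}, the quadratic bound for free-group mapping tori, and Lemma~\ref{lemma: simplifying automorphisms doesn't change DF}.
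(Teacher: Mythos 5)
Your proof is correct and follows essentially the same route as the paper: reduce to a central extension of $F_2 \rtimes_{\phi}\mathbb{Z}$, invoke Bridson--Groves for the quadratic Dehn function of the base, Corollary~\ref{Papa cor} for the simultaneous area/diameter control, and Theorem~\ref{GerstenRiley} for the cubic bound. The only cosmetic difference is that you carry out the reduction to the central-extension form by hand (passing to $\Phi^2$ so that $c\mapsto c$), whereas the paper outsources exactly this step to Lemma~\ref{pres lemma}.
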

\begin{proof}
	Bridson and Groves \cite{BridsonGroves} prove  that for all $\phi \in \Aut(F_2)$, $F_2 \rtimes_{\phi} \mathbb{Z}$   has a quadratic Dehn function, so   Corollary~\ref{Papa cor} applies and allows us to use  Theorem~\ref{GerstenRiley} to deduce that every central extension of $F_2 \rtimes_{\phi} \mathbb{Z}$   has at most cubic Dehn function.  Lemmas~\ref{pres lemma} and \ref{lemma: simplifying automorphisms doesn't change DF} together imply that $M_{\Phi}$ has at most a cubic Dehn function. \end{proof}

In Sections \ref{relhyperbolicF2xZ} and \ref{sec: F2xZ unit eigenvalues} we will refine this method to improve the upper bound from cubic to quadratic in special cases. In Section \ref{Z2*Z} we will adapt the arguments to certain related situations which fail to be central extensions.

\section{Mapping tori of \texorpdfstring{$G = F_2 \times \mathbb{Z} = \langle a,b \rangle \times \langle c \rangle$}{G=F2 x Z =<a,b> x <c>}} \label{F2xZ}
\subsection{Automorphisms of \texorpdfstring{$F_2 \times \mathbb{Z}$}{F2 x Z}}\label{types of autos}

Recall the notation of Theorem~\ref{F2xZ thm}:  $\Psi \in \Aut(F_2 \times \Z)$    induces $\psi \in \Aut(F_2)$ via the map $F_2 \times \Z \onto F_2$   killing the $\Z$ factor, and  $\psi$ induces  $\psi_{ab} \in \Aut(\mathbb{Z}^2)$  via  the abelianization map $F_2 \to \Z^2$, $g \mapsto g_{ab}$; and $p: F_2 \times \mathbb{Z} \to  \mathbb{Z}$ is projection onto the second factor.  Let  $\lambda^{\pm 1}$ be the (complex) eigenvalues of $\psi_{ab}$. We will prove Theorem~\ref{F2xZ thm} by separately addressing three comprehensive and mutually exclusive cases.  
\begin{enumerate}[label=(\roman*)]
	\item \label{one1} $\abs{\lambda}\neq 1$, 
	\item  \label{one2}  $\abs{\lambda} =  1$ and there exists $g \in F_2$ and $m \in \N$  such that $\psi_{ab}^m(g_{ab}) = g_{ab}$ and  $p(\Psi^m(g)) \neq 0$,
	\item   \label{one3} all other cases---that is, $\abs{\lambda} =  1$ and  for every $g \in F_2$ and every $m \in \N$, if $p(\Psi^m(g)) = 0$  then $\psi_{ab}^m(g_{ab}) = g_{ab}$.
\end{enumerate}

In Section~\ref{relhyperbolicF2xZ} we will prove that the Dehn function of the mapping torus $M_{\Psi}$ of $F_2 \times \mathbb{Z} = \langle a,b \rangle \times \langle c \rangle$  is quadratic in case~\ref{one1}.  In Section~\ref{sec: F2xZ unit eigenvalues} we will prove that it is cubic in case~\ref{one2} and is quadratic in case~\ref{one3}. First we narrow the family of automorphisms $\Psi$ that must be explored.

\begin{lemma} \cite[Proposition 5.1 (Nielsen)]{LyndonSchupp}
	\label{fate of a commutator} For all $~\theta \in \Aut(F(a,b))$, there is $h \in F(a,b)$ such that ${\theta^2([a,b])=[a,b]^h}$. 
\end{lemma}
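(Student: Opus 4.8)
The claim is that for every $\theta \in \Aut(F(a,b))$, the square $\theta^2$ fixes the conjugacy class of the commutator $[a,b]$. The natural approach is to work inside $\Out(F_2) \cong \GL(2,\Z)$. Since conjugacy classes are what is at stake, I would first pass from $\theta$ to its image $[\theta] \in \Out(F_2)$; the statement $\theta^2([a,b]) = [a,b]^h$ for some $h$ is exactly the assertion that $[\theta]^2$ fixes the conjugacy class $[[a,b]]$, so it suffices to understand how $\Out(F_2)$ acts on that one conjugacy class.

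The key fact is that $[a,b]$ generates the commutator subgroup of $F_2$ up to conjugacy in a very rigid way: $F_2/F_2' \cong \Z^2$, the abelianization of $F_2'$ as a module, and $\Out(F_2) \cong \GL(2,\Z)$ acts via the determinant on the "orientation" of the commutator. Concretely, for any $\phi \in \Aut(F_2)$, a classical computation (Nielsen) shows that $\phi([a,b])$ is conjugate to $[a,b]^{\pm 1}$, with the sign equal to $\det$ of the induced matrix $\phi_{ab} \in \GL(2,\Z)$. I would establish this by checking it on the Nielsen generators of $\Aut(F_2)$ — the transposition $a \leftrightarrow b$, the inversion $a \mapsto a^{-1}$, and the transvection $a \mapsto ab$, $b \mapsto b$ — verifying in each case directly that $[a,b]$ maps to a conjugate of $[a,b]^{\pm 1}$ with the sign matching the determinant of the corresponding matrix in $\GL(2,\Z)$ (namely $-1$, $-1$, $+1$). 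Since these generate $\Aut(F_2)$ and the assignment $\phi \mapsto (\text{sign})$ is multiplicative — being $\phi \mapsto \det(\phi_{ab})$ — it follows that $\phi([a,b])$ is always conjugate to $[a,b]^{\det(\phi_{ab})}$.

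Given that, the result is immediate: for $\theta^2$ the induced matrix is $\theta_{ab}^2$, whose determinant is $\det(\theta_{ab})^2 = 1$, so $\theta^2([a,b])$ is conjugate to $[a,b]^{+1} = [a,b]$, i.e.\ there is $h \in F(a,b)$ with $\theta^2([a,b]) = [a,b]^h$, as required. Alternatively, and perhaps cleaner for exposition, I would cite the stated reference \cite[Proposition 5.1]{LyndonSchupp} for the sign statement and just supply the one-line determinant argument; the excerpt already attributes the lemma to Nielsen via Lyndon--Schupp, so the substantive content is exactly that $\Out(F_2)$ acts on $[[a,b]]$ through $\det$.

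**Main obstacle.** The only real work is the sign computation $\phi([a,b]) \sim [a,b]^{\det \phi_{ab}}$, and the only subtlety there is bookkeeping: one must verify the transvection case ($a \mapsto ab$, $b \mapsto b$ gives $[ab,b] = b^{-1}a^{-1}b^{-1}\cdot ab\cdot b = b^{-1}(a^{-1}b^{-1}ab)b = [a,b]^b$, conjugate to $[a,b]^{+1}$, matching $\det = +1$) and the orientation-reversing generators carefully, and then invoke multiplicativity. I expect no genuine difficulty — this is a standard fact — so in the write-up I would keep it to a sentence or two, or simply defer entirely to the cited Nielsen result and note that $\det(\theta_{ab}^2)=1$.
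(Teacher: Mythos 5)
Your proposal is correct and follows essentially the same route as the paper: verify on the Nielsen generators of $\Aut(F(a,b))$ that $[a,b]$ is sent to a conjugate of $[a,b]^{\pm 1}$, note that the sign is multiplicative, and conclude that squaring forces the sign to be $+1$. The identification of the sign with $\det(\theta_{ab})$ is a nice explicit way to package the multiplicativity, but it is the same argument.
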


\begin{proof} $\Aut(F(a,b))$ is generated by the following five elementary Nielsen transformations: $(a,b)$ maps to $(a^{-1}, b)$, $(a, b^{-1})$, $(b, a)$, $(ab, b)$, or $(a, ba)$. Each of these automorphisms sends $[a,b]$ to a conjugate of  $[a,b]^{\pm 1}$. 
\end{proof}

To prove Theorem~\ref{F2xZ thm}  it will suffice to focus only on the mapping tori of the form $M_{\Phi}$ described in the next lemma.

\begin{lemma} \label{pres lemma} 
	Let $G = F_2 \times \Z =  \langle a, b \rangle \times \langle c \rangle$. 	Given $\Psi \in \Aut(G)$, there exists $\Phi \in \Aut(G)$ satisfying the following properties:
	\begin{itemize}
		\item \label{form} there is $k \geq 0$ such that $[\Phi] = [\Psi^k]$ in $\Out(F_2\times \mathbb{Z})$,
		\item \label{Dehnfunction} the Dehn functions of  $M_{\Psi}$ and $M_{\Phi}$ are equivalent,
		\item\label{commutator} $\phi([a,b]) = [a,b]$,
		\item \label{det} $\phi_{ab}$ has  determinant $1$ and so is in $\SL(2,\Z)$,
		\item \label{poseigen} the eigenvalues of $\phi_{ab}$ are real and positive, and
		\item \label{central} $M_{\Phi}$ is a central extension of $M_{\phi}  =  \langle a, b,  t \mid a^{t}=\phi(a),  \ b^{t}=\phi(b)  \rangle$ by $\Z = \langle c \rangle$, 
	\end{itemize}
	where $\phi \in \Aut(F(a,b))$ is the map induced from $\Phi$ by killing $c$, and $\phi_{ab}$ is the map induced by the abelianization of $F(a,b)$ to $\mathbb{Z}^2$. 
	Thus there exist $k_a, k_b \in \Z$ such that $$M_{\Phi}   \ = \  \langle a, b, c, t \mid a^{t}=\phi(a)c^{k_a},  \ b^{t}=\phi(b)c^{k_b},\  c^{t}=c,  \ [a,c]=1, \ [b,c]=1 \rangle.$$
	
	Moreover, conditions \ref{one1}, \ref{one2}, and \ref{one3} above hold for $\Phi$  exactly when they hold for $\Psi$.
\end{lemma}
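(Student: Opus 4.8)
The plan is to build $\Phi$ from $\Psi$ in stages, each stage replacing $\Psi$ by an automorphism that differs only by passing to a power or conjugating in $\Out(G)$, and invoking Lemma~\ref{lemma: simplifying automorphisms doesn't change DF} after each stage to preserve the Dehn function. First I would record the structure of $\Aut(F_2 \times \mathbb{Z})$. Writing $G = \langle a,b\rangle \times \langle c\rangle$, the center is $\langle c\rangle$ (since $F_2$ is centerless), so any $\Psi \in \Aut(G)$ preserves $\langle c\rangle$, hence $\Psi(c) = c^{\pm 1}$, and induces $\psi \in \Aut(F_2)$ on the quotient $G/\langle c\rangle \cong F_2$. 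After replacing $\Psi$ by $\Psi^2$ (allowed by Lemma~\ref{lemma: simplifying automorphisms doesn't change DF}(1)) we may assume $\Psi(c) = c$. Then $\Psi$ has the form $a \mapsto \psi(a)c^{\ell_a}$, $b \mapsto \psi(b)c^{\ell_b}$, $c \mapsto c$ for some $\ell_a,\ell_b \in \mathbb{Z}$; in particular $M_\Psi$ is already a central extension of $M_\psi$ by $\langle c\rangle$, giving \ref{central}.

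Next I would arrange the conditions on $\psi$. By Lemma~\ref{fate of a commutator}, replacing $\psi$ by $\psi^2$ (i.e.\ $\Psi$ by $\Psi^2$ again) makes $\psi([a,b])$ a conjugate of $[a,b]$; composing with the corresponding inner automorphism of $F_2$—which changes $[\Psi]$ only by an inner automorphism of $G$, hence not at all in $\Out(G)$—we get $\phi([a,b]) = [a,b]$, which is \ref{commutator}. This forces $\phi_{ab} \in \SL(2,\mathbb{Z})$, since an automorphism of $F_2$ fixing $[a,b]$ (not just up to conjugacy) has abelianization of determinant $1$; that is \ref{det}. For \ref{poseigen}: the eigenvalues of $\phi_{ab}$ are $\lambda^{\pm 1}$, and $-\phi_{ab}$ is not realized by an automorphism of $F_2$ in general, so instead I pass to $\phi_{ab}^2$ one more time (again Lemma~\ref{lemma: simplifying automorphisms doesn't change DF}(1)), whose eigenvalues are $\lambda^{\pm 2}$; if $\lambda$ is real then $\lambda^2 > 0$ and we are done, while if $\lambda$ is not real then by Lemma~\ref{finite order examples} $\phi_{ab}$ (hence $\phi_{ab}^2$) has finite order, and raising to a suitable further power makes $\phi_{ab}$ the identity, whose eigenvalue $1$ is real and positive. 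So after finitely many applications of Lemma~\ref{lemma: simplifying automorphisms doesn't change DF}, all of \ref{form}–\ref{poseigen} hold, and \ref{central} gives the displayed presentation of $M_\Phi$ once we read off the exponents $k_a, k_b$ of $c$ appearing in $\phi(a)c^{k_a} = \Phi(a)$ and $\phi(b)c^{k_b} = \Phi(b)$ (note $[c,a]=[c,b]=1$ and $\phi([a,b])=[a,b]$ guarantee the two remaining relators $a^t = \phi(a)c^{k_a}$, $b^t = \phi(b)c^{k_b}$ together with $c^t = c$ and the commuting relations do define a group isomorphic to $M_\Phi$).

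Finally, for the "moreover" clause, I would check that conditions \ref{one1}, \ref{one2}, \ref{one3} are invariant under the operations used. These conditions are phrased in terms of $|\lambda|$ and in terms of the existence of $g \in F_2$, $m \in \mathbb{N}$ with $\psi_{ab}^m(g_{ab}) = g_{ab}$ and $p(\Psi^m(g)) \neq 0$. The quantity $|\lambda|$ is unchanged by conjugation in $\Out(G)$ and by squaring it is replaced by $|\lambda|^2$, so $|\lambda| = 1 \iff |\lambda|^2 = 1$; thus the dichotomy \ref{one1} versus \{\ref{one2},\ref{one3}\} is preserved. For the distinction between \ref{one2} and \ref{one3}, I would argue that the set of $g \in F_2$ with $p(\Psi^m(g)) \neq 0$ for some $m$ with $\psi_{ab}^m$ fixing $g_{ab}$ is governed by the "twisted" data $\ell_a, \ell_b$ and $\psi_{ab}$, and that passing to a power of $\Psi$ and conjugating by an inner automorphism of $G$ changes neither whether such a $g$ exists: conjugating by $h \in G$ replaces $\Psi^m(g)$ by $h\Psi^m(g)h^{-1}$, which has the same image under $p$ (as $p$ is a homomorphism to the abelian group $\mathbb{Z}$ and $p(h\Psi^m(g)h^{-1}) = p(\Psi^m(g))$); and replacing $\Psi$ by $\Psi^k$ replaces the witness exponent $m$ by $\lceil m/k\rceil$-type adjustments in the obvious direction, using that $\psi_{ab}^{mk}$ fixes $g_{ab}$ whenever $\psi_{ab}^m$ does. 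I expect this last bookkeeping—verifying stability of \ref{one2}/\ref{one3} under the replacement $\Psi \rightsquigarrow \Psi^k$ in both directions—to be the fussiest part of the argument, since one must be careful that "there exists $g$ and $m$" for a power corresponds correctly to the same statement for $\Psi$ itself; the clean way is to observe that both $\Phi$ and $\Psi$ have the same set of periodic conjugacy classes in $F_2$ on which $p$ is eventually nonzero, because $\Phi$ is obtained from $\Psi$ by an inner twist and a power, and a power of an automorphism has exactly the same periodic conjugacy classes as the original.
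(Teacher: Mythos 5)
Your construction follows the paper's proof step for step: pass to $\Psi^2$ so that $c\mapsto c$ and the central extension structure is visible, use Lemma~\ref{fate of a commutator} together with an inner automorphism to arrange $\phi([a,b])=[a,b]$, use Lemma~\ref{finite order examples} to dispose of non-real eigenvalues by passing to a further power, and invoke Lemma~\ref{lemma: simplifying automorphisms doesn't change DF} after each modification. Stacking several successive squarings rather than committing to $\Psi^2$ (case \ref{one1}) or $\Psi^6$ (cases \ref{one2}, \ref{one3}) at the outset is only a cosmetic difference.

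The one place where the proposal stops short of a proof is the part you yourself flag as fussiest: the invariance of conditions \ref{one2}/\ref{one3} under $\Psi \rightsquigarrow \Psi^k$. The non-trivial direction is to show that if $(g,m)$ witnesses condition \ref{one2} for $\Psi$, then $p(\Psi^{km}(g))\neq 0$; a priori the $k$ successive contributions to the exponent of $c$ could cancel. What closes this is the observation that $p\circ\Psi^{m}\restricted{F_2}\colon F_2\to\Z$ is a homomorphism to an abelian group and hence factors through $g\mapsto g_{ab}$; writing $\Psi^{km}(g)$ as a telescoping composition and using that $\psi_{ab}^{m}$ fixes $g_{ab}$, each of the $k$ stages contributes exactly $p(\Psi^{m}(g))$, so $p(\Psi^{km}(g))=k\,p(\Psi^{m}(g))\neq 0$. (This is precisely the identity $p(\Phi^{m}(g))=6\,p(\Psi^{m}(g))$ that the paper records.) By contrast, the ``clean way'' you propose---comparing the periodic conjugacy classes of $F_2$ on which $p$ is eventually nonzero---is not the stated condition, which concerns periodicity of $g_{ab}$ under $\psi_{ab}$ rather than periodicity of the conjugacy class of $g$ in $F_2$, and so it would not substitute for the computation above. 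With that one identity supplied, your argument is complete and coincides with the paper's.
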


\begin{proof}
	The center $\langle c \rangle$ of $G$, being  characteristic,  is preserved by $\Psi$, so $\Psi$ maps $c$ to $c$ or $c^{-1}$, and  $\Psi^2$ maps $c$ to $c$.  On killing $c$, $\Psi$ induces some  $\theta \in \Aut(F(a,b))$. Therefore,
	the mapping torus $M_{\Psi^2} = G \rtimes_{\Psi^2} \Z$ is   
	$$\langle a, b ,c, t \mid a^{t}=\theta^2(a)c^{k_a},  \ b^{t}=\theta^2(b)c^{k_b},\  c^{t}=c, \ [a,c]=1, \ [b,c]=1\rangle$$
	for some $k_a, k_b \in \Z$.  
	By Lemma~\ref{fate of a commutator},  $\theta^2([a,b]) =[a,b]^h$ for some $h \in F(a,b)$. In case~\ref{one1}, define $\Phi$ to be  $\Psi^2$ composed with conjugation by $h^{-1}$. This new automorphism satisfies the properties above  by definition and by  Lemma~\ref{lemma: simplifying automorphisms doesn't change DF}.    Lemma~\ref{finite order examples} tells us that  in cases \ref{one2} and \ref{one3}, either $\phi_{ab}$ has a real unit eigenvalue or it has finite order dividing six.  Define $\Phi$ to be $\Psi^6$ composed with an appropriate inner automorphism so that $\phi = \textup{id}$. Then $\phi$ and $\Phi$ satisfy all the required properties.

	Conditions \ref{one1}, \ref{one2}, and \ref{one3} hold for  $\Phi$ exactly when they hold for $\Psi$: 
	Suppose that $\Psi$ satisfies condition \ref{one1}: Then $\Phi = \iota_{h^{-1}}\circ\Psi^2$ for some $h \in F(a,b)$. The map $\psi_{ab}$ has a non-unit eigenvalue if and only if $\phi_{ab}= \psi^2_{ab}$ has a non-unit eigenvalue. Suppose that $\Psi$ satisfies condition \ref{one2}:  Then $\Phi = \iota_{h^{-1}}\circ\Psi^6$ for some $h\in F(a,b)$. If for some $g \in F(a,b)$, we have that $g_{ab}$ is a fixed point of $\psi_{ab}^m$, then it is also a fixed point of  $\phi_{ab}^m=\psi_{ab}^{6m}$. If there is $m\in \mathbb{N}$ so that $\phi_{ab}^m$ has a fixed point, then $\psi_{ab}^{6m}$ will also have a fixed point.  Moreover, $p(\Phi^{m}(g)) = 6p(\Psi^m(g))$, so $p(\Phi^m(g))=0$ if and only if $p(\Psi^m(g))=0$. 
\end{proof}

\subsection{Theorem \ref{F2xZ thm} when \texorpdfstring{$\psi_{ab}$}{psi ab} has non-unit eigenvalues}\label{relhyperbolicF2xZ}

The primary tool for this section is \textit{relative hyperbolicity}, a concept  introduced by Gromov, and then developed by  Bowditch, Farb, Osin, and others \cite{Bowditch, Farb, Osin}.

Suppose $M_{\phi}$ is a group presented by $$\mathcal{P}_1 \ := \  \langle a, b,  t \mid a^{t}=\phi(a),  \ b^{t}=\phi(b) \rangle$$ where $\phi \in \Aut(F(a,b))$ such that $\phi([a,b]) = [a,b]$.

\begin{lemma} \label{rel hyp lemma}	If $\phi_{ab}$ has non-unit eigenvalues,    $M_{\phi}$ is strongly hyperbolic relative to the subgroup $H := \langle [a,b], \ t \rangle  \cong \Z^2$. 
\end{lemma}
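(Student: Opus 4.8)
The plan is to realize $M_{\phi}$ as the fundamental group of a suitable space — or more directly, to exhibit it as an HNN extension / graph-of-groups construction in which the peripheral structure is transparent — and then verify the two standard axioms for strong relative hyperbolicity: the relatively hyperbolic group should split as a finite graph of groups with the peripheral subgroup $H$ playing the role of the (only) non-hyperbolic vertex or edge group, and the complementary pieces should be hyperbolic. Concretely, since $\phi([a,b])=[a,b]$, the element $z := [a,b]$ is fixed by $\phi$, hence central in $M_\phi$ modulo the relation; more precisely $z$ and $t$ commute in $M_\phi$, so $H=\langle z,t\rangle\cong\Z^2$ genuinely sits inside $M_\phi$. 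The idea is that $M_\phi$ is a mapping torus of $F_2 = F(a,b)$ and $F_2$ is hyperbolic relative to the (trivial) collection, while $\phi$ permutes — in fact fixes — the conjugacy class of $z$, so the mapping torus is hyperbolic relative to the mapping torus of $\langle z\rangle$, which is exactly $H$.

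First I would invoke the Bestvina--Handel / Brinkmann / Bridson--Groves picture: $\phi\in\Aut(F_2)$ with $\phi_{ab}$ having a non-unit eigenvalue means $\phi_{ab}$ is hyperbolic (Anosov) on $\Z^2$, so its eigenvalues are $\lambda^{\pm1}$ with $|\lambda|>1$; in particular $\phi$ has exponential growth and, crucially, the \emph{only} periodic conjugacy classes of $\phi$ are the powers of $[a,b]$ (equivalently of the boundary word). Here is where I would need to be careful: I must argue that if $w\in F_2$ has $\phi^n(w)$ conjugate to $w$ for some $n\neq 0$, then $w$ is a power of $[a,b]$ up to conjugacy. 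One clean way: $\phi$ fixes $[a,b]$, so it is realized by a homeomorphism of the once-punctured torus $S$ fixing the boundary, and by Nielsen--Thurston $\phi$ is either finite order, reducible, or pseudo-Anosov rel the boundary; the non-unit eigenvalue of $\phi_{ab}$ rules out finite order, and a reduction curve would be an essential simple closed curve whose homology class is $\phi_{ab}$-periodic, contradicting that $\phi_{ab}$ has no periodic nonzero vectors (its eigenvalues are irrational of modulus $\ne1$). So $\phi$ is pseudo-Anosov rel $\partial S$, and the only periodic conjugacy classes are the powers of the boundary $[a,b]$.

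Then I would apply the standard theorem (Farb's "combination", or the Dahmani / Mj--Reeves combination theorem, or Gautero--Lustig, or Thurston's hyperbolization in the surface case): the mapping torus of a pseudo-Anosov homeomorphism of a surface with boundary is hyperbolic relative to the cusps, i.e.\ relative to the mapping torus of the boundary. Since the boundary curve $[a,b]$ is \emph{fixed} (not just periodic) by $\phi$, that cusp subgroup is $\langle [a,b]\rangle\rtimes\langle t\rangle = \langle[a,b],t\rangle\cong\Z^2 = H$. This gives exactly the statement. Alternatively, staying purely group-theoretically: $M_\phi$ is an HNN extension of $F_2$ with stable letter $t$ and associated subgroups both equal to $F_2$ via $\phi$; one applies Bigdely--Wise or the Mj--Reeves HNN combination theorem with $F_2$ hyperbolic relative to the $\phi$-orbit of $\langle[a,b]\rangle$ (a single subgroup, since it is fixed) to conclude that the HNN extension is hyperbolic relative to the corresponding mapping torus $H$.

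The main obstacle I anticipate is \emph{not} the combination step — that is off-the-shelf once the hypotheses are set up — but rather pinning down precisely that the periodic conjugacy classes of $\phi$ are exactly the powers of $[a,b]$, and organizing this so that the chosen combination theorem's hypotheses are met verbatim (in particular that $F_2$ is hyperbolic relative to $\{\langle[a,b]\rangle\}$, which is true but needs a word, and that $\phi$ together with its inverse respects this peripheral structure up to conjugacy — immediate here since $[a,b]$ is genuinely fixed, not merely permuted). A secondary technical point is to make sure the peripheral subgroup comes out honestly isomorphic to $\Z^2$ and is the full maximal parabolic; that follows because $\langle[a,b]\rangle$ is malnormal in $F_2$ (being a maximal cyclic subgroup carried by the boundary of the punctured torus) and is fixed by $\phi$, so its mapping torus $H$ is malnormal in $M_\phi$ and abelian. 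I would present the argument in the surface-homeomorphism language since it is the most economical, with a remark that the combination-theorem route gives the same conclusion for readers who prefer to stay in the category of groups.
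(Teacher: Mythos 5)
Your proposal is correct and follows essentially the same route as the paper: realize $M_{\phi}$ as the fundamental group of a once-punctured torus bundle whose monodromy is pseudo-Anosov (forced by the non-unit eigenvalue of $\phi_{ab}$), hence a finite-volume hyperbolic $3$-manifold group, and then invoke Farb's theorem that such groups are strongly hyperbolic relative to their cusp subgroups, which here is $\langle [a,b], t\rangle \cong \Z^2$. The extra material on periodic conjugacy classes and the combination-theorem alternative is not needed for this route but does no harm.
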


\begin{proof}
	$M_{\phi}$ is the fundamental group of a finite-volume hyperbolic once-punctured torus bundle. In Theorem~4.11 of \cite{Farb}, Farb showed that such groups are strongly hyperbolic relative to their cusp subgroups. In our case, that is the subgroup $\langle [a,b], \ t \rangle$. (See also Section~4 of \cite{ButtonKropholler} for a survey of when mapping tori of free groups are relatively hyperbolic and acylindrically hyperbolic.)
\end{proof}

Consider the presentation 
$$\mathcal{P}_2 \ := \  \langle a, b, z,  t \mid a^{t}=\phi(a),  \ b^{t}=\phi(b), \ z=[a,b], \ z^t=z \rangle$$ for $M_{\phi}$ 
obtained from $\mathcal{P}_1$ by adding an extra generator $z$, an extra relation which declares that $z$ equals $[a,b]$ in the group, and a further extra relation which declares that $[a,b]$ commutes with $t$ (which is, a consequence of the other defining relations since  $\phi([a,b]) = [a,b]$).  Then $\langle  t, z \rangle \cong \Z^2$ is the subgroup $H$ of Lemma~\ref{rel hyp lemma}.  Refer to faces of a van~Kampen diagram over $\mathcal{P}_2$ as \emph{$\Z^2$-faces} when they  correspond to the relation $z^t=z$, and refer to the remaining faces as \emph{$\mathcal{R}$-faces}.  

\begin{lemma}  \label{diagram properties}
	There exists $C>0$ such that every word $w$ on $\set{a, b, t}^{\pm 1}$ of length $n$ that represents the identity has a van~Kampen diagram $\Delta$ over $\mathcal{P}_2$ with the following properties.    
	\begin{enumerate}
		\item The number of $\mathcal{R}$-faces is at most $Cn$.  \label{claim1}
		\item The number of $\Z^2$-faces in $\Delta$ is at most $Cn^2$.  \label{claim2}
		\item From every vertex of  $\Delta$ on the perimeter of an $\mathcal{R}$-face, there is a path to $\partial \Delta$ of length at most $Cn$ in the 1-skeleton of the union of the $\mathcal{R}$-faces.\label{claim3} 
	\end{enumerate}
\end{lemma}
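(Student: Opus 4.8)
The plan is to exploit the relative hyperbolicity established in Lemma~\ref{rel hyp lemma}: a relatively hyperbolic group has a relative Dehn function that is linear, which means that every word $w$ of length $n$ representing the identity bounds a van~Kampen diagram over $\mathcal{P}_2$ in which the $\mathcal{R}$-faces (the ones not coming from the peripheral $\Z^2$ relation $z^t=z$) number at most $Cn$. This immediately gives claim~\eqref{claim1}. Concretely, one takes the diagram structure provided by strong relative hyperbolicity (in the sense of Osin \cite{Osin}): $w$ is equal in $M_\phi$ to a product of at most $Cn$ conjugates of $\mathcal{R}$-relators together with words representing elements of conjugates of $H$, so after filling in the non-peripheral part we are left with a collection of boundary curves each labelled by a word in $H = \langle t, z\rangle \cong \Z^2$, with the sum of their lengths linear in $n$.

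Next I would cap off each such peripheral boundary curve by a van~Kampen diagram over the abelian presentation $\langle t,z \mid z^t=z\rangle$ of $\Z^2$, which uses only $\Z^2$-faces. A word of length $\ell$ representing the identity in $\Z^2$ bounds a diagram with at most $O(\ell^2)$ faces (the Dehn function of $\Z^2$ is quadratic). Since the total perimeter of the peripheral curves is at most $C'n$, the total number of $\Z^2$-faces contributed is at most $C'(C'n)^2 = O(n^2)$, which is claim~\eqref{claim2}. Here one should be a little careful: there could be many peripheral curves, but the sum of their lengths is $O(n)$, and $\sum \ell_i^2 \leq (\sum \ell_i)^2$, so the bound survives.

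For claim~\eqref{claim3} the key point is that the union $U$ of the $\mathcal{R}$-faces, together with $\partial\Delta$, forms a subcomplex whose size (number of faces, hence number of vertices and edges up to a constant) is $O(n)$ by claim~\eqref{claim1}, and which is connected and carries the boundary $\partial\Delta$. After choosing the outer diagram $\overline\Delta$ (the one built from $\mathcal{R}$-faces before filling the peripheral pieces) to be of minimal area among those realizing the linear relative area bound, its $1$-skeleton has $O(n)$ edges, so the diameter of $U$ in its own $1$-skeleton is $O(n)$; in particular every vertex of $U$ is within $O(n)$ of $\partial\Delta$ inside the $1$-skeleton of $U$. Every vertex lying on the perimeter of an $\mathcal{R}$-face is a vertex of $U$, so this is exactly the assertion. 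I would formalize this by noting that a connected planar $2$-complex with $m$ faces, each of bounded size, and with connected boundary, has $1$-skeleton with $O(m)$ edges and hence diameter $O(m)$.

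The main obstacle I anticipate is the bookkeeping in the first step: converting the abstract statement ``the relative Dehn function of $M_\phi$ is linear'' into the concrete combinatorial picture of a diagram over $\mathcal{P}_2$ with $O(n)$ non-peripheral faces and peripheral boundary curves of total length $O(n)$, and in particular ensuring that when we pass from $\mathcal{P}_1$ to $\mathcal{P}_2$ (adding the generator $z$ and the relations $z=[a,b]$, $z^t=z$) the peripheral curves really are words in $\langle t,z\rangle$ rather than in $\langle t,[a,b]\rangle$ written in the old generators---this is handled by the relation $z=[a,b]$, but it costs a bounded number of extra $\mathcal{R}$-faces per peripheral piece, which is fine since there are $O(n)$ pieces. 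The rest is a routine combination of the quadratic isoperimetric inequality for $\Z^2$ and the elementary planar-complex diameter estimate.
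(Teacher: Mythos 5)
Your proposal follows essentially the same route as the paper: invoke Farb-relative hyperbolicity (Lemma~\ref{rel hyp lemma}) and Osin's theorem that this gives a linear relative Dehn function to get the $O(n)$ bound on $\mathcal{R}$-faces, fill the peripheral pieces with quadratic-area diagrams over $\langle t,z \mid z^t=z\rangle$ using $\sum \ell_i^2 \le (\sum \ell_i)^2$, and deduce the diameter bound from the linear count of $\mathcal{R}$-faces. The two quantitative claims you state without proof, however, are exactly where the real work lies, and your ``minimal area'' choice is not quite enough to deliver them. First, the assertion that the peripheral boundary curves have total length $O(n)$ is not automatic from the linear relative Dehn function: a priori a peripheral face could have long stretches of boundary shared with other peripheral faces. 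Second, and more seriously, your claim that the union $U$ of the $\mathcal{R}$-faces together with $\partial\Delta$ is connected could fail if an $\mathcal{R}$-face were enclosed inside a region bounded by $\Z^2$-faces, or if $\Z^2$-faces touched and pinched off part of the diagram; in that case a vertex on the perimeter of such an $\mathcal{R}$-face would have no path to $\partial\Delta$ through the $1$-skeleton of $U$, and claim~\eqref{claim3} would be false for that diagram.

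The paper closes both holes with Osin's notion of a diagram of \emph{minimal type}, i.e.\ lexicographically minimizing (number of $\mathcal{R}$-faces, number of $\Z^2$-faces, number of edges). Osin's Lemma~2.15 then shows there are no edges internal to the $\Z^2$-faces, so every peripheral edge lies on $\partial\Delta$ or on an $\mathcal{R}$-face of bounded perimeter, giving the linear bound $|w|+MN_{\mathcal{R}}$ on total peripheral length (Corollary~2.16). Minimality of the number of $\Z^2$-faces and of the edge count further force the $\Z^2$-faces to be disjoint islands with simple boundary loops enclosing no $\mathcal{R}$-faces, which is precisely the connectivity statement you need for claim~\eqref{claim3}. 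So your outline is the right one, but you should replace ``minimal area'' by this lexicographic minimality and cite (or reprove) the no-internal-edges and disjoint-islands facts before the last two steps go through.
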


\begin{proof}
	Let $H=\set{t,z} \cup \{h_{ij} \mid i,j \in \Z, \ (i,j) \neq (0,0), (1, 0), (0,1) \}$ be an alphabet, with a letter for each non-identity element of the subgroup $\langle  t, z \rangle \cong  \Z^2$ of $M_{\phi}$. Here $h_{ij}$ corresponds to the element represented by $t^{i}z^{j}$.  Let $S$ denote the set of words in $H^{\ast}$ that represent the identity in $M_{\phi}$. For example, $S$ includes the word $[z,t]$ and $h_{ij}z^{-j}t^{-i}$ for all $(i,j) \neq (0,0), (1, 0), (0,1)$.
	
	The presentation 
	$$ \mathcal{P}_3  \ := \  \langle a, b, H \mid a^{t} = \phi(a),  \ b^{t} = \phi(b), \ z=[a,b],       \  S \rangle$$  again gives $M_{\phi}$. Note that the elements $t$ and $z$ appear in $H$, and the defining relation $z^t=z$ appears in $S$.  Again, we will refer to  van Kampen diagram faces that correspond to elements of $S$ as \emph{$\mathbb{Z}^2$-faces}.

	Then $\mathcal{P}_3$ is a \emph{finite relative presentation} for $M_{\phi}$  with respect to $H$, as per Definition~2.2 of Osin in \cite{Osin}. Theorem~1.5  in \cite{Osin} says (in particular) that a finitely generated group which is hyperbolic relative to a subgroup in the sense of Farb, as is the case for $M_{\phi}$ relative to $H$ by Lemma~\ref{rel hyp lemma}, has a \emph{linear relative Dehn function}.  This implies that there exists $C>0$ such that for every word $w$ on $\{a,b,t\}^{\pm 1}$ representing the identity, there is a van~Kampen diagram $\hat{\Delta}$ over $\mathcal{P}_3$ whose number of $\mathcal{R}$-faces is at most $C |w|$.  
	
	Osin proves further facts that we will need concerning  the geometry of $\hat{\Delta}$. A diagram for the word $w$ is of \textit{minimal type} over all diagrams for $w$ if under lexicographic ordering it minimizes
	\begin{center} ($N_{\mathcal{R}} =$ \# of $\mathcal{R}$-faces, $ \ \ N_{\Z^2} =$ \# of $\Z^2$-faces, $\ \ E=$ total \# of edges). \end{center}
	Choose $\hat{\Delta}$ be of minimal type.	
	
	Let $M$ be the maximum length of the relators $a^{t}\phi(a)^{-1}$,  $b^{t}\phi(b)^{-1}$,  $z[a,b]^{-1}$,  and  $z^tz^{-1}$. Call an edge of $\hat{\Delta}$ \emph{internal to the $\Z^2$-faces} when it  has $\Z^2$-faces (or a $\Z^2$-face) on both sides. Osin (Lemma~2.15  of \cite{Osin}) tells us that if $\hat{\Delta}$ is of minimal type then it has no edges which are internal to the $\Z^2$-faces and deduces (Corollary 2.16) that the sum of the lengths of the perimeters of $\Z^2$-faces in $\hat{\Delta}$ is at most $|w| + M N_{\mathcal{R}}$. 
	
	Suppose that $w$ is a word on $\{a,b, t\}^{\pm 1}$ and take $\hat{\Delta}$ to be a diagram of minimal type for $w$ over $\mathcal{P}_3$.	
	
	The words around $\mathcal{R}$-faces only include the letters $t, z, a$ and $b$, so they can overlap $\Z^2$-faces only in edges labeled by $t$ and $z$. Therefore the word around each $\Z^2$-face  is a word on  $\{t, z\}^{\pm 1}$ since every edge in the boundary of a $\Z^2$-face is either in $\partial \hat{\Delta}$ or is also in the boundary of an $\mathcal{R}$-face. Let $\Delta$ be a diagram obtained from $\hat{\Delta}$ by excising all $\Z^2$-faces and replacing each $\Z^2$-face with the appropriate minimal area diagram over $\langle  t, z \mid z^t =z \rangle$.  So $\Delta$ is a van~Kampen diagram over $\mathcal{P}_2$. By Osin's Theorem~1.5, as discussed above, $\Delta$ satisfies \eqref{claim1}.   As the Dehn function of  $\langle  t, z \mid z^t =z \rangle$ enjoys a quadratic upper bound, and, given the bound on the lengths of the boundaries of $\Z^2$-faces explained in the previous paragraph,  $\Delta$ also satisfies \eqref{claim2}.    
	
	Because of the minimality assumption on the number of  $\Z^2$-faces, no two $\Z^2$-faces will have a vertex in common in $\hat{\Delta}$: two  $\Z^2$-faces with a vertex in common could be replaced by a single $\Z^2$-face.  Also the boundary circuit of any $\Z^2$-face in $\hat{\Delta}$ will be a simple loop.  This is because $E(\hat{\Delta})$ is minimal:  a  $\Z^2$-face with a non-simple loop as its boundary circuit could be excised and a $\Z^2$-face with a shorter and simple boundary loop inserted in its place. Thus the $\Z^2$-faces form disjoint \emph{islands} in $\hat{\Delta}$ and there are no $\mathcal{R}$-faces enclosed within these islands. In the light of this, \eqref{claim3} follows from \eqref{claim2}.   
\end{proof}

\begin{proof}[Proof of Theorem~\ref{F2xZ thm} in Case~\ref{one1}]
	We suppose $\Psi \in \Aut(F_2 \times \Z)$.  By Lemma~\ref{pres lemma} there exists $\Phi$ so that $M_{\Phi}$ and $M_{\Psi}$ have equivalent Dehn function, and $M_{\Phi}$  has presentation
	$$\langle a, b ,c, t \mid a^{t}=\phi(a)c^{k_a},  \ b^{t}=\phi(b)c^{k_b},\  c^{t}=c, \ [a,c]=1, \ [b,c]=1\rangle,$$
	which is a central extension of 
	$$ \langle a, b,  t \mid a^{t}=\phi(a),  \ b^{t}=\phi(b) \rangle$$ where $\phi \in \Aut(F(a,b))$ has the property that $\phi([a,b]) = [a,b]$. 
	If $z = [a,b]$, then in $M_{\Phi}$ $$z^t  \   =  \  [a^t,b^t] \  =  \  [\Phi(a),\Phi(b)] \  = \  [\phi(a)c^{k_a}, \phi(b)c^{k_b}]  \  = \ [\phi(a), \phi(b)]  \ = \   \phi ([a,b])  \  = \  [a,b]   \ = \  z.$$
	We change presentations, and work with the central extension 
	$$\mathcal{Q}   \ := \  \langle a, b ,c, t, z \mid a^{t}=\phi(a)c^{k_a},  \ b^{t}=\phi(b)c^{k_b},\  c^{t}=c, \ [a,c]=1, \ [b,c]=1, \ z = [a,b], \ z^t=z \rangle,$$
	of  $$ \mathcal{P}_2  \ =  \  \langle a, b,  t, z \mid a^{t}=\phi(a),  \ b^{t}=\phi(b), \ z=[a,b],  \  z^t =z \rangle.$$
	
	Suppose $w$ is a word of length $n$ representing the identity in $\mathcal{Q}$.  Let $\overline{w}$ be $w$ with all $c^{\pm 1}$ deleted.  
	
	Let $\overline{\Delta}$ be a van~Kampen diagram for $\overline{w}$ as per Lemma~\ref{diagram properties}.  Given \eqref{claim3} of that lemma, there is a  forest $\mathcal{F}$ in the 1-skeleton of the union of the $\mathcal{R}$-faces in $\overline{\Delta}$ joining every vertex  of an $\mathcal{R}$-face to $\partial \overline{\Delta}$ by a path of length at most $Cn$ through the 1-skeleton of the $\mathcal{R}$-faces.    
	
	Charge $\overline{\Delta}$. Given that the defining relation $z^t =z$ is unchanged on lifting to  the central extension, the  $\mathbb{Z}^2$-faces of Lemma~\ref{diagram properties}\eqref{claim2}, are unchanged. There are $Cn^2$ such $\Z^2$ faces. Let $m = \max\{|k_a|, |k_b|\}$. The remaining $Cn$ $\mathcal{R}$-faces of Lemma~\ref{diagram properties}\eqref{claim1}, each acquire at most $m$ charges. These are discharged by adding partial $c$-corridors that follow the forest $\mathcal{F}$ to the boundary and then around the boundary to a base vertex. Each partial $c$-corridor has length at most $(C+1)n$: the length of the path to the boundary is at most $Cn$ by Lemma~\ref{diagram properties} \eqref{claim3} and the length of the path to the base vertex is at most $n$. In total then, $c$-partial corridors contribute at most $(C+1)^{2}mn^2$ 2-cells to the new diagram. The result is a diagram over $\mathcal{Q}$ of area at most $((C+1)^{2}m +C) n^2$ for a word $\overline{w}c^k$, which has length less than $n$. By adding in an annular region to rearrange $\overline{w}c^k$  to $w$, as per the electrostatic model of Section~\ref{sec:electrostatic model}, it follows that $w$ has a diagram over $\mathcal{Q}$ of area at most $({(C+1)}^{2}m +C+1) n^2$.
\end{proof}

\
\subsection{Theorem \ref{F2xZ thm} in the case where all eigenvalues of \texorpdfstring{$\psi_{ab}$}{psi ab} are unit}\label{sec: F2xZ unit eigenvalues}

We begin by arguing that for the purpose of determining Dehn functions we can further specialize the family of presentations to examine:  
\begin{lemma} \label{specialize again}
	Suppose that $\Phi \in \Aut(G)$  is as per Lemma~\ref{pres lemma} and that the eigenvalues of  $\phi_{ab}$ are $1$.  Then there exists $\Xi \in \Aut(G)$ such that  the eigenvalues of   $\xi_{ab}$ are also $1$, the Dehn functions of  $M_{\Phi}$ and $M_{\Xi}$ are equivalent, and 
	$$M_{\Xi}=\langle a, b, c, t \mid a^t=ab^{\beta}c^{k_a}, b^t= bc^{k_b}, c^t =c, \ [a,c]=1, \ [b,c]=1\rangle$$
	for some $\beta \in \Z$.    Moreover,  conditions \ref{one2} and \ref{one3} of Section \ref{types of autos} hold for $\Psi$  exactly when they hold for $\Xi$, and they are characterized by $k_b \neq 0$ and $k_b = 0$, respectively.   
\end{lemma}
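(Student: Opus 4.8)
The plan is to put $\phi_{ab}$ into Jordan normal form over $\SL(2,\Z)$, lift it to an automorphism of $G$, straighten the $F(a,b)$-part into an honest transvection, and then read conditions~\ref{one2} and~\ref{one3} directly off the resulting presentation. Throughout, let $\tau\in\Aut(F(a,b))$ denote the transvection $a\mapsto ab^{\beta}$, $b\mapsto b$, so that $\tau_{ab}=\left(\begin{smallmatrix}1&0\\ \beta&1\end{smallmatrix}\right)$ in the ordered basis $\{a,b\}$, and recall that since $\Phi$ is as in Lemma~\ref{pres lemma} it fixes $c$ and has the shape $\Phi(x)=\phi(x)c^{\theta(x)}$ for $x\in F(a,b)$, where $\theta\colon F(a,b)\to\Z$ is the homomorphism with $\theta(a)=k_a$, $\theta(b)=k_b$. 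For the first step, both eigenvalues of $\phi_{ab}$ equal $1$, so by Cayley--Hamilton $N:=\phi_{ab}-I$ satisfies $N^2=0$; feeding this $N$ into the argument of Lemma~\ref{linear growth} (which is exactly that argument once one is reduced to $I+N$) produces $B\in\SL(2,\Z)$ with $B^{-1}\phi_{ab}B=\left(\begin{smallmatrix}1&\alpha\\ 0&1\end{smallmatrix}\right)$ for some $\alpha\in\Z$, and conjugating in addition by $\left(\begin{smallmatrix}0&-1\\ 1&0\end{smallmatrix}\right)$ turns this into the lower-triangular block $\left(\begin{smallmatrix}1&0\\ \beta&1\end{smallmatrix}\right)=\tau_{ab}$ (with $\beta=-\alpha$, possibly $0$). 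Lift the relevant element of $\SL(2,\Z)$ to $\eta_0\in\Aut(F(a,b))$ via surjectivity of $\Aut(F_2)\to\GL(2,\Z)$, extend to $\eta\in\Aut(G)$ by $\eta(c)=c$, and replace $\Phi$ by $\eta^{-1}\Phi\eta$; by Lemma~\ref{lemma: simplifying automorphisms doesn't change DF}(3) this does not change the Dehn function, and the new $\Phi$ again fixes $c$, again has the shape above (with $\phi$ replaced by $\eta_0^{-1}\phi\eta_0$ and $\theta$ by $\theta\circ\eta_0$), and now satisfies $\phi_{ab}=\tau_{ab}$.

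Next I would straighten the free part. Since $\phi_{ab}=\tau_{ab}$ and $\Out(F_2)\cong\GL(2,\Z)$, we get $[\phi]=[\tau]$ in $\Out(F_2)$, hence $\phi=\tau\circ\iota_g$ for some $g\in F(a,b)$. Define $\Xi\in\Aut(G)$ by $\Xi(x)=\tau(x)c^{\theta(x)}$ for $x\in F(a,b)$ and $\Xi(c)=c$; this is an automorphism (inverse $x\mapsto\tau^{-1}(x)c^{-\theta(\tau^{-1}(x))}$, $c\mapsto c$). Because $c$ is central, a one-line computation gives $\Phi=\iota_{(\tau(g),0)}\circ\Xi$, so $[\Phi]=[\Xi]$ in $\Out(G)$ and Lemma~\ref{lemma: simplifying automorphisms doesn't change DF}(3) yields $\delta_{M_{\Xi}}\simeq\delta_{M_{\Phi}}\simeq\delta_{M_{\Psi}}$. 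Setting $k_a:=\theta(a)$, $k_b:=\theta(b)$, the automorphism $\Xi$ sends $a\mapsto ab^{\beta}c^{k_a}$, $b\mapsto bc^{k_b}$, $c\mapsto c$, so $M_{\Xi}$ has exactly the stated presentation, and $\xi_{ab}=\tau_{ab}$ has both eigenvalues $1$.

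For the conditions, note first that~\ref{one2} and~\ref{one3} are unaffected by the passage from $\Psi$ to $\Phi$ (Lemma~\ref{pres lemma}), by conjugating $\Phi$ by any $\kappa\in\Aut(G)$ with $\kappa(c)=c$ and $p\circ\kappa=p$ — which translates them via the substitution $g\leftrightarrow\kappa(g)$ on $F(a,b)$, and covers $\eta$ as well as the swap below — and by composing with an inner automorphism of $G$, since $p$ is abelian-valued; so it suffices to verify them for $\Xi$. Induction gives $\Xi^m(b)=bc^{mk_b}$ and $\Xi^m(a)=ab^{m\beta}c^{(\ast)}$, while $\xi_{ab}^m=\left(\begin{smallmatrix}1&0\\ m\beta&1\end{smallmatrix}\right)$; and since $p\circ\Xi^m$ kills $[G,G]\supseteq[F(a,b),F(a,b)]$, for $h\in F(a,b)$ with $h_{ab}=n_a a_{ab}+n_b b_{ab}$ one has $p(\Xi^m(h))=n_a\,p(\Xi^m(a))+n_b\,mk_b$. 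If $\beta\neq0$ the elements of $F(a,b)^{ab}$ fixed by some $\xi_{ab}^m$ ($m\ge 1$) are exactly the multiples of $b_{ab}$, for which $p(\Xi^m(h))=n_b\,mk_b$; hence~\ref{one2} holds for $\Xi$ iff $k_b\neq0$ and~\ref{one3} iff $k_b=0$. If $\beta=0$ every class is fixed and $p(\Xi^m(h))=m(n_ak_a+n_bk_b)$, so~\ref{one2} holds iff $(k_a,k_b)\neq(0,0)$; in that case, if $k_b=0$ (so $k_a\neq0$), conjugating $\Xi$ by the involution $a\leftrightarrow b$, $c\mapsto c$ — which preserves the prescribed form (still $\beta=0$), the Dehn function, and conditions~\ref{one2},~\ref{one3} — makes $k_b\neq0$, so again~\ref{one2} holds iff $k_b\neq0$ and~\ref{one3} iff $k_b=0$.

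The hard part is the straightening in the second step: one must be sure that straightening only the \emph{abelianization} of the $F(a,b)$-part already straightens it entirely, up to an inner automorphism of $G$. This works precisely because $\Out(F_2)\cong\GL(2,\Z)$ leaves no obstruction beyond the abelianization, and the leftover inner automorphism of $F(a,b)$ can be absorbed into an inner automorphism of $G$ (supported on the $F(a,b)$-factor), which is invisible both to the Dehn function and to conditions~\ref{one2},~\ref{one3}. The only other fussy point is the $\beta=0$ bookkeeping above, where the extra $a\leftrightarrow b$ swap is what makes "$k_b\neq0$" characterize condition~\ref{one2} on the nose.
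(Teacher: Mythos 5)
Your proof is correct and follows essentially the same route as the paper's: normalize $\phi_{ab}$ to a unipotent triangular matrix via (the argument of) Lemma~\ref{linear growth}, use $\Out(F_2)\cong\GL(2,\Z)$ to replace $\phi$ by the transvection $a\mapsto ab^{\beta}$ up to an inner automorphism, lift everything to $\Aut(G)$, invoke Lemma~\ref{lemma: simplifying automorphisms doesn't change DF}, and then read conditions~\ref{one2} and \ref{one3} off the normalized form. Your extra $a\leftrightarrow b$ swap in the corner case $\beta=0$, $k_b=0$, $k_a\neq 0$ is a small genuine improvement: the paper's proof only records there that $p(\Xi(a))$ or $p(\Xi(b))$ is non-zero (handling that case separately in the subsequent argument), so the literal characterization of condition~\ref{one2} by $k_b\neq 0$ requires exactly the normalization you supply.
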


\begin{proof}
	Let $\Phi$ be as per Lemma~\ref{pres lemma}: $\Phi(a) = \phi(a)c^{k_a'}, \  \Phi(b) = \phi(b)c^{k_b'}, \ \Phi(c) =c$, for some   $k_a', k_b' \in \Z$ and some ${\phi \in \Aut(F(a,b))}$  such that $\phi_{ab}$ has determinant $1$. We assume its only eigenvalue is 1, and thus there is some $w \in F_2$  such that  $\phi_{ab}(w_{ab})= w_{ab}$.
	
	We will now show that there is $\Xi \in \Aut(F_2 \times \mathbb{Z})$ such that $[\Phi]$ and $[\Xi]$ are conjugate in $\Out(F_2 \times \mathbb{Z})$ and   $\xi = \Xi \!\! \upharpoonright_{F(a,b)}$ maps $b \mapsto b$.

	As  $\phi_{ab}$ has only  eigenvalue 1, it is either the identity or it has linear growth.  So (by Lemma~\ref{linear growth} in the latter case) $\phi_{ab}$ is conjugate in $\SL(2, \Z)$ to $\left(\begin{smallmatrix}
	1 & \beta \\
	0 & 1 \end{smallmatrix}\right)$ for some $\beta \in \Z$. On account of the standard isomorphism between $\Out(F_2)$ and $\GL(2,\Z)$,  $[\phi]$ is conjugate in $\Out(F_2)$ to $[\xi]$ where $\xi(a) = ab^{\beta}$ and $\xi(b) = b$. So $\xi= f^{-1}\circ \phi \circ f\circ \iota_g$ for some $f \in \Aut(F_2)$ and some $\iota_g \in \textup{Inn}(F_2)$.  
	We lift $f, \xi, \iota_g \in \Aut(F_2)$   to $F, \Xi, \hat{\iota}_g \in \Aut(F_2 \times \Z)$ by defining $F(gc^k) = f(g)c^k$ for $g \in F_2$ and $k \in \mathbb{Z}$, by taking $\hat{\iota}_g$ to be conjugation by $g$,  and by defining $\Xi:= F^{-1} \circ \Phi \circ F \circ \hat{\iota}_g$.  Because $c$ is central, $\hat{\iota}_g(c) = c$. In particular,  $$\Xi: \ \ a \mapsto ab^{\beta}c^{k_a}, \ \ b \mapsto bc^{k_b}, \ \ c \mapsto c,$$
	for some $k_a, k_b \in \mathbb{Z}$. (Note that $p(\Phi(b)) = k_b'$ and $p(\Xi(b)) = k_b$ may not be equal, as $\Phi(f(b)^{g^{-1}})$ and $\Phi(b)$ will not generally have the same index sum of $c$.)

	Therefore $M_{\Xi}$  has the presentation claimed and $\xi_{ab}$ has  only $1$ as an eigenvalue, as required.  And, by Lemma \ref{lemma: simplifying automorphisms doesn't change DF}, the mapping tori $M_{\Phi}$ and $M_{\Xi}$ have equivalent Dehn functions.

	Next we will show that
	\begin{enumerate}
		\item If $\phi_{ab} \neq \textup{id}$, then  $p(\Xi(b))=k_b\neq 0$ if and only if $p(\Phi(w)) \neq 0$. \label{nonidcase}
		\item \label{idcasew}  If $\phi_{ab} = \textup{id}$, then exactly one of the following hold:
		\begin{enumerate}
			\item  $p(\Phi(x)) = 0$ for all $x \in \langle a,b \rangle$, in which case $\Phi \in \Inn(F_2 \times \Z)$ and $\Xi =\textup{Id}$, \label{idcasewquadratic}
			\item   $p(\Phi(x)) \neq 0$ for some $x$, in which case  $p(\Xi(a))$ or $p(\Xi(b))$ is non-zero. \label{idcasecubic}
		\end{enumerate}
	\end{enumerate}
	 
	We wish to compare $p(\Phi(w))$ and $p(\Xi(b))$. Let $w^{\prime} = f(b)$. The following calculation shows   that $w_{ab}^{\prime}$ is another fixed point of $\phi_{ab}$ and that $p(\Phi(w^{\prime})) =p(\Xi(b))= k_b$: 
	$$\Phi(w^{\prime}) \ = \  F\circ \Xi \circ \iota_{g^{-1}}\circ F^{-1}(f(b)) \ = \  F(\Xi (b^{g^{-1}}))  \ = \  F(b^{g^{-1}}c^{p(\Xi(b))}) \ = \ f(b)^{g^{-1}}c^{p(\Xi(b))} \ = \ (w')^{g^{-1}}c^{p(\Xi(b))}.$$
	
	Now we prove \ref{nonidcase}.  If $\phi_{ab} \neq \mbox{id}$, then since $w_{ab}$ and $w^{\prime}_{ab}$ are both fixed by $\phi_{ab}$, $w_{ab} = dw^{\prime}_{ab}$ for some $d\neq 0$, and therefore $p(\Phi(w)) = dp(\Phi(w^{\prime})) = dk_b$. So $p(\Xi(b))=k_b\neq 0$ if and only if $p(\Phi(w)) \neq 0$.

	Next we prove~\ref{idcasew}.  If $\phi_{ab} = \mbox{id}$, then $\Xi$  maps $a \mapsto ac^{k_a}$, $b \mapsto bc^{k_b}$,  and $c \mapsto c$  for some $k_a, k_b \in \mathbb{Z}$, and so $\xi_{ab} = \mbox{id}$ also. 
	
	Lemma~\ref{pres lemma} shows that $\Psi$ and $\Phi$ either both satisfy condition~\ref{one2}  or both satisfy condition~\ref{one3}.
	
	Under  case \ref{nonidcase}, it is immediately evident that, as required, $\Phi$ and $\Xi$ either both satisfy condition~\ref{one2}  or both satisfy condition~\ref{one3}, and so this holds for  $\Psi$ and $\Xi$ also.  
	
	Finally consider case~\ref{idcasew}.  When $\phi_{ab}= \mbox{id}$,   condition~\ref{one2} `there exists $g \in F_2$ and $m \in \N$  such that $\phi_{ab}^m(g_{ab}) = g_{ab}$ and  $p(\Phi^m(g)) \neq 0$' amounts to `there exists $g \in F_2$ such that  $p(\Phi(g)) \neq 0$.' Observations  \ref{idcasewquadratic} and  \ref{idcasecubic} show that this holds for $\Phi$ if and only if it holds for $\Xi$. Again $\Phi$ and $\Xi$ either both satisfy condition~\ref{one2}  or both satisfy condition~\ref{one3}, and so this holds for  $\Psi$ and $\Xi$ also. 
\end{proof}

\begin{proof}[Proof of Theorem~\ref{F2xZ thm} in Case \ref{one2}.]
	By Lemmas~\ref{pres lemma} and \ref{specialize again}, for the purpose of calculating the Dehn function we may work with $$M_{\Xi}=\langle a, b, c, t ~|~ a^t=ab^{\beta}c^{k_a}, b^t= bc^{k_b}, c^t =c, \ [a,c]=1, [b,c]=1\rangle$$ where $k_b \neq 0$. 
	The subgroup $K := \langle b, c  \mid  [b,c] \rangle \cong \mathbb{Z}^2$ quasi-isometrically embeds in $F_2 \times \langle c \rangle$  and $\Xi(K) \subseteq K$.  So, by Lemma~\ref{BridsonGersten}, ${n \mapsto n^2 \max\{|\Xi^n(b)|, |\Xi^n(c)|\}= n^2(k_bn+1)}$ is a lower bound for the Dehn function of $M_{\Xi}$. This lower bound is  cubic (as $k_b \neq 0$), matching our upper bound from Corollary~\ref{cor: electrostatic upperbounds}, so the claim is established.
\end{proof}

We now turn to Case~\ref{one3}. This time, Lemmas~\ref{pres lemma} and \ref{specialize again} allow us to work with $M_{\Xi}$ which has the form $$M_{\Xi}=\langle a, b, c, t ~|~ a^t=ab^{\beta}c^{k_a}, b^t= b, c^t =c, \ [a,c]=1, [b,c]=1\rangle$$ where, $\beta$ is non-zero. 
(The case $\beta =0$ and $k_a \neq 0$ is covered by Theorem~\ref{F2xZ thm} in Case \ref{one2}---the Dehn function of this mapping torus is cubic.) 

The methods of Case~\ref{one1} cannot be used here. Indeed, Button and R. Kropholler \cite{ButtonKropholler} have shown that for $\xi$ with this form, $M_{\xi}$ is not strongly hyperbolic relative to any finitely generated proper subgroup, so van~Kampen diagrams over $M_{\xi}$ do not decompose into uncharged islands with linear-area complement.  Instead will use a variant of the electrostatic model whereby the diagram will be discharged along \textit{partial corridors} (see Section~\ref{sec: vK diagrams,corridors, DF}) in a manner controlled by an application of Hall's Marriage Theorem, which we now review. 

A subgraph $F$ of a graph $\Gamma$ is a \emph{1-factor for} $\Gamma$ if it contains all vertices of $\Gamma$ and each vertex meets precisely one edge of $F$.  In other words, a 1-factor pairs every vertex with a neighbor. We will be interested in the following special case:

\begin{lemma} \label{lemma: Hall's Marriage Theorem} A $k$-regular bipartite graph  $\Gamma$  with $k\geq 1$ has a 1-factor.
\end{lemma}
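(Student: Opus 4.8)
The plan is to deduce this from Hall's Marriage Theorem, which states that a bipartite graph $\Gamma$ with parts $A$ and $B$ has a matching saturating $A$ provided Hall's condition holds: for every $S \subseteq A$, the neighborhood $N(S)$ satisfies $|N(S)| \geq |S|$. First I would fix a $k$-regular bipartite graph $\Gamma$ with bipartition $A \sqcup B$. Counting edges from each side, $k|A| = |E(\Gamma)| = k|B|$, so $|A| = |B|$ since $k \geq 1$. Thus a matching saturating $A$ is automatically a perfect matching, and its edge set, together with all the vertices, forms the desired 1-factor. So it suffices to verify Hall's condition.

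The key step is the Hall condition check, which is the standard double-counting argument. Let $S \subseteq A$ and let $E(S)$ be the set of edges with an endpoint in $S$. Since $\Gamma$ is $k$-regular, $|E(S)| = k|S|$. Every edge of $E(S)$ has its other endpoint in $N(S)$, and each vertex of $N(S)$ is incident to at most $k$ edges (again by $k$-regularity), so $|E(S)| \leq k|N(S)|$. Combining, $k|S| \leq k|N(S)|$, and dividing by $k > 0$ gives $|S| \leq |N(S)|$. Hence Hall's condition holds, Hall's theorem produces a matching saturating $A$, and by the cardinality equality this matching is a 1-factor for $\Gamma$.

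There is essentially no obstacle here: the only subtlety is remembering that a matching saturating one side of a balanced bipartite graph is a perfect matching, and that "1-factor" as defined in the paper (a subgraph containing all vertices in which each vertex meets exactly one edge) is precisely a perfect matching. If one wanted to avoid invoking Hall's theorem as a black box, one could instead give a direct inductive/augmenting-path argument, but since Hall's Marriage Theorem is classical I would simply cite it. The write-up is short: state Hall's theorem, do the edge-count to get $|A| = |B|$, do the edge-count to verify Hall's condition, and conclude.
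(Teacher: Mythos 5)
Your proof is correct and is exactly the standard argument the paper has in mind: the paper itself gives no proof, simply remarking that the lemma ``is a consequence of Hall's Marriage Theorem'' and citing Diestel, and your double-counting verification of Hall's condition (together with the edge count showing $|A|=|B|$, so that a matching saturating $A$ is a perfect matching) is precisely how that deduction goes. Note only that the graphs to which the paper applies this lemma may have multiple edges, but your counting argument goes through verbatim for multigraphs, so nothing needs to change.
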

This is a consequence of Hall's Marriage Theorem. See \cite{Diestel} for a proof.

\begin{proof}[Proof of Theorem~\ref{F2xZ thm} in Case~\ref{one3}.]

	By Lemma~\ref{specialize again},  it suffices to prove that $M_{\Phi}$, presented by $$\mathcal{P}= \langle a, b, t, c  \mid  a^t=ab^{\beta}c^{k_a}, \ b^t=b,\ ac=ca,\ bc=cb, \ ct=tc \rangle,$$ has quadratic Dehn function.   
	If $k_a =0$, then $M_{\Phi}\isom M_{\phi}\times \langle c \rangle$ and so the Dehn functions of $M_{\Phi}$ and $M_{\phi}$ agree and will be quadratic.	Therefore we may restrict our attention to the case where $\beta$ and  $k_a$ are both non-zero.

	Van Kampen diagrams over $\mathcal{P}$ have both partial $b$-corridors and partial $c$-corridors. $M_{\Phi}$  is a central extension of $M_{\phi}$ by $\langle c \rangle$, where $M_{\phi}$ is presented by   
	$$\mathcal{Q} \ = \ \langle a, b, t \mid a^t=ab^{\beta}, b^t=b \rangle.$$
	Van Kampen diagrams over  $\mathcal{Q}$ may have partial $b$-corridors.

	Suppose $w$ is a word of length $n$  representing the identity in $\mathcal{P}$.  Let $\overline{w}$ be $w$ with all $c^{\pm 1}$ removed. Then $w = \overline{w}c^m$ in $M_{\Phi}$ for some $m \in \Z$ and $|\overline{w}| \leq |w|$. Since $\mathcal{Q}$ has a quadratic Dehn function, there exists a minimal area diagram $\overline{\Delta}$ for $\overline{w}$ over  $\mathcal{Q}$  such that $\Area(\overline{\Delta}) \leq C |\overline{w}|^2$. We charge $\overline{\Delta}$ by replacing 2-cells in $\overline{\Delta}$ with 2-cells labeled by the defining relators from $\mathcal{P}$, as in the first steps of the Electrostatic Model (see Section \ref{sec:electrostatic model}). What follows is a scheme for adding in 2-cells to `discharge' $\overline{\Delta}$ so as to create a diagram for $\overline{w}c^m$ over $\mathcal{P}$.
	
	The  idea is that if we can pair off oppositely-oriented capping faces that are joined by partial $b$-corridors, then we can add in partial $c$-corridors following the $b$-corridors, as in Figure \ref{fig:follow b corridor}, in order to discharge the $c$-edges in our diagram.  As $c$ is central in $\mathcal{P}$, partial $c$-corridors can be run alongside this partial $b$-corridor, and the word one reads along both the top and bottom  of the $c$-corridor will be the same as that word along the top and bottom of the $b$-corridor, namely some power of $t$. We wish to find a consistent way of partnering vertices so that we can replicate the picture in Figure~\ref{fig:follow b corridor}, adding in partial $c$-corridors to discharge between partners throughout the van Kampen diagram, with no leftover charges to consider.  
	
	\begin{figure}[!ht]
		\centering
		\vcenteredhbox{\includegraphics[scale=1]{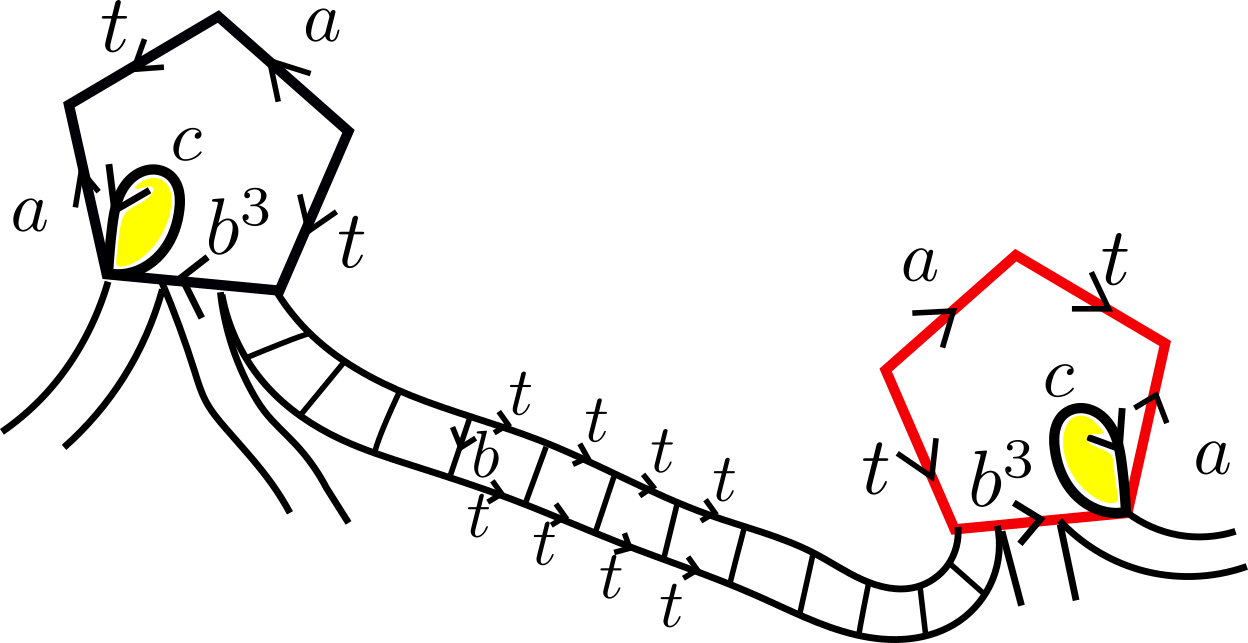}} \hspace{1cm} \vcenteredhbox{\includegraphics[scale=1]{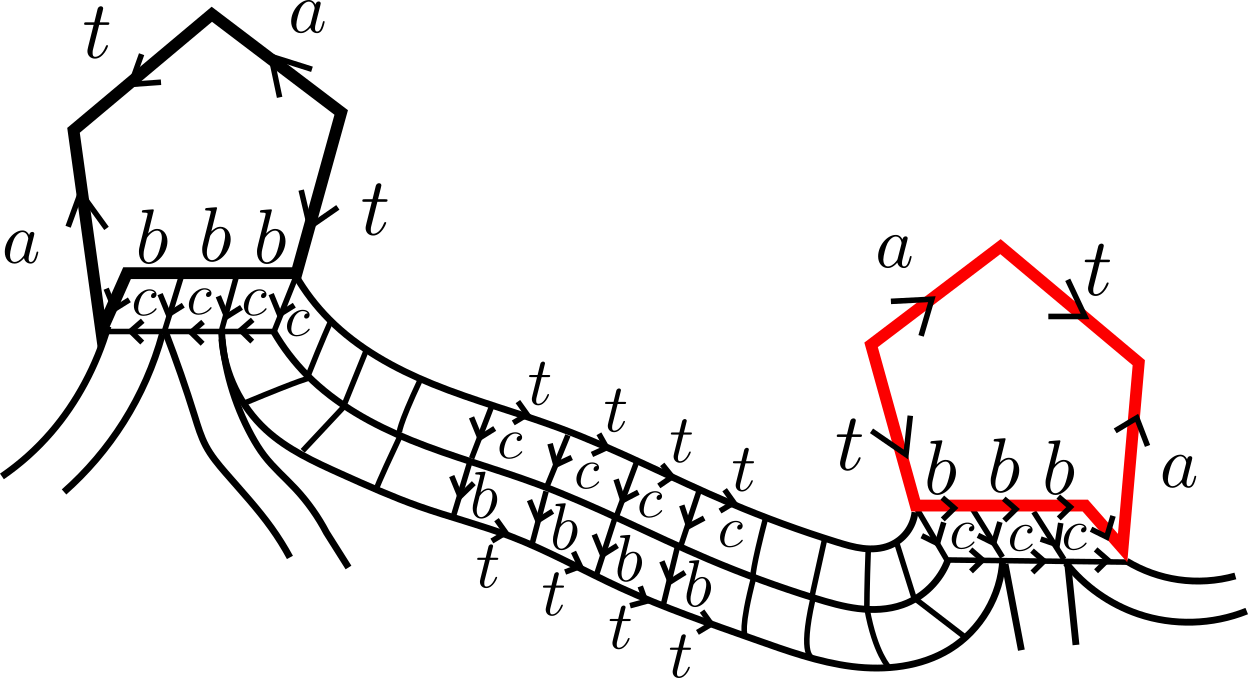}}
		\caption[Discharging $c$-partial corridors along $b$-partial corridors]{If a partial $b$-corridor joins two capping faces in $\overline{\Delta}$, their $c$-charges can be discharged by adding partial $c$-corridors   `following' that partial $b$-corridor.}
		\label{fig:follow b corridor}
	\end{figure}
	
	\textbf{I. Modeling $\overline{\Delta}$ with a graph.}  Construct a planar graph with multi-edges, $\Gamma$, from $\overline{\Delta}$ as illustrated in Figure \ref{fig:partial corridor}: $\Gamma$ has a black vertex for each capping face in $\overline{\Delta}$; whenever two capping faces are connected by a partial $b$-corridor, possibly of length zero, an edge connects the corresponding vertices (two vertices may share multiple edges); we also add an edge and a white vertex to $\Gamma$  for each partial $b$-corridor that goes to the boundary. Every black vertex in the graph $\Gamma$ is degree $|\beta|$ and every white vertex has degree 1.   
	\begin{figure}[!ht]
		\centering
		\vcenteredhbox{\includegraphics[scale=0.65]{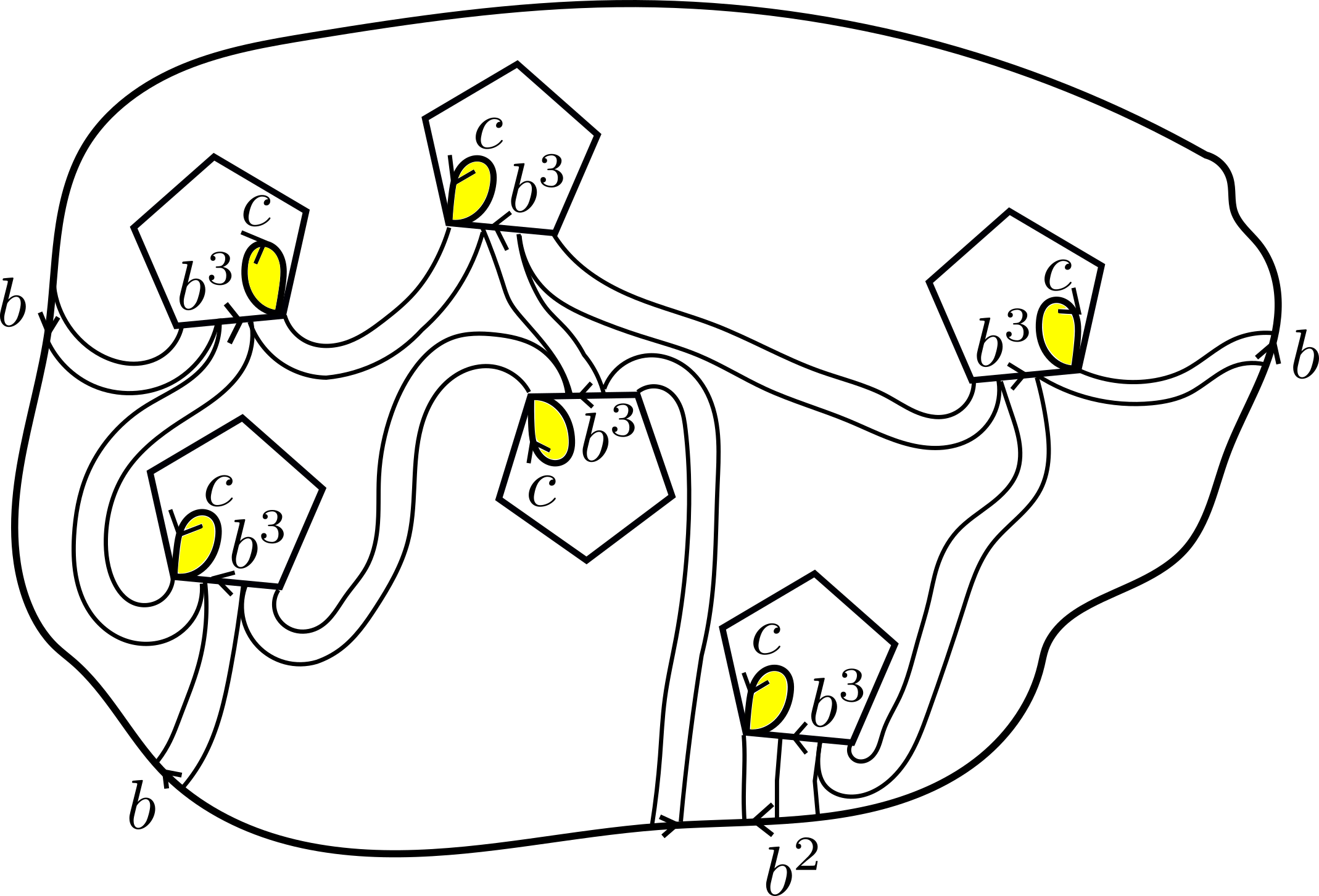}}\hspace{1cm} \vcenteredhbox{\includegraphics[scale=1.2]{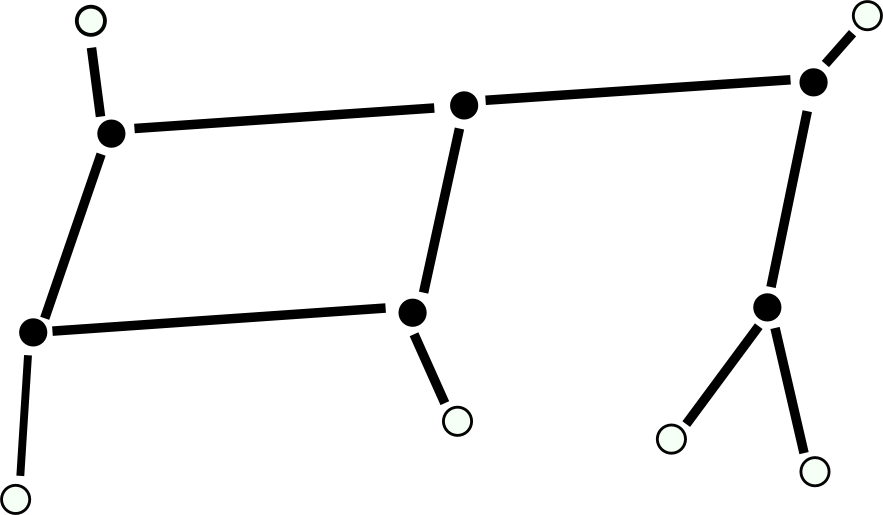}}
		\caption{From capping faces and partial-corridors in $\overline{\Delta}$, construct a graph $\Gamma$.  Black vertices correspond to capping faces, white vertices correspond to 1-cells labeled $b$ in $\partial\overline{\Delta}$, and the edges correspond to partial $b$-corridors. }
		\label{fig:partial corridor}
	\end{figure}
	
	The graph $\Gamma$ is naturally bipartite (but not generally black-white bipartite, as you can see in Figure~\ref{fig:partial corridor}):  partition the black vertices according to whether they correspond to capping faces with clockwise or anticlockwise oriented $b$-edges, and extend this partition to the white vertices.

	\textbf{II. Building a regular bipartite graph.} We would like to apply Corollary \ref{lemma: Hall's Marriage Theorem}, but  $\Gamma$ may not be regular: black vertices   have degree $\abs{\beta}$, but white vertices have degree $1$. So, as illustrated in Figure \ref{fig: bipartite regular graph}, we  construct a regular graph $\hat{\Gamma}$ which has $\Gamma$ as a subgraph.   Take $|\beta|$ many copies of $\Gamma$, and identify the white vertices in each of the copies.   That is, $$\hat{\Gamma} \ := \ \displaystyle \left(\bigsqcup_{i=1}^{|\beta|} \Gamma\times\{i\}\right)/ \sim,$$ where  $(v,i) \sim (v,j)$ for all $i,j$ when $v$ is a white vertex.  
	White vertices are degree one, so the identification of $|\beta|$ copies of $\Gamma$ forces $\hat{\Gamma}$ to be a $|\beta|$-regular graph. If $\Gamma$ is bipartite with respect to a partition $A \sqcup B$ of its vertices, then $\hat{\Gamma}$ is bipartite with respect to $$\left( \bigcup_{i=1}^{|\beta|}A \times \set{i} \right)/ \sim   \ \bigsqcup \  \left( \bigcup_{i=1}^{|\beta|}B  \times \set{i}  \right)/ \sim.$$
	\begin{figure}[!ht]
		\centering
		\begin{subfigure}[t]{5cm}
			\includegraphics[scale=0.8]{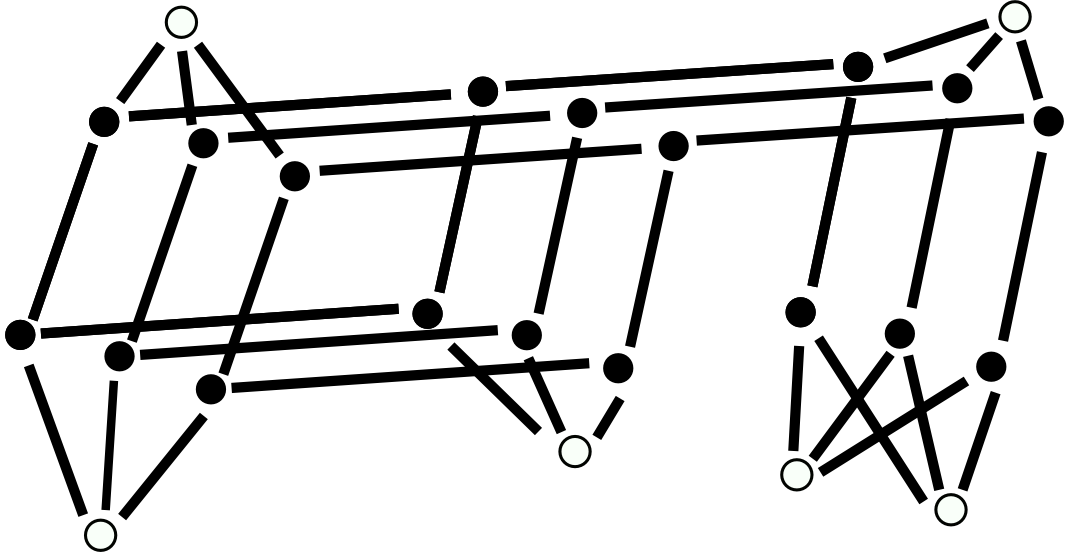}
			\caption{The regular bipartite graph $\hat{\Gamma}$: $|\beta|$ copies of $\Gamma$ glued together at degree-1 vertices}
			\label{fig: bipartite regular graph}
		\end{subfigure}
		\begin{subfigure}[t]{5cm}
			\includegraphics[scale=0.8]{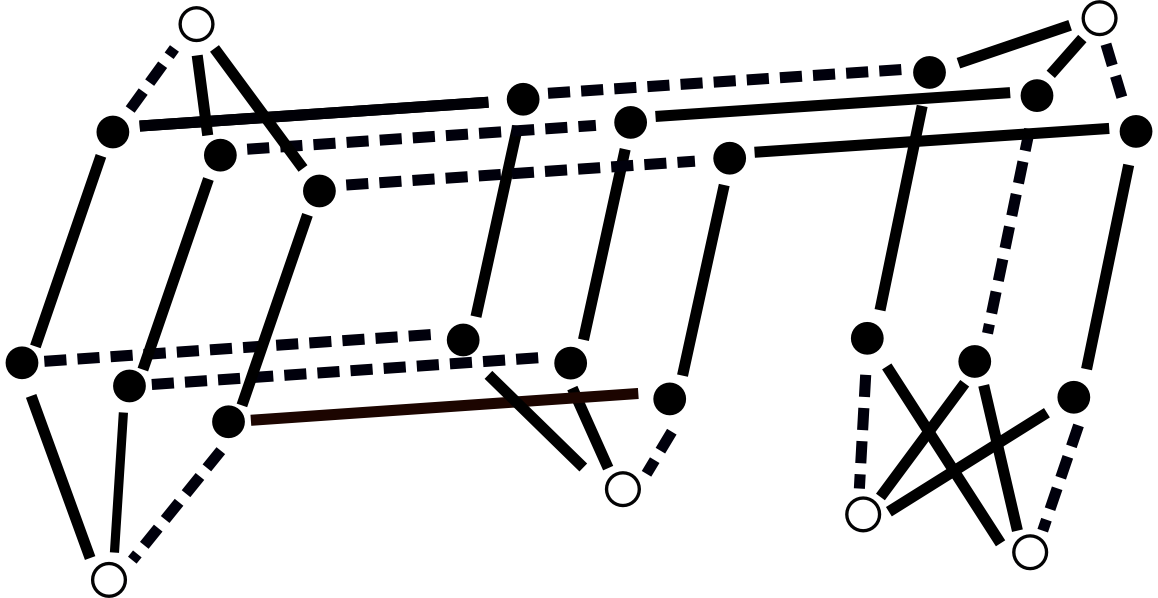}
			\caption{Partners for the vertices of $\hat{\Gamma}$ (indicated by solid lines)}
			\label{fig: marriages in bipartite regular graph}
		\end{subfigure}
		\begin{subfigure}[t]{5cm}
			\includegraphics[scale=0.8]{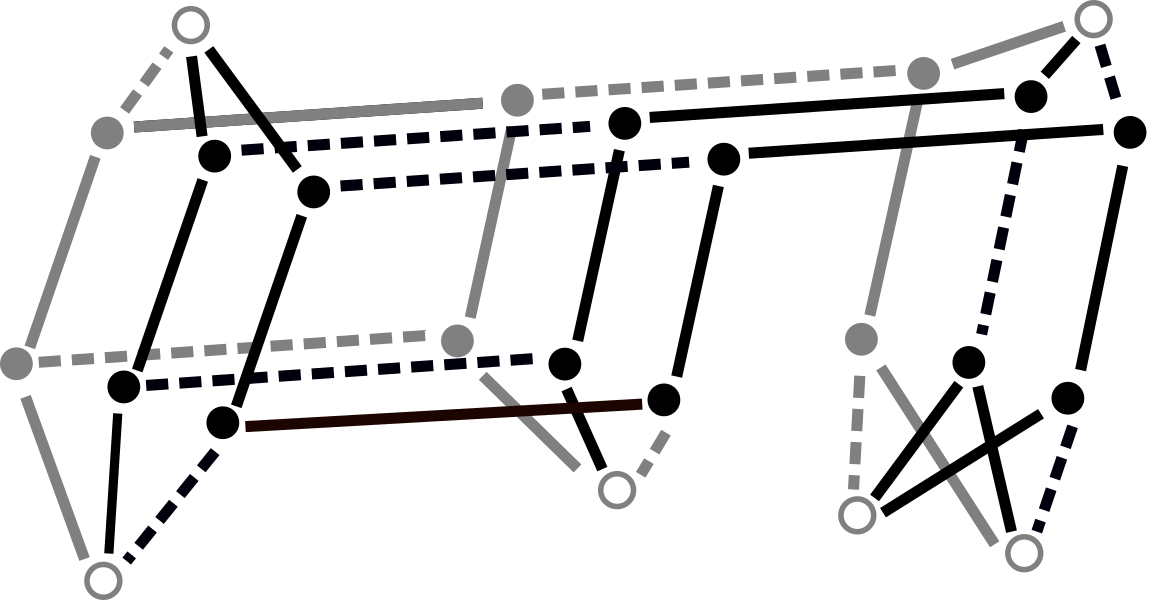}
			\caption{The copy of $\Gamma$ corresponding to the image of $\Gamma \times\{1\}$ gives a partnering for the vertices of $\Gamma$}
			\label{fig: marriage scheme in Gamma}
		\end{subfigure}
		\caption{Finding neighbor partners for $\Gamma$ via Hall's Marriage Theorem}
		\label{fig: pairing vertices}
	\end{figure}
	
	\textbf{III. Finding pairing partners for $b$- and $c$-corridors.} Corollary \ref{lemma: Hall's Marriage Theorem} tells us that $\hat{\Gamma}$ has a 1-factor. This partners each vertex $v \in \hat{\Gamma}$ with an  adjacent vertex  $v^{\prime}$. View the  image of $\Gamma \times \{1\}$ in $\hat{\Gamma}$ as  $\Gamma$, sitting as a subgraph in $\hat{\Gamma}$. In the example of Figure~\ref{fig: pairing vertices}c, $\Gamma$ is the grey subgraph at the back. If $v \in \Gamma$ is a black vertex, its partner $v'$ is also a vertex  of $\Gamma$, but this may fail for white vertices.
	
	\textbf{IV. Completing to a van Kampen diagram.} If $v$ and $v^{\prime}$ are partnered black vertices in $\Gamma$ then the corresponding capping faces are connected by at least one partial $b$-corridor (possibly of length zero). In $\overline{\Delta}$, the capping faces $f$ and $f'$ corresponding to $v$ and $v^{\prime}$ have $|k_a|$ many oppositely oriented charges. We will connect these charges with  $|k_a|$ partial $c$-corridors, as in Figure~\ref{fig:follow b corridor}.  Choose one of the partial $b$-corridors joining $f$ to $f'$ (there is at least one). Run all of the partial $c$-corridors for one capping face alongside the partial $b$-corridor. If a black vertex $v$ is paired with a white vertex $v'$  in $\Gamma$, run all of the partial $c$-corridors alongside the partial $b$-corridor to the boundary. Two white vertices will never be paired. At the ends of partial $b$-corridors on capping faces, it may be necessary to insert rectangles in which the $b$-and $c$-corridors cross, as in Figure \ref{fig: pairing in Gamma gives us discharging pattern for c charges}, but this requires no more than $|\beta||k_a|\Area(\overline{\Delta})$ additional 2-cells. The total number of 2-cells added to $\overline{\Delta}$ in this process is no more than $(|\beta|+1)|k_a|\Area(\overline{\Delta})$.  
	
	\begin{figure}[!ht]
		\centering
		\vcenteredhbox{\includegraphics[scale=0.8]{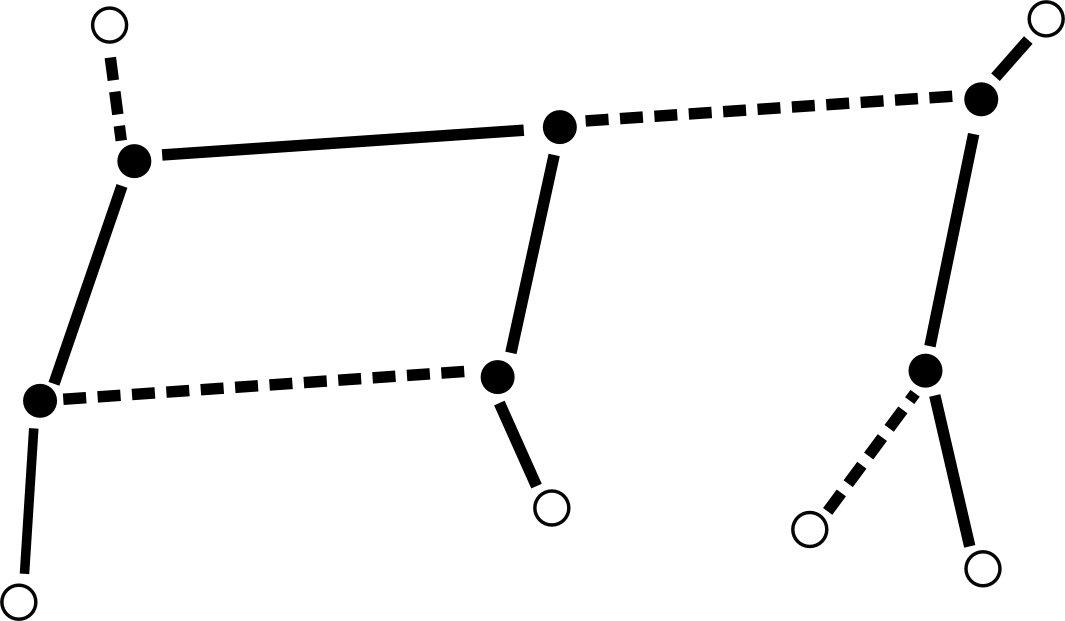}} \hspace{0.5cm} \vcenteredhbox{ \includegraphics[scale=0.6]{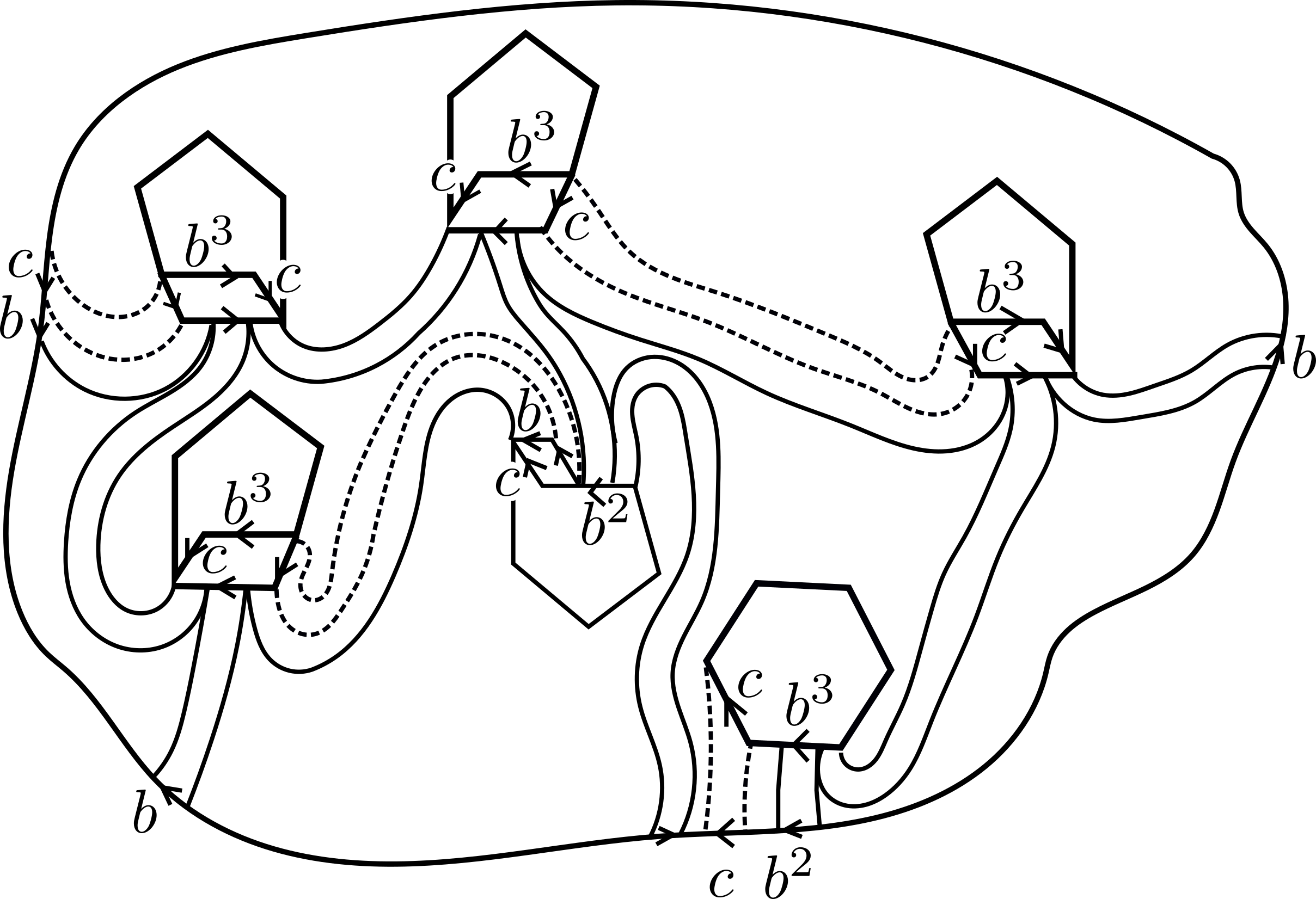}}
		\caption{Partnering in $\Gamma$ gives a consistent way to discharge $c$-charges}
		\label{fig: pairing in Gamma gives us discharging pattern for c charges}
	\end{figure}
	
	\textbf{V. Correcting the boundary.} Partial $c$-corridors follow partial $b$-corridors to the boundary in groups of $|k_a|$. The  new diagram has boundary length between $|\overline{w}|$ and $(|k_a|+1)|\overline{w}|$ and is a van Kampen diagram over $\mathcal{P}$ for some word $w^{\prime}$ in the pre-image of $\overline{w}$. Deleting all $c^{\pm 1}$  from $w^{\prime}$ produces $\overline{w}$, but the arrangement of the $c^{\pm1}$ letters  in $w^{\prime}$ may differ from that in $w$. As was described in Section \ref{sec:electrostatic model} we glue around the outside of this diagram an annular diagram with the word $w^{\prime}$ along the inner boundary component and the word $w$ along the outer boundary component. Together, they form $\Delta$, a van~Kampen diagram for $w$ over $\mathcal{P}$. This annular diagram has area at most $(|k_a|+1)^2|\overline{w}|^2$, and summing our area estimates, $\Delta$ has area no more than ${(1+  (|\beta|+1)|k_a|)\Area(\overline{\Delta}) + (|k_a|+1)^2|\overline{w}|^2}$. Since $\Area(\overline{\Delta}) \leq C|\overline{w}|^2$, it follows that there is constant $A>0$ such that for any given word $w$ in the generators of $\mathcal{P}$ that represents the identity, this construction produces a van~Kampen diagram of area  at most $A|w|^2$. \end{proof}

 \section{\texorpdfstring{Mapping tori of $G = \mathbb{Z}^2 \ast \mathbb{Z}= \langle a, b \mid [a,b] \rangle \ast \langle c \rangle$}{G=Z2 ast Z =<a,b| [a,b] > ast <c>}}
 \label{Z2*Z}
 
 \subsection{Automorphisms of \texorpdfstring{$\mathbb{Z}^2 \ast \mathbb{Z}$}{Z2 ast Z}}\label{sec: autos of Z2astZ}

 Servatius \cite{Servatius} and Laurence \cite{Laurence} found a generating set for the automorphism group of a RAAG $A(\Gamma)$ based on the underlying graph $\Gamma$. In the instance of $$\mathbb{Z}^2 \ast \mathbb{Z}  \ = \ \langle a, b \mid [a,b] \rangle \ast \langle c \rangle,$$  their generating set for the automorphism group consists of  the inner automorphisms, inversions, the one non-trivial graph isomorphism ($a \mapsto b$, $b \mapsto a$, and $c \mapsto c$),  and the four transvections 
 $$\begin{array}{llll}
 \tau_a  : & a \mapsto ab, & b \mapsto b, & c \mapsto c, \\ 
 \tau_b : & a \mapsto a, & b \mapsto ba, & c \mapsto c,  \\ 
 \psi_a : & a \mapsto a, & b \mapsto b,  & c \mapsto ca,   \\  
 \psi_b : & a \mapsto a, & b \mapsto b,  &  c \mapsto cb. 
 \end{array}$$

 The following lemma and then proposition are  steps towards Theorem~\ref{Z2astZ} in that they let us focus on particular presentations for the purposes of classifying Dehn functions of mapping tori of  	$\mathbb{Z}^2 \ast \mathbb{Z}$.  Recall that $\iota_h$ denotes the inner automorphism $x \mapsto h^{-1} x h$.
 
 \begin{lemma} \label{get Phi} 	
 	For all $\Psi \in \Aut( \mathbb{Z}^2 \ast \mathbb{Z})$, there exists $\Phi \in \Aut( \mathbb{Z}^2 \ast \mathbb{Z})$ such that $$ \Phi: \ \ a \mapsto \phi(a), \ \ b \mapsto \phi(b), \ \ c \mapsto wc^{\pm 1}x, $$ where $\phi\in \Aut(\Z^2)$,  $w$ and $x$ are words on $a$ and $b$, and $[\Phi] = [\Psi]$ in $\Out(\Z^2 \ast \Z)$.  Explicitly, suppose that ${\Psi(a) =  u_1c^{\epsilon_1}\dots u_n c^{\epsilon_n}u_{n+1}}$, where $\epsilon_i \neq 0$  and each ${u_i \in \langle a, b\rangle}$ for all $i$, and     $u_2, \ldots, u_n \neq 1$. Then $n=2m$ is even and for ${g := c^{\epsilon_{m+1}}u_{m+2}\dots u_{2m} c^{\epsilon_{2m}}u_{2m+1}}$, we find  $\Phi' :=  \iota_{g^{-1}}\circ \Psi$ is an example of a map satisfying the properties given for $\Phi$.  
 	
 	Moreover, for any $\Phi$ of the given form,  $M_{\Psi}$ and $M_{\Phi}$ have equivalent Dehn functions. 
 \end{lemma}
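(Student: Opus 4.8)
The plan is to extract a convenient representative of $[\Psi]$ using the structure of subgroups and automorphisms of free products, the key tool being the Kurosh subgroup theorem. I would first note that $H := \Psi(\langle a,b\rangle) = \langle \Psi(a),\Psi(b)\rangle$ is a two-generator abelian subgroup of $\Z^2 \ast \Z$ isomorphic to $\Z^2$; since $\Z^2$ is freely indecomposable and not infinite cyclic, Kurosh forces $H$ to be conjugate into one of the two free factors, and as $\Z^2$ does not embed in $\Z$ it must be conjugate into $\langle a,b\rangle$. Moreover, once $g\Psi(a)g^{-1} =: v \in \langle a,b\rangle \setminus\{1\}$, the centralizer of the nontrivial free-factor element $v$ in $\Z^2 \ast \Z$ is exactly $\langle a,b\rangle$, so $C_{\Z^2\ast\Z}(\Psi(a)) = g^{-1}\langle a,b\rangle g$; since $\Psi(b)$ commutes with $\Psi(a)$, the \emph{same} $g$ conjugates $\Psi(b)$ into $\langle a,b\rangle$.

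Next I would pin $g$ down explicitly by iterated cyclic reduction in the free product $\langle a,b\rangle \ast \langle c\rangle$. If $\Psi(a) \in \langle a,b\rangle$ already, then $n = 0$ (even) and $g = 1$. Otherwise, a cyclically reduced element of a free product which is conjugate into a factor already lies in that factor, so the normal form $\Psi(a) = u_1 c^{\epsilon_1}\cdots u_n c^{\epsilon_n}u_{n+1}$ cannot be cyclically reduced; repeatedly conjugating off the outermost syllables forces, at each step, the outer $\langle a,b\rangle$- and $\langle c\rangle$-syllables to be mutually inverse. In other words the normal form is palindromic, $\Psi(a) = p\, v\, p^{-1}$ with $v \in \langle a,b\rangle$; matching syllables shows that $n = 2m$ is even, that $v = u_{m+1}$, and that $p^{-1} = c^{\epsilon_{m+1}}u_{m+2}\cdots c^{\epsilon_{2m}}u_{2m+1} =: g$, whence $g\Psi(a)g^{-1} = u_{m+1} \in \langle a,b\rangle$. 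Setting $\Phi := \iota_{g^{-1}}\circ\Psi$ (recall $\iota_{g^{-1}}(y) = gyg^{-1}$) then gives $\Phi(a),\Phi(b) \in \langle a,b\rangle$ and $[\Phi] = [\Psi]$ in $\Out(\Z^2\ast\Z)$.

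It then remains to check that $\Phi$ has the advertised shape. The restriction $\phi := \Phi\restricted{\langle a,b\rangle}$ is an injective endomorphism of $\Z^2$, and it is surjective: $\Phi$ and $\Psi$ induce the same automorphism of $H_1(\Z^2\ast\Z) \cong \Z^2 \oplus \Z$ (inner automorphisms act trivially on $H_1$), this automorphism carries the $\Z^2$-summand into itself because $\Phi(\langle a,b\rangle)\subseteq\langle a,b\rangle$, and an invertible integer matrix preserving a direct summand restricts to an automorphism of it; hence $\phi \in \Aut(\Z^2)$ and in fact $\Phi(\langle a,b\rangle) = \langle a,b\rangle$. For $\Phi(c) = g\Psi(c)g^{-1}$: since $\Phi$ is onto and $\langle\Phi(a),\Phi(b)\rangle = \langle a,b\rangle$, we have $\langle a,b,\Phi(c)\rangle = \Z^2\ast\Z$; I would conclude $\Phi(c) = wc^{\pm1}x$ with $w,x \in \langle a,b\rangle$ either directly from Kurosh (an element completing $\{a,b\}$ to a generating set of $\langle a,b\rangle\ast\langle c\rangle$ is a conjugate $kc^{\pm1}k^{-1}$ of $c^{\pm1}$, and after absorbing any outer $\langle c\rangle$-syllables of $k$ one is left with a single $c$-syllable), or via the Servatius--Laurence generating set of Section~\ref{sec: autos of Z2astZ}: an automorphism fixing $\langle a,b\rangle$ setwise is a product of the generators that do so (namely $\tau_a,\tau_b$, the inversions of $a$ and $b$, the graph flip, $\psi_a$, $\psi_b$, and the inner automorphisms by $a$ and by $b$), each of which sends $c$ to an element with exactly one $c$-syllable, a property closed under composition.

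Finally, the `moreover' is immediate: any $\Phi$ as in the statement has $[\Phi] = [\Psi]$ in $\Out(\Z^2\ast\Z)$, so $M_\Psi$ and $M_\Phi$ have equivalent Dehn functions by Lemma~\ref{lemma: simplifying automorphisms doesn't change DF}. The main obstacle I expect is the bookkeeping of the second paragraph --- carrying out the iterated cyclic reduction carefully enough to read off exactly the $g$ in the statement, including the degenerate cases where $u_1$ or $u_{n+1}$ is trivial --- together with making the single-$c$-syllable claim of the third paragraph fully rigorous (the Servatius--Laurence route is cleanest, but one must verify that an automorphism preserving $\langle a,b\rangle$ really can be written using only those generators).
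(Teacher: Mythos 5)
Your overall architecture is genuinely different from the paper's and much of it is sound. For the first two of your paragraphs (Kurosh plus the centralizer of a nontrivial free-factor element to produce a conjugator $g$ taking $\Psi(\langle a,b\rangle)$ back into $\langle a,b\rangle$, then syllable-matching in the normal form $p\,v\,p^{-1}$ to see that $n=2m$ and to read off the explicit $g$), this is a valid alternative to what the paper does. The paper instead uses normality of $\Inn(\Z^2\ast\Z)$ to write $\Psi=\iota_h\circ\Phi$ with $\Phi$ a product of the \emph{non-inner} Servatius--Laurence generators, observes that every such generator preserves both $\langle a,b\rangle$ and the double coset $\langle a,b\rangle c^{\pm1}\langle a,b\rangle$, and that both properties are closed under composition; the explicit $g$ then falls out of the free-product normal form of $h$, since $\Psi(a)\in h^{-1}\langle a,b\rangle h$ forces $h=vg$ with $v\in\langle a,b\rangle$. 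Your abelianization argument that $\phi\in\Aut(\Z^2)$ is correct but unnecessary in the paper's route, where $\Phi$ visibly restricts to an automorphism of the $\Z^2$ factor.

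There is, however, a genuine gap at the step $\Phi(c)=wc^{\pm1}x$, and your first proposed route does not survive scrutiny. It is false that an element completing $\{a,b\}$ to a generating set of $\langle a,b\rangle\ast\langle c\rangle$ must be a conjugate of $c^{\pm1}$: the element $cb$ completes $\{a,b\}$ to a generating set but is cyclically reduced of syllable length two, hence conjugate into neither factor. In the other direction, a conjugate of $c$ need not have the required form and cannot be massaged into it by ``absorbing outer $\langle c\rangle$-syllables'': with $k=ca$ one gets $kck^{-1}=c\,a\,c\,a^{-1}\,c^{-1}$, which has three $c$-syllables in normal form, and indeed $\{a,b,kck^{-1}\}$ does not generate. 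So the single-$c$-syllable claim really does require using that $\Phi$ is an automorphism, not merely that $\Phi(c)$ generates together with $a,b$. Your second route is the correct one, but the verification you defer --- that an automorphism preserving $\langle a,b\rangle$ setwise is a product of generators each of which preserves $\langle a,b\rangle$ and sends $c$ into $\langle a,b\rangle c^{\pm1}\langle a,b\rangle$ --- is exactly the content of the paper's proof, and in the form you state it (factoring $\Phi$ itself, including its inner part) it needs the extra observation that the normalizer of $\langle a,b\rangle$ in $\Z^2\ast\Z$ is $\langle a,b\rangle$ itself, so that the inner part is conjugation by an element of $\langle a,b\rangle$. The cleanest repair is to adopt the paper's factorization $\Psi=\iota_h\circ(\text{product of non-inner generators})$ at the outset, which delivers the form of $\Phi(c)$ with no further work; the `moreover' clause then follows from Lemma~\ref{lemma: simplifying automorphisms doesn't change DF} exactly as you say.
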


 \begin{proof}
 	Since $\Inn(\mathbb{Z}^2 \ast \mathbb{Z}) \trianglelefteq \Aut(\mathbb{Z}^2 \ast \mathbb{Z})$, all automorphisms $\Psi \in \Aut(\mathbb{Z}^2 \ast \mathbb{Z})$ can be written as $\iota_h \circ \Phi$ where $\iota_h$ denotes conjugation by some $h \in \Z^2 \ast \Z$ and $\Phi$ is some product of the inversions, transvections, and graph isomorphisms in the generating set above. All inversions, transvections, and graph isomorphisms restrict  to  automorphisms of  the subgroup ${\langle a, b \mid [a,b] \rangle}$, and they all map the subset $\langle a, b \rangle c^{\pm 1} \langle a, b \rangle$ to itself.  So $$\Phi: a \mapsto \phi(a), \  \ b \mapsto \phi(b),  \  \ c \mapsto wc^{\pm 1}x,$$ for some $\phi \in \Aut(\Z^2)$ and  some words $w$ and $x$  on $a^{\pm 1}$ and $b^{\pm 1}$. This proves the existence of  a $\Phi$ with the required properties.   We turn next to how to find such an automorphism.

 	Now suppose $\Psi(a)$ is as per the statement. For   $h \in \Z^2 \ast \Z$ as  above we have that  $\Psi (a) \in \iota_{h}( \langle a, b \rangle)$.  So  $$   \Psi(a)   \ = \    u_1c^{\epsilon_1}\dots u_n c^{\epsilon_n}u_{n+1}  \ \in \   h^{-1}  \langle a, b \rangle h.$$  But, given the free product structure of $\Z^2 \ast \Z$, that  implies that   $n=2m$ is even
 	and  $$h \ = \  v c^{\epsilon_{m+1}}u_{m+2}c^{\epsilon_{m+2}}\cdots u_{2m}c^{\epsilon_{2m}}u_{2m+1} \ = \ vg$$ where $v$ is some element of $\langle a, b \rangle$ and  $g$ is as defined in the statement.   
 	
 	It follows then that $\Phi' := \iota_{g^{-1}} \circ \Psi = \iota_{v} \circ \iota_{h^{-1}} \circ \Psi =   \iota_{v} \circ   \Phi$.  So $\Phi'$ maps $a \mapsto \phi'(a), \  \ b \mapsto \phi'(b),  \  \ c \mapsto w'c^{\pm 1}x'$ for some $\phi' \in \Aut(\Z^2)$ and  some words $w'$ and $x'$  on $a^{\pm 1}$ and $b^{\pm 1}$.  
 	
 	By Lemma \ref{lemma: simplifying automorphisms doesn't change DF}, $M_{\Psi}$ and $M_{\Phi}$ have equivalent Dehn functions.
 \end{proof}
 
 \begin{proposition}\label{groomZ2*Z} Given $\Phi$ as per Lemma~\ref{get Phi}, there exists  $\Xi \in \Aut( \mathbb{Z}^2 \ast \mathbb{Z})$ of the form $$\Xi: \ \ a \mapsto \xi(a), \ \ b \mapsto \xi(b), \ \ c \mapsto cz, $$ where $\xi \in \Aut(\Z^2)$, $z \in \langle a, b \rangle$, and $M_{\Phi}$ and $M_{\Xi}$ have equivalent Dehn functions. Moreover, conditions \emph{(\emph{1}), (\emph{2}), and (\emph{3})} of Theorem~\ref{Z2astZ} apply to $\Xi$ exactly when they apply to $\Phi$. 
 	
 	Additionally,   
 	\begin{itemize}
 		
 		\item when $\xi$  has finite order (Condition~\emph{(\emph{1})} of Theorem~\ref{Z2astZ}),  we may further assume $\xi: a \mapsto a, \ b \mapsto b$, so that
 		$$M_{\Xi} \  = \  \langle a,b,c,t \ \mid \ [a,b]=[a,t]=[b,t]=1,  \ c^t=ca^kb^l \rangle,$$

 		\item when $\xi$ is  of infinite order and has only unit eigenvalues  
 		(Condition~\emph{(\emph{3})} of Theorem~\ref{Z2astZ}), we may further assume $\xi: a \mapsto ab^k, \ b \mapsto b$ for some $k \neq 0$, so that for some $l,m \in \Z$, 
 		$$ M_{\Xi} \  =  \  \langle a, b ,c, t \mid [a,b]=1, \  a^{t}=ab^{k},  \ b^{t}=b,\  c^{t}=c a^lb^m \rangle.$$

 	\end{itemize}
 \end{proposition}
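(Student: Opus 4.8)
The plan is to reach the asserted normal forms by a short chain of simplifying moves, each of which preserves the Dehn function up to $\simeq$ by Lemma~\ref{lemma: simplifying automorphisms doesn't change DF} and leaves invariant which of the three cases of Theorem~\ref{Z2astZ} holds. First, starting from $\Phi$ as in Lemma~\ref{get Phi} with $\Phi(c)=wc^{\pm1}x$ and $w,x\in\langle a,b\rangle$, I would arrange a $+1$ exponent on $c$: if it is $-1$, a direct computation gives $\Phi^2(c)=\widetilde w\,c\,\widetilde x$ with $\widetilde w=\Phi(w)x^{-1}$ and $\widetilde x=w^{-1}\Phi(x)$ in $\langle a,b\rangle$, and $\Phi^2$ restricts to $\phi^2$ on the $\Z^2$-factor, so by part (1) of Lemma~\ref{lemma: simplifying automorphisms doesn't change DF} we may replace $\Phi$ by $\Phi^2$. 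Then set $\Xi:=\iota_{\widetilde w}\circ\Phi$: since $\langle a,b\rangle$ is abelian and contains $\widetilde w$ along with the images of $a$ and $b$, conjugation by $\widetilde w$ fixes the latter, so $\Xi$ restricts to $\phi$ (or $\phi^2$) on $\langle a,b\rangle$, while $\Xi(c)=\widetilde w^{-1}(\widetilde w\,c\,\widetilde x)\widetilde w=c\,(\widetilde x\widetilde w)$ has the form $cz$ with $z\in\langle a,b\rangle$. By parts (1) and (3) of Lemma~\ref{lemma: simplifying automorphisms doesn't change DF}, $M_\Xi\simeq M_\Phi$. Since a matrix in $\GL(2,\Z)$ has finite order, resp.\ a non-unit eigenvalue, exactly when its square does, $\xi:=\Xi\!\!\upharpoonright_{\langle a,b\rangle}$ satisfies the same one of conditions (1)--(3) of Theorem~\ref{Z2astZ} as $\phi$, which is the ``moreover'' clause.

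For the finite-order refinement, say $\xi$ has order $d$. Iterating shows $\Xi^n(c)=c\,z_n$ where $z_n\in\langle a,b\rangle\cong\Z^2$ corresponds to $(I+A+\cdots+A^{n-1})z$ with $A$ the matrix of $\xi$; at $n=d$ this gives $\Xi^d(a)=a$, $\Xi^d(b)=b$, and $\Xi^d(c)=c\,a^kb^l$ for some $k,l\in\Z$. Replacing $\Xi$ by $\Xi^d$ (allowed by part (1) of Lemma~\ref{lemma: simplifying automorphisms doesn't change DF}, and still in the finite-order case) puts $M_\Xi$ in the first displayed presentation.

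For the infinite-order unit-eigenvalue refinement, Lemma~\ref{finite order examples} forces the eigenvalues of $\xi$ to be real (otherwise $\xi$ would be finite order), hence $\pm1$; infinite order then forces a nontrivial Jordan block, hence $\det\xi=1$, so $\xi\in\SL(2,\Z)$ has linear growth. By Lemma~\ref{linear growth} there are $j>0$ and $B\in\SL(2,\Z)$ with $B^{-1}\xi^jB=\left(\begin{smallmatrix}1&k\\0&1\end{smallmatrix}\right)$, and $k\neq 0$ since otherwise $\xi^j=I$ contradicts infinite order. Lift $B$ to $\widehat B\in\Aut(\Z^2\ast\Z)$ via $\widehat B\!\!\upharpoonright_{\langle a,b\rangle}=B$ and $\widehat B(c)=c$, and set $\Xi':=\widehat B^{-1}\circ\Xi^j\circ\widehat B$. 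Then $\Xi'$ restricts to $a\mapsto ab^k$, $b\mapsto b$, and $\Xi'(c)=\widehat B^{-1}(\Xi^j(c))=c\,B^{-1}(z_j)=c\,a^lb^m$ for some $l,m$, since $\Xi^j(c)=c\,z_j$ with $z_j\in\langle a,b\rangle$. Replacing $\Xi$ by $\Xi'$ (parts (1) and (3) of Lemma~\ref{lemma: simplifying automorphisms doesn't change DF}, still in the same case) gives the second displayed presentation.

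No individual step is hard; the point requiring care is to verify at each move --- passing to a power, composing with an inner automorphism, conjugating by a lifted $\SL(2,\Z)$ matrix --- that the image of $c$ attains \emph{exactly} the claimed shape and that we remain in the same one of conditions (1)--(3) of Theorem~\ref{Z2astZ}. The only non-bookkeeping input is the normal-form description of linear-growth and unit-eigenvalue elements of $\SL(2,\Z)$ already established in Lemmas~\ref{linear growth} and \ref{finite order examples}.
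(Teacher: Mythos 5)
Your proposal is correct and follows essentially the same route as the paper: compose with the inner automorphism by the prefix of $\Phi(c)$ (after squaring when the exponent of $c$ is $-1$, with the same conjugator $\Phi(w)x^{-1}$ the paper uses), then pass to the order of $\xi$ in the finite-order case and conjugate by a lifted change of basis in the linear-growth case. If anything you are slightly more careful than the paper in the unit-eigenvalue case, since you explicitly pass to the power $\xi^j$ that Lemma~\ref{linear growth} requires before conjugating to $\left(\begin{smallmatrix}1&k\\0&1\end{smallmatrix}\right)$.
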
 
 
 \begin{proof} 	 
 	
 	Recall $\Phi (c)= wc^{\pm 1}x$ as per Lemma~\ref{get Phi}. Define $\Xi_1 := \iota_{w} \circ \Phi$ and $\Xi_2 := \iota_{\phi(w)x^{-1}}\circ\Phi^2$.   So if $\Phi(c) = wcx$, then $\Xi_1(c) = \iota_w  \circ \Phi(c) =cz$ where $z=xw$.  If  $\Phi(c) = wc^{-1}x$, then $\Xi_2(c) = \iota_{\phi(w)x^{-1}} \circ \Phi^2(c) = cz$ where $z=w^{-1}\phi(xw)x^{-1}$.

 	For $i=1,2$, let  $\xi_i :=  \Xi_i \restricted{\langle a, b \rangle}$, the restriction of $ \Xi_i$  to the $\Z^2$ factor.  
 	
 	Suppose  $\xi_i$  has exponential growth. Define $\Xi :=\Xi_i$. Then $\Xi$ has the general form claimed in the proposition, and since $[\Xi_1] = [\Phi]$ and $[\Xi_2] = [\Phi]^2$, Lemma~\ref{lemma: simplifying automorphisms doesn't change DF} implies that $M_{\Phi}$ and $M_{\Xi}$ have equivalent Dehn functions.  
 	
 	Suppose $\xi_i \in \Aut(\mathbb{Z}^2)$ has finite  order $n$ (i.e.\ $\xi_i$ has trivial growth).   Define $\Xi= \Xi_i^n$. This has the promised form: its restriction to   $\langle a,b \rangle$   is the identity  and $\Xi(c) = cz$ for some $z \in \langle a,b \rangle$. Lemma~\ref{lemma: simplifying automorphisms doesn't change DF} implies that $M_{\Xi}$ and $M_{\Phi}$ have equivalent Dehn functions.

 	Finally, suppose $\xi_i$ is  of infinite order and has only unit eigenvalues. Lemma~\ref{linear growth} implies that $\xi_i$ is conjugate in $\Aut(\Z^2)$ to the automorphism {$\xi: a \mapsto ab^{k}$, $b \mapsto b$,} for some $k \neq 0$. Therefore, for some $f \in \Aut(\Z^2)$, ${\xi} = f^{-1}\circ \xi_i \circ f$. Define $\Xi:= F^{-1}\circ \Xi_i \circ F$, where $F$ restricts to $f$ on $\Z^2$ and maps $c \mapsto c$. By Lemma~\ref{lemma: simplifying automorphisms doesn't change DF}, $M_{\Phi}$ and $M_{\Xi}$ have equivalent Dehn functions. Let $z' = f^{-1}(z)$. The map $\Xi$ has the desired form: $$\Xi(c) = \eta^{-1} \circ \Xi_i \circ \eta (c) = \eta^{-1}(\Xi_i(c)) = \eta^{-1}(cz)= cf^{-1}(z)=cz'.$$

 	In every case,   conditions  (\emph{1}), (\emph{2}), and (\emph{3}) of Theorem~\ref{Z2astZ} apply to $\Xi$ exactly when they apply to $\Phi$.  After all, in each case, the restriction $\xi$ of $\Xi$ to the $\mathbb{Z}^2$ factor  is a conjugate of a power of the restriction  $\phi$ of $\Phi$. Let $A$ be the Jordan Canonical Form (JCF) of $\phi$. For all $k \in \mathbb{N}$, $A$ is finite order if and only if $A^k$ is finite order, and $A$ has a non-unit eigenvalue if and only if $A^k$ has one too. The JCF is invariant under conjugation.
 \end{proof}

 \subsection{Corridors}\label{intersectingcorridors}
 In every instance of Proposition~\ref{groomZ2*Z},   
 $$M_{\Xi} \ = \  \langle a, b, c, t \mid [a,b]=1, \ a^t= \xi(a), \  b^t= \xi(b), \ c^t=cz \rangle$$
 for some $\xi \in \Aut(\Z^2)$ and some $z \in \langle a,b \rangle$.   In this section we prove some preliminary results about  van~Kampen diagrams over this presentation.  Such diagrams can have both $c$- and $t$-corridors. 
 
 \begin{definition} Suppose that $\tau$ is a $t$-corridor and $\eta$ is a $c$-corridor. Suppose ${\hat{\tau}} \subseteq \tau$ and ${\hat{\eta} \subseteq \eta}$ are subcorridors. We say  $\hat{\tau}$ and $\hat{\eta}$ \textit{form a bigon} when they have exactly two common 2-cells, specifically their first and last ones.\end{definition}

 We leave the proof of our next lemma as an exercise---the essential points are (1) neither a  $c$-corridor nor a $t$-corridor self-intersects, and  (2) look at an \emph{innermost} pair of crossings. 
 
 \begin{lemma}\label{lemma:bigon}
 	Suppose $\tau$ is a $t$-corridor and $\eta$ is a   $c$-corridor. If  $\tau$ and $\eta$ intersect more than once, then there are subcorridors $\hat{\tau} \subseteq \tau$ and $\hat{\eta} \subseteq \eta$ forming a bigon. 
 \end{lemma}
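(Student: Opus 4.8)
The plan is to pass from the combinatorial corridors to their ``core'' arcs and run the standard innermost--bigon argument inside the disk $\Delta$.

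\textbf{Step 1: intersection cells.} First I would record what a $2$-cell common to $\tau$ and $\eta$ looks like. Among the defining relators of $M_{\Xi}$, the only one containing both $c$ and $t$ is $c^t=cz$, which reads $t^{-1}ctz^{-1}c^{-1}$ around the corresponding $2$-cell; such a cell has exactly two $t$-edges and exactly two $c$-edges, and around its boundary these alternate ($t$, $c$, $t$, $c$, with the $z^{-1}$-subpath lying inside one of the $c$--$c$ arcs). Hence every $2$-cell lying in both $\tau$ and $\eta$ is of this type, and in such a cell the $t$-corridor $\tau$ (entering and leaving through the two $t$-edges) crosses the $c$-corridor $\eta$ (entering and leaving through the two $c$-edges) transversally, in a single point; moreover a cell of this type carries only one such crossing point, so distinct crossings live in distinct cells. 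Thus the set $S$ of cells common to $\tau$ and $\eta$ is finite and consists of transversal crossings.

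\textbf{Step 2: core arcs.} I would then replace $\tau$ by its core arc $\gamma_{\tau}$ and $\eta$ by its core arc $\gamma_{\eta}$. Since a $t$-corridor does not self-intersect and a $c$-corridor does not self-intersect, $\gamma_{\tau}$ and $\gamma_{\eta}$ are simple arcs properly embedded in $\Delta$ (endpoints on $\partial\Delta$, interiors in the interior), and by Step 1 the set $\gamma_{\tau}\cap\gamma_{\eta}$ is a finite set of transversal crossing points, one in each cell of $S$. By hypothesis $|S|\geq 2$, so $|\gamma_{\tau}\cap\gamma_{\eta}|\geq 2$, and the problem is reduced to a purely planar statement about two properly embedded simple arcs in a disk meeting transversally at least twice.

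\textbf{Step 3: innermost bigon.} Orient $\gamma_{\eta}$ and choose crossing points $p,q$ that are consecutive along $\gamma_{\eta}$, so the subarc $\beta\subseteq\gamma_{\eta}$ between $p$ and $q$ satisfies $(\beta\setminus\{p,q\})\cap\gamma_{\tau}=\emptyset$. Let $\alpha$ be the unique subarc of the embedded arc $\gamma_{\tau}$ with endpoints $p$ and $q$. Then $\alpha\setminus\{p,q\}\subseteq\gamma_{\tau}$ is disjoint from $\beta\setminus\{p,q\}$, so $\alpha$ and $\beta$ are simple arcs meeting exactly in $\{p,q\}$; hence $\alpha\cup\beta$ is a simple closed curve in the disk $\Delta$, and bounds a closed subdisk. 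Let $\sigma_p,\sigma_q\in S$ be the cells carrying $p$ and $q$, and let $\hat{\tau}\subseteq\tau$ be the subcorridor running from $\sigma_p$ to $\sigma_q$ (the cells through which $\alpha$ passes) and $\hat{\eta}\subseteq\eta$ the subcorridor from $\sigma_p$ to $\sigma_q$ (the cells through which $\beta$ passes). Finally I would verify that the common cells of $\hat{\tau}$ and $\hat{\eta}$ are exactly $\sigma_p$ and $\sigma_q$: any common cell $\sigma$ lies in $\tau\cap\eta$, hence is of the type in Step 1 and carries a unique crossing point $x$; as $\sigma$ is met by $\alpha$ we get $x\in\alpha$ (either $\alpha$ runs through the core of $\sigma$, or $\sigma\in\{\sigma_p,\sigma_q\}$ and $x\in\{p,q\}\subseteq\alpha$), and likewise $x\in\beta$, so $x\in\alpha\cap\beta=\{p,q\}$ and $\sigma\in\{\sigma_p,\sigma_q\}$. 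Since $p$ (resp.\ $q$) is a shared endpoint of $\alpha$ and $\beta$, the cell $\sigma_p$ (resp.\ $\sigma_q$) is the first (resp.\ last) cell of both $\hat{\tau}$ and $\hat{\eta}$, so $\hat{\tau}$ and $\hat{\eta}$ form a bigon.

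I expect the main obstacle to be bookkeeping rather than mathematics: making the passage to core arcs precise (embeddedness of each corridor, transversality, and the fact that $\gamma_{\tau}\cap\gamma_{\eta}$ meets each common cell in exactly one point), and keeping the ``first cell/last cell'' conventions straight when translating the planar bigon back into a statement about subcorridors. The topological core -- choosing an innermost pair of crossings and observing that two simple arcs in a disk with common endpoints and otherwise disjoint interiors cut off a subdisk -- is routine, which is presumably why the authors left it as an exercise.
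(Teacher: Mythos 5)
Your proof is correct and follows exactly the route the paper intends: the lemma is left as an exercise with the hints that (1) neither corridor self-intersects and (2) one should look at an innermost pair of crossings, and your Steps 2--3 are precisely the careful execution of those two points, with Step 1 supplying the (correct) observation that every common cell is a $c^t=cz$ cell carrying a single transversal crossing. No gaps; the bookkeeping identifying the common cells of $\hat{\tau}$ and $\hat{\eta}$ with $\sigma_p$ and $\sigma_q$ is sound.
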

 
 Recall that a $c$-corridor is called \textit{reduced} if it contains no two 2-cells sharing a $c$-edge for which the word around the boundary of their union is freely reducible to the identity in the group.

 \begin{lemma}\label{lemma:ct 1x intersections} In a van Kampen diagram where $c$-corridors are reduced, if a $t$-corridor $\tau$ intersects a $c$-corridor $\eta$, it will do so only once.
 \end{lemma}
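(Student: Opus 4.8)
The strategy is to argue by contradiction, feeding the bigon produced by Lemma~\ref{lemma:bigon} into the HNN-structure of $M_{\Xi}$. Throughout we use the presentation $\langle a,b,c,t\mid [a,b],\ t^{-1}at\,\xi(a)^{-1},\ t^{-1}bt\,\xi(b)^{-1},\ t^{-1}ct\,z^{-1}c^{-1}\rangle$ of $M_{\Xi}$, with $z\in\langle a,b\rangle$.

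First record two facts. (i) The relator $c^t=cz$ is the only defining relator containing both $c$ and $t$; hence every $2$-cell lying on both a $t$-corridor and a $c$-corridor carries this relator, and in particular so does every $2$-cell of a $c$-corridor. Consequently a $c$-corridor has two sides: along one of them (``the $t$-side'') one reads a word on $t^{\pm1}$ only, and along the other (``the $z$-side'') a word on $t^{\pm1}$ together with the letters of $z^{\pm1}$, which lie in $\{a^{\pm1},b^{\pm1}\}$. Each $2$-cell of the corridor contributes exactly one $t^{\pm1}$ to each side, and by Remark~\ref{reduced remark} the corridor is reduced exactly when all of these $t^{\pm1}$ share a common sign. (ii) $M_{\Xi}$ retracts onto $\langle t\rangle\cong\Z$ by sending $a,b,c\mapsto1$ (every relator is killed), so any word representing $1$ in $M_{\Xi}$ has total $t$-exponent $0$.

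Now suppose the $t$-corridor $\tau$ and the $c$-corridor $\eta$ share at least two $2$-cells. By Lemma~\ref{lemma:bigon} there are subcorridors $\hat\tau\subseteq\tau$ and $\hat\eta\subseteq\eta$ forming a bigon; let $\sigma_1,\sigma_2$ be their common (first and last) $2$-cells, chosen so that the number of $2$-cells enclosed is minimal. Then $\hat\tau\cup\hat\eta$ is an embedded circle bounding a van~Kampen subdiagram $D$, and reading $\partial D$ starting inside $\sigma_1$ gives a word of the form $W\,\alpha\,P\,\beta$, where $W$ is one of the two sides of $\hat\tau$ — a word on $\{a,b,c\}^{\pm1}$, hence with no $t$ — $P$ is one of the two sides of $\hat\eta$, and $\alpha,\beta$ are the ``corner'' subwords traversing short arcs of $\partial\sigma_1$ and $\partial\sigma_2$. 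Inspecting the cyclic order of the edges of the relator $c^t=cz$ shows that each of $\alpha,\beta$ is either empty or a single copy of $z^{\pm1}$, hence $t$-free; therefore $0=t\text{-exp}(\partial D)=t\text{-exp}(P)$. On the other hand, by (i) each $2$-cell of $\hat\eta$ contributes a $t^{\pm1}$ to $P$, except possibly for $\sigma_1$ or $\sigma_2$ if that letter is absorbed into $\hat\tau$ there; since $\eta$ is reduced these letters all have the same sign, so $t\text{-exp}(P)=\pm(\text{number of such }2\text{-cells})$. This is nonzero whenever $\hat\eta$ has a $2$-cell besides $\sigma_1,\sigma_2$, or when not both contributions are absorbed — contradicting $t\text{-exp}(P)=0$.

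The only configuration left is that $\hat\eta$ consists of just $\sigma_1$ and $\sigma_2$, which are then adjacent in $\eta$ and so share a $c$-edge, with $\hat\tau$ attached at the $t$-edge of each $\sigma_i$ lying on the side $P$. I would dispose of this directly: if $\sigma_1,\sigma_2$ were also adjacent in $\tau$ they would share two edges, which a reduced diagram forbids; otherwise $\hat\tau$ has an interior $2$-cell, and since the bigon forces $\tau$ to run through $\sigma_1$ and $\sigma_2$ with opposite orientations relative to $D$, the cells $\sigma_1,\sigma_2$ form a cancelling pair of $\eta$, contradicting that $\eta$ is reduced. In every case we reach a contradiction, so $\tau$ meets $\eta$ at most once. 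I expect the main obstacle to be the planar bookkeeping in the third paragraph: identifying which side of each corridor bounds $D$, proving $\alpha$ and $\beta$ are $t$-free from the edge-cycle of $c^t=cz$, and cleanly eliminating the short degenerate bigon; the $t$-exponent input (fact (ii)) is immediate, and the content is arranging the picture around $\sigma_1$ and $\sigma_2$ precisely enough to pin down $\alpha$, $\beta$ and $P$.
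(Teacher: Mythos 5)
Your main argument is correct but runs along a genuinely different line from the paper's. The paper disposes of the bigon in one stroke with an orientation count: since $\eta$ is reduced, the $t$-edges along one side of $\hat\eta$ are all coherently oriented (Remark~\ref{reduced remark}), while the $t$-corridor $\hat\tau$ forces the $t$-edges in the first and last common cells to point in opposite directions relative to the bigon; these two requirements clash at $E_2$, whatever the length of $\hat\eta$. You instead integrate this local orientation data into a global $t$-exponent count over $\partial D$: the $\hat\tau$ side and the corner arcs are $t$-free, so the coherently signed $t$-letters contributed by the interior cells of $\hat\eta$ must sum to zero, which they cannot unless there are none. That is a clean and correct computation (your identification of the corner arcs as empty or $z^{\pm1}$ is right), and it is a nice reformulation; its cost is that it says nothing when $\hat\eta$ consists of only the two shared cells, so you are forced into a separate degenerate case that the paper's argument never sees.

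That degenerate case is where your proof is soft. Your first disposal -- ``they would share two edges, which a reduced diagram forbids'' -- is not licensed: the lemma assumes only that $c$-corridors are reduced, not that the diagram is reduced, and in any case reducedness forbids cancelling pairs, not double edge-sharing per se. (The sub-case can still be killed, but by noting that a shared $t$-edge traversed twice along one side of $\hat\eta$ makes the side word of $\eta$ non-reduced, i.e.\ by the hypothesis you actually have.) Your second disposal asserts ``the bigon forces $\tau$ to run through $\sigma_1$ and $\sigma_2$ with opposite orientations relative to $D$'' without proof -- but this is precisely the orientation-reversal fact that constitutes the paper's entire argument, and it holds for bigons of any length. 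So either justify that fact once and use it everywhere (recovering the paper's proof and making your counting argument redundant), or supply the planar argument for it in the two-cell case; as written, the case your new method cannot reach is handled by an unproved appeal to the old one.
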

 
 \begin{proof} Since $c$-corridors are made up of a single kind of 2-cell (arising from the defining relation $c^t=cz$), all 2-cells in a reduced $c$-corridor have the same labels and are oriented the same way along the  corridor.  Let us assume for the contradiction that $\eta$ is reduced and that $\tau$ and $\eta$ intersect at least twice.
 	
 	By Proposition \ref{lemma:bigon}, there exist subcorridors $\hat{\tau}$ and $\hat{\eta}$ that form a bigon, with precisely the first and final 2-cells, $E_1$ and $E_2$, in common, as in Figure~\ref{fig:ctdoublecross}. The orientation of the edges labeled by $t$ in $E_1$ fixes an orientation for all the $t$-labeled 1-cells along the bottom of $\hat{\eta}$ (see Remark~\ref{reduced remark}) since $\hat{\eta}$ is reduced. It also fixes an orientation for $t$-labeled 1-cells in $\hat{\tau}$. But these two specifications are inconsistent   for the $t$-labeled 1-cells in $E_2$. \end{proof}
 
 \begin{figure}[!ht]
 	\centering
 	\includegraphics[scale=0.45]{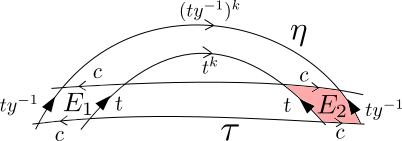}
 	\caption{If a $t$-corridor and a $c$-corridor cross at least twice, the $c$-corridor cannot be reduced.}
 	\label{fig:ctdoublecross}
 \end{figure}
 
 We will use the same argument for alternating corridors and $c$-corridors in Lemma \ref{combinedlemma}(\ref{alternating1}) and for $\alpha$- and $t$- partial corridors in Lemma \ref{combinedlemma}(\ref{alternating3}). 
 
 \begin{corollary} In a van Kampen diagram with reduced $c$-corridors, there are no $c$-annuli, and $t$-annuli do not intersect $c$-corridors.
 \end{corollary}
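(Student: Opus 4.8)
The plan is to establish the two assertions in turn, with the second one relying on Lemma~\ref{lemma:ct 1x intersections} once the first is available.

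\emph{First, that there are no $c$-annuli.} I would argue by contradiction. Suppose a reduced $c$-annulus $\eta$ occurs, built from $2$-cells $E_1,\dots,E_n$ with $n\ge 1$; each $E_i$, being a cell of a $c$-corridor, comes from the relator $c^t=cz$, and, exactly as in the proof of Lemma~\ref{lemma:ct 1x intersections}, reducedness forces all the $E_i$ to carry identical labels and a common orientation along the corridor. Both $c$-edges of each $E_i$ are shared with neighbouring cells of $\eta$, so the inner boundary circuit of the annular region $\eta$ spells a word $w_0$ on $\{a,b,t\}^{\pm1}$ to which each $E_i$ contributes exactly one letter $t^{\pm1}$, all of the same sign; hence $w_0$ has $t$-exponent sum $\pm n$. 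But $w_0$ bounds the subdiagram enclosed by $\eta$, so it represents the identity in $M_{\Xi}$. A one-line check shows every defining relator of $M_{\Xi}$ has $t$-exponent sum $0$, so there is a retraction $M_{\Xi}\to\mathbb{Z}$ with $t\mapsto 1$ and $a,b,c\mapsto 0$; thus every word representing $1$ in $M_{\Xi}$ has $t$-exponent sum $0$, forcing $n=0$, a contradiction. (A degenerate one-cell ``annulus'' would be a self-intersecting $c$-corridor, which is excluded by the observations preceding Lemma~\ref{lemma:bigon}.)

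\emph{Second, that $t$-annuli do not meet $c$-corridors.} Suppose $\tau$ is a $t$-annulus and $\eta$ a $c$-corridor with $\tau\cap\eta\neq\emptyset$. By the previous paragraph $\eta$ is not an annulus, so it runs from $\partial\Delta$ to $\partial\Delta$, and its spine is an arc separating the disc $\Delta$ into two pieces. The spine of $\tau$ is a closed curve (which after a small isotopy avoids $\partial\Delta$), so it meets the separating arc an even number of times; since $\tau$ meets $\eta$ at all, it does so at least twice, i.e.\ $\tau$ and $\eta$ intersect in two disjoint subcorridors. This contradicts Lemma~\ref{lemma:ct 1x intersections}. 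The final corollary of the excerpt is the conjunction of these two assertions.

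The step I expect to cost the most effort is the bookkeeping that turns ``$\tau$ meets $\eta$'' into ``the spine of $\tau$ crosses the spine of $\eta$ transversally, once per connected component of $\tau\cap\eta$'', so that the parity count legitimately produces two intersections in the sense used in Lemma~\ref{lemma:ct 1x intersections}. This is governed by the same local picture of a $c^t=cz$ cell lying simultaneously in a $c$-corridor and a $t$-corridor that underlies the proof of Lemma~\ref{lemma:ct 1x intersections}: the two $c$-edges of such a cell sit on opposite sides of the $t$-corridor passing through it, so $\eta$ genuinely crosses $\tau$ rather than merely grazing it. Everything else is routine.
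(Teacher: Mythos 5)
Your proof is correct, and it splits into a half that matches the paper and a half that does not. For the second assertion ($t$-annuli cannot meet $c$-corridors) you argue exactly as the paper does: since $\eta$ is not an annulus its spine is a separating arc, the closed spine of the $t$-annulus must cross it an even number of times, so at least twice, contradicting Lemma~\ref{lemma:ct 1x intersections} (whose orientation argument applies verbatim to annular $t$-corridors, as the paper tacitly assumes). For the first assertion (no $c$-annuli) the paper runs the same parity argument in the other direction: the outer boundary of a putative $c$-annulus contains $t$-edges, so some $t$-corridor meets it, and by the Jordan-curve parity count it must meet it at least twice, again contradicting Lemma~\ref{lemma:ct 1x intersections}. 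You instead use reducedness to see that the inner boundary word of the annulus has $t$-exponent sum $\pm n$, and then the retraction $M_{\Xi}\to\mathbb{Z}$ killing $a,b,c$ to force $n=0$. Both arguments are sound; yours has the advantage of being self-contained for this assertion (it does not route through Lemma~\ref{lemma:ct 1x intersections} and so does not require the annular extension of that lemma for the first claim), while the paper's has the advantage of uniformity, disposing of both assertions with a single parity-plus-Lemma~\ref{lemma:ct 1x intersections} template. Your closing remark about each common $2$-cell being a genuine transversal crossing --- because the two $c$-edges of a $c^t=cz$ cell lie on opposite sides of the $t$-corridor through it --- is exactly the point that makes the parity count legitimate, and it is the same local picture the paper relies on without comment.
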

 \begin{proof}
 	The word around the outside of a $c$-annulus contains $t$'s, so it would have to intersect once (and therefore intersect at least twice) with a $t$-corridor, which is impossible by Lemma \ref{lemma:ct 1x intersections}. Similarly, if a $c$-corridor has common 2-cells with a $t$-annulus, it must have at least two in common---again  impossible by Lemma \ref{lemma:ct 1x intersections}.
 \end{proof}
 
 The following   corollary allows us to determine the lengths of $c$-corridors in a diagram $\Delta$ in terms of  the word around its boundary and the way $c$-edges are paired up by $c$-corridors---the so-called $c$-\textit{corridor pairing} (see Definition \ref{def:pairingpattern}).

 \begin{corollary}\label{c-corridor length} Suppose $\Delta$ is a van~Kampen diagram with reduced $c$-corridors.  Suppose further   that its boundary word is $w_1c^{\pm 1}w_2c^{\mp 1}$ for some words $w_1$ and $w_2$  and that    $\eta$  is a $c$-corridor beginning and ending on the edges labelled by these distinguished $c^{\pm 1}$. Then the length of $\eta$ is the absolute value of the index sum of the $t^{\pm 1}$ in $w_1$ (or, equivalently, in $w_2$).\end{corollary}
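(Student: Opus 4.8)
The plan is to exploit the homomorphism $\sigma\colon M_{\Xi}\to\Z$ determined by $t\mapsto 1$ and $a,b,c\mapsto 0$. This is well defined because $\xi(a),\xi(b)$ and $z$ are words in $a,b$ alone, so every defining relator of $\langle a,b,c,t\mid [a,b]=1,\ a^t=\xi(a),\ b^t=\xi(b),\ c^t=cz\rangle$ has total $t$-exponent $0$; for any word $u$ on the generators, $\sigma(u)$ is exactly the index sum of its $t^{\pm1}$-letters. Applying $\sigma$ to the boundary word $w_1c^{\pm1}w_2c^{\mp1}$ (which represents $1$) and using $\sigma(c)=0$ gives $\sigma(w_1)=-\sigma(w_2)$, hence $|\sigma(w_1)|=|\sigma(w_2)|$, which already accounts for the parenthetical ``equivalently, in $w_2$'' clause.

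Next I would analyze the two boundary rails of $\eta$. Write $\ell$ for the length of $\eta$. The only defining relator in which $c$ occurs, namely $c^t=cz$, read around a $2$-cell as $t^{-1}ctz^{-1}c^{-1}$, contains exactly two $t$-edges, the remaining letters being $c$'s and letters of $z\in\langle a,b\rangle$. Consequently each of the $\ell$ $2$-cells of $\eta$ contributes exactly one $t$-edge to each of the two rails of $\eta$, and no $t$-edge lies on a $c$-edge shared by consecutive $2$-cells. Since $\eta$ is reduced, all its $2$-cells carry the same labels and are oriented the same way along the corridor --- precisely the observation already used in the proof of Lemma~\ref{lemma:ct 1x intersections} (cf.\ Remark~\ref{reduced remark}). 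Therefore, traversing either rail $R$ of $\eta$ from one end to the other, all $\ell$ of its $t$-edges are crossed in a single fixed direction, so $\sigma(R)=\pm\ell$ and $|\sigma(R)|=\ell$.

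Finally I would tie a rail to $w_1$ using planarity. The corridor $\eta$ is an embedded band in the disc $\Delta$ whose two short ends are the distinguished boundary $c$-edges; removing its interior splits $\Delta$ into two subdiagrams, one of which, $\Delta_1$, has the subarc $w_1$ of $\partial\Delta$ on its boundary, the remainder of $\partial\Delta_1$ being one rail $R$ of $\eta$, whose endpoints are the two $w_1$-side endpoints of the distinguished $c$-edges. Hence, up to basepoint and orientation, the boundary word of $\Delta_1$ is $w_1R^{\mp1}$, which represents $1$ in $M_{\Xi}$; applying $\sigma$ gives $\sigma(w_1)=\pm\sigma(R)$, so $|\sigma(w_1)|=|\sigma(R)|=\ell$, as claimed. (If $\eta$ runs along $\partial\Delta$ so that $\Delta_1$ is empty, then $w_1$ is literally the rail word and the conclusion is immediate.)

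I do not anticipate a real obstacle: the argument is short once one has $\sigma$ and the planar splitting. The only point demanding care is the claim that each rail of a reduced $c$-corridor carries exactly $\ell$ consistently oriented $t$-edges, but this is forced by the shape of the single relator $c^t=cz$ together with the reducedness bookkeeping already recorded in the proof of Lemma~\ref{lemma:ct 1x intersections}.
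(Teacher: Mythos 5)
Your proof is correct, but it reaches the conclusion by a genuinely different mechanism than the paper. Both arguments start from the same crucial consequence of reducedness: every $2$-cell of $\eta$ carries the same labels with the same orientation, so each rail of $\eta$ carries exactly $\ell$ consistently oriented $t$-edges (the paper phrases this as ``the word along one side of $\eta$ is $t^k$ without free reduction''). From there the paper stays inside the corridor machinery: each $2$-cell of $\eta$ lies on exactly one $t$-corridor, Lemma~\ref{lemma:ct 1x intersections} forces each such $t$-corridor to cross $\eta$ exactly once, and a pairing argument on the boundary (oppositely oriented $t$-edges of $w_2$ whose corridors cannot cross $\eta$ must pair with each other on the same side) shows that the number of crossing $t$-corridors is exactly the absolute value of the $t$-index sum. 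You instead introduce the retraction $\sigma\colon M_{\Xi}\to\Z$, $t\mapsto 1$, split $\Delta$ along the embedded band $\eta$, and read the length off the exponent sum of a rail. Your route is more elementary and self-contained --- in particular it does not invoke the crossing lemma at all, only the orientation observation from its proof --- whereas the paper's version has the advantage of exercising exactly the corridor-intersection tools ($t$-corridors crossing $c$-corridors at most once) that are reused later for alternating corridors. The one point worth making explicit in your write-up is that $\Delta_1$ is a (simply connected) subdiagram of $\Delta$ and hence a genuine van~Kampen diagram for $w_1R^{\mp1}$, so that applying $\sigma$ to its boundary word is legitimate even when $\eta$ runs along or touches $\partial\Delta$; with that noted, the argument is complete.
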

 
 \begin{proof} All $t$-corridors intersecting $\eta$ have the same orientation with respect to $\eta$. In particular, the word along one side of $\eta$ is $t^k$ for some $k$, without any free reductions.  Thus the $t$-corridors starting at $t$-edges in $w_2$ that are oppositely oriented to the $t$'s in $\eta$ cannot cross it, and so must have oppositely oriented partners on the same side of $\eta$, as shown in Figure \ref{lengthsOfcorridors}. This leaves exactly the absolute value of the index-sum of $t$ in $w_1$ many $t$-corridors which have no partners on the same side of $\eta$, and so must cross it. By Lemma \ref{lemma:ct 1x intersections}, each of these $t$-corridors can cross $\eta$ exactly once.  
 \end{proof} 
 
 \begin{figure}[!ht]
 	\centering
 	\includegraphics[scale=0.45]{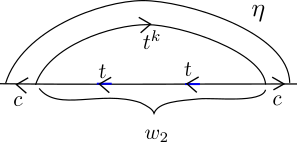}
 	\caption[Lengths of $c$-corridors]{The $t$-corridors of oppositely oriented $t$-edges in $w_2$ cannot cross $\eta$. \label{lengthsOfcorridors}}
 \end{figure}
 
 \begin{figure}[!ht]
 	\centering
 	\includegraphics[scale=0.12]{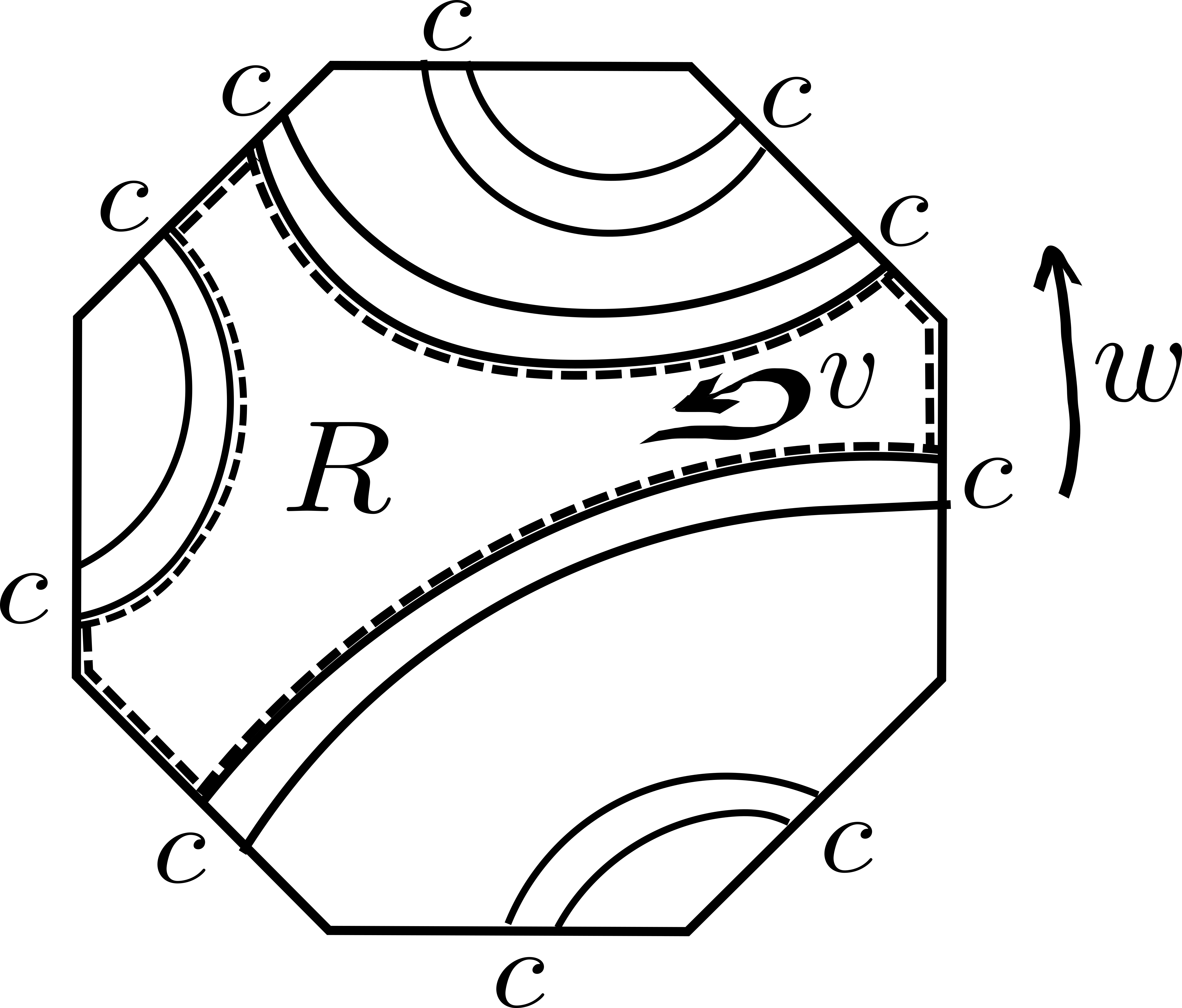}
 	\caption{a c-complementary region}
 	\label{fig:Cor6}
 \end{figure}

 Since $c$-corridors cannot cross, removing all the $c$-corridors leaves a set of connected subdiagrams called \textit{$c$-complementary regions}. The words around the perimeters of each of these regions contain no $c^{\pm 1}$. See Figure \ref{fig:Cor6}.
 
 \begin{corollary}\label{c complementary region length} Let $R$ be a $c$-complementary region in a van Kampen diagram for the word $w$. If the word around the perimeter of $R$ is called $v$, then $|v|\leq|w|$.\end{corollary}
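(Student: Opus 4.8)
The plan is to compare $\partial R$ with $\partial\Delta$ edge by edge. Removing the $c$-corridors $\eta_1,\dots,\eta_N$ cuts $\Delta$ into its $c$-complementary regions; since $c$ occurs with both signs only in the single relator $c^t=ca^kb^l$ that contains it, these corridors are genuine (no capping faces), pairwise non-crossing, and each runs from one $c$-edge of $\partial\Delta$ to another. Hence $\partial R$ is a cyclic word $v$ on $\{a,b,t\}^{\pm1}$ each of whose edges either lies on $\partial\Delta$ or is shared with exactly one $c$-corridor, and every edge of $\partial\Delta$ not on $\partial R$ lies on the boundary of another complementary region or of a $c$-corridor. Thus $|v|\le|w|$ is equivalent to the statement that the number of edges of $\partial R$ shared with $c$-corridors is at most the number of edges of $\partial\Delta$ not on $\partial R$, and I would prove this by building an injection from the first set into the second.

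Fix a $c$-corridor $\eta$ of length $\ell$ bordering $R$. Reading along $\eta$, its two sides are a ``$t$-side'' of length $\ell$, labelled $t^{\pm\ell}$, and an ``$a^kb^l$-side'' which in addition to $\ell$ letters $t^{\pm1}$ carries one copy of the subword $a^kb^l$ for each of the $\ell$ two-cells of $\eta$; and by Corollary~\ref{c-corridor length} this number $\ell$ is the absolute value of the $t$-exponent sum of either arc of $\partial\Delta$ cut off by $\eta$. Each $t$-edge of $\partial\eta$ shared with a complementary region lies on a $t$-corridor, and by Lemma~\ref{lemma:ct 1x intersections} every $t$-corridor meets $\eta$ in at most one two-cell, so the $\ell$ two-cells of $\eta$ lie on $\ell$ pairwise distinct $t$-corridors, each of which runs from $\partial\Delta$ to $\partial\Delta$. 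Tracing these $t$-corridors out to $\partial\Delta$, and separately tracking which edges of $\partial\eta$ already lie on $\partial\Delta$ (these are exactly the edges of $\partial\eta$ hidden from the complementary regions), attaches to each edge of $\partial R\cap\partial\eta$ a distinct edge of $\partial\Delta$ not on $\partial R$; performing this over all corridors bordering $R$ yields the injection. The degenerate case in which $R$ borders no corridor gives $R=\Delta$ and $v=w$ directly.

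The step I expect to be the main obstacle is making this precise, and the asymmetry of a $c$-corridor is what causes the trouble: the $a^kb^l$-side is longer than the $t$-side by $|a^kb^l|$ per two-cell, so when it is the longer side of $\eta$ that borders $R$, the extra $a,b$-edges cannot all be charged to $\partial\eta\cap\partial\Delta$ and must instead be charged, via the $t$-corridor tracing above, to arcs of $w$ or to other $c$-corridors on the far side of $\eta$. Checking that distinct shared edges receive distinct targets, and handling the case in which a $t$-side of a corridor is entirely hidden on $\partial\Delta$ (here one uses that $M_{\Xi}$ retracts onto $\langle t\rangle\cong\mathbb{Z}$, so the opposite arc of $w$ is long enough to absorb the discrepancy), is the only delicate part; granting it, $|v|\le|w|$ follows at once.
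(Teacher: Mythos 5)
Your outline defers precisely the step on which everything hinges, and that step cannot be completed as you have set it up. You reduce $|v|\le|w|$ to the existence of an injection from the edges of $\partial R$ shared with $c$-corridors into the edges of $\partial\Delta$ not on $\partial R$, and then declare that verifying injectivity and handling the long side of a corridor is ``the only delicate part.'' But when the long side of a corridor faces $R$, no such injection can exist, because the inequality with constant $1$ genuinely fails there: a $2$-cell for $c^t=cz$ has one arc labelled $t$ and one labelled by $tz$ between its two $c$-edges, so a corridor of length $\ell$ presents $\ell$ edges to one neighbouring region and $\ell(1+|z|)$ to the other. Concretely, in $N_1=\langle a,c,t\mid a^t=a,\ c^t=ca\rangle$ the word $w=c^{-1}t^{-n}c\,t^{n}a^{-n}$ has length $3n+2$, while the complementary region bordered by the long side of its unique $c$-corridor has perimeter $4n$. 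Your tracing device also has no target for exactly the problematic edges: the $a^{\pm1},b^{\pm1}$ on the long side do not lie on $t$-corridors, and ``charging them to other $c$-corridors on the far side'' does not produce edges of $\partial\Delta$, which is what your injection requires. The statement that is true, and is all that is used later in the paper, is $|v|\le K|w|$ for a constant $K$ depending only on the presentation (one may take $K=1+|z|$).

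The intended argument is much shorter and works corridor by corridor rather than edge by edge. After a cyclic conjugation, write $w$ as $x_0c^{\epsilon_0}v_0c^{-\epsilon_0}x_1c^{\epsilon_1}v_1c^{-\epsilon_1}\cdots x_nc^{\epsilon_n}v_nc^{-\epsilon_n}$, where the $x_i$ are the arcs of $\partial\Delta$ lying on $\partial R$ and each $c^{\epsilon_i}v_ic^{-\epsilon_i}$ is the arc cut off by a $c$-corridor $\eta_i$ bordering $R$; then $v=x_0v_0'x_1v_1'\cdots x_nv_n'$ with $v_i'$ the side of $\eta_i$ facing $R$. Corollary~\ref{c-corridor length} says the length of $\eta_i$ equals the absolute value of the $t$-exponent sum of $v_i$, hence is at most $|v_i|$, so $|v_i'|\le(1+|z|)\,|v_i|$. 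Since the arcs $v_i$ are pairwise disjoint and disjoint from the $x_j$, summing over $i$ and $j$ gives the bound at once, with no combinatorial bookkeeping. If you rework your proof, replace the injection by this count and state the conclusion with the multiplicative constant.
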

 \begin{proof}
 	Suppose that after cyclic conjugation $w$ has the form $x_0c^{\epsilon_0} v_0c^{-\epsilon_0}x_1c^{\epsilon_1}v_1 c^{-\epsilon_1}\cdots x_nc^{\epsilon_n}v_nc^{-\epsilon_n}$. Then the perimeter of $R$ can be labeled by the word $v=x_0 v'_0x_1v'_1\cdots x_nv'_n$ where $x_0,x_1,\dots, x_n$ are part of $w$ and $v'_0, \dots, v'_n$ label the $c$-corridors, with $v'_i = c^{\epsilon_i}v_ic^{-\epsilon_i}$. By  Corollary \ref{c-corridor length}, $|v'_i| \leq |v_i|$, and so $|v|\leq |w|$.
 \end{proof}	 
 
 \subsection{Alternating corridors}	 
 
 When $$M_{\Xi} \ = \ \langle a, b ,c, t \mid a^{t}=ab^{k},  \ b^{t}=b,\  c^{t}=c a^lb^m, \ [a,b]=1\rangle$$ for some $k, l, m \in \Z$,  killing $b$ maps $M_{\Xi}$ onto   $$Q_l  \ :=  \ \langle a, c, t \mid a^t=a,  \ c^t = ca^l \rangle.$$  
 The elements $b$ and $c$ do not commute in  $M_{\Xi}$, so  $M_{\Xi}$ is  not a central extension of $Q_l$.  Nevertheless, we will use a variant of the electrostatic model to establish upper bounds on area in   $M_{\Xi}$.    The purpose of this section is to  provide necessary  information about van~Kampen diagrams over $Q_l$. We begin with the case $l=1$.   Setting $\alpha := ta^{-1}$, we see that  
 $${Q}'  \ := \  \langle \alpha, c, t \! \mid \! \alpha^t=\alpha, ~ t^c = \alpha \rangle$$ 
 and $Q_1$ are the same group.
 
 \begin{definition}\label{def:alternatingcorridor}
 	A \emph{$c$-face} is a  2-cell in a van~Kampen diagram $\Delta$ over ${Q}'$ corresponding to the defining relation $t^c = \alpha$.   Partial $\alpha$- and $t$-corridors in  $\Delta$ fit together in an alternating way: where a partial $\alpha$-corridor ends at a $c$-face in the interior of a diagram, a partial $t$-corridor   begins, and where this ends, another partial $\alpha$-corridor begins.  An \textit{alternating corridor} in $\Delta$ is a maximal union of $\alpha$-partial corridors, $t$-partial corridors and the $c$-faces  between them, fitting together in this way---see Figure \ref{fig:alternatingcorridor}. \end{definition} 
 
 Like  a standard corridor, an alternating corridor either closes up on itself or it connects two boundary edges. It is possible for alternating corridors to self-intersect, but, as we will see shortly, in a reduced diagram,  alternating corridors do not self-intersect or form annuli. Every face in $\Delta$ is part of some alternating corridor. Like standard and partial corridors, an  alternating corridor has a top and a bottom: the internal $\alpha$- and $t$-edges are directed from the bottom to the top (again, see the figure).
 
 \begin{center}
 	\begin{figure}[!ht]
 		\includegraphics[scale=.3]{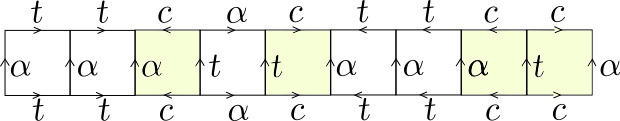}
 		\caption{An alternating corridor} \label{fig:alternatingcorridor}
 	\end{figure}
 \end{center}

 \begin{lemma}\label{combinedlemma} Suppose $\Delta$ is a van Kampen diagram over $\langle \alpha, c, t \! \mid \! \alpha^t=\alpha,~ t^c = \alpha \rangle$ in which all $c$-corridors   and all $\alpha$- and $t$-partial corridors are reduced. (See Figures \ref{fig:ok non-reduced} and \ref{notallowed nonreductions}.) Then in $\Delta$:
 	\begin{enumerate} 
 		\item \label{alternating1} A $c$-corridor $\eta$ and an alternating corridor $\tau$ can cross at most once.
 		\item \label{annuli} Alternating corridors do not form annuli.
 		\item \label{alternating2} A single alternating corridor can never cross itself.
 		\item \label{alternating3} Two alternating corridors cannot cross more than once.
 	\end{enumerate}
 \end{lemma}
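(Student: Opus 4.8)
The plan is to read everything off the local structure of van~Kampen diagrams over $Q'=\langle \alpha,c,t\mid \alpha^t=\alpha,\ t^c=\alpha\rangle$. There are exactly two kinds of $2$-cell: the square coming from $\alpha^t=\alpha$, which is the body of the $\alpha$- and $t$-partial corridors, and the $c$-face coming from $t^c=\alpha$, which carries one $\alpha$-edge, one $t$-edge and two $c$-edges. A $c$-face is therefore at once the unique cap of an $\alpha$-partial corridor, the unique cap of a $t$-partial corridor, and an interior cell of a (full) $c$-corridor. I would first record the elementary consequences of this, all immediate from the $2$-cell count: (i) each $\alpha$-partial corridor, each $t$-partial corridor and each $c$-face lies on exactly one alternating corridor, namely the one obtained by splicing capped partial corridors as in Definition~\ref{def:alternatingcorridor}; hence (ii) an alternating corridor shares a cell with a $c$-corridor only at $c$-faces, and two alternating corridors, or a single alternating corridor with itself, can share a cell only at an $\alpha^t=\alpha$-square, and there transversally, with one strand running ``the $\alpha$-way'' and the other ``the $t$-way''; and (iii) as in Definition~\ref{def:alternatingcorridor} an alternating corridor has a coherent top and bottom, and moreover, once all $c$-corridors and all $\alpha$- and $t$-partial corridors are reduced, each of its two sides is labelled by a \emph{reduced} word on $\{\alpha,t,c\}^{\pm1}$ which is non-empty unless the corridor is trivial. (For (iii): within one tower a side reads $t^{\pm n}$ or $\alpha^{\pm n}$ without cancellation by reducedness, and there is no cancellation across a $c$-face since its $\alpha$-edge and $t$-edge lie on opposite sides of the corridor; a $c^{\pm1}c^{\mp1}$ caused by a degenerate length-zero partial corridor between two $c$-faces is excluded by the reducedness hypotheses.)

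Granting this, parts (1), (3) and (4) are the orientation-tracking argument of Lemma~\ref{lemma:ct 1x intersections}, applied now to the partial corridors out of which alternating corridors are built rather than to full $t$-corridors. For (1): if a reduced $c$-corridor $\eta$ met an alternating corridor $\tau$ more than once, then — all intersections being at $c$-faces by (ii) — passing to the pair of intersections innermost along $\eta$ (the innermost-pair argument of Lemma~\ref{lemma:bigon}) yields a bigon whose two corner cells $E_1,E_2$ are $c$-faces. Exactly as in the proof of Lemma~\ref{lemma:ct 1x intersections}, the orientation of the $t$-edge of $E_1$ propagates coherently along $\eta$ — whose $t$-side reads $t^{\pm k}$ without cancellation because $\eta$ is reduced — to a prescribed orientation of the $t$-edge of $E_2$, and independently propagates along $\tau$, using the coherent transverse orientation from (iii), to the opposite orientation of that edge, a contradiction. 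Parts (3) and (4) run identically with $\eta$ replaced by one of the two crossing strands: an innermost double crossing of two alternating corridors, or of one alternating corridor with itself, produces a bigon whose corners $E_1,E_2$ are $\alpha^t=\alpha$-squares by (ii), and reducedness of the $\alpha$-partial corridor carrying one strand through $E_1$ together with reducedness of the $t$-partial corridor carrying the other forces incompatible orientations of the relevant edges of $E_2$ when these are propagated around the bigon along the two boundary strands.

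Part (2) then follows from (4) by an innermost-annulus argument. Suppose some alternating corridor is an annulus; choose an innermost such $\tau$ and let $D'$ be the subdiagram it cuts out. Every $2$-cell of $D'$ lies on an alternating corridor $\rho$; by (4), $\rho$ shares at most one cell with $\tau$, but a simple closed curve is crossed transversally an even number of times by any arc with both endpoints outside it and by any other simple closed curve, so $\rho$ shares no cell with $\tau$ and is confined to $D'$. Then $\rho$ is itself an alternating annulus lying strictly inside $\tau$, contradicting minimality; hence $D'$ contains no $2$-cells. But then the inner boundary circle of $\tau$ bounds a subcomplex with no $2$-cells, so its label freely reduces to the empty word, contradicting (iii).

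The main obstacle is the ``holonomy'' bookkeeping of the orientation-tracking argument used for (1), (3) and (4): one has to verify that following the $t$-edge (resp.\ $\alpha$-edge) of $E_1$ around the bigon along the two boundary strands really yields opposite orientations at $E_2$. This is delicate precisely because an alternating corridor, unlike an ordinary corridor, turns corners, so the heart of the matter is (iii) — that the coherent orientation survives passage through a $c$-face; once that is in hand the conflict at $E_2$ is forced exactly as in Lemma~\ref{lemma:ct 1x intersections}. Everything else — the innermost-pair and innermost-annulus extractions, the parity observation, and the structural claims (i)--(iii) — is routine, with the length-zero degeneracies absorbed by the standing reducedness hypotheses.
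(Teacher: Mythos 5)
Your structural observations (i)--(iii) are sound --- (iii) is precisely the coherent top/bottom built into Definition~\ref{def:alternatingcorridor} --- and your part~(\ref{alternating1}) matches the paper's. The genuine gap is in parts~(\ref{alternating2}) and (\ref{alternating3}). Part~(\ref{alternating2}) is not a bigon statement: a \emph{single} self-crossing of an alternating corridor produces a closed sub-loop of cells beginning and ending at one cell $E$, through which the corridor passes once in $\alpha$-mode and once in $t$-mode. There is no second corner $E_2$ at which to exhibit an orientation conflict, and at $E$ itself the two passages use disjoint edge pairs (the $\alpha$-edges for one, the $t$-edges for the other), so no edge receives two incompatible orientations. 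Your argument only rules out a corridor crossing itself at least \emph{twice}. The missing idea (the paper's) is that the sub-loop must contain an odd number of $c$-faces in order to switch modes between the two passages through $E$; the $c$-corridor through such a $c$-face must cross the closed loop an even number of times, hence at least twice, contradicting part~(\ref{alternating1}).

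For part~(\ref{alternating3}), your propagation is only actually carried out when no $c$-corridor crosses the bigon, i.e.\ when the two strands are pure $\alpha$- and $t$-partial corridors. When $c$-faces occur on the strands, the modes of the two corridors at $E_2$ may be swapped relative to $E_1$ (exactly when each strand carries an odd number of $c$-faces), so the ``relevant edges of $E_2$'' are not the analogues of those at $E_1$, and the conflict is not ``exactly as in Lemma~\ref{lemma:ct 1x intersections}''; coherence through $c$-faces does not by itself force it, and the remaining holonomy bookkeeping --- which you yourself flag as the heart of the matter --- is not done. The paper sidesteps this: take the $c$-corridor whose two crossing $c$-faces are nearest $E_1$ along the strands; the resulting triangle has for its other two sides a pure $\alpha$-partial corridor and a pure $t$-partial corridor, which would have to terminate on the \emph{same} side of that reduced $c$-corridor --- impossible, since one side of a $c$-corridor carries only $t$-edges and the other only $\alpha$-edges. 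Finally, your part~(\ref{annuli}) is routed through (\ref{alternating2}) and (\ref{alternating3}) and so inherits these gaps (and its last step needs the side word of the annulus to be cyclically reduced, which requires excluding adjacent $c$-faces); the paper instead derives (\ref{annuli}) from (\ref{alternating1}) alone: an alternating annulus can contain no $c$-face, hence is a pure $t$- or $\alpha$-annulus whose side word is a nonzero power of an infinite-order element.
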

 
 \begin{center}
 	\begin{figure}[!ht]
 		\centering
 		\includegraphics[scale=0.35]{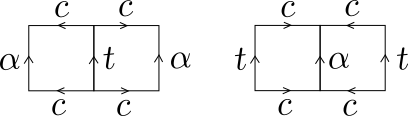}
 		\caption{Non-reduced subdiagrams that can occur in $\Delta$} \label{fig:ok non-reduced}
 	\end{figure} 
 	\begin{figure}[!ht]
 		\centering
 		
 		\begin{subfigure}[t]{4.5cm}
 			\centering
 			\includegraphics[scale=0.35]{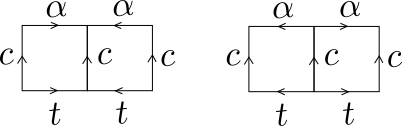}
 			\caption{$c$-corridors are reduced in $\Delta$}
 			\label{fig:reduced c-corridors}
 		\end{subfigure} \hspace{.8cm} \begin{subfigure}[t]{7cm}
 		\centering
 		\includegraphics[scale=0.35]{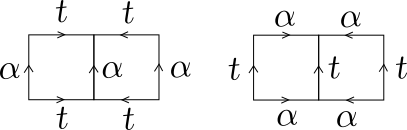}
 		\caption{$\alpha$- and $t$- partial corridors are reduced in $\Delta$}
 		\label{fig: t and alpha reduced}
 	\end{subfigure}
 	\caption{Non-reduced subdiagrams \emph{not} occurring in $\Delta$} \label{notallowed nonreductions}
 \end{figure}
\end{center}

\begin{proof}
	For (\ref{alternating1}) it suffices (see Lemma \ref{lemma:bigon}) to prove that it is impossible to have a bigon of an alternating corridor $\tau$ and a $c$-corridor $\eta$ in $\Delta$. Since $c$-corridors in $\Delta$ are reduced, the top of the $c$-corridor is labeled by a power of $\alpha$ without any free reduction. As in our proof of Lemma \ref{lemma:ct 1x intersections}, $\tau$ and $\eta$ specify inconsistent orientations for the $t$ edge in the second common 2-cell, as in Figure \ref{fig:cdoublecross}.
	
	For (\ref{annuli}), suppose for a contradiction, that there is such an annulus. It cannot contain any $c$-faces, as this would force a $c$-corridor to cross the alternating annulus twice. If our annulus contains no $c$-faces, then it is either a $t$- or $\alpha$-annulus. The word along the top of the annulus is a power of $\alpha$ or $t$, respectively. Such an annulus would imply that $t$ or $\alpha$ have finite order, but both are infinite order elements of $N_{1}$. 
	
	\begin{figure}[!ht]
		\centering
		\begin{subfigure}[t]{5cm}
			\includegraphics[scale=0.5]{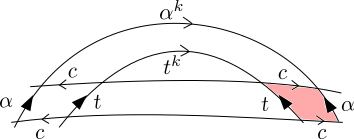}
			\centering
			\caption{$c$-corridors cannot cross alternating corridors more than once}
			\label{fig:cdoublecross}
		\end{subfigure}\hspace{1cm}
		\begin{subfigure}[t]{5.5cm}
			\centering
			\includegraphics[scale=0.5]{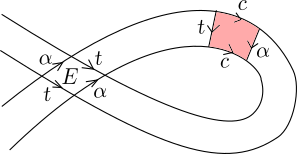} 
			\caption{Alternating corridors do not self-intersect.}
			\label{fig:self-intersection}
		\end{subfigure}
		
		\vspace{0.5cm}
		
		\begin{subfigure}[b]{5cm}
			\centering
			\includegraphics[scale=0.5]{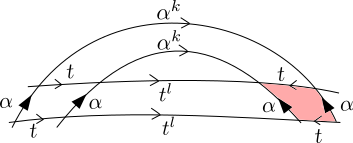}
			\caption{$\alpha$- and $t$- partial corridors cannot cross more than once.}
			\label{fig:tadoublecross}
		\end{subfigure}
		\hspace{1cm}
		\begin{subfigure}[b]{5.5cm}
			\centering
			\includegraphics[scale=0.5]{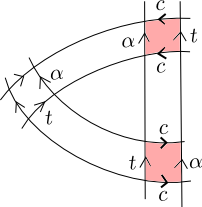}
			\caption{Distinct alternating corridors cannot cross more than once.}
			\label{fig:c-triangle}
		\end{subfigure}
		\caption{Impossible behavior for alternating corridors.}\label{fig:alternatingcorridors}
	\end{figure}

	For (\ref{alternating2}), suppose for a contradiction that an alternating corridor  $\eta$ has a self-intersection. An alternating corridor can only have a self-intersection at a 2-cell corresponding to the relation $[\alpha, t]=1$. Let $\hat{\eta} \subset \eta$ be a subcorridor of $\eta$ that begins and ends at the self-intersection. Call this first and final 2-cell $E$. 
	
	The 2-cell $E$ is part of both $t$- and $\alpha$- partial corridors in $\hat{\eta}$; therefore $\hat{\eta}$ contains at least one $c$-face (in particular, an odd number of $c$-faces in order to get both an $\alpha$- and $t$-segment at the intersection). Each $c$-face in $\hat{\eta}$ is part of a $c$-corridor. By (\ref{alternating1}), $c$-corridors can only cross $\hat{\eta}$ once, but each $c$-corridor must cross $\hat{\eta}$ at least twice, since $\hat{\eta}$ is an annulus. 
	
	For (\ref{alternating3}), assume for the contradiction that two alternating corridors cross at least twice. Again, we can find a bigon of alternating corridors. There are two cases. In one, no $c$-corridors intersect the bigon. In this case, one of the alternating corridors is a partial $t$-corridor, and the other is a partial $\alpha$-corridor. An argument like Lemma~\ref{lemma:ct 1x intersections} shows that this kind of double intersection is impossible when $t$- and $\alpha$- partial corridors are reduced (see Figure~\ref{fig:tadoublecross}). In the other case, at least one $c$-corridor intersects the bigon. We look at the triangle formed by the two bigons and the first $c$-corridor to cross them. Since it is the first such $c$-corridor, we have an $\alpha$- and $t$-partial corridor that both need to end on the same side of a $c$-corridor. However, $c$-corridors always have $t$'s along the bottom and $\alpha$'s along the top --- there cannot be both $\alpha$'s and $t$'s on the same side of the $c$-corridor. Figure \ref{fig:c-triangle} illustrates this contradiction. Therefore neither case happens.\end{proof}

\subsection{Quadratic area diagrams over \texorpdfstring{$Q_1$}{Q1}} 

\begin{definition} \label{def:pairingpattern} 
	A $c$-\textit{pairing} for a word $w$  is any pairing off of the $c$ in $w$ with the $c^{-1}$   in $w$.

	If $w$ represents the identity in $M_{\Xi}$, then a van~Kampen diagram $\Delta$ for $w$ induces a $c$-pairing: a $c$ and a $c^{-1}$ are paired when they are joined by a $c$-corridor in $\Delta$. We say that a $c$-pairing is   \textit{valid} if it is induced by a van Kampen diagram for $w$. (Not all $c$-pairings of a word  need be valid. Valid $c$-pairings need not be unique.)
\end{definition}

The Dehn function of $${Q}  \ := \  {Q}_1   \ = \  \langle a, c, t \mid a^t=a,  \ c^t = ca \rangle$$ grows at most quadratically, as it is a free-by-cyclic group.  The point of the following lemma is that this quadratic area bound can be realized on diagrams witnessing any prescribed valid $c$-pairing.

\begin{lemma}\label{lemma:definingtheta1} There exists $A >0 $ such that for any word $u$ representing the identity in ${Q}$ (not necessarily freely reduced), and for any valid $c$-pairing $P$ of $u$,  there is a van~Kampen diagram $\Theta$ for $u$ over ${Q}$  that induces  $P$, has $\Area(\Theta) \leq A |u|^2$, and has reduced $c$-corridors.  
\end{lemma}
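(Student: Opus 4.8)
The plan is to start from any van Kampen diagram witnessing the validity of $P$ and improve it to a quadratic-area diagram without disturbing the pairing. The three moves are: make the $c$-corridors reduced (which pins down their lengths), cut the diagram open along them, and re-fill the resulting pieces efficiently over $\Z^2 = \langle a,t \mid [a,t] \rangle$.

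Since $P$ is valid, fix a van Kampen diagram $\Delta_0$ for $u$ over ${Q}$ inducing $P$. Whenever a $c$-corridor of $\Delta_0$ contains two consecutive two-cells whose union has freely trivial boundary word, excise that pair and re-glue: the corridor is shortened by two, it still joins the same two $c$-edges, the boundary $u$ is untouched, and the area does not increase. Iterating gives a diagram $\Delta_1$ for $u$, inducing $P$, with all $c$-corridors reduced and $\Area(\Delta_1) \le \Area(\Delta_0)$. Since $c$ occurs in the single relator $c^t = ca$ only, the reasoning of Lemma~\ref{lemma:ct 1x intersections} and Corollary~\ref{c-corridor length} applies verbatim over ${Q}$: the length of each reduced $c$-corridor of $\Delta_1$ equals the absolute value of the $t$-exponent-sum of the arc of $u$ between the two $c$-edges it joins, hence is at most $|u|$, so the $c$-corridors account for at most $\tfrac12|u|^2$ two-cells.

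Now cut $\Delta_1$ along all of its $c$-corridors. As $c$ appears in no other relator, each resulting piece is $c$-free, hence a van Kampen diagram over $\langle a,t \mid [a,t] \rangle \cong \Z^2$; call them $R_1,\dots,R_s$ with $s \le \tfrac12|u|+1$. Each $\partial R_i$ is a concatenation of sub-arcs of $u$ on $\{a,t\}^{\pm1}$ with sides of the bordering $c$-corridors, a corridor of length $\ell$ contributing $t^{\pm\ell}$ along one side and $(ta^{-1})^{\pm\ell}$ along the other; so the words $\partial R_i$ are determined by $u$, $P$, and the planar layout of the corridors. Replace each $R_i$ by a minimal-area van Kampen diagram over $\Z^2$ with boundary word $\partial R_i$, and reassemble. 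The result $\Theta$ is a van Kampen diagram for $u$ over ${Q}$, has reduced $c$-corridors, and induces $P$ (the $c$-corridors were not touched), so it remains to bound $\Area(\Theta) = \sum_\eta |\eta| + \sum_i \Area(R_i)$.

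The first sum is at most $\tfrac12|u|^2$; the second is the crux, and is where the estimate must be global. The naive bound $\Area(R_i) \le \tfrac14 (\#a\text{ on }\partial R_i)(\#t\text{ on }\partial R_i)$ together with $|\partial R_i| \le C|u|$ (proved as in Corollary~\ref{c complementary region length}) yields only $\sum_i \Area(R_i) = O(s|u|^2) = O(|u|^3)$, so the $R_i$ cannot be handled one at a time. Instead I would exploit that all the $R_i$ are diagrams over the \emph{same} group $\Z^2$ and are attached to the $c$-corridors along grid-straight segments (powers of $t$ on one side of each corridor, powers of $ta^{-1}$ on the other): the standard grid fillings of the $R_i$ can be chosen as compatible subcomplexes of one ambient $\Z^2$-grid, so that $\sum_i \Area(R_i)$ is bounded by the area of a single grid region, which is quadratic in $|u|$. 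A fallback, should that bookkeeping prove unwieldy, is to invoke Bridson--Groves~\cite{BridsonGroves} for the quadratic Dehn function of ${Q} = F_2 \rtimes \Z$, producing an efficient diagram $\Delta'$ for $u$, then to superimpose $\Delta_1$ and $\Delta'$ along their common boundary to form a spherical diagram and use the non-crossing of $c$-corridors to surger $\Delta'$ into a quadratic-area diagram inducing the pairing $P$ of $\Delta_1$. In either case, taking $A$ large enough to absorb the constants finishes the proof.
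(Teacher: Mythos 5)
Your setup is sound through the point where you cut along the reduced $c$-corridors and reduce everything to bounding $\sum_i \Area(R_i)$, and you are right to flag that the naive per-region estimate only gives $O(|u|^3)$. But that sum being $O(|u|^2)$ \emph{is} the lemma, and neither of your two proposed mechanisms establishes it. The ``one ambient $\Z^2$-grid'' idea is not an argument: the regions $R_1,\dots,R_s$ are distinct diagrams glued along corridor sides, with no common coordinate system, and their total boundary length is already of order $|u|^2$ (each of the up-to-$|u|/2$ corridors contributes two sides of length up to $|u|$), so there is no reason their grid fillings embed disjointly into a single region of quadratic area. Concretely, with many length-$\Theta(|u|)$ $c$-corridors in series, each intermediate region has boundary of length $\Theta(|u|)$ and could a priori require area $\Theta(|u|^2)$ on its own; ruling this out requires a \emph{global} argument tracking how $t$-corridors continue across $c$-corridors from one region to the next. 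The fallback is no better: superimposing $\Delta_1$ with a Bridson--Groves diagram $\Delta'$ gives a sphere, but you give no procedure for the ``surgery'' that transfers the prescribed pairing $P$ onto a quadratic-area diagram, and if such a transfer were routine the lemma would be immediate from Bridson--Groves, which it is not.

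The paper's actual mechanism is the missing idea. One passes to the presentation ${Q}' = \langle \alpha, c, t \mid \alpha^t=\alpha,\ t^c=\alpha\rangle$ (with $\alpha = ta^{-1}$), in which a partial $t$-corridor ending on a $c$-face continues as a partial $\alpha$-corridor on the other side; chaining these gives the \emph{alternating corridors} of Definition~\ref{def:alternatingcorridor}. Lemma~\ref{combinedlemma} shows that, once $c$-corridors and the $\alpha$- and $t$-partial corridors are reduced, alternating corridors do not self-intersect and cross each other and the $c$-corridors at most once; hence each of the at most $|u'|/2$ alternating corridors has length at most $|u'|/2$, and since every $2$-cell of every complementary region lies on some alternating corridor, $\sum_i \Area(R_i) \leq |u'|^2/4$. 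That crossing analysis is precisely the global bookkeeping your ``ambient grid'' gestures at, and without it (or a substitute of comparable strength) the quadratic bound is unproven.
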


\begin{proof}
	Let $\Delta$ be a van~Kampen diagram for $u$ over ${Q}$ that realizes the given $c$-pairing.
	
	Instead of ${Q}$ we will work with 
	$${Q}'  \ := \  \langle \alpha, c, t \! \mid \! \alpha^t=\alpha, ~ t^c = \alpha \rangle,$$ which,  recall, we can see   presents the same group by setting $\alpha := ta^{-1}$. 
	
	Two finite presentations $\langle A_1 \mid R_1 \rangle$ and $\langle A_2 \mid R_2 \rangle$ of the same group have $\simeq$-equivalent Dehn functions \cite{Alonso, Gersten}. In outline, the  proofs   in  \cite{Alonso, Gersten} go as follows.  For each $a \in  A_1$, pick  a word $u_a=u_a(A_2)$ representing the same group element.  Suppose a word $w_1 = w_1(A_1)$  represents $1$ in  $\langle A_1 \mid R_1 \rangle$.  Let $w_2$  be the word obtained from $w_1$ by replacing all of its letters  $a^{\pm 1}$ by ${u_a}^{\pm 1}$.   A van~Kampen diagram $w_1$ over  $\langle A_1 \mid R_1 \rangle$  can be  converted  to a  van~Kampen diagram for $w_2$  over $\langle A_2 \mid R_2 \rangle$  of comparable area by converting each edge labeled $a$ to a path labeled $u_a$ and then filling all the faces. Each relator in $R_1$ can be rewritten as a word representing the identity in $A_2$, and each can then be filled with at most some constant number of  relators in $R_2$, so the area of the diagram over $\langle A_2 \mid R_2 \rangle$ will be no more than a constant multiple of the area of the diagram over $\langle A_1 \mid R_1 \rangle$.

	In the instance of  ${Q}$ and ${Q}'$, the $c$-pairings induced by the two diagrams agree, and so  it suffices to prove the lemma for ${Q}'$ instead of $Q$. 
	
	Given $u=u(a,c,t)$, let $u'$ be the word obtained from  $u(\alpha^{-1} t, c,t)$ by cancelling away all $\alpha^{\pm 1} \alpha^{\mp 1}$ and all $t^{\pm 1} t^{\mp 1}$ (but not all $c^{\pm 1} c^{\mp 1}$).   Then $|u^{\prime}|\leq 2|u|$. 
	Construct a van~Kampen diagram $\Theta'$ for $u'$ over ${Q}'$ as follows.  Begin with a planar polygon with edges directed and labeled so that one reads $u'$ around the perimeter.  Insert reduced $c$-corridors of 2-cells  (each with perimeter $t^c \alpha^{-1}$) mimicking the pattern of $c$-corridors in $\Delta$.  Fill the complementary regions with minimal area sub-diagrams over $\langle \alpha, t \mid \alpha^t =\alpha \rangle$. The words around their perimeters represent the identity in  $\langle \alpha, t \mid \alpha^t =\alpha \rangle$ because the words around the corresponding loops in  $\Delta$ represent the identity in $\langle a, t \mid  a^t = a \rangle$. Since the complementary regions are filled with minimal area subdiagrams, all $\alpha$- and $t$- partial-corridors in $\Theta'$ are reduced.
	
	Lemma~\ref{combinedlemma} implies that the length of any alternating corridor $\mathcal{A}$ in our diagram is bounded above by the total number of $c$-corridors and alternating-corridors that intersect $\mathcal{A}$. Since there are in total no more than $|u'|/2$ $c$-corridors and alternating corridors, the length of $\mathcal{A}$ is at most $|u'|/2$. Similarly, the length of each $c$-corridor is at most $|u|/2$  by Lemma \ref{c-corridor length}, and there are fewer than $|u|/2$ many $c$-corridors. So altogether, $$\Area(\Theta^{\prime}) \ \leq \  \frac{|u'|^2 + |u|^2}{4}  \ \leq \  2|u|^2.$$
\end{proof}

\subsection{Quadratic area diagrams over \texorpdfstring{$Q_l$}{Ql}} 
In the previous section we established that given a valid $c$-pairing for a word representing the identity in $Q_1$, we can construct a quadratic area van Kampen diagram with that $c$-pairing. In this section, we leverage Lemma~\ref{lemma:definingtheta1} to the case where we have a valid $c$-pairing for a word representing the identity in $Q_l$. Our main strategy is to rewrite words representing the identity in $Q_l$ to words in $Q_1$, where we can apply Lemma~\ref{lemma:definingtheta1} to build a van~Kampen diagram. Then we convert it to a diagram over $Q_l$. 

Recall that  $$Q_l  \ :=  \ \langle a, c, t \mid a^t=a,  \ c^t = ca^l \rangle.$$ 

Define $${Q}^{\tau}_{1}  \ :=  \ \langle a, c, \tau | a^{\tau}=a, c^{\tau}=ca\rangle.$$  Identifying $t$ with $\tau^l$  gives an isomorphism of $Q_{l}$ with the index $l$ subgroup of $Q^{\tau}_{1}$ generated by $a,c,$ and $\tau^l$.

\begin{proposition}\label{lemma:wordconvertingZ2astZ} If  $u$ is a (not necessarily freely reduced) word representing the identity in ${Q}_l$  and $P$ is a  valid $c$-pairing  of $u$, there exists a van~Kampen diagram for the corresponding word $v:=u(a,c,\tau^l)$ in $Q_1^{\tau}$ with a corresponding $c$-pairing.    
\end{proposition}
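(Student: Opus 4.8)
The identification $t\mapsto\tau^{l}$ (together with $a\mapsto a$, $c\mapsto c$) embeds $Q_{l}$ into $Q^{\tau}_{1}$ as the subgroup $\langle a,c,\tau^{l}\rangle$, because $a^{\tau^{l}}=a$ and $c^{\tau^{l}}=ca^{l}$ hold in $Q^{\tau}_{1}$. Hence $u=1$ in $Q_{l}$ gives $v=u(a,c,\tau^{l})=1$ in $Q^{\tau}_{1}$, so \emph{some} van~Kampen diagram for $v$ over $Q^{\tau}_{1}$ already exists; the real content of the proposition is that one can be chosen whose induced $c$-pairing matches $P$ under the obvious bijection between the $c$-letters of $v$ and those of $u$ (the substitution $t\mapsto\tau^{l}$ affects no $c$). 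The plan is to build such a diagram by expanding and re-filling, cell by cell, a reduced van~Kampen diagram $\Delta$ over $Q_{l}$ that realizes $P$; one exists since $P$ is valid.

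First I would fix two finite model diagrams over $Q^{\tau}_{1}$, depending only on the fixed constant $l$. Since $a^{t}=a$ and $c^{t}=ca^{l}$ are relators of $Q_{l}$, their images $\tau^{-l}a^{-1}\tau^{l}a$ and $\tau^{-l}c\,\tau^{l}a^{-l}c^{-1}$ under $t\mapsto\tau^{l}$ represent $1$ in $Q^{\tau}_{1}$; let $G$ and $D_{l}$ be minimal-area, hence reduced, van~Kampen diagrams over $Q^{\tau}_{1}$ for these words. The word filled by $G$ has no $c$, so $G$ has no $c$-edge. The word filled by $D_{l}$ has exactly two occurrences of $c^{\pm1}$, so $\partial D_{l}$ has exactly two $c$-edges; as a reduced diagram over $Q^{\tau}_{1}$ has no $c$-annulus (by the reasoning of the corollary following Lemma~\ref{lemma:ct 1x intersections}, using that $\tau$ has infinite order in $Q^{\tau}_{1}$), those two $c$-edges are joined by the single $c$-corridor of $D_{l}$, which is itself reduced.

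Next, subdivide every $t$-edge of $\Delta$ into a path of $l$ consecutive $\tau$-edges, so $t^{\pm1}$ is read off as $\tau^{\pm l}$, and leave all $a$- and $c$-edges unchanged. Then $\partial\Delta$ reads $v=u(a,c,\tau^{l})$, and the boundary circuit of each $a^{t}=a$-cell (resp.\ $c^{t}=ca^{l}$-cell) reads the word filled by $G$ (resp.\ by $D_{l}$) up to cyclic rotation and inversion. Excise the interior of every $2$-cell and glue in the matching model diagram --- a copy of $G$, or a suitably rotated and/or reflected copy of $D_{l}$; this is exactly the cell-by-cell change-of-presentation substitution used in the proof of Lemma~\ref{lemma:definingtheta1}. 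Because the subdivision of the $t$-edges is done on $\Delta$ once and for all and the $a$- and $c$-edges are untouched, adjacent model pieces agree along the edges they share, so they assemble into a van~Kampen diagram $\Delta'$ for $v$ over $Q^{\tau}_{1}$ with reduced $c$-corridors.

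Finally I would check that $\Delta'$ induces the $c$-pairing corresponding to $P$; this last step is the crux. A $c$-corridor of $\Delta$ passes only through $c^{t}=ca^{l}$-cells (an $a^{t}=a$-cell carries no $c$-edge); say it runs through $\sigma_{1},\dots,\sigma_{k}$, so its two endpoints on $\partial\Delta$ are the $c$-edges of $\sigma_{1}$ and of $\sigma_{k}$ that it does not use to cross to a neighbour. In $\Delta'$ each $\sigma_{i}$ has become a copy of $D_{l}$ whose single $c$-corridor joins the two $c$-edges of $\sigma_{i}$; concatenating these $k$ corridors across the shared $c$-edges gives one $c$-corridor of $\Delta'$ with the same two endpoints on $\partial\Delta'=\partial\Delta$. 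So the $c$-edges of $v$ are paired in $\Delta'$ exactly as $P$ pairs the $c$-edges of $u$, which is the asserted $c$-pairing. The main obstacle is thus precisely this transfer of the pairing, and it reduces to the single structural fact used above: the model piece $D_{l}$ must contain exactly one $c$-corridor joining its two boundary $c$-edges, which is guaranteed by the absence of $c$-annuli in reduced diagrams over $Q^{\tau}_{1}$.
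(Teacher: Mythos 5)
Your proposal is correct and follows essentially the same route as the paper: subdivide each $t$-edge of a diagram realizing $P$ into $l$ consecutive $\tau$-edges and replace each $2$-cell by a diagram over $Q_1^{\tau}$ for the corresponding relator, so that the $c$-corridors concatenate and the pairing is preserved. The only cosmetic difference is that the paper writes down the replacement pieces explicitly (an $a$-corridor of $l$ cells, and a $c$-corridor of $l$ cells together with $l(l-1)/2$ $a$-cells) rather than invoking minimal-area model diagrams and the no-$c$-annuli argument.
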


\begin{proof} 
	Suppose $u(a,c,t)$ represents the identity in  ${Q}_l$ and $\Theta_0$ is a van~Kampen diagram over ${Q}_l$ for $u$ inducing the $c$-pairing $P$.  
	Define $v:=u(a,c,\tau^l)$---that is, obtain $v$ by substituting a $(\tau^l)^{\pm 1}$ for every $t^{\pm 1}$ in $u$. Then $v$ represents the identity in ${Q}_{1}^{\tau} =  \ \langle a, c, \tau | a^{\tau}=a, c^{\tau}=ca\rangle$ and  $P$  induces a  valid $c$-pairing for $v$ (which we will also call $P$) since $\Theta_0$ can be converted to a van~Kampen diagram for $v$ over ${Q}_{1}^{\tau}$ with the same pattern of $c$-corridors as follows. First replace each $t$-edge  in $\Theta_0$ by a concatenation of $l$ $\tau$-edges. The resulting diagram has 2-cells of two types---those originating from the relation $a^t=a$ and those from the relation $c^t=ca^l$.  The perimeter words of these 2-cells become $a^{\tau^l}a^{-1}$  and $c^{\tau^l}(ca^{l})^{-1}$.  These words  are relators in ${Q}_{1}^{\tau}$: the first can be derived by $l$ applications of $a^{\tau}=a$ and the second by $l$ applications of $c^{\tau}=ca$ and $l(l-1)/2$ applications of $a^{\tau}=a$.  Accordingly, refine the diagram by replacing the  $a^{\tau^l}a^{-1}$ 2-cells with an $a$-corridor of $l$ 2-cells each labeled  $a^{\tau} a^{-1}$, and the  $c^{\tau^l}(ca^{l})^{-1}$ 2-cells   with a  $c$-corridor of $l$ 2-cells labeled  $c^{\tau}(ca)^{-1}$ together with $l(l-1)/2$ of the $a^{\tau} a^{-1}$  2-cells. The substitutions in the case $l=3$ are shown in Figure \ref{fig:replacement_Prop615}.  This process maintains the $c$-pairing during the change from $\mathcal{Q}_l$ to $\mathcal{Q}_1^{\tau}$.
	\begin{figure}[!ht]
		\centering
		\includegraphics[scale=0.2]{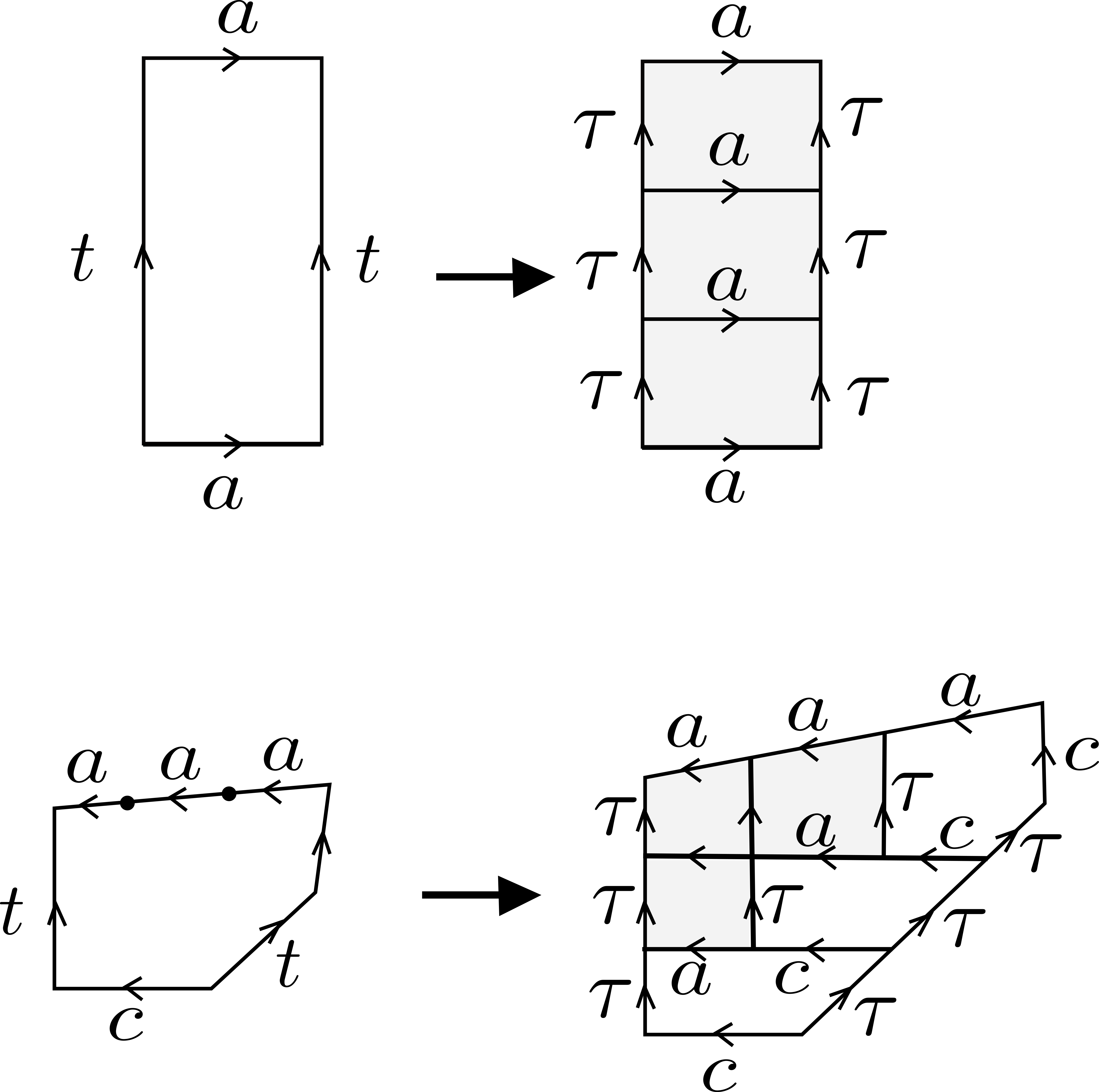}
		\caption{Converting $\Theta_0$ from $Q_l$ to $Q_1^{\tau}$ (illustrated with $l=3$) }
		\label{fig:replacement_Prop615}
	\end{figure}
\end{proof}

After producing a quadratic area van~Kampen diagram for $v$ in $Q_1^{\tau}$ that has $c$-pairing $P$, we want to use it to build a quadratic area van~Kampen diagram for $u$ in $Q_l$ that also has $c$-pairing $P$. The following lemma tells us that we will be able to replace $c$-corridors over $Q_1^{\tau}$ with $c$-corridors over $Q_l$, as they always occur in multiples of $l$.

\begin{lemma}{\label{Length of corridors}}
	Suppose $\Theta$ is van~Kampen diagram for a word $v = v(a, c, \tau^l)$ over 
	${{Q}^{\tau}_{1}   =    \langle a, c, \tau | a^{\tau}=a, c^{\tau}=ca\rangle}$, with reduced $c$-corridors.  Then every $c$-corridor in $\Theta$  has  length a multiple of $l$.
\end{lemma}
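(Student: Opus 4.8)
The plan is to compare the $\tau$-exponent sum along a side of the given $c$-corridor $\eta$ with the $\tau$-exponent sum along an arc of $\partial\Theta$, exploiting that in $v$ the generator $\tau$ occurs only in blocks $\tau^{\pm l}$. Write $e_\tau(\gamma)$ for the $\tau$-exponent sum of an edge-path $\gamma$ in $\Theta^{(1)}$ (the number of $\tau$-edges traversed with their orientation minus the number traversed against it).

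First I would pin down the shape of a $c$-corridor in a diagram over $Q_1^\tau=\langle a,c,\tau\mid a^\tau=a,\ c^\tau=ca\rangle$. The only defining relator containing $c$ is $\tau^{-1}c\tau a^{-1}c^{-1}$, and $\tau$ occurs in it exactly twice, once on each side of the two $c$'s; hence each $2$-cell of a $c$-corridor $\eta=(E_1,\dots,E_n)$ contributes exactly one $\tau$-edge to each of the two sides of $\eta$, so each side of $\eta$ is an edge-path with exactly $n$ edges labelled $\tau$. Because $\eta$ is reduced, consecutive $c$-rungs of $\eta$ are coherently oriented (two $2$-cells whose shared $c$-edge is read in the same direction by both would bound a subdiagram with freely trivial boundary word), so along either side of $\eta$ those $n$ edges labelled $\tau$ are all traversed in the same direction; reading a fixed side $\gamma$ of $\eta$ therefore gives $e_\tau(\gamma)=\pm n$, where $n$ is the length of $\eta$. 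The endpoints $x,y$ of $\gamma$ are endpoints of the two extreme $c$-edges of $\eta$, and since $\eta$ is maximal each such extreme $c$-edge is either on $\partial\Theta$ or is shared so as to make $\eta$ a $c$-annulus; the latter is impossible because a diagram with reduced $c$-corridors contains no $c$-annuli (the corollary following Lemma~\ref{lemma:ct 1x intersections}). Hence $x,y\in\partial\Theta$.

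Next I would use the hypothesis that $v=v(a,c,\tau^l)$. Reading $v$ around $\partial\Theta$, the letters $a^{\pm1}$ and $c^{\pm1}$ are separated by maximal runs of $\tau^{\pm1}$, and every such run is a concatenation of blocks $\tau^{\pm l}$, so its length and its $\tau$-exponent are divisible by $l$. As $x$ and $y$ are endpoints of $c$-edges on $\partial\Theta$, neither lies in the interior of a $\tau$-run; so, travelling from $x$ to $y$ along either boundary arc $\delta$, one crosses only complete $\tau$-runs, whence $e_\tau(\delta)\equiv 0\pmod l$. Finally, $\gamma$ followed by the reverse of $\delta$ is a closed edge-loop in $\Theta^{(1)}$; because every defining relator of $Q_1^\tau$ has $\tau$-exponent $0$, the quantity $e_\tau$ is unchanged when a path is pushed across a $2$-cell of $\Theta$, so it vanishes on every loop, $\Theta$ being simply connected. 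Therefore $\pm n=e_\tau(\gamma)=e_\tau(\delta)\equiv 0\pmod l$, i.e.\ $l\mid n$.

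I expect the only real obstacle to be the bookkeeping in the first step: one must check carefully that reducedness of $\eta$ prevents the $\tau$-edges along a side of $\eta$ from backtracking, so that $e_\tau(\gamma)$ is genuinely $\pm n$ rather than something smaller in absolute value, and that the two extreme $c$-edges of $\eta$ really do reach $\partial\Theta$. Granting those two points, the exponent-sum bookkeeping in the remaining steps is routine.
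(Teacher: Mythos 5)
Your proof is correct, and it is organized differently from the paper's. The paper argues by induction over the tree dual to the $c$-corridors: rooting that tree at a leaf, it computes the length of each corridor recursively as a signed sum of the lengths of the corridors bounding an adjacent complementary region together with the $\tau^{\pm l}$-blocks of $\partial \Theta$ incident to that region, the base case being the leaf regions, where the corridor length is the $\tau$-exponent sum of the boundary arc between its two ends (as in Corollary~\ref{c-corridor length}). You instead treat each corridor in isolation: since the $\tau$-exponent sum kills every defining relator, it vanishes on every loop in the simply connected diagram, so the exponent sum $\pm n$ along a side of the corridor equals the exponent sum along a boundary arc joining its two boundary $c$-edges, and the latter is divisible by $l$ because those endpoints sit at block boundaries of $v(a,c,\tau^l)$. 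This short-circuits the induction and makes explicit exactly where reducedness enters (to prevent cancellation along the side, so that the exponent sum really is $\pm n$ rather than something smaller). The two points you flag as needing care are both already available in the paper: the first is precisely Remark~\ref{reduced remark} applied to the corridor relation $\tau^c=\tau a$, and the second is the absence of $c$-annuli, whose proof (the corollary following Lemma~\ref{lemma:ct 1x intersections}) transfers verbatim to $Q_1^{\tau}$ --- and is in any case implicitly assumed in the paper's own proof, since otherwise the dual graph would not be a tree.
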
	
\begin{proof}
	Let $\mathcal{T}$   be the  tree dual to  the $c$-corridors in $\Theta$---that is,  $\mathcal{T}$ has a  vertex dual  to each  $c$-complementary region and an edge dual to each   $c$-corridor; the leaves of $\mathcal{T}$ correspond to regions which have one single $c$-corridor in their perimeter.  (See Section \ref{sec: vK diagrams,corridors, DF}.) Pick any leaf $r$ of $\mathcal{T}$ to serve as the \emph{root}.  There is a bijection between vertices $v \neq r$ of $\mathcal{T}$ and $c$-corridors $C_v$: take $C_v$ to be  dual to the first edge of the geodesic in $\mathcal{T}$ from $v$ to $r$.
	
	We will show by reverse induction on distance in $\mathcal{T}$ from $v$ to $r$ (i.e.\ starting from the leaves and working towards  $r$), that the length of $C_v$ is a multiple  of $l$.  Indeed  when $v$ is a leaf,   the length of $C_v$ is  the index-sum of the $\tau^{\pm 1}$ in the boundary between the paired $c$-edges and $\tau$ only appears in multiples of $l$ in $v$, so the result holds.  For the induction step, suppose  $v \neq r$.  The length of $C_v$ is the exponent sum of the lengths of $C_{v'}$ (with appropriate signs) over every parent $v'$ of $v$ (each a multiple of $l$, by induction hypothesis) and of the  $\tau^l$ in the boundary of  $\Theta_1$  that are also  in the boundary of the subdiagram dual to $v$. 
\end{proof}

Next we examine how to build a filling for a $c$-complementary region over $Q_{l}$ from a filling for a $c$-complementary region over $Q_{1}^{\tau}$ when their boundaries are compatible.

\begin{lemma}\label{shufflingsequence} Suppose $w = w(a, \tau^l)$ has  a van Kampen diagram $\mathcal{D}$ over ${\langle a, \tau \mid a^\tau = a \rangle}$ of  area $A$.  Then $w(a, t)$ has a van Kampen  diagram $\mathcal{D}''$ over ${\langle a, t \mid a^t=a \rangle}$ of area at most $A$.
\end{lemma}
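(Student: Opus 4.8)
The plan is to pass through the classical description of filling areas in $\mathbb{Z}^2$. Both $\langle a,\tau \mid a^\tau = a\rangle$ and $\langle a,t \mid a^t=a\rangle$ present $\mathbb{Z}^2$, and for each the Cayley $2$-complex is the Euclidean plane $\mathbb{R}^2$ tiled by unit squares, with the two generators acting as the unit horizontal and vertical translations. For a word $v$ in such generators that represents $1$, let $\ell_v$ be the closed edge-loop obtained by reading $v$ (a signed horizontal step for each $a^{\pm1}$, a signed vertical step for each $\tau^{\pm1}$ or $t^{\pm1}$), and for a unit square $s$ let $W(\ell_v,s)$ denote the winding number of $\ell_v$ about the centre of $s$. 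I will use two standard facts: (i) every van~Kampen diagram for $v$ over either presentation has area at least $\sum_s |W(\ell_v,s)|$ --- this follows from mapping the diagram combinatorially onto the Cayley $2$-complex, extending the parametrisation of the boundary loop, and observing that the signed number of $2$-cells covering a given square equals the winding number of the boundary loop about its centre; and (ii) there is a van~Kampen diagram for $v$ of area exactly $\sum_s|W(\ell_v,s)|$.

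First I would apply (i) to the given diagram $\mathcal D$ for $w(a,\tau^l)$, obtaining $\sum_s |W(\ell,s)| \le A$ where $\ell := \ell_{w(a,\tau^l)}$ (here $a$ is the horizontal, $\tau$ the vertical generator).

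Next I would compare $\ell$ with $\ell' := \ell_{w(a,t)}$. Since $w$ is a word in the letters $a^{\pm1}$ and $(\tau^l)^{\pm1}$, every occurrence of $a^{\pm1}$ in $w(a,\tau^l)$ lies between maximal vertical runs whose lengths are multiples of $l$; hence every horizontal edge of $\ell$ sits at a height divisible by $l$. Therefore, with $x_0$ fixed, the value $W(\ell,s)$ for $s=[x_0,x_0+1]\times[y_0,y_0+1]$ depends only on $j=\lfloor y_0/l\rfloor$: moving the test point vertically inside the open strip $lj<y<l(j+1)$ along a segment of non-integer $x$-coordinate crosses no vertical edge of $\ell$ (those have integer $x$-coordinate) and no horizontal edge of $\ell$ (those have height a multiple of $l$, hence lie outside the strip). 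The loop $\ell'$ is precisely the image of $\ell$ under the homeomorphism $(x,y)\mapsto(x,y/l)$ --- well defined on $\ell$ because its horizontal edges have heights divisible by $l$ --- which collapses each length-$l$ vertical run of $\ell$ to one vertical edge and preserves winding numbers. Consequently $W(\ell',[x_0,x_0+1]\times[j,j+1])$ equals the common value of $W(\ell,\cdot)$ on the $l$ squares of the column-strip $\{[x_0,x_0+1]\times[y_0,y_0+1] : lj\le y_0<l(j+1)\}$, and summing that constant over those $l$ squares and then over all $x_0,j$ gives $\sum_s|W(\ell,s)| = l\sum_s|W(\ell',s)|$.

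Finally I would invoke (ii) for $w(a,t)$ over $\langle a,t \mid a^t=a\rangle$ to produce a van~Kampen diagram $\mathcal D''$ of area $\sum_s|W(\ell',s)| = \tfrac1l\sum_s|W(\ell,s)| \le \tfrac{A}{l}\le A$, as required. The only substantial point is fact (i); I expect to cite it as classical --- it is exactly what lies behind the standard proof that $\mathbb{Z}^2$ has quadratic Dehn function --- or, for a self-contained treatment, to include the short degree-theoretic argument indicated above, where the one technicality is accounting for $2$-cells of the diagram that map degenerately onto an edge or a vertex. A purely corridor-theoretic argument is also available --- a reduced diagram over $\langle a,\tau\mid a^\tau=a\rangle$ has area equal to the number of crossings between its (pairwise non-crossing) $a$-corridors and $\tau$-corridors --- but bookkeeping the $l$-block structure of the boundary word is more awkward that way, so I would favour the approach above.
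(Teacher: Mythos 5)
Your proof is correct, but it takes a genuinely different route from the paper's. The paper stays entirely inside the corridor framework: it observes that the first edge of each $\tau$-segment of $\partial\mathcal{D}$ can only be joined by a $\tau$-corridor to the first edge of an oppositely oriented $\tau$-segment (an index-sum-mod-$l$ argument), uses those ``initial'' corridors to pair off whole segments, rebuilds a diagram $\mathcal{D}'$ in which each pairing is realised by a stack of $l$ parallel $\tau$-corridors (the $a$-edges left over fold together at no cost), and then collapses each stack to a single $t$-corridor to obtain $\mathcal{D}''$ of area at most $A$. You instead pass to the Euclidean model of $\mathbb{Z}^2$ and use the exact characterisation of filling area as total absolute winding number; the observation that all horizontal edges of $\ell$ lie at heights divisible by $l$, so that $(x,y)\mapsto(x,y/l)$ carries $\ell$ to $\ell'$ and the winding-number sum scales by $1/l$, is the right one and your verification is sound. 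Your route is shorter and even yields the sharper bound $A/l$, whereas the paper's is self-contained and produces a diagram with an explicit corridor structure matching the rest of its toolkit. The one thing you must supply or cite carefully is your fact (ii) --- that the winding-number lower bound is actually \emph{achieved} by some diagram --- since that is the step that constructs $\mathcal{D}''$ rather than merely bounding its area from below; it is true and classical (provable by induction on $\sum_s|W|$, pushing the boundary loop across a square of maximal $|W|$ at a cost of one $2$-cell per unit of winding number, with the base case being that a grid loop with all winding numbers zero is null-homotopic in the $1$-skeleton), but it is less routinely quoted than the lower bound (i), so a precise reference or the short inductive proof should accompany it. The degenerate-cell worry you flag in (i) does not actually arise here, since every $2$-cell carries the length-four commutator relator and so maps onto an embedded square of the tiling.
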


\begin{proof}
	Define a \textit{$\tau$-segment} to be $l$ consecutive $\tau$-labeled edges in the  boundary circuit of $\mathcal{D}$. Such segments have a natural orientation that agrees with the orientation of the constituent $\tau$. We will find a van~Kampen diagram for $w$ over  ${\langle a, \tau \mid a^\tau = a \rangle}$ for which the  $\tau$-segments are connected by blocks of parallel $\tau$-corridors.  (Call this a  $\tau^l$-pairing.) 
	
	The first edge in any $\tau$-segment can only be paired by a $\tau$-corridor in $\mathcal{D}$ with the first edge of an oppositely oriented $\tau$-segment.   Indeed, suppose that an initial $\tau$ in a $\tau$-segment is connected by a corridor $C$  to a $\tau$ in position $i$ on another segment, with $1\leq i \leq l$. Let $\widehat{w}$ be the subword of $w$ between them, as in Figure~\ref{fig: pairing in tau-segments}. Because $\tau$ corridors do not cross, the $\tau$-index sum of $\widehat{w}$ must be zero.  If $i \neq 1$, the $\tau$-index sum of $\widehat{w}$ will not be a multiple of $l$, as $\widehat{w}$ either includes an entire $\tau$-segment or entirely misses it, except for the partial segment which contains the $\tau$ in position $i$. In particular, the index-sum of $\tau$ in $\widehat{w}$ can only be 0 when $i=1$. 
	
	\begin{figure}[!ht]
		\centering
		\begin{subfigure}[t]{7cm}
			\centering
			\includegraphics[scale=0.53]{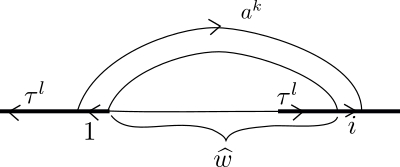}
			\caption{A $\tau$ in position 1 in a $\tau$-segment can only pair with another initial $\tau$}
			\label{fig: pairing in tau-segments}
		\end{subfigure}\hspace{.5cm}
		\begin{subfigure}[t]{7cm}
			\centering
			\includegraphics[scale=0.55]{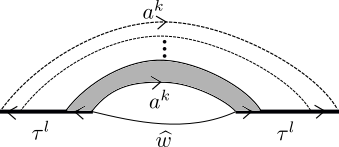} 
			\caption{Initial $\tau$ corridors provide a guide for joining the rest of the $\tau$-segment when building $D'$}
			\label{fig: tau-segment pairing}
		\end{subfigure}\hspace{.5cm}
		\caption{In a word $w$ on $a$ and $\tau^l$, there is a valid $\tau$-pairing that pairs whole $\tau$-segments. }\label{fig: tau-segments}
	\end{figure}
	
	Construct a new van~Kampen diagram $\mathcal{D}'$ for $w$ over  $\langle a, \tau \mid a^\tau = a \rangle$  as follows. Begin with a planar loop with edges labeled so that we read  $w(a, \tau^l)$ around the perimeter. Add in all initial $\tau$ corridors from $D$. If an initial $\tau$-corridor $C$ connects $\tau$-segments $S$ and $S'$, we will pair each $\tau$ in $S$ to the corresponding $\tau$ in $S'$ using copies of $C$, as in Figures~\ref{fig: tau-segment pairing} and \ref{fig:fillingex3}.  The remaining regions that have to be filled have perimeters labeled by words on $a^{\pm 1}$ alone, as all $\tau$ edges have been paired. Moreover, the index-sum of $a$ is zero, so these can be folded together to complete the construction of $\mathcal{D}'$ without the addition of any further 2-cells.     
	
	The area of $\mathcal{D}'$ will be $l$ times the sum of the initial $\tau$-corridor contributions, and so in particular, the area of the new diagram is at most $l A$. Let   $\mathcal{D}''$  be the  van~Kampen diagram for $w(a,t)$  over  $\langle a, t \mid a^t = a \rangle$ of area at most $A$  obtained by replacing each stack of $l$   $\tau$-corridors in $\mathcal{D}'$ by a single  $t$-corridor and each $\tau$-segment in the boundary by a single $t$-edge, as in Figure~\ref{fig:fillingex4}.
\end{proof}

\begin{center}
	\begin{figure}[!ht]
		\centering
		\begin{subfigure}[t]{3.5cm}
			\centering
			\includegraphics[scale=0.24]{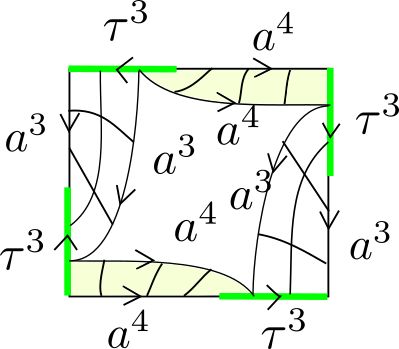}
			\caption{Initial filling $\mathcal{D}$}
			\label{fig:fillingex1}
		\end{subfigure}\hspace{.5cm}
		\begin{subfigure}[t]{3.5cm}
			\centering
			\includegraphics[scale=0.24]{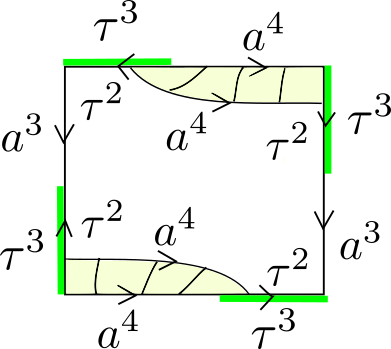} 
			\caption{Refilling all initial $\tau$-corridors}
			\label{fig:fillingex2}
		\end{subfigure}\hspace{.5cm}
		\begin{subfigure}[t]{3.5cm}
			\centering
			\includegraphics[scale=0.24]{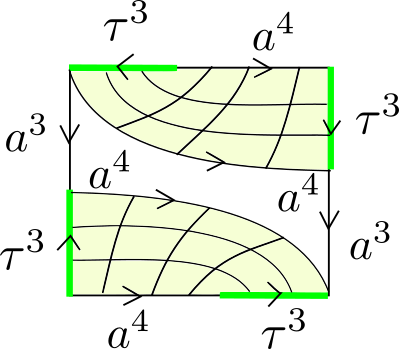}
			\caption{Filling with stacks of $\tau$-corridors to get $\mathcal{D}'$}
			\label{fig:fillingex3}
		\end{subfigure}
		\hspace{.5cm}
		\begin{subfigure}[t]{3.5cm}
			\centering
			\includegraphics[scale=0.24]{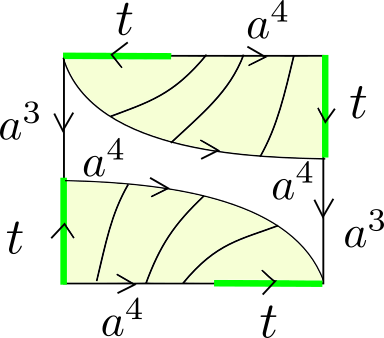}
			\caption{Grouping $\tau^l$'s to get $\mathcal{D}''$}
			\label{fig:fillingex4}
		\end{subfigure}
		\caption{A toy example of the procedure of Lemma \ref{shufflingsequence}}\label{fig:filling}
	\end{figure}
\end{center}

We will promote Lemma~\ref{lemma:definingtheta1}  to the following   result concerning ${Q_l = \langle a, c, t~| a^t=a, c^t=ca^l \rangle}.$

\begin{proposition}\label{lemma:generalcaseZ2*Zv2} There exists $A_l >0 $ such that if  $u$ is a (not necessarily freely reduced) word representing the identity in ${Q}_l$  and $P$ is a  valid $c$-pairing  of $u$,  then there exists a van~Kampen diagram $\Theta$ for $u$ over ${Q}_l$  which has reduced $c$-corridors,  induces  $P$, and has $\Area(\Theta) \leq A_l |u|^2$.  
\end{proposition}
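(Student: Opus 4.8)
The plan is to assemble Proposition~\ref{lemma:wordconvertingZ2astZ} and Lemmas~\ref{lemma:definingtheta1}, \ref{Length of corridors} and \ref{shufflingsequence}. Start with $u$ representing the identity in $Q_l$ and a valid $c$-pairing $P$ of $u$. Proposition~\ref{lemma:wordconvertingZ2astZ} produces the word $v := u(a,c,\tau^l)$, which represents the identity in $Q_1^\tau = \langle a,c,\tau \mid a^\tau=a,\ c^\tau=ca\rangle$, together with a valid $c$-pairing of $v$; call it $P$ as well, noting that the $c$-edges of $v$ and of $u$ are literally the same (the substitution $t\mapsto\tau^l$ does not touch $c$). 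Since $Q_1^\tau$ is, after renaming $\tau$ to $t$, exactly the group $Q=Q_1$ of Lemma~\ref{lemma:definingtheta1}, that lemma applied to $v$ yields a van~Kampen diagram $\Theta'$ for $v$ over $Q_1^\tau$ that induces $P$, has reduced $c$-corridors, and satisfies $\Area(\Theta') \le A|v|^2 \le Al^2|u|^2$. Everything then reduces to converting $\Theta'$ into a van~Kampen diagram $\Theta$ over $Q_l$ for $u$, of area at most a constant (depending on $l$) times $|u|^2$, with the same pattern of $c$-corridors (hence the same $c$-pairing $P$).

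To build $\Theta$, cut $\Theta'$ along its $c$-corridors. This produces $c$-corridors $\eta_1,\dots,\eta_M$ and $c$-complementary regions, each of the latter carrying a sub-diagram over $\langle a,\tau \mid a^\tau=a\rangle$ (its perimeter being a word on $a^{\pm1},\tau^{\pm1}$). Since the $c$-corridors of $\Theta'$ are reduced, Lemma~\ref{Length of corridors} gives that each $\eta_j$ has length $m_j l$ for some $m_j\ge 0$; one side of $\eta_j$ then reads a pure power $\tau^{\mp m_j l}$ and the other reads $(\tau a^{-1})^{\pm m_j l}$. I would replace $\eta_j$ by a genuine $Q_l$-corridor $\eta_j'$ of length $m_j$ for the relator $c^t=ca^l$, which --after grouping $\tau^l=t$-- has sides reading $t^{\mp m_j}$ and $(ta^{-l})^{\pm m_j}$. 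The first sides then coincide; to reconcile the second sides I insert, for each of the $m_j$ constituent blocks, a small diagram over $\langle a,\tau\mid a^\tau=a\rangle$ filling the bigon between $(\tau a^{-1})^{\pm l}$ and $(\tau^l a^{-l})^{\pm1}$ --these are equal in $\langle a,\tau\mid a^\tau=a\rangle\cong\Z^2$ since $a$ and $\tau$ commute there-- which amounts to reversing the refinement of Figure~\ref{fig:replacement_Prop615} (the $l(l-1)/2$ applications of $a^\tau=a$ there). Each such bigon has perimeter of length $O(l)$ and hence a filling of area $O(l^2)$, and there are $\sum_j m_j \le \tfrac1l\Area(\Theta')$ of them in all.

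Absorbing these bigon fillings onto the adjacent $c$-complementary regions enlarges each region $R_i$ to a diagram $\widehat R_i$ over $\langle a,\tau\mid a^\tau=a\rangle$ whose perimeter is now a word of the form $w_i(a,\tau^l)$: the parts inherited from $\partial\Theta'=v(a,c,\tau^l)$ already carry $\tau$'s in consistently oriented blocks of $l$, and the parts running along $c$-corridors now read $t^{\pm\ast}=\tau^{\pm l\ast}$ or $(ta^{-l})^{\pm\ast}=(\tau^l a^{-l})^{\pm\ast}$, again with the $\tau$'s grouped. Lemma~\ref{shufflingsequence} then replaces $\widehat R_i$ by a diagram $\widehat R_i''$ for $w_i(a,t)$ over $\langle a,t\mid a^t=a\rangle$ with $\Area(\widehat R_i'')\le\Area(\widehat R_i)$. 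Re-assembling the corridors $\eta_j'$ and the region fillings $\widehat R_i''$, and replacing each $\tau^l$-segment of $\partial\Theta'$ by a single $t$-edge, produces a van~Kampen diagram $\Theta$ over $Q_l$ with boundary word $u$; its $c$-corridors are the images of those of $\Theta'$, hence induce $P$, and they are reduced because each is a string of identical $c^t=ca^l$ cells, so its bottom --a power of $t$-- is reduced (cf.\ Remark~\ref{reduced remark}). Finally $\Area(\Theta) \le \sum_i\Area(\widehat R_i'') + \sum_j m_j \le \Area(\Theta') + O(l)\,\Area(\Theta') + \tfrac1l\Area(\Theta') = O(l)\,\Area(\Theta') \le A_l|u|^2$ for a suitable $A_l>0$, as required.

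I expect the main obstacle to be the bookkeeping in the corridor surgery, and in particular checking that the hypotheses of Lemma~\ref{shufflingsequence} really are met: one must verify that, once every length-$l$ block of each $c$-corridor of $\Theta'$ has been collapsed (together with its bigon filling) to a single $Q_l$-cell, every $\tau$-edge surviving inside a $c$-complementary region lies in a block of $l$ with consistent orientation, so that the ``$\tau$-segment'' decomposition used in the proof of Lemma~\ref{shufflingsequence} applies verbatim. Pinning down the orientation conventions for the tops and bottoms of $c$-corridors --so that the side of $\eta_j$ facing a region always presents a word that regroups cleanly into $t^{\pm1}$'s and $a^{\pm1}$'s-- is the delicate point; the rest is a direct assembly of results already in hand.
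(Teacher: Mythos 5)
Your proposal is correct and follows essentially the same route as the paper's proof: reduce to $Q_1^{\tau}$ via Proposition~\ref{lemma:wordconvertingZ2astZ} and Lemma~\ref{lemma:definingtheta1}, use Lemma~\ref{Length of corridors} to collapse each $c$-corridor by a factor of $l$, insert the small fillings reconciling $(\tau a^{-1})^{\pm l}$ with $(\tau^l a^{-l})^{\pm 1}$ (the paper's collar of copies of the diagram $L$), and finish with Lemma~\ref{shufflingsequence}. The orientation/blocking concern you flag at the end is handled in the paper exactly as you anticipate, by noting the region perimeters take the form $x_0(\tau a^{\epsilon_1})^{lk_1}x_1\cdots(\tau a^{\epsilon_n})^{lk_n}x_n$ with the $x_i$ already words in $a$ and $\tau^l$.
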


\begin{proof} Suppose $u(a,c,t)$ represents the identity in  ${Q}_l$ and $u$ has a valid $c$-pairing $P$. Lemma \ref{lemma:wordconvertingZ2astZ} implies that $P$ is also a valid $c$-pairing for the corresponding word $v:=u(a,c,\tau^l)$ in $Q_1^{\tau}$, which we get by substituting a $(\tau^l)^{\pm 1}$ for every $t^{\pm 1}$ in $u$. 
	Now we can use what we know about building diagrams over $Q_1^{\tau}$: by Lemma~\ref{lemma:definingtheta1}, there is a constant $A_1>0$ such that $v$ admits a new van~Kampen diagram $\Theta_{1}$ over ${Q}_{1}^{\tau}$ that induces $P$ and has area at most $A_1{|{v}|^2} \leq A_1 l^2{|u|}^2$. Guided by $\Theta_{1}$, we will construct a van~Kampen diagram $\Theta_{l}$ for $u$ over ${Q}_{l}$ which has comparable area.

	\begin{figure}[!ht]
		\centering
		\includegraphics[scale=0.2]{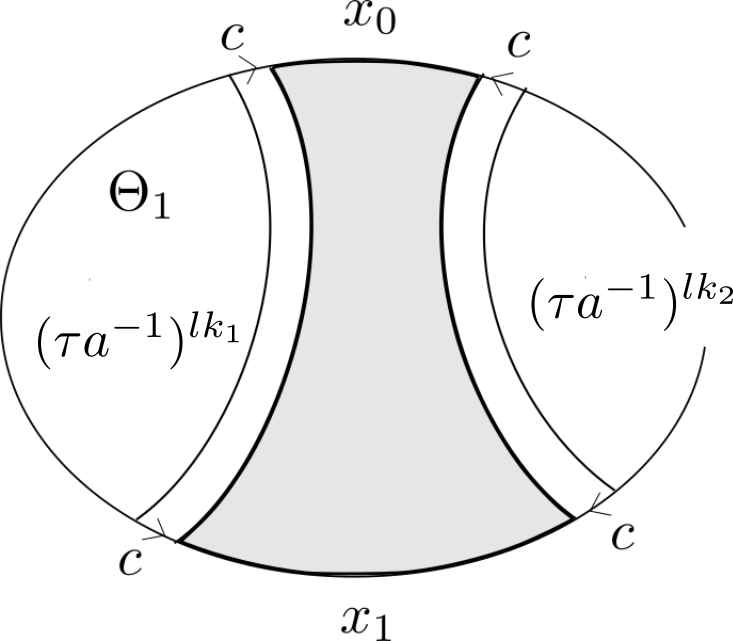} 
		\label{fig:theta1}
		\caption{The quadratic area diagram $\Theta_1$}
	\end{figure}
	By Lemma \ref{Length of corridors}, $c$-corridors in $\Theta_1$ all have length that is a multiple of $l$. To build $\Theta_{l}$, we begin by inserting reduced $c$-corridors into a polygonal path labeled by $u$, mimicking the $c$-corridors in  $\Theta_1$.  Corresponding $c$-corridors in the two diagrams differ in length by exactly the factor $l$: where a $c$-corridor in $\Theta_1$ has $\tau^{nl}$ along one side and $(\tau a^{-1})^{nl}$ along the other, the corresponding $c$-corridor in $\Theta_l$ has  $t^n$ along one side and $(t a^{-l})^{n}$ along the other.
	
	Next we fill  the $c$-complementary regions. We wish to use Lemma \ref{shufflingsequence} to convert the filling in $c$-complementary regions of $\Theta_1$ to fillings in $\Theta_l$, but for any $c$-complementary region, the word along the perimeter of the region will not generally have an appropriate form. Its perimeter has the form $x_0(\tau a^{\epsilon_1})^{l k_1} x_1 \cdots (\tau a^{\epsilon_n})^{lk_n}x_n$, where $\epsilon_i \in \{0, -1\}$, $k_i \neq 0$,  and $x_i$ is a subword of $v$ and therefore is a word in $a$ and $\tau^l$. We add a collar of $2$-cells to change the boundary of the $c$-complemetary region to $x_0 (\tau^l a^{\epsilon_1 l})^{k_1} x_1 \cdots (\tau^l a^{\epsilon_n l})^{k_n}x_n$. In particular, if $L$ is a minimal area diagram for the word $ (\tau a)^{-l}\tau^l a^l$, $k_i$ copies of $L$ can be glued in to rewrite $(\tau a^{\epsilon_i})^{l k_i}$ to $(\tau^l a^{l})^{k_i}$. The result is a region with boundary that is a word in $a$ and $\tau^l$. 
	
		Apply Lemma~\ref{shufflingsequence} to convert each of these diagrams, without increasing area, to diagrams over $\langle a, t \mid a^t =a \rangle$  with boundary $x_0(t a^{l \epsilon_1})^{k_1} x_1 \dots (t a^{l\epsilon_n})^{k_n}x_n$ (as in Figure~\ref{fig:example5}), and use them to fill the $c$-complementary regions of $\Theta_l$. This produces a van Kampen diagram $\Theta_l$ for $u$ over ${Q}_l$.		
	
	\begin{figure}[!ht]
		\centering
		\begin{subfigure}[t]{4cm}\centering
			\includegraphics[scale=0.2]{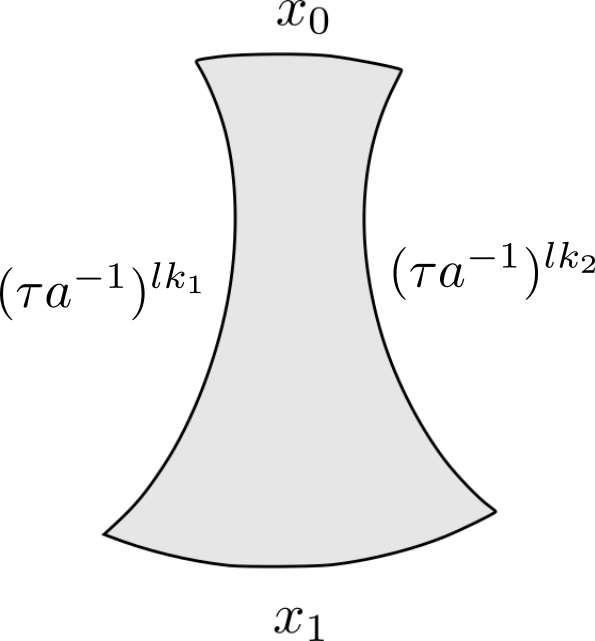}
			\caption{One $c$-complementary region in $\Theta_1$}
			\label{fig:example2}
		\end{subfigure}
		\hspace{0.2cm}
		\begin{subfigure}[t]{4cm}
			\centering
			\includegraphics[scale=0.2]{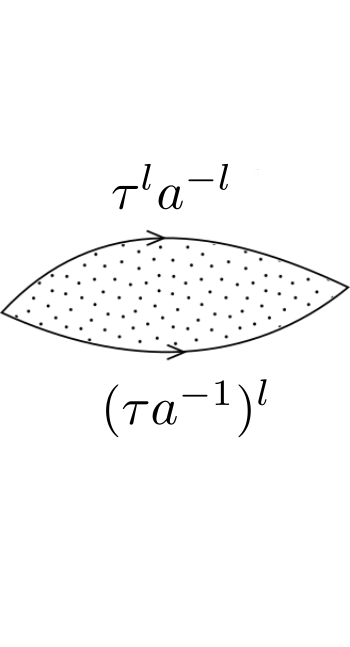}
			\caption{The diagram $L$. $\Area(L) \leq l^2$}
			\label{fig:example3}
		\end{subfigure}
		\hspace{0.2cm}
		\begin{subfigure}[t]{4cm}
			\centering
			\includegraphics[scale=0.2]{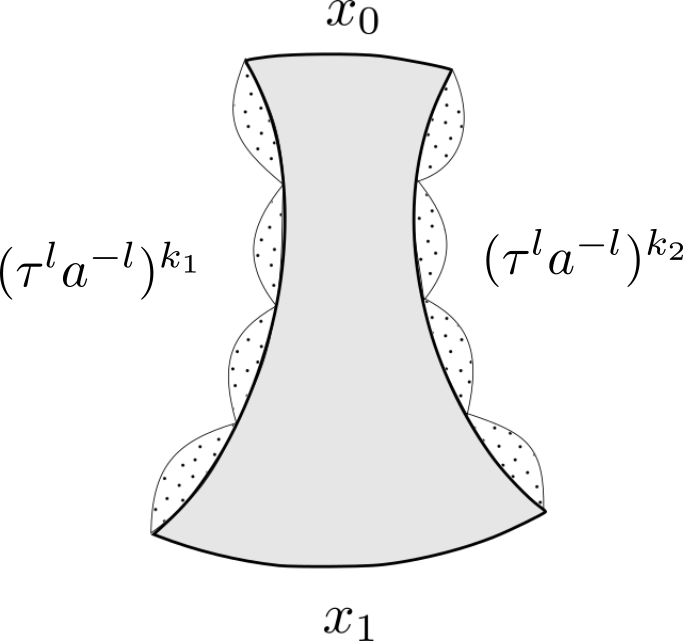}
			\caption{Gluing copies of $L$ along edges of the tops of $c$-corridors in $\Theta_1$}
			\label{fig:example4}
		\end{subfigure}
		\hspace{0.2cm}
		\begin{subfigure}[t]{4cm}
			\centering
			\includegraphics[scale=0.2]{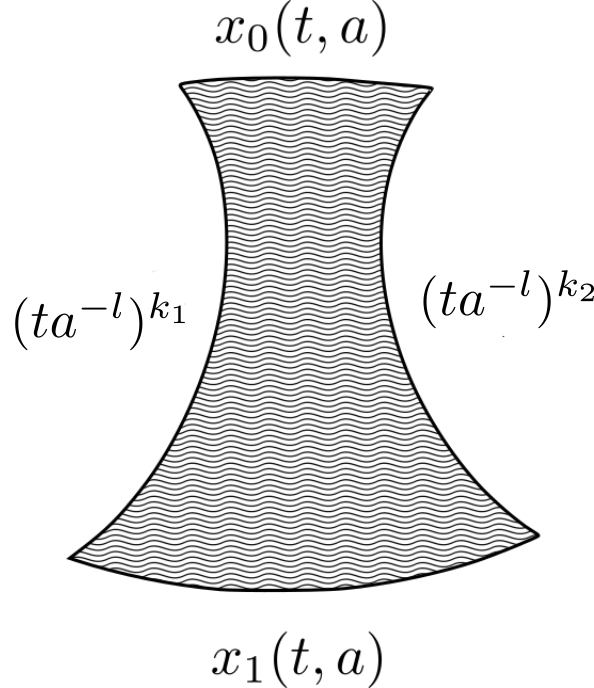}
			\caption{A new filling from Lemma \ref{shufflingsequence}, in terms of $t$.}
			\label{fig:example5}
		\end{subfigure}
		\caption{Converting a $c$-complementary region filling from $\Theta_1$ over ${Q}_1^{\tau}$ to one for $\Theta_l$ over ${Q}_l$}
		\label{fig:scallopedpotatoes}
	\end{figure}
	
	Finally we come to area estimates for $\Theta_l$. First observe that the total number of 2-cells in the $c$-corridors in $\Theta_1$ is at most the area of $\Theta_1$, which we determined earlier to be at most  $A_1 l^2{|u|}^2$. Correspondingly, there are  at most  $A_1 l {|u|}^2$  2-cells in the $c$-corridors in $\Theta_l$.  The number of copies of $L$ glued on to the $c$-complementary regions is at most $A_1 l{|u|}^2$, since it is the sum of the lengths of the $c$-corridors, divided by $l$. Since $L$ has area bounded above by $l^2$, the total area taken by copies of $L$ is at most $A_1 l^3 {|u|}^2$.  
	The total area of the $c$-complementary regions in  $\Theta_1$ is also at most  $A_1 l^2{|u|}^2$.   They, along with the attached copies of $L$, are converted to  the $c$-complementary regions  in  $\Theta_l$ without an increase in their area, as per Lemma~\ref{shufflingsequence}.  Therefore the area of   $\Theta_l$ is at most   $A_1 l {|u|}^2  + A_1 l^3 {|u|}^2 +  A_1 l^2{|u|}^2 \leq A_l |u|^2$, where $A_l := A_1(l + l^3+ l^2)$.  \end{proof}

\subsection{Completing our proof of Theorem \ref{Z2astZ}}

\begin{proof}[Proof of Theorem \ref{Z2astZ}(\ref{two1})]
	
	This is the case where $\phi$ has finite order.  By Lemma~\ref{get Phi} and Proposition~\ref{groomZ2*Z}, for the purposes of determining the Dehn function of $M_{\Phi}$, and thus $M_{\Psi}$, we can work with $M_{\Xi}$, which has the form
	$$M_{0, l, m}  \  :=  \  \langle a, b ,c, t \mid [a,b]=1, \   a^{t}=ab^{0},  \ b^{t}=b,\  c^{t}=c a^lb^m \rangle = \langle a, b ,c, t \mid [a,b]=[a,t] = [b,t]=1,\  c^{t}=c a^lb^m \rangle$$ for some $l, m \in \Z$. Let
	$$N_l  \ := \  \langle a, c, t \mid a^t=a,  \ c^t = ca^l \rangle.$$  
	These groups are not hyperbolic, so  their Dehn functions grow at least quadratically. We will show that these mapping tori have quadratic Dehn functions for all $l, m \in \Z$. All proofs of the quadratic upperbound for these groups can be reduced to the proof for ${M_{0,l,0} =  \langle a, b ,c, t \mid [a,b]=[a,t]=[b,t],\  c^{t}=c a^l \rangle}$, so we begin with this special case.\\

	Suppose $w$ is a word representing the identity in $M_{0,l,0}$.  Let $\Delta$ be a minimal area van~Kampen diagram for $w$ over $M_{0,l,0}$.  Let $\overline{w}$ be $w$ with all $b^{\pm 1}$  removed.  Then $\overline{w}=1$ in  $N_l$.   The $c$-pairing $P$ induced by $\Delta$ in turn induces a valid $c$-pairing $\overline{P}$ for $\overline{w}$  because  collapsing each $b$-corridor to the path along its bottom side gives a van~Kampen  diagram $\overline{\Delta}$ for $\overline{w}$ over $N_l$. 
	
	By Proposition~\ref{lemma:generalcaseZ2*Zv2}, there is a constant $A_l >0$ and a van~Kampen diagram $\overline{\Theta}$ for $\overline{w}$ over $N_l$ which has reduced $c$-corridors, induces $\overline{P}$, and has area at most $A_l |\overline{w}|^2$.

	The defining relations for $N_l$ are also defining relations for $M_{0,l,0}$ (as $m=0$), so $\overline{\Theta}$ is \emph{a fortiori} a van~Kampen diagram over $M_{0,l,0}$.  We aim to convert it from a van~Kampen diagram for $\overline{w}$, which contains no letters $b^{\pm1}$, to a van~Kampen diagram $\Theta$ for the original $w$, which may contain letters $b^{\pm 1}$.  We will do this without altering its $c$-corridors.  Rather, we will replace each $c$-complementary region in $\overline{\Theta}$ with an \emph{inflated} version so that the word around the boundary becomes $w$. 
	
	In $\Delta$ there are no partial $b$-corridors and no $b$-corridor  can cross  a $c$-corridor.    Therefore each word read  around the boundary of a $c$-complementary region in $\Delta$ contains the same number of $b$ letters as $b^{-1}$ letters. Since  the layout of $c$-corridors in $\overline{\Theta}$ agrees with that in $\overline{\Delta}$ (and so in $\Delta$), for each $c$-complementary region $\overline{C}$ in $\overline{\Theta}$, there is a corresponding    $c$-complementary region in $\Delta$. As in $\Delta$, each word $\overline{u}$ read around the boundary of the $c$-complementary region $\overline{C}$ in $\overline{\Theta}$ contains the same number of $b$ letters as $b^{-1}$ letters.  Therefore  $\overline{C}$ can be \emph{inflated} to put the necessary $b$ and  $b^{-1}$ in place by adding $b$-corridors to the boundary of $ \overline{C}$.    
	
	The total number of such $b$-corridors that we must insert is at most $|w|/2$.  The length of each $b$-corridor is at most the length of the boundary circuit $\partial \overline{C}$ of  the relevant $c$-complementary region  $\overline{C}$ in  $\overline{\Theta}$---at most a constant times $|\overline{w}|$---by an argument equivalent to Corollary \ref{c complementary region length}.   Thus the area of the resulting diagram $\Theta$ is at most the area of    $\overline{\Theta}$ (which is at most $A_l |\overline{w}|^2$) plus the number of 2-cells in $b$-corridors, which is no more than a constant times $|\overline{w}| \,  |w|$. In total, the area of $\Theta$ is at most a constant times $|w|^2$, as required.   
	
	Now we consider the case of $M_{0, l,m}$ for $m \neq 0$. If $l$ and $m$ are relatively prime, by Bezout's Lemma, there is a pair of integers $(x,y)$ such that $l y - m x = 1$. So there is a generating set $A, B$ of $\Z^2 = \langle a, b \rangle$ with ${A= a^{l} b^{m}}$ and ${B= a^xb^y}$ (generating since $a = A^yB^{-m}$ and $b = B^l A^{-x}$), for which our group has the presentation $${\langle A, B, c, t \mid [A, B]=1, A^t =A, B^t=B, c^t=cA \rangle},$$ the same as $M_{0,1,0}$. Therefore the Dehn function is quadratic. Finally, if $l$ and $m$ are not relatively prime, let $n:=\mbox{gcd}(l, m)$. Then $M_{0,l,m}$ is a subgroup of index $n$ in $M_{0,\frac{l}{n}, \frac{m}{n}}$.  But then $M_{0,\frac{l}{n}, \frac{m}{n}}$ has a quadratic Dehn function and hence so does $M_{0,l,m}$.
\end{proof}

\begin{proof}[Proof of Theorem \ref{Z2astZ}(\ref{two2})]  
	This is the case where $\phi$ has a non-unit eigenvalue. As $K:=\langle a, b \rangle \isom \Z^2$ quasi-isometrically embeds in $\Z^2 \ast \Z$ and    $\Phi \restricted{K} = \phi$    
	is an automorphism of $K$, Lemma~\ref{BridsonGersten} implies that  the Dehn function of $M_{\Phi}$ is bounded below by an exponential function. From Lemma \ref{Exponential}, the Dehn functions of mapping tori of RAAGs are always bounded above by exponential functions. Thus $M_{\Phi}$ and so $M_{\Psi}$ has exponential Dehn function.	\end{proof}

\begin{proof}[Proof of Theorem \ref{Z2astZ}(\ref{two3})]   
	This is the case where  $\phi$ has infinite order and only unit eigenvalues.  We will show that $M_{\Phi}$ and thus $M_{\Psi}$ has a cubic Dehn function. By Lemma~\ref{get Phi} and Proposition~\ref{groomZ2*Z}, for the purposes of determining the Dehn function, we can work with $M_{\Xi}$ which has the form
	$$M_{k, l, m}  \  :=  \  \langle a, b ,c, t \mid [a,b]=1, \   a^{t}=ab^{k},  \ b^{t}=b,\  c^{t}=c a^lb^m \rangle$$ for some $k, l, m \in \Z$ with $k \neq 0$. Let  
	$$N_l  \ := \  \langle a, c, t \mid a^t=a,  \ c^t = ca^l \rangle.$$

	Suppose $w$ is a freely reduced word representing the identity in $M_{k,l,m}$. Let $\Delta$ be a minimal area van~Kampen diagram for $w$ over $M_{k,l,m}$. Let $\overline{w}$ be $w$ with all $b^{\pm 1}$  removed. Then $\overline{w}=1$ in  $N_l$. As in Case \eqref{two1} above, the $c$-pairing $P$ induced by $\Delta$ induces a valid $c$-pairing $\overline{P}$ for $\overline{w}$.

	By Proposition~\ref{lemma:generalcaseZ2*Zv2} there is a constant $A_l >0$ dependent only on $l$ such that   $\overline{w}$ admits a van~Kampen diagram $\overline{\Theta}$ over $N_l$ which also induces $\overline{P}$ and has area at most $A_l |\overline{w}|^2$. 
	Again, as in Case \eqref{two1} above, by Corollary \ref{c complementary region length}, there exists a constant $K>0$ such that the boundary circuit of any $c$-complementary region $\overline{R}$ in $\overline{\Theta}$ has length at most $K | \overline{w} |$. Each such region $\overline{R}$ is a diagram over $\langle a,t \mid a^t =a \rangle$.

	Each such $\overline{R}$ has a maximal geodesic tree in its 1-skeleton---that is, a tree reaching all vertices and with the property that there is a root vertex  $v_{\overline{R}}$ on the boundary $\partial \overline{\Theta}$ such that for every vertex $v$ in  $\overline{R}$, the distance from $v_{\overline{R}}$ in the tree is the same as in the 1-skeleton  of $\overline{R}$.
	
	The diameter of each $c$-complementary region $\overline{R}$ is linear in $|\overline{w}|$ and so in $|w|$.  After all, every vertex in $\overline{R}$  is contained in an $a$-corridor that extends to the boundary of $\overline{R}$. The length of each $a$-corridor is  the number of $t$-corridors that cross it, and there are at most   $K |\overline{w}|/2$ many $t$-corridors in $\overline{R}$. So the maximum distance  to the boundary is  $K |\overline{w}|/2$ and thus the diameter of the $c$-complementary region  $\overline{R}$ is at most $(K+1)|\overline{w}|$.  
	
	We now apply the electrostatic model from Section~\ref{sec:electrostatic model} to \emph{inflate} $\overline{\Theta}$ to a van~Kampen diagram for $w$ over $M_{k,l,m}$. 
	
	Since $\overline{\Theta}$ induces a valid $c$-pairing, this can be done by inserting $b$-corridors within the $c$-complementary regions.   
	
	First we charge the diagram with at most $A_l|\overline{w}|^2$ many $b$-charges (in effect, replacing all of the 2-cells for defining relations from $N_{l}$ with the corresponding 2-cells for defining relations from $M_{k,l,m}$).   Next connect each charge in $\overline{R}$ by a   $b$-partial corridor of length no more than $B|w|$ to the root $v_{\overline{R}}$.   
	The total area of these $b$-partial corridors is at most a constant times $|\overline{w}|^3$.   
	Finally, insert  $b$-corridors (each of at most a constant times $|w|$) along the  boundaries  of the $c$-complementary regions to rearrange the (at most a constant times $|w|^2$ many) $b$ and $b^{-1}$ until the perimeter word is $w$.  
	
	The resulting diagram $\Theta$ for $w$ over $M_{k, l, m}$ has at most the area of $\overline{\Theta}$ (at most quadratic in $|\overline{w}|$), plus the total area of the $b$-partial corridors (at most cubic in $|\overline{w}|$), plus the total area of the $b$-corridors (at most cubic in $|w|$)---in total, at most cubic in $|w|$.

	So the Dehn function of  $M_{k,l,m}$ grows at most cubically.
	
	As $\phi$ has infinite order and only unit eigenvalues, it has a $2 \times 2$ Jordan block $A$ and so, by Lemma \ref{BridsonGersten}, the Dehn function of the mapping torus has a cubic lower bound.
\end{proof}	

\section{Mapping tori of RAAGs of the product of two free groups}  
\label{HigherRank}

Here we will prove  Theorem~\ref{F_k x F_l} concerning Dehn functions of mapping tori of products $F_k \times F_l$ of free groups.  

\subsection{Automorphisms of \texorpdfstring{$F_k \times F_l$}{Fk x Fl}}

Suppose $X$ and $Y$ are disjoint finite sets with $\abs{X} = k$, $\abs{Y} = l$, and  $k, l \geq 2$.  Let $\Gamma$ be the bipartite graph with vertex set $X \cup Y$ and an edge between a pair of vertices if and only if one is in $X$ and the other is in $Y$.  So  $G= F_k \times F_l$ is the RAAG $A_{\Gamma}$. 

Our first task is to  explain the opening part of   Theorem~\ref{F_k x F_l}, which amounts to:

\begin{lemma} \label{first part of thm} Given $\Psi \in \Aut(G)$, we can find $\phi_1 \in  \Aut(F_k)$ and $\phi_2 \in  \Aut(F_l)$ such that  $\Phi = \phi_1 \times \phi_2$  has the property that  $[\Phi] =  [\Psi^2]$  in $\Out(F_k \times F_l)$.  
\end{lemma}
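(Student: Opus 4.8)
The plan is to identify the structure of $\Aut(G)$ for $G = F_k \times F_l = \langle X \rangle \times \langle Y \rangle$ (with $|X|=k$, $|Y|=l$, $k,l\geq 2$) using the Servatius--Laurence generating set for automorphism groups of RAAGs (already invoked in Section~\ref{sec: autos of Z2astZ}), and then to read off the conclusion by restricting $\Psi^2$ to the two free factors. Let $H \leq \Aut(G)$ be the subgroup of automorphisms preserving both $\langle X \rangle \times 1$ and $1 \times \langle Y \rangle$ setwise; note $\Inn(G) \subseteq H$.

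First I would run through the Servatius--Laurence generators of $A_\Gamma = G$, where $\Gamma = K_{k,l}$ is the complete bipartite graph on $X \sqcup Y$. The domination condition $\mathrm{lk}(v) \subseteq \mathrm{st}(v')$ never holds with $v \in Y$, $v' \in X$ (it would force $X \subseteq \{v'\} \cup Y$, impossible since $k \geq 2$), and symmetrically, so the only transvections that occur are $x \mapsto xx'$ with $x,x' \in X$ or $y \mapsto yy'$ with $y,y' \in Y$. These, together with the inversions, the partial conjugations (which, since $\Gamma \ssm \mathrm{st}(v)$ has only single-vertex components, conjugate a subset of $X \ssm \{v\}$, resp.\ $Y \ssm \{v\}$, by $v$), and every graph symmetry fixing the sets $X$ and $Y$, all lie in $H$. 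The only generator that can fail to lie in $H$ is a graph symmetry $\sigma$ interchanging $X$ and $Y$; such a $\sigma$ exists precisely when $k=l$, it sends $\langle X\rangle$ to $\langle Y\rangle$, and it satisfies $\sigma^2 \in H$ and $\sigma H \sigma^{-1}=H$. Hence $H \cup H\sigma$ is a subgroup containing every generator, so $\Aut(G) = H \cup H\sigma$ when $k=l$ and $\Aut(G) = H$ when $k \neq l$. In either case $\Psi^2 \in H$ for every $\Psi \in \Aut(G)$ (if $\Psi = h\sigma$ with $h \in H$, then $\Psi^2 = h\,(\sigma h \sigma^{-1})\,\sigma^2 \in H$), and moreover $\Psi(\langle X\rangle)$ is one of $\langle X\rangle$, $\langle Y\rangle$.

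Next I would restrict. Since $\Psi^2 \in H$, the maps $\phi_1 := \Psi^2|_{\langle X\rangle}$ and $\phi_2 := \Psi^2|_{\langle Y\rangle}$ are injective endomorphisms which are also onto (because $\Psi^2(\langle X\rangle \times 1) = \langle X\rangle \times 1$ and $\Psi^2(1\times \langle Y\rangle) = 1 \times \langle Y\rangle$), so $\phi_1 \in \Aut(F_k)$ and $\phi_2 \in \Aut(F_l)$. Finally, for $(u,v) \in G$ write $(u,v) = (u,1)(1,v)$; then $\Psi^2(u,v) = \Psi^2(u,1)\,\Psi^2(1,v) = (\phi_1(u),1)(1,\phi_2(v)) = (\phi_1(u),\phi_2(v))$, i.e.\ $\Psi^2 = \phi_1 \times \phi_2$. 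So $\Phi := \phi_1 \times \phi_2$ satisfies $[\Phi] = [\Psi^2]$ in $\Out(G)$, as required (in fact one obtains equality of automorphisms, but only the outer statement is needed downstream).

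The main obstacle is the first step: carefully verifying that every Servatius--Laurence generator other than an $X$--$Y$ swap preserves the ordered pair of free factors $(\langle X\rangle, \langle Y\rangle)$. This is the combinatorics of $K_{k,l}$ --- ruling out mixed transvections via the failure of domination, and confirming that the relevant partial conjugations are genuine automorphisms that fix $Y$ (resp.\ $X$) pointwise and merely permute or conjugate inside $X$ (resp.\ $Y$). One could instead bypass Servatius--Laurence and argue that $\{\langle X\rangle, \langle Y\rangle\}$ is the only unordered pair of non-trivial, directly indecomposable normal direct factors of $G$ with product $G$, using that finitely generated normal subgroups of a free group are trivial or of finite index to constrain the normal subgroups of $G$; but the generator-by-generator route is more self-contained given the machinery the paper has already set up.
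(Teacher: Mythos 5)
Your proof is correct and follows essentially the same route as the paper's: both rest on the Laurence--Servatius generating set and the observation that every generator except an $X$--$Y$-swapping graph symmetry preserves the two free factors, so that $\Psi^2$ must preserve each of them. Your packaging of this via the index-at-most-two subgroup $H$ is a clean variant of the paper's generator-shuffling argument, and since $\Inn(G)\subseteq H$ it even yields the slightly stronger conclusion that $\Psi^2$ itself, not merely its outer class, splits as $\phi_1\times\phi_2$.
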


This lemma allows us to work with $\Phi$ instead of $\Psi$ when trying to find the Dehn function of $M_{\Psi}$, since $\delta_{M_{\Psi}} \simeq \delta_{M_{\Phi}}$  by Lemma~\ref{lemma: simplifying automorphisms doesn't change DF}.  

For a vertex $x$ in a graph, $\star(x)$ is the subgraph consisting of all edges incident with $x$ and $\link(x)$ is the set of vertices adjacent to $x$.  We will prove Lemma~\ref{first part of thm} with the help of:

\begin{lemma}[Laurence \cite{Laurence},  Servatius  \cite{Servatius}]\label{generatorsforFkxFl}  If $A_{\Gamma}$ is a RAAG, then the following is a generating set for  $\Aut(A_{\Gamma})$:
	\begin{enumerate}
		\item All inner automorphisms: for a vertex $x$ of $\Gamma$, $\iota_x: y \mapsto x^{-1} y x$ for all $y\in A_{\Gamma}$.
		\item All inversions: maps  that send $x \mapsto x^{-1}$ for some vertex $x$ of $\Gamma$ and leave all other vertices fixed.   
		\item All partial conjugations: for a vertex $x$ in $\Gamma$ and a  connected component $C$ of $\Gamma - \star(x)$, map $y \mapsto x^{-1} y x$ for all vertices $y$ in $C$ and fix all other vertices.		
		\item All transvections:  for a pair of vertices $x, y$ of $\Gamma$  such that $\link(x) \subseteq \star(y)$,   $\tau_{x,y}$ maps $x \mapsto xy$ and fixes all other vertices. 
		\item All graph symmetries:   automorphisms induced by  the restriction of a graph symmetry to the vertex set.   
	\end{enumerate}
\end{lemma}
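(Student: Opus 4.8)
The plan is to run a Whitehead-style peak-reduction argument. Let $\mathcal{L}\le\Aut(A_\Gamma)$ be the subgroup generated by the five listed families; one first checks (routine, with explicit inverses) that each listed map really is an automorphism — for a transvection $\tau_{x,y}$ this uses the hypothesis $\link(x)\subseteq\star(y)$, and for a partial conjugation it uses the connected-component structure of $\Gamma-\star(x)$. We want $\mathcal{L}=\Aut(A_\Gamma)$. Fix $\phi\in\Aut(A_\Gamma)$ and, to control cancellation coming from a possible center, work with the length function $\|\psi\|:=\sum_{v\in V(\Gamma)}\|\psi(v)\|_{\mathrm{cyc}}$, the sum of cyclically reduced lengths of the images of the vertex generators; inner automorphisms are permitted as reduction moves and, being in $\mathcal{L}$, are accounted for at the end. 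Among all $\psi$ in the coset $\mathcal{L}\phi$ pick one minimizing $\|\psi\|$; it suffices to show such a $\psi$ is a graph symmetry composed with inversions, since then $\psi\in\mathcal{L}$ and hence $\phi\in\mathcal{L}$.

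\textbf{Base case.} Always $\|\psi\|\ge n:=|V(\Gamma)|$ since $\psi(v)\ne 1$. If $\|\psi\|=n$ then every $\psi(v)$ is a single generator $v_{\sigma(v)}^{\pm 1}$; passing to the abelianization, an automorphism cannot have a repeated column up to sign, so $\sigma$ is injective and hence a bijection of $V(\Gamma)$. Since $\psi$ and $\psi^{-1}$ both have this single-generator form, $vw=wv$ in $A_\Gamma$ iff $\psi(v)\psi(w)=\psi(w)\psi(v)$ iff $\{\sigma v,\sigma w\}$ is an edge, so $\sigma$ is a graph isomorphism and $\psi$ is a graph symmetry precomposed with inversions — both available in $\mathcal{L}$. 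The real content is therefore the reduction step: if $\|\psi\|>n$ there is a generator $\alpha$ from one of the five families with $\|\alpha\psi\|<\|\psi\|$, contradicting minimality.

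\textbf{The reduction step (and the main obstacle).} This is the RAAG analogue of Whitehead's peak-reduction theorem. Introduce the RAAG Whitehead automorphisms: the permutation-type ones (graph symmetries composed with inversions) and the multiplier-type ones $(A,x)$ which left- or right-multiply a chosen set $A$ of generators by $x^{\pm 1}$ while fixing $x$. One then argues in two stages. First, a non-minimal $\psi$ admits a \emph{length-reducing} multiplier-type Whitehead automorphism: analyze the reduced words $\psi(v)$ and the cancellation forced in products $\psi(v)\psi(w)$ whenever $vw=wv$ holds in $A_\Gamma$; the commuting relations constrain which generators can begin or end each $\psi(v)$, and tracking this first/last-letter data yields a vertex $x$ and a set $A$ whose simultaneous conjugation — or a transvection $\tau_{x,y}$ when the constraints force a domination $\link(x)\subseteq\star(y)$ — strictly decreases $\|\cdot\|$, exactly as over a free group but with the peak-reduction inequalities checked against the defining graph. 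Second, every multiplier-type Whitehead automorphism of $A_\Gamma$ is a product of inversions, partial conjugations, and transvections from the list: induct on $|A|$, splitting off one vertex $y\in A$ at a time, each such single-vertex multiplier being a partial conjugation (when $y$ lies in a component of $\Gamma-\star(x)$) or a transvection (when the $\link$-containment holds) or an inner automorphism, the point being that well-definedness of $(A,x)$ forces each $y\in A$ into one of these structural positions relative to $x$.

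\textbf{What is hard.} The genuine difficulty is proving the peak-reduction inequality in the presence of the commuting relations: unlike in $F_n$, cancellation in $\psi(v)\psi(w)$ can be masked by sliding commuting letters past one another, so one must work with a normal form for elements of $A_\Gamma$ (shuffle-equivalence classes of reduced words, equivalently combinatorial geodesics in the Salvetti complex) and show the Whitehead-move length estimate still holds. A secondary subtlety — the reason for using cyclic lengths above — is a possibly nontrivial center of $A_\Gamma$ (a vertex joined to all others), where inner automorphisms are length-inefficient; there one restricts to the non-central generators and the action on conjugacy classes, handling the central direction separately before reassembling $\psi$. Once both stages are in place the induction closes and $\mathcal{L}=\Aut(A_\Gamma)$, which is the assertion of the lemma.
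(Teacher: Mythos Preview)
The paper does not prove this lemma: it is stated with attribution to Laurence and Servatius and then used as a black box (see the corollary immediately following it). So there is no in-paper proof to compare your proposal against.

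For what it is worth, your outline is broadly the right shape --- Laurence's original argument is indeed a peak-reduction argument in the Whitehead style, minimizing a suitable length function over the coset and showing that a non-minimal representative can always be shortened by one of the listed generators. Your sketch correctly identifies the genuine difficulty (controlling cancellation in the presence of commuting relations, which forces one to work with shuffle-equivalence classes of reduced words rather than ordinary reduced words), and honestly flags it as the step requiring real work. That said, what you have written is a plan rather than a proof: the reduction step is asserted (``tracking this first/last-letter data yields a vertex $x$ and a set $A$\ldots'') rather than carried out, and the decomposition of a general multiplier-type Whitehead automorphism into the listed generators needs more care than your one-sentence induction suggests, since not every subset $A$ gives a well-defined automorphism of $A_\Gamma$. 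If you actually want to prove this, you would need to engage with Laurence's paper directly; for the purposes of this paper, citing it is the appropriate move.
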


We can see how this generating set reflects the product structure in the instance of $A_{\Gamma} = F_k \times F_l$.

\begin{cor} \label{LS cor}
	When $A_{\Gamma} = F_k \times F_l$, the inversions, partial conjugations, and transvections of the Laurence--Servatius generators of  $\Aut(A_{\Gamma})$ restrict to automorphisms of $F_k = F(X)$ and $F_l  = F(Y)$.  The same is true of the graph symmetries, except when $k =l$, in which case there are automorphisms exchanging $X$ and $Y$.   
\end{cor}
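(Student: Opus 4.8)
The plan is to run through the four relevant families of Laurence--Servatius generators one at a time, exploiting the fact that $\Gamma$ is the complete bipartite graph on $X \cup Y$. First I would record the elementary combinatorial facts that everything rests on: for $x \in X$ one has $\link(x) = Y$, so $\star(x)$ spans the vertex set $\{x\} \cup Y$; symmetrically, for $y \in Y$ one has $\link(y) = X$ and $\star(y)$ spans $\{y\} \cup X$. Throughout, $F_k = F(X)$ and $F_l = F(Y)$ are precisely the subgroups of $A_\Gamma$ generated by the two colour classes of $\Gamma$, and an automorphism ``restricts'' to each exactly when it preserves both subgroups setwise; when it does, I will note which restriction (if either) is the identity.

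Inversions are immediate: inverting a vertex of $X$ (resp.\ $Y$) is an automorphism of $F(X)$ (resp.\ $F(Y)$) and fixes the other factor pointwise. For partial conjugations I would observe that when $x \in X$ the graph $\Gamma - \star(x)$ is the induced subgraph on $X \setminus \{x\}$, which is edgeless since every edge of $\Gamma$ joins $X$ to $Y$; hence its connected components are the singletons $\{x'\}$ with $x' \in X \setminus \{x\}$, and the associated partial conjugation $x' \mapsto x^{-1} x' x$ is an (inner) automorphism of $F(X)$ that fixes every vertex of $Y$. The case $x \in Y$ is symmetric. The key point here is simply that $Y \subseteq \star(x)$ for $x\in X$, so no vertex of $Y$ survives in $\Gamma - \star(x)$. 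For transvections $\tau_{x,y}$ the defining condition is $\link(x) \subseteq \star(y)$. If $x \in X$ then $\link(x) = Y$; were $y \in Y$ we would need $Y \subseteq \star(y) = \{y\} \cup X$, i.e.\ $Y \subseteq \{y\}$, forcing $l = 1$ and contradicting $l \geq 2$. Hence $y \in X$, and $\tau_{x,y}\colon x \mapsto xy$ is then an elementary Nielsen automorphism of $F(X)$, trivial on $F(Y)$; the case $x \in Y$ is symmetric and uses $k \geq 2$ in the same way.

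Finally, for graph symmetries I would argue by vertex degrees: in $\Gamma$ every vertex of $X$ has degree $l$ and every vertex of $Y$ has degree $k$. If $k \neq l$ these two values are distinct, so any automorphism of $\Gamma$ preserves the partition $\{X, Y\}$, hence permutes $X$ among itself and $Y$ among itself, and the induced group automorphism is a permutation automorphism of $F(X)$ times one of $F(Y)$. If $k = l$, then in addition $\Gamma$ admits the symmetries interchanging $X$ and $Y$ (via any bijection, composed with permutations of each side), and the induced automorphism of $F_k \times F_l$ exchanges the two free factors. This last alternative is the only place the equality $k = l$ produces new behaviour. I do not expect a substantial obstacle anywhere; the step requiring the most care is the transvection analysis, and the only real content is that the hypotheses $k, l \geq 2$ are exactly what forbids ``cross'' transvections sending a generator of one factor into the other---the very phenomenon that occurs for $F_k \times \mathbb{Z}$ and is flagged in the discussion preceding this section.
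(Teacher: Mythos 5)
Your proof is correct and follows essentially the same route as the paper: the same computation of $\link$ and $\star$ in the complete bipartite graph rules out cross-transvections using $k,l\geq 2$, and your degree argument for the graph symmetries merely fills in a step the paper dismisses as straightforward. One small mislabel: the partial conjugation $x'\mapsto x^{-1}x'x$ (fixing the other generators of $X$) restricts to an automorphism of $F(X)$ that is not inner in general, though nothing in your argument depends on that parenthetical.
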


\begin{proof} This is immediate for the inversions.  
	It is true of the partial conjugations because  if $x \in X$, then ${\Gamma-\star(x) = X-\{x\}}$.
	As for the transvections, suppose $y \in Y$, and so $\star(y) = \{y\} \cup X$.    If $w \in Y,$ then $\link(w) = X$, and so ${\link(w) \subseteq \star(y)}$.  So $\tau_{w,y}: w \mapsto wy$ (and fixes all other elements of $X \cup Y$), and   $\tau_{w,y}$ restricts to automorphisms of $F_k = F(X)$ and $F_l  = F(Y)$ as claimed.  
	If, on the other hand, $w \in X$, then since $\link(w)=Y$, $\link(w) \subseteq \star(y)$ if and only if $Y=\{y\}$, and so, as $l \geq 2$, there are no transvections    $\tau_{w,y}$.  
	Likewise the result holds for  transvections    $\tau_{w,x}$ with $x \in X$.		The result for graph symmetries is straight-forward.
\end{proof}

\begin{proof}[Proof of Lemma~\ref{first part of thm}]
	Every automorphism $\Omega$ of $F_k \times F_l$  is a product $\Pi$ of the Laurence--Servatius generators.   As $\Inn(F_k \times F_l) \trianglelefteq \Aut(F_k \times F_l)$, the inversions, partial conjugations, transvections, and graph symmetries in this product can be shuffled to the end as a suffix $\Omega_0$, so as to express  $\Omega$ as $\iota_g \circ \Omega_0$ for some inner automorphism $\iota_g$.   
	
	If $\rho \in \Aut(F_k \times F_l)$ is a graph symmetry and $\tau$ is an inversion, partial conjugation, or transvection, then $\rho^{-1} \tau \rho$ is again an inversion, partial conjugation, or transvection (respectively). So $\Omega_0 = \Omega_1 R$   where $R$ is the product of the graph symmetries in the product $\Pi$ and, 
	by  Corollary~\ref{LS cor}, $\Omega_1$ restricts to   automorphisms of   $F_k = F(X)$ and $F_l  = F(Y)$. 
	
	Now the lemma concerns some $\Psi \in \Aut(G)$.  
	Take  $\Omega = \Psi^2 = \iota_g\circ\Omega_1R$.  In this case the product $R$ will have an even number of terms and so (whether or not $k = l$), Corollary~\ref{LS cor} tells us that $R$    restricts to  automorphisms  of    $F(X)$ and $F(Y)$.  So taking $\Phi =   \Omega_1 R$ we have that  $\Phi = \phi_1 \times \phi_2 $ for some  $\phi_1 \in  \Aut(F_k)$ and $\phi_2 \in  \Aut(F_l)$  and $[\Psi^2] = [\Phi] \in \Out(F_k \times F_l)$, as required. 
\end{proof}

Here is a further lemma we will use to adapt a RAAG automorphism to one better suited to calculation  of the  Dehn function of the mapping torus. 

\begin{lemma}\label{lemma: F_k x F_l}  Suppose $\phi_1, \psi_1 \in \Aut(F_k)$ and  $\phi_2, \psi_2 \in \Aut(F_l)$ are such that $[\phi_1]= [\psi_1]$ in $\Out(F_k)$ and $[\phi_2] =[\psi_2]$ in $\Out(F_l)$.  Then $\delta_{M_{\phi_1 \times \phi_2}} \simeq  \delta_{M_{\psi_1 \times \psi_2}}$.
\end{lemma}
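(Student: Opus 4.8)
The plan is to reduce $\delta_{M_{\phi_1 \times \phi_2}} \simeq \delta_{M_{\psi_1 \times \psi_2}}$ to the already-established Lemma~\ref{lemma: simplifying automorphisms doesn't change DF}(3), which says that conjugate automorphisms in $\Out(G)$ produce mapping tori with equivalent Dehn functions. So it suffices to show that $\phi_1 \times \phi_2$ and $\psi_1 \times \psi_2$ are conjugate in $\Out(F_k \times F_l)$ given that $[\phi_i] = [\psi_i]$ in $\Out(F_k)$ and $\Out(F_l)$ respectively. Actually, the cleanest route is to observe they represent the \emph{same} element of $\Out(F_k \times F_l)$, which is even stronger than conjugate.

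First I would unwind the hypotheses: $[\phi_1] = [\psi_1]$ in $\Out(F_k)$ means $\psi_1 = \iota_{u} \circ \phi_1$ for some $u \in F_k$, where $\iota_u$ denotes conjugation by $u$; similarly $\psi_2 = \iota_v \circ \phi_2$ for some $v \in F_l$. Then $\psi_1 \times \psi_2 = (\iota_u \circ \phi_1) \times (\iota_v \circ \phi_2) = (\iota_u \times \iota_v) \circ (\phi_1 \times \phi_2)$, using that the product of compositions is the composition of products when the factors act on complementary free factors. Now $\iota_u \times \iota_v$, viewed as an automorphism of $F_k \times F_l$, is precisely conjugation by the element $(u,v) = uv \in F_k \times F_l$ — here I would spell out that for $(g,h) \in F_k \times F_l$ we have $(\iota_u \times \iota_v)(g,h) = (u^{-1}gu, v^{-1}hv) = (uv)^{-1}(g,h)(uv)$ since $u$ and $v$ commute in the product. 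Hence $\iota_u \times \iota_v \in \Inn(F_k \times F_l)$, so $\psi_1 \times \psi_2$ and $\phi_1 \times \phi_2$ differ by an inner automorphism and therefore $[\psi_1 \times \psi_2] = [\phi_1 \times \phi_2]$ in $\Out(F_k \times F_l)$.

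Applying Lemma~\ref{lemma: simplifying automorphisms doesn't change DF}(3) (in the trivial case where the conjugating outer automorphism is the identity, or simply noting that equal outer automorphisms give isomorphic mapping tori) yields $\delta_{M_{\phi_1 \times \phi_2}} \simeq \delta_{M_{\psi_1 \times \psi_2}}$, completing the proof. There is no real obstacle here; the only thing to be careful about is the bookkeeping that the map $\Aut(F_k) \times \Aut(F_l) \to \Aut(F_k \times F_l)$, $(\alpha,\beta) \mapsto \alpha \times \beta$, is a homomorphism and carries $\Inn(F_k) \times \Inn(F_l)$ into $\Inn(F_k \times F_l)$ — a routine check that I would state but not belabor.
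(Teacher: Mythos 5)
Your proposal is correct and matches the paper's own argument essentially verbatim: both write the $\psi_i$ (resp.\ $\phi_i$) as inner twists of the other, observe that the product of the two inner automorphisms is conjugation by the single element $uv \in F_k \times F_l$ because the factors commute, conclude $[\phi_1 \times \phi_2] = [\psi_1 \times \psi_2]$ in $\Out(F_k \times F_l)$, and invoke Lemma~\ref{lemma: simplifying automorphisms doesn't change DF}. No differences worth noting.
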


\begin{proof} Suppose $\phi_1 =  \iota_{a}\circ\psi_1 $ for $a \in F_k$ and $\phi_2 =   \iota_{b}\circ\psi_2$ for $b \in F_l$. Then viewing $a$ and $b$ as elements of $F_k \times F_l$ via the natural embeddings $F_k \to F_k \times F_l$ and $F_l \to F_k \times F_l$, we have that $\phi_1 \times \phi_2 = \iota_{ab}\circ  (\psi_1 \times \psi_2) $, as $b$ commutes with all elements of $F_k$ and $a$ commutes with all elements of $F_l$.  So $[\phi_1 \times \phi_2]=[\psi_1 \times \psi_2]$  in $\Out(F_k \times F_l)$ and it follows from Lemma \ref{lemma: simplifying automorphisms doesn't change DF} that $\delta_{M_{\phi_1 \times \phi_2}} \simeq  \delta_{M_{\psi_1 \times \psi_2}}$. \end{proof} 

\subsection{Growth of  free group automorphisms} \label{subsec:growth}

Suppose $F$ is a  finite-rank free group.   The \emph{growth} $g_{\phi, X}: \N \to \N$ of an automorphism $\phi: F \to F$ with respect to a free basis $X$  is defined by  $$g_{\phi, X}(n)  \ :=  \ \max_{x\in X}\{|\phi^n(x)|\},$$ where $|  \, g  \,  |$ denotes the length of a shortest word on $X$ representing $g$.    We write $f \simeq_{\ell} g$ when $f, g : \N \to \N$ are Lipschitz equivalent; that is, when there exist $C_1, C_2>0$ such that ${C_1 g(n) < f(n) < C_2 g(n)}$ for all $n$.   Up to $\simeq_{\ell}$, free group growth $g_{\phi,X}$ does not depend on the choice of finite basis $X$.

We say that  $\phi \in \Aut( F )$ is \textit{periodic} when there is $l>0$ such that $\phi^l$ is an inner automorphism.  We say that $\phi$ is  \emph{polynomially growing} when there is $d \geq 0$ such that  $g_{\phi} (n) \simeq_{\ell} n^d$, and  $\phi$ is called \textit{exponentially growing}  otherwise.

Levitt \cite[Theorem~3]{Levitt} shows that in the polynomially growing case, for every $x \in F$,  there exists $d_x \geq 0$ such that  $| \phi^n(x) |  \simeq_{\ell} n^{d_x}$, and in the exponentially growing case, there exists $x \in F$ and    $\lambda>1$ such that $|\phi^n(x)| > \lambda^n$ for all $n \in \N$.  For $g \in F$, let  $||g||$ denote the cyclically reduced length, that is, the length of the shortest word representing a conjugate of $g$.
The corresponding result holds for $|| \, \cdot \, ||$ in place of $| \, \cdot \, |$ (though possibly with different powers and exponential functions)  \cite[Theorem~6.2]{Levitt}.   

\begin{definition}
	If $\phi$ is polynomially growing, let $d$ be the largest degree so that for some $g \in F(X)$, $||\phi^n(g)|| \simeq_{\ell} n^d$. In this case, define $g^{cyc}_{\phi}(n) = n^{d}$. Otherwise, define $g^{cyc}_{\phi}(n) = 2^n$.
\end{definition} 

	In contrast with growth, $ g^{cyc}(n) \not\simeq_{\ell}\ \max_{x\in X}\{||\phi^n(x)||\}$ in general.  That is, knowing what happens to generators is not enough to understand $g_{\phi}^{cyc}$.  Indeed,    \cite[Lemma 5.2]{Levitt} gives  a family of automorphisms $\phi_l$ and bases $X_l$ ($l \in \mathbb{N}$) such that  $d_x \in \{0,1\}$ for all $x\in X_l$, but  there exists $g \in F(X_l)$ such that $d_g = l$.   

To establish lower bounds for the Dehn function of $M_{\phi_1 \times \phi_2}$, we will use cyclically reduced growth to find lower bounds for the growth of a family of words under repeated application of our automorphism. To  establish upper bounds for the Dehn function of $M_{\phi_1 \times \phi_2}$, we will use growth to provide upper bounds for the growth of words under our automorphism. Results of Levitt provide a way to bridge the gap between the two types of growth: in all cases where the cyclically reduced growth and traditional growth disagree, it is possible to exchange $\phi_1 \times \phi_2$ with a related automorphism $\hat{\xi}_1 \times \hat{\xi}_2$ for which  $g_{\hat{\xi_i}} \simeq_{\ell} g^{\text{cyc}}_{\hat{\xi_i}}$ for $i \in \{1,2\}$. The mapping tori $M_{\phi_1\times \phi_2}$ and $M_{\hat{\xi_1} \times \hat{\xi_2}}$ have equivalent Dehn functions. We expand on this below.

Here is a summary  of results of Levitt \cite{Levitt} and Piggot \cite{Piggott} on properties of growth and cyclically reduced growth in free groups:

\begin{lemma}  \label{equiv conveniences} Suppose   $\phi \in \Aut( F)$. 
	\begin{enumerate}
		\item \label{equality of cyclic growth}
		$g^{cyc}_{\phi}=g^{cyc}_{\psi}$ if $[\phi] = [\psi] \in \Out(F)$.
		\item \label{equality of cyclic growth under powers} If $\phi$ is polynomially growing,
		$g^{cyc}_{\phi}\simeq_{\ell}g^{cyc}_{\phi^k}$ for all $k\in \mathbb{N}$.
		\item (Theorem 0.4 of \cite{Piggott}) $g_{\phi} \simeq g_{\phi^{-1}}$.  \label{inv the same}
		\item (Section~2 of \cite{Levitt}) $g^{cyc}_{\phi} \simeq_{\ell} g^{cyc}_{\phi^{-1}}$. \label{inv the same 2} 
		\item (Theorem 3 of \cite{Levitt}, cf.\ Bestvina--Feighn--Handel \cite{BFH1}).   Either $g_{\phi}(m) \simeq 2^m$,  or $g_{\phi}(m) \simeq  m^{d}$ for some $d\in \mathbb{N}$.
		
		\item (By Corollary 1.6 of \cite{Levitt}) \label{Levitt FP} If  $n \mapsto ||\phi^n(g)||$ grows polynomially, then  there exists $p\geq 1$ and  $\xi \in \Aut(F)$ such that $[\phi^p] = [\xi]$ in $\Out(F)$ and $\xi$ admits a non-trivial fixed point.
		\item (By Lemma 2.3 of  \cite{Levitt}) \label{Levitt lem 2} Suppose   $\xi \in \Aut(F)$ satisfies $g_{\xi}^{cyc} (n)\simeq_{\ell} n^d$ with $d>0$ and $\xi$ has a non-trivial fixed point set. Then $g_{\xi}(n) \simeq_{\ell} n^d$. (There are two other possible behaviors for $[\xi]$: either for some $k$, $[\xi^k] = [\textup{Id}]$, or $\xi$ has exponential growth.)
	\end{enumerate}
\end{lemma}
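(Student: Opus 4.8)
The plan is to assemble all seven parts from the cited work of Levitt \cite{Levitt} and Piggott \cite{Piggott}, supplying the short elementary arguments needed for the two parts that are not verbatim quotations and being explicit about why the polynomial/exponential dichotomy underlying the definition of $g^{cyc}_\phi$ is stable under the operations in play.

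First I would dispose of the pure citations. Part~\eqref{inv the same} is Theorem~0.4 of \cite{Piggott}. Part~(5) is Theorem~3 of \cite{Levitt} (see also Bestvina--Feighn--Handel \cite{BFH1}). Part~\eqref{inv the same 2}, that $g^{cyc}_\phi\simeq_\ell g^{cyc}_{\phi^{-1}}$, is precisely the inversion-invariance of the polynomial degree of conjugacy-class growth established in Section~2 of \cite{Levitt}. Parts~\eqref{Levitt FP} and~\eqref{Levitt lem 2} are, respectively, Corollary~1.6 and Lemma~2.3 of \cite{Levitt}; here the work is just to match hypotheses --- for~\eqref{Levitt FP}, polynomial growth of the single conjugacy class $[g]$ under $\phi$; for~\eqref{Levitt lem 2}, a non-trivial fixed subgroup together with $g^{cyc}_\xi(n)\simeq_\ell n^d$, $d>0$ --- and then quote the conclusions.

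For part~\eqref{equality of cyclic growth}, I would write $\phi=\iota_c\circ\psi$ for some $c\in F$ (possible since $[\phi]=[\psi]$, with the convention $\iota_c\colon x\mapsto c^{-1}xc$) and prove by induction on $n$ that $\phi^n(g)=w_n^{-1}\,\psi^n(g)\,w_n$ for all $g\in F$ and $n\ge 0$, where $w_n:=c\,\phi(c)\,\phi^2(c)\cdots\phi^{n-1}(c)$. Then $\phi^n(g)$ and $\psi^n(g)$ are conjugate, so $||\phi^n(g)||=||\psi^n(g)||$ for every $g$ and $n$. Hence $\phi$ admits a conjugacy class of exponential growth if and only if $\psi$ does, so by part~(5) together with Theorem~6.2 of \cite{Levitt} (recalled just before the definition of $g^{cyc}_\phi$) the automorphism $\phi$ is polynomially growing exactly when $\psi$ is; and in the polynomially growing case the two families of exponents $\{d_g\}$ coincide, so their maxima agree. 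Either way $g^{cyc}_\phi=g^{cyc}_\psi$. For part~\eqref{equality of cyclic growth under powers}, I would use $g_{\phi^k}(n)=g_\phi(kn)$ to see that $\phi^k$ is polynomially growing when $\phi$ is, then apply Theorem~6.2 of \cite{Levitt} to both $\phi$ and $\phi^k$: for each $g$ there are $d_g,d_g'\ge 0$ with $||\phi^m(g)||\simeq_\ell m^{d_g}$ and $||\phi^{kn}(g)||\simeq_\ell n^{d_g'}$; substituting $m=kn$ and absorbing the constant $k^{d_g}$ forces $d_g'=d_g$, and taking maxima over $g$ gives $g^{cyc}_{\phi^k}\simeq_\ell g^{cyc}_\phi$.

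The step I expect to be the real obstacle --- or at least the one a reader will want justified --- is part~\eqref{inv the same 2}. It cannot be reduced to part~\eqref{inv the same}, because, as noted after the definition, $g^{cyc}_\phi$ is genuinely not computed by $\max_{x\in X}||\phi^n(x)||$; so the argument must go through Levitt's direct analysis of the growth of conjugacy classes under polynomially growing automorphisms, which I would cite from Section~2 of \cite{Levitt} rather than reprove here.
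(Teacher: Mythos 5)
Your proposal is correct and matches the paper's treatment: the paper offers no proof of this lemma at all, presenting it purely as a summary of results cited from Levitt and Piggott, exactly the citations you invoke for parts (3)--(7). Your added verifications for parts (1) and (2) --- the conjugation identity $\phi^n(g)=w_n^{-1}\psi^n(g)w_n$ giving equality of cyclically reduced lengths, and the substitution $m=kn$ for powers --- are sound and supply routine details the paper leaves implicit.
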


In building van Kampen diagrams and shuffling relators we will use both forward and backward iterates of our automorphism. Lemma \ref{equiv conveniences} (\ref{inv the same}) and (\ref{inv the same 2}) imply that we can use the same functions to estimate both. (\ref{equality of cyclic growth}) implies that cyclic growth can be defined for outer automorphisms. This can fail for growth.

\begin{lemma}\label{equiv2} Suppose $\phi \in \Aut(F)$ has polynomial growth and is not periodic. Then there exists $\xi  \in \Aut(F)$ and $p\geq 0$ with $[\xi] = [\phi^p]\in \Out(F)$ and $g^{cyc}_{\phi} \simeq g^{cyc}_{\xi} \simeq g_{\xi}$. Moreover, for any $q\geq 1$,  $g^{cyc}_{\phi} \simeq g^{cyc}_{\xi^q} \simeq g_{\xi^q}$.
\end{lemma}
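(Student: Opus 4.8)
The plan is to chain together the structural results of Levitt stated as Lemma~\ref{equiv conveniences}. Since $\phi$ is polynomially growing and not periodic, there is some $g \in F$ with $||\phi^n(g)|| \simeq_\ell n^d$ for the maximal such $d$, and since $\phi$ is not periodic this $d$ is strictly positive, so $g^{cyc}_\phi(n) = n^d$ with $d \geq 1$. First I would apply Lemma~\ref{equiv conveniences}\eqref{Levitt FP}: because $n \mapsto ||\phi^n(g)||$ grows polynomially, there is $p \geq 1$ and $\xi \in \Aut(F)$ with $[\xi] = [\phi^p]$ in $\Out(F)$ such that $\xi$ has a non-trivial fixed point. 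Then Lemma~\ref{equiv conveniences}\eqref{equality of cyclic growth} gives $g^{cyc}_\xi = g^{cyc}_{\phi^p}$, and Lemma~\ref{equiv conveniences}\eqref{equality of cyclic growth under powers} (valid since $\phi$, hence $\phi^p$, is polynomially growing) gives $g^{cyc}_{\phi^p} \simeq_\ell g^{cyc}_\phi$. So $g^{cyc}_\xi \simeq_\ell g^{cyc}_\phi \simeq_\ell n^d$ with $d > 0$.

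Next I would invoke Lemma~\ref{equiv conveniences}\eqref{Levitt lem 2}: $\xi$ has a non-trivial fixed point set and $g^{cyc}_\xi(n) \simeq_\ell n^d$ with $d > 0$, so that lemma yields $g_\xi(n) \simeq_\ell n^d$ as well (the two alternative behaviors — $[\xi^k] = [\mathrm{Id}]$ for some $k$, or exponential growth of $\xi$ — are excluded, the first because it would force $\phi$ periodic, the second because $g_\xi \simeq_\ell g^{cyc}_\xi \simeq_\ell n^d$ is polynomial). Combining, $g^{cyc}_\phi \simeq g^{cyc}_\xi \simeq g_\xi$, and $[\xi] = [\phi^p]$ with $p \geq 1 \geq 0$, giving the first assertion.

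For the "moreover" clause I would note that $\xi$ is polynomially growing (its growth is $\simeq_\ell n^d$) and not periodic (else $\phi^p$, hence $\phi$, would be periodic). Fix $q \geq 1$. Then $g^{cyc}_{\xi^q} \simeq_\ell g^{cyc}_\xi$ by Lemma~\ref{equiv conveniences}\eqref{equality of cyclic growth under powers}. It remains to see $g_{\xi^q} \simeq_\ell n^d$: the upper bound $g_{\xi^q}(n) \preceq g_\xi(qn) \simeq_\ell n^d$ is immediate from the definition of growth (an $n$-fold iterate of $\xi^q$ is the $qn$-fold iterate of $\xi$), and the lower bound $g_{\xi^q} \succeq g^{cyc}_{\xi^q} \simeq_\ell n^d$ holds since cyclically reduced length is bounded by ordinary length. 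Hence $g^{cyc}_\phi \simeq g^{cyc}_{\xi^q} \simeq g_{\xi^q}$, as claimed.

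The only subtle point — and the step I would be most careful with — is verifying that the two "escape hatches" in Lemma~\ref{equiv conveniences}\eqref{Levitt lem 2} genuinely cannot occur under our hypotheses: one must use that $\phi$ being non-periodic rules out $[\xi^k] = [\mathrm{Id}]$ (since $[\xi] = [\phi^p]$ would then make $[\phi^{pk}]$ trivial), and that $d > 0$ together with the polynomial growth of $\phi$ rules out the exponential alternative. Everything else is a bookkeeping assembly of the cited facts, with attention to whether each auxiliary automorphism meets the polynomial-growth and non-periodicity hypotheses needed to apply parts \eqref{equality of cyclic growth under powers} and \eqref{Levitt lem 2}.
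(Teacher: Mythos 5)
Your proposal is correct and follows exactly the route the paper takes: its proof is the one-line instruction to take $\xi$ as in Lemma~\ref{equiv conveniences}\eqref{Levitt FP} and then apply parts \eqref{Levitt lem 2}, \eqref{equality of cyclic growth}, and \eqref{equality of cyclic growth under powers}, which is precisely what you do (you simply spell out the bookkeeping, including the needed observation that non-periodicity forces $d\geq 1$ so that \eqref{Levitt lem 2} applies).
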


\begin{proof} We use Lemma \ref{equiv conveniences}: take $\xi$ as per \eqref{Levitt FP} and then apply     \eqref{Levitt lem 2}, \eqref{equality of cyclic growth}, and \eqref{equality of cyclic growth under powers}.
\end{proof}

\begin{lemma} \label{no change to Dehn fns}  Suppose $\phi_1 \in \Aut(F_k)$ and $\phi_2 \in \Aut(F_l)$.   For $i =1,2$, suppose $p_i \geq 0$  is such that $[\phi_i^{p_i}] = [\xi_i]$  as in Lemma~\ref{equiv2}.  Define $\hat{\xi}_1 = \xi_1^{p_2}, ~\hat{\xi}_2 = \xi_2^{p_1}$.  Then $M_{\phi_1\times \phi_2}$ and $M_{\hat{\xi}_1\times \hat{\xi}_2}$ have equivalently growing Dehn functions and $g_{\phi_i}^{cyc} \simeq  g_{\hat{\xi}_i}^{cyc} \simeq g_{\hat{\xi}_i}$. \end{lemma}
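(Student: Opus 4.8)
The plan is to reduce the whole statement, via the first clause of Lemma~\ref{lemma: simplifying automorphisms doesn't change DF}, to an assertion about a single common power $N := p_1 p_2$ of $\phi_1 \times \phi_2$, and then to chase outer-automorphism classes through Lemmas~\ref{equiv2} and \ref{lemma: F_k x F_l}. First I would record that $p_1, p_2 \geq 1$: since a $\xi_i$ as in Lemma~\ref{equiv2} exists, $\phi_i$ is polynomially growing and not periodic (otherwise $g^{cyc}_{\phi_i} \simeq g^{cyc}_{\xi_i}$ would be bounded), and $p_i = 0$ would force $[\xi_i] = [\textup{Id}]$, i.e.\ $\xi_i \in \Inn(F_{?})$. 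Hence $\hat\xi_1 = \xi_1^{p_2}$ and $\hat\xi_2 = \xi_2^{p_1}$ are genuine iterates with positive exponent, so the ``moreover'' clause of Lemma~\ref{equiv2} is available for them.

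For the growth assertion $g_{\phi_i}^{cyc} \simeq g_{\hat\xi_i}^{cyc} \simeq g_{\hat\xi_i}$, I would apply the ``moreover'' clause of Lemma~\ref{equiv2} to $\phi_1$ with exponent $q = p_2$, giving $g^{cyc}_{\phi_1} \simeq g^{cyc}_{\xi_1^{p_2}} \simeq g_{\xi_1^{p_2}}$, that is, $g^{cyc}_{\phi_1} \simeq g^{cyc}_{\hat\xi_1} \simeq g_{\hat\xi_1}$; symmetrically, applying it to $\phi_2$ with $q = p_1$ gives $g^{cyc}_{\phi_2} \simeq g^{cyc}_{\hat\xi_2} \simeq g_{\hat\xi_2}$. (By Lemma~\ref{equiv conveniences}\eqref{equality of cyclic growth} the quantity $g^{cyc}$ depends only on the outer class, so $g^{cyc}_{\hat\xi_i}$ is unambiguous.)

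For the Dehn-function assertion: since $[\phi_i^{p_i}] = [\xi_i]$ in the relevant $\Out$ and $\Out$ is a group, raising to the $p_{3-i}$-th power gives $[\phi_1^{N}] = [\xi_1^{p_2}] = [\hat\xi_1]$ in $\Out(F_k)$ and $[\phi_2^{N}] = [\xi_2^{p_1}] = [\hat\xi_2]$ in $\Out(F_l)$, where $N = p_1 p_2$. Now $(\phi_1 \times \phi_2)^{N} = \phi_1^{N} \times \phi_2^{N}$, so Lemma~\ref{lemma: F_k x F_l}, applied with $\phi_1^N, \phi_2^N$ in the roles of $\phi_1, \phi_2$ and $\hat\xi_1, \hat\xi_2$ in the roles of $\psi_1, \psi_2$, yields $\delta_{M_{(\phi_1 \times \phi_2)^{N}}} \simeq \delta_{M_{\hat\xi_1 \times \hat\xi_2}}$. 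The first clause of Lemma~\ref{lemma: simplifying automorphisms doesn't change DF}, applied with $\Psi = \phi_1 \times \phi_2$ and $n = N$, gives $\delta_{M_{\phi_1 \times \phi_2}} \simeq \delta_{M_{(\phi_1 \times \phi_2)^{N}}}$, and concatenating the two equivalences completes the proof.

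There is no real obstacle here: it is pure bookkeeping. The only point requiring a little care is using the \emph{product} exponent $N = p_1 p_2$ (rather than $p_1$ or $p_2$ alone), so that both outer-class identities $[\phi_i^{N}] = [\hat\xi_i]$ hold simultaneously; with that choice the splitting $(\phi_1\times\phi_2)^N = \phi_1^N \times \phi_2^N$ makes Lemma~\ref{lemma: F_k x F_l} directly applicable, and the positivity of $p_1,p_2$ is exactly what lets the ``moreover'' clause of Lemma~\ref{equiv2} be invoked for the two growth statements. This lemma will then be used in the proof of Theorem~\ref{F_k x F_l}\eqref{uno}, where replacing $\phi_1\times\phi_2$ by $\hat\xi_1\times\hat\xi_2$ permits working with factor automorphisms whose growth and cyclic growth coincide.
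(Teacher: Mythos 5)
Your proposal is correct and follows essentially the same route as the paper: reduce to the common power $N=p_1p_2$ via Lemma~\ref{lemma: simplifying automorphisms doesn't change DF}(1), identify $[\phi_i^N]=[\hat{\xi}_i]$ and apply Lemma~\ref{lemma: F_k x F_l}, with the growth claim coming from the ``moreover'' clause of Lemma~\ref{equiv2}. Your extra observation that $p_1,p_2\geq 1$ (so that clause is applicable) is a small point of care the paper leaves implicit.
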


\begin{proof}
	By Lemma~\ref{lemma: simplifying automorphisms doesn't change DF}, the Dehn functions of $M_{\phi_1 \times \phi_2}$ and ${M_{(\phi_1 \times \phi_2)^{p_1p_2}} = M_{(\phi_1^{p_1})^{p_2} \times (\phi_2^{p_2})^{p_1}}}$ are equivalent. By Lemma~\ref{lemma: F_k x F_l}, we may also pick convenient representatives of the outer automorphism classes without changing the Dehn function, so $M_{\xi_1^{p_2} \times \xi_2^{p_1}} = M_{\hat{\xi}_1\times \hat{\xi}_2}$ will also have equivalent Dehn function to  $M_{\phi_1 \times \phi_2}$.
\end{proof}

\subsection{Dehn function lower bounds}

A result similar to Lemma \ref{part2} was proved by Brady and Soroko \cite{Soroko} in the context of Bieri doubles.

\begin{lemma}\label{part2} Suppose that $\Phi \in \Aut(F_k \times F_l)$ has the form $\Phi =\phi_1 \times \phi_2$ where  $\phi_1 \in \Aut(F_k)$ and $\phi_2 \in \Aut(F_l)$. Suppose that $g_{\phi_1}^{cyc} \preceq_{\ell} g_{\phi_2}^{cyc}$. 
	\begin{enumerate}
		\item If $g_{\phi_1}^{cyc}(n) \simeq_{\ell} n^{d_1}$, for  some $d_1\geq 0$, then $n^{d_1 +2} \ \preceq \  \delta_{M_{\Phi}}(n).$
		\item If $g_{\phi_1}(n) \succeq 2^n$, then the Dehn function $\delta_{M_{\Phi}}(n) \succeq 2^n$.
	\end{enumerate}
\end{lemma}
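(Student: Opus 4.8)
The plan is to prove both lower bounds by exhibiting, for each $n$, a word $w_n$ of length linear in $n$ representing $1$ in $M_\Phi$ for which every van~Kampen diagram carries a belt of $t$-corridors whose total length forces area at least a constant times $n^{d_1+2}$ in case~(1), or exponential in case~(2); compare the treatment of Bieri doubles in Brady--Soroko \cite{Soroko}. If $d_1=0$ in~(1) there is nothing to prove, since $M_\Phi$ contains $\Z^2=\langle x,y\rangle$ for $x\in X$, $y\in Y$, so it is not hyperbolic and $n^2\preceq\delta_{M_\Phi}$ (Lemma~\ref{Exponential}). So assume $d_1\geq1$ in~(1) --- in which case $\phi_1$ grows polynomially of degree exactly $d_1$ --- and in~(2) that $\phi_1$ grows exponentially.

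\emph{Test words.} In case~(1), using the definition of $g^{cyc}_{\phi_1}$ together with Levitt's theorem (Lemma~\ref{equiv conveniences}(5) and \cite[Thm.~6.2]{Levitt}), fix a cyclically reduced $g\in F_k$ with $\|\phi_1^{\,j}(g)\|\simeq_{\ell}j^{d_1}$ for all $j$; fix a generator $y\in Y$; and set
\[
 w_n \ := \ \bigl[\, t^{-n}g^{\,n}t^{\,n}, \ y^{\,n} \,\bigr] \ = \ t^{-n}g^{\,n}t^{\,n}\; y^{\,n}\; t^{-n}g^{-n}t^{\,n}\; y^{-n}.
\]
In case~(2), using Levitt's theorem again, fix $g\in F_k$ and $\lambda>1$ with $|\phi_1^{\,j}(g)|>\lambda^{\,j}$ for all $j$, and set $w_n:=[\,t^{-n}g\,t^{\,n},\ y\,]$. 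Since $t^{-n}g^{\,m}t^{\,n}=\phi_1^{\,n}(g)^{m}\in F_k$ commutes with $y^{\,m}\in F_l$, each $w_n$ represents $1$ in $M_\Phi$, and $|w_n|$ is linear in $n$. Because cyclically reduced length is additive under powers, $\phi_1^{\,n}(g)^{n}$ has $F_k$-length $\gtrsim n\cdot n^{d_1}=n^{d_1+1}$ (resp.\ $\phi_1^{\,n}(g)$ has $F_k$-length $>\lambda^{n}$): so $F_k$ is polynomially (resp.\ exponentially) distorted in $M_\Phi$, and the point of $w_n$ is that realizing the commutator of this long element of $F_k$ with $y^{\,n}$ (resp.\ $y$) is expensive.

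\emph{The lower bound.} Let $\Delta$ be a van~Kampen diagram for $w_n$. Only $t$ forms genuine corridors (the relators $[x,y]$ are $t$-free, and each relator $t^{-1}st\,\Phi(s)^{-1}$, $s\in X\cup Y$, has one $t$ and one $t^{-1}$), so removing the at most $|w_n|$ $t$-corridors leaves $t$-complementary regions, each a diagram over $F(X)\times F(Y)$ in which $X$- and $Y$-corridors meet pairwise at most once (as in Lemma~\ref{lemma:ct 1x intersections}); hence such a region has area at least its number of $X$-corridor--$Y$-corridor crossings. Each of the $n$ $t$-edges of the block $t^{\,n}$ just after $g^{\,n}$ lies on a distinct $t$-corridor separating $g^{\,n}$ from $y^{\,n}$, so the path in the dual tree of $t$-corridors from the region carrying $g^{\,n}$ to the region $R_y$ carrying $y^{\,n}$ crosses $\geq n$ $t$-corridors. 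The $F_k$-content emanating from $g^{\,n}$ behaves as a conserved quantity: to be eliminated it must either be carried to $R_y$ --- where, having crossed $j$ $t$-corridors it has acquired reduced length $\gtrsim n\,j^{d_1}$ and must then be commuted past $y^{\,n}$, costing $\gtrsim n^{d_1+1}\cdot n$ crossings in $R_y$ --- or carried back to the $g^{-n}$-block, which requires traversing corridors that successively apply $\phi_1^{-1}$ and so have lengths summing to $\gtrsim\sum_{j\leq n}n\,j^{d_1}$; shrinking the content along the way costs a commensurate amount of region area. In every case
\[
 \Area(\Delta)\ \gtrsim\ \sum_{j=1}^{n}\|\phi_1^{\,j}(g)^{n}\|\ =\ \sum_{j=1}^{n} n\,\|\phi_1^{\,j}(g)\|\ \gtrsim\ \sum_{j=1}^{n} n\,j^{d_1}\ \gtrsim\ n^{d_1+2},
\]
while in case~(2) the corridor reaching $R_y$ already carries $F_k$-content of length $>\lambda^{n}$, forcing $\Area(\Delta)>\lambda^{n}$. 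As $|w_n|$ is linear in $n$, this yields $n^{d_1+2}\preceq\delta_{M_\Phi}$ in case~(1) and $2^{n}\preceq\delta_{M_\Phi}$ in case~(2).

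\emph{Main obstacle.} The substance is the conservation/accounting of the third paragraph: one must show that no van~Kampen diagram does better than the ``unfold $t^{-n}g^{\,n}t^{\,n}$ to $\phi_1^{\,n}(g)^{n}$ and then commute'' filling (or the ``cancel $g^{\,n}$ against $g^{-n}$ first'' filling), both of which have area $\simeq n^{d_1+2}$; the work is to convert ``$\Delta$ carries a belt of $t$-corridors of controlled total length'' into a genuine lower bound by a potential/weighting argument on $t$-corridors and $t$-complementary regions, carefully handling the possibility that the $F_k$-content splits and recombines. A subsidiary point is to align the orientation of $t$-corridors with the sign of the iterate, so that positive powers $\phi_1^{\,j}$ (for which our $g$ realizes the growth) appear along the belt; this is arranged via $M_\Phi\cong M_{\Phi^{-1}}$ and $g^{cyc}_{\phi_1}\simeq_{\ell}g^{cyc}_{\phi_1^{-1}}$ (Lemmas~\ref{lemma: simplifying automorphisms doesn't change DF}(2) and \ref{equiv conveniences}(4)).
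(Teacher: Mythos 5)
Your proposal has a genuine gap, in two respects. First, the test word itself is inadequate. In case~(1) you take $w_n=[\,t^{-n}g^{\,n}t^{\,n},\,y^{\,n}\,]$ with $y$ an \emph{arbitrary} generator of $F_l$. Consider the $t$-corridor pattern in which the first $t^{-n}$ block connects to the last $t^{\,n}$ block (corridors nesting around $y^{-n}$) and the second block connects to the third (nesting around $y^{\,n}$); this is planar and orientation-consistent. If $\phi_2$ happens to fix $y$ (which can occur even when $\phi_2$ has exponential growth), each of these $2n$ corridors has length $n$, and the central region has boundary $g^{\,n}y^{\,n}g^{-n}y^{-n}$, fillable with $O(n^2)$ commutator cells. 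Algebraically this is the filling $w_n=t^{-n}\,g^{\,n}(t^{\,n}y^{\,n}t^{-n})g^{-n}(t^{\,n}y^{-n}t^{-n})\,t^{\,n}=t^{-n}[g^{-n},y^{-n}]t^{\,n}=1$, of area $O(n^2)$. So for $d_1\geq 2$ your word simply does not have the claimed area, and in case~(2) the analogous filling of $[\,t^{-n}gt^{\,n},y\,]$ has area $O(n)$. The fix requires choosing $y$ to be an element realizing the cyclic growth of $\phi_2$ (this is where the hypothesis $g^{cyc}_{\phi_1}\preceq_{\ell}g^{cyc}_{\phi_2}$ enters) and, more importantly, designing the word so that \emph{every} corridor pattern is expensive. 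The paper does this with $w_n=t^{-4n}y^{\,n}t^{4n}x^{\,n}t^{-4n}y^{-n}t^{4n}x^{-n}$: the blocks $t^{\pm 4n}$ are four times longer than the number of letters they must clear, and the $x$- and $y$-blocks alternate, so for any value of the single switching parameter $h$ at least $2n$ consecutive corridors each carry either $\phi_1^{-i}(x^{\,n})$ or $\phi_2^{\,i}(y^{\,n})$ with $i\geq n$, and summing their cyclically reduced lengths gives $n^2\min\{C_1n^{d_1},C_2n^{d_2}\}$ directly.

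Second, even granting a good test word, the heart of your argument --- the ``conservation of $F_k$-content'' and the potential/weighting scheme needed to show no diagram beats the obvious fillings --- is not carried out, and you say so yourself under ``Main obstacle.'' That accounting is exactly the hard part of a lower-bound proof, and it cannot be waved through: splitting and recombining of content, and corridors that double back, are real possibilities. The lesson from the paper's proof is that with the right word no such machinery is needed: non-crossing of $t$-corridors reduces the combinatorics to one integer $h$, and the area bound is just a sum of corridor lengths, each bounded below by a cyclically reduced length (which is well defined independently of where the corridor sits because the element carried is determined up to conjugacy).
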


\begin{proof}  If $g_{\phi_1}^{cyc}(n) \simeq_{\ell} n^{d_1}$ then by Lemma \ref{equiv conveniences}~(\ref{inv the same 2}), $g_{\phi_1^{-1}}^{cyc}(n) \simeq_{\ell} n^{d_1}$. Let $x \in F_k$ and $C_1>0$ be such that ${||\phi_1^{-n}(x)|| \geq C_1n^{d_1}}$ for all $n \in \N$. If $\phi_2$ is polynomially growing, there is $y \in F_l$ and $C_2>0$  such that ${||\phi_2^n(y)|| \geq C_2 n^{d_2}}$, and if $\phi_2$ is exponentially growing, choose $y$ such that for some $b>1$, $b^n \preceq ||\phi_2^n(y)||$.

	Consider the word ${w_n = t^{-4n}y^nt^{4n}x^n t^{-4n} y^{-n}t^{4n}x^{-n}}$. We will show that $\Area(w_n) \succeq n^{d_1+2}$. Since  $$ {20n \ \leq|w_n| \ \leq \ 16n + 2n|x| + 2n|y|},$$ this lower bound on the area will imply that the Dehn function dominates the polynomial $n^{\min\{d_1, d_2\} +2}$.
	Consider the following picture: 
	\begin{figure}[!ht]
		\centering
		\begin{subfigure}[b]{3.7cm} 
			\centering
			\includegraphics[scale=0.17]{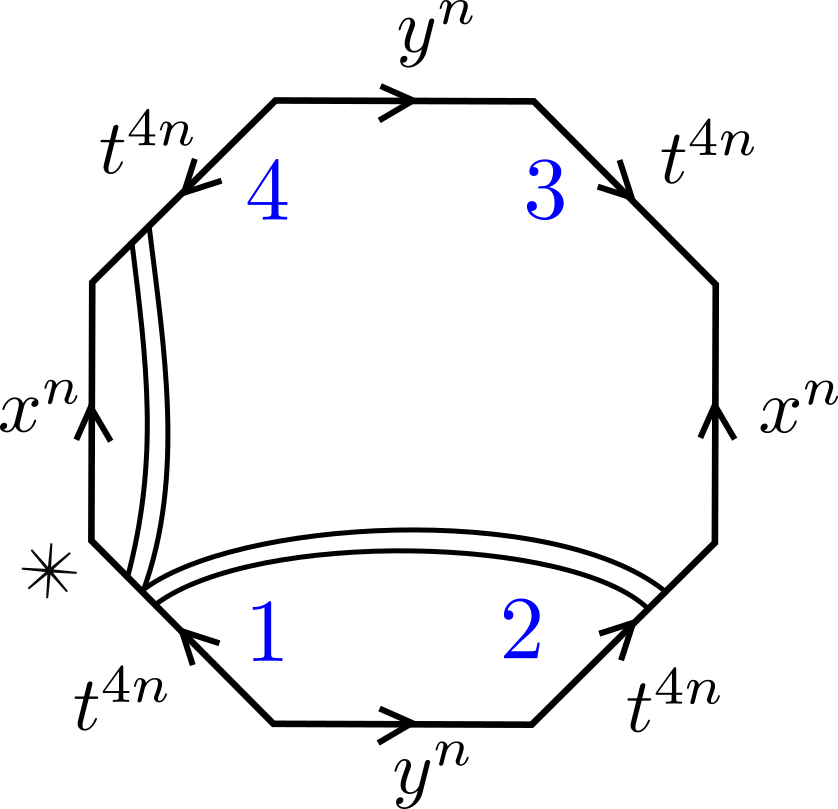}
			\caption{There are few choices for $t$-corridor patterns}
			\label{fig:BridsonGerstenunfilled}
		\end{subfigure}\hspace{1cm}
		\begin{subfigure}[b]{5cm}  
			\centering
			\includegraphics[scale=0.17]{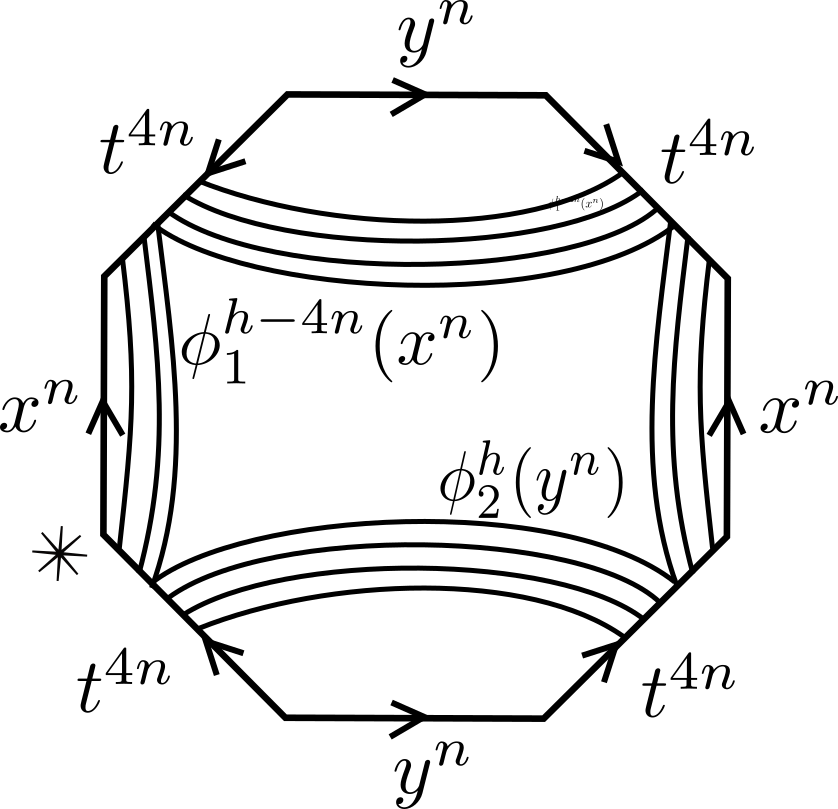}
			\caption{The adjacent $t$-corridors at $h$ determine the diagram.}
			\label{fig:BridsonGerstenfilled}
		\end{subfigure}
		\caption[Fillings in the mapping torus with base $F_k \times F_l$]{Fillings in the mapping torus with base $F_k \times F_l$ and automorphism $\phi_1 \times \phi_2$}
	\end{figure}

	A $t$-corridor beginning on side 1 can only end on sides 2 or 4. Since $t$-corridors cannot cross, there is some value $h \in \{0, \dots, 4n\}$ so that the first $h$ $t$-corridors emanating from side 1 end on side 2 and the remainder end on side 4. This switching point $h$ determines the diagram, as seen in Figure \ref{fig:BridsonGerstenfilled}. If $h \geq 2n$, then a stack of at least $2n$ $t$-corridors $\mathcal{C}_1, \mathcal{C}_2, \ldots$ (emanating from the 1st, 2nd etc., edge of side 1) start on side 1 and end on side 2.  If $h < 2n$, then  a stack of at least $2n$ $t$-corridors start on side 1 and end on side 4: in this case take $\mathcal{C}_1$  to be that emanating from the final edge of side 1, $\mathcal{C}_2$  to be that emanating from the penultimate edge, etc.  Let $|\mathcal{C}_i|$ be the area of corridor $\mathcal{C}_i$, that is, the number of 2-cells in the corridor.
	
	The area of each corridor can be bounded from below by the length of the shortest side, and that can be bounded below by the cyclically reduced length of the shortest side. For  $g \in F$ and $n \in \mathbb{N}$, $||g^n|| = n||g||$, so we get 
	$$|\mathcal{C}_i| \ \geq\   \min\{ ||\phi_1^{-i}(x^n)||, ||\phi_2^{i}(y^n)||\} \ = \ \min\{ n||\phi_1^{-i}(x)||, n||\phi_2^{i}(y)||\}  \ \geq \  n \min\{C_1i^{d_1}, C_2i^{d_2}\}.$$
	
	Summing the areas of corridors $\mathcal{C}_n, \ldots, \mathcal{C}_{2n-1}$, we find that 
	$$\Area(\Delta) \ \geq \  \sum_{i=n}^{2n-1}|\mathcal{C}_i|  \  \geq \ n^2\min\{C_1n^{d_1}, C_2n^{d_2}\}.$$ 
	
\end{proof}

\subsection{Dehn function upper bounds}

\begin{lemma}\label{part1} Suppose $\Phi \in \Aut(F_k \times F_l)$ has the form $\Phi = \phi_1 \times \phi_2$, where $\phi_1 \in \Aut(F_k)$ and $\phi_2 \in \Aut(F_l)$.  If $n^{d_1} \simeq  g_{\phi_1}(n) \preceq g_{\phi_2}(n)$, then $\delta_{M_{\Phi}}(n) \preceq n^{d_1+2}$. In the case that $\phi_1$ is periodic, $\delta_{M_{\Phi}}(n) \preceq n^{2}$. 
\end{lemma}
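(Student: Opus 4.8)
The plan is to build a van~Kampen diagram for a word $w$ of length $n$ representing the identity in $M_{\Phi}$ by the standard ``shuffle the $t^{\pm 1}$ to one side'' procedure, being careful to track how far letters travel and how much they grow. Write $w = t^{k_0} a_1 t^{k_1} \cdots a_m t^{k_m}$ with $a_i \in (X \cup Y)^{\pm 1}$ and $\sum k_i = 0$, so $m \le n$ and $\sum |k_i| \le n$. Because $\Phi = \phi_1 \times \phi_2$ is a \emph{product} automorphism, each $a_i$ is moved past the adjacent block of $t$'s by applying $\phi_1^{\pm j}$ or $\phi_2^{\pm j}$ (depending on whether $a_i \in X^{\pm1}$ or $Y^{\pm1}$) for some $j$ with $|j| \le |k_0| + \cdots + |k_m| \le n$; the growth of a single such letter is at most $g_{\phi_i}(j) \preceq g_{\phi_2}(n) \simeq n^{d_2}$, and for letters in $F_k$ it is at most $g_{\phi_1}(n) \simeq n^{d_1}$. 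The cost of carrying out the shuffle --- i.e.\ the number of relator applications of the form $t^{-1} x t = \phi_i(x)$ --- for one letter moved past $j$ letters $t$ is bounded by a constant times $j \cdot (\text{maximal length reached})$, since at each of the $j$ steps we replace a word of length at most $g_{\phi_i}(n)$ by its image. Summing over the $m \le n$ letters gives at most a constant times $n \cdot n \cdot \max_i g_{\phi_i}(n) \preceq n^{d_2 + 2}$ relator applications to convert $w$ to a word $u t^{\sum k_i} = u$ on $X^{\pm 1} \cup Y^{\pm 1}$ representing the identity in $F_k \times F_l$, with $|u| \preceq n \cdot \max_i g_{\phi_i}(n) \preceq n^{d_2+1}$.

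The next step is to fill the remaining word $u$ over $F_k \times F_l = \langle X \cup Y \mid [x,y]=1 \rangle$. This RAAG has quadratic Dehn function, so naively $\Area(u) \preceq |u|^2 \preceq n^{2d_2+2}$, which is too weak. The fix is to observe that $u$ has a bounded number of ``syllables'' in a strong sense: $u$ is a product of at most $m \le n$ pieces, each of which is the $\phi_i^{\pm j}$-image of a single generator and hence lies wholly in $F_k$ or wholly in $F_l$. A word in $F_k \times F_l$ that is a concatenation of $s$ blocks, each block being a word in $F(X)$ or a word in $F(Y)$, can be reduced to the empty word using a number of commutator relations $[x,y]=1$ bounded by a constant times $s^2 \cdot \ell_X \cdot \ell_Y$, where $\ell_X$ (resp.\ $\ell_Y$) bounds the total length of the $X$-blocks (resp.\ $Y$-blocks): one sweeps all the $X$-letters to the left past all the $Y$-letters, which costs at most (number of $X$-letters)$\times$(number of $Y$-letters) swaps, and then one is left with an element of $F(X)$ times an element of $F(Y)$, each of which is already trivial and needs no further relators since the original word was reduced in the product. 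Here $s \le n$, $\ell_X \le |u| \preceq n^{d_1 + 1}$ (the $X$-blocks are $\phi_1^{\pm j}$-images of generators, so each has length $\preceq n^{d_1}$ and there are at most $n$ of them), and $\ell_Y \preceq n^{d_2+1}$, so the commuting cost is at most a constant times $n \cdot n^{d_1+1} \cdot n^{d_2 + 1} = n^{d_1 + d_2 + 3}$ --- still not obviously $\preceq n^{d_1+2}$ when $d_2 > 1$.

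To get the sharp bound I would refine the area accounting by using $t$-corridors rather than treating the shuffle as a linear rewriting: in the diagram, the $F_k$-part and the $F_l$-part evolve independently along $t$-corridors, and the commutator cells $[x,y]=1$ needed are exactly those forming the ``grid'' where an $X$-letter's trajectory crosses a $Y$-letter's trajectory. The number of $X$-strands is $\le n$ and each has length $\preceq n^{d_1}$; the number of $Y$-strands is $\le n$ and each has length $\preceq n^{d_2}$; but crucially the total \emph{area} swept is controlled by $(\text{total }X\text{-length}) \times (\text{number of }Y\text{-strands})$, not by the product of the two total lengths, because $Y$-strands run parallel and a given $X$-cell meets each $Y$-strand at most once. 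This gives at most a constant times $\big(n \cdot n^{d_1}\big)\cdot n = n^{d_1 + 2}$ commutator cells, plus the $\preceq n^{d_1+2}$ corridor cells from the shuffle (each of the $\le n$ letters in $F_k$ contributes a corridor of length $\preceq n^{d_1}$ over a ``width'' $\le n$, and symmetrically the $F_l$-corridors have total area $\preceq n \cdot n^{d_2} \cdot$(width); here one uses $g_{\phi_1} \preceq g_{\phi_2}$ together with the fact that the $F_l$-subdiagram is itself a mapping torus diagram of controlled shape). Appealing to Lemma~\ref{equiv conveniences} and the fact that $F_l \rtimes_{\phi_2} \mathbb{Z}$ has Dehn function at most $n^{d_2+2}$ one can organize the $F_l$-part so that its contribution is absorbed. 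Thus $\delta_{M_\Phi}(n) \preceq n^{d_1+2}$, and when $\phi_1$ is periodic ($d_1 = 0$, after replacing $\phi_1$ by an inner automorphism via Lemma~\ref{lemma: F_k x F_l}) the same count gives $\preceq n^2$. The main obstacle is precisely this last bookkeeping: showing that the commutator cells needed to untangle $u$ cost only $n^{d_1+2}$ and not $n^{d_1+d_2+\text{something}}$, which requires exploiting that all $Y$-strands in the diagram run mutually parallel so that the relevant product is (total length of the slower-growing side) $\times$ (number of strands) rather than (total length) $\times$ (total length).
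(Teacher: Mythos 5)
There is a genuine gap, and it is exactly the point your own second paragraph flags. Your plan shuffles \emph{all} the $t^{\pm1}$ to one side, which forces every $Y$-letter to be replaced by its $\phi_2^{\pm j}$-image; the intermediate word $u$ then has length on the order of $n\,g_{\phi_2}(n)$, which is $n^{d_2+1}$ in the polynomial case and exponential when $\phi_2$ grows exponentially (which the hypothesis $g_{\phi_1}\preceq g_{\phi_2}$ allows). No subsequent bookkeeping can recover $n^{d_1+2}$ from a word that long, and your third paragraph does not close the hole: the assertion that $F_l\rtimes_{\phi_2}\mathbb{Z}$ has Dehn function ``at most $n^{d_2+2}$'' and that this contribution ``is absorbed'' is unjustified --- $n^{d_2+2}\not\preceq n^{d_1+2}$ whenever $d_2>d_1$, and if $\phi_2$ is exponentially growing the quantity is not even polynomial.

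The missing idea is to treat the two factors asymmetrically and never expand the fast factor at all. First spend $O(n^2)$ commutators sorting each maximal $t$-free subword $w_i$ into $u_iv_i$ with $u_i\in F(X)$, $v_i\in F(Y)$. Then shuffle only the $X$-letters to the right, past both the $t$'s and the $Y$-letters: each of the at most $n$ such letters grows to length at most $Kn^{d_1}$ and travels past at most $n$ symbols, so this costs $O(n^{d_1+2})$; the resulting $X$-word freely reduces to the empty word. What remains is $\bar v = v_1t^{c_1}\cdots v_mt^{c_m}$, still of length at most $n$ with the $t$'s left in place, representing the identity in the free-by-cyclic group $F_l\rtimes_{\phi_2}\mathbb{Z}$, which has \emph{quadratic} Dehn function by Bridson--Groves. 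That quadratic theorem, applied to a word of length $n$ (not $n^{d_2+1}$), is what makes the fast factor cost only $O(n^2)$ regardless of $g_{\phi_2}$. Your corridor picture in the last paragraph is gesturing at this, but as written it prices the $F_l$-corridors at $n\cdot n^{d_2}\cdot(\text{width})$ rather than invoking the quadratic filling of $\bar v$, so the argument as proposed does not establish the stated bound.
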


\begin{proof} We have a finite presentation  $$\langle x_1, \ldots, x_k, \, y_1, \ldots, y_l, \,  t \mid   [x_i,y_j]=1, \  t^{-1}x_it= \phi_1(x_i),  \  t^{-1}y_jt= \phi_2(y_j), \  \forall i, j  \rangle$$   for $M_{\Phi}$.
	Suppose $w$ is a word $$w \ = \ w_1 t^{c_1} w_2 t^{c_2} \cdots w_m t^{c_{m}}$$   in which the subwords $w_i$ are in $\langle x_1, \dots, x_k, y_1, \dots y_l\rangle$.	Let $n$ be the length of $w$ and suppose $w$ represents the identity in $M_{\Phi}$. To  bound  $\delta_{M_{\Phi}}(n)$ from above  we will estimate how many defining relators need to be applied to $w$ to reduce it to the empty word. (We are also allowed to insert or remove inverse pairs of generators $x_i^{-1}x_i$ or $x_ix_i^{-1}$, but only applications of  defining relators will count towards the area.)  
	
	By applying fewer than $n^2$ commutators, convert  each $w_i$ to $u_iv_i$ for some reduced $u_i \in \langle x_1, \dots, x_k\rangle$ and ${v_i \in \langle y_1, \dots, y_l \rangle}$, thereby rewriting $w$ as a word ${w^{\prime}  =  u_1v_1t^{c_1} \dots u_m v_m t^{c_m}}$, which has length at most $n$. 
	
	Next     convert  $w^{\prime}$ to a product $\bar{v}u$ of the word  $\bar{v} = v_1t^{c_1}\cdots v_m t^{c_m}$ with a word $u$ in $\langle x_1, \cdots, x_k\rangle$, by applying defining relators to shuffle all  the $x_1^{\pm 1}, \dots, x_k^{\pm 1}$ in $w'$ to the right.  The  word $\bar{v}$ represents the identity in $F_l \rtimes_{\phi_2} \langle t \rangle$ and   $u$  represents the identity in $F_k$. Indeed, the index sum of $t$ in $w$  is zero, so gathering all powers of $t$ together on the left would produce a word of the form $vu$ with $u \in F_k$ and $v \in F_l$  which represents the identity in $F_k \times F_l$, and so $u$ and $v$ freely reduce to the identity---in particular, $\bar{v} = v =1$ in $F_l \rtimes_{\phi_2} \langle t \rangle$.

	This shuffling of $w'$ into  $\bar{v}u$  results in growth of slow-growth elements (the $F_k$ factor), but not in growth of fast-growth elements (the $F_l$ factor).	 We can (crudely) estimate its cost  by giving an upper bound on the length  to which a letter $x_i^{\pm 1}$ can grow in the process: it passes at most $n$ letters $t$ or $t^{-1}$, each time with the effect of applying $\phi_1$ or $\phi_1^{-1}$. We are given that $n^{d_1} \simeq  g_{\phi_1}(n)$, so $n^{d_1} \simeq  g_{\phi^{-1}_1}(n)$, by Lemma~\ref{equiv conveniences} \eqref{inv the same}. Thus there is a constant  $K>0$ such that   $x_i^{\pm 1}$ can grow  to length at most $K n^{d_1}$. The cost to shuffle  (and in the process transform) all the (at most $n$) letters $x_i^{\pm 1}$  of  the $u_1, \ldots, u_m$ to the right past the  letters of $\bar{v}$ (of which there are at most $n$)  is at most $K n^{d_1+2}$.
	
	Next freely reduce $u$ to the empty word (at no cost to area), leaving the word $\bar{v}$, which represents the identity in $F_l\rtimes_{\phi_2}\mathbb{Z}$ and has length at most $n$. By Bridson--Groves \cite{BridsonGroves}, $\bar{v}$  can be reduced to the empty word using   no more than a constant $c$ times $n^2$ defining relations. 
	
	In conclusion, we have an upper bound of $n^2 + K n^{d_1+2} + cn^2$, which gives that $\delta_{M_{\Phi}}(n) \preceq n^{d_1+2}$ as required.
	
	Finally, we address the periodic case: suppose $l$ is such that $\phi_1^l$ is an inner automorphism.	
	By Lemmas~\ref{lemma: simplifying automorphisms doesn't change DF} and \ref{lemma: F_k x F_l}, $\delta_{M_{\Phi}} \simeq \delta_{M_{\phi_1^l \times \phi_2^l}} \simeq \delta_{M_{\textup{Id} \times \phi_2^l}} \simeq \delta_{M_{\textup{Id} \times \phi_2}}$.  We can estimate $\delta_{M_{\textup{Id} \times \phi_2}}$ by the above argument in the special case that $\phi_1 = \textup{Id}$.   In this case the cost of shuffling the $x_i^{\pm 1}$  through the word is at most $n^2$ (rather than $K n^{d_1+2}$) since they do not grow in the process, and so $ \delta_{M_{\Phi}}(n) \simeq \delta_{M_{\textup{Id} \times \phi_2}} (n) \preceq n^2$.	 \end{proof}

\begin{proof}[Proof of Theorem~\ref{F_k x F_l}]
	We have  $G= F_k \times F_l$, where $k,l \geq 2$, and $\Psi \in \Aut(F_k \times F_l)$.   Lemma~\ref{first part of thm} identified a  $\Phi = \phi_1 \times \phi_2$ with $\phi_1  \in \Aut(F_k)$ and $\phi_2 \in \Aut(F_l)$ which (by Lemma~\ref{lemma: simplifying automorphisms doesn't change DF}) has  $\delta_{M_{\Psi}} \simeq \delta_{M_{\Phi}}$.  
	
	Provided   $\phi_i$ is not periodic,   Lemmas~\ref{equiv2} and \ref{no change to Dehn fns} imply that even if  $g_{\phi_i} \not \simeq g^{\text{cyc}}_{\phi_i}$, there is $\hat{\xi}_i$ such that $g^{\text{cyc}}_{\phi_i} \simeq g_{\hat{\xi}_i} \simeq g^{\text{cyc}}_{\hat{\xi}_i} $ with $M_{\hat{\xi}_1 \times \hat{\xi}_2} \simeq M_{\Phi}$.

	The theorem claims that
	\begin{enumerate}
		\item[\eqref{dos}] If $[\phi_1^p] = [\textup{Id}] \in \Out(F_k)$ for some $p \in \mathbb{N}$ (that is, $\phi_1$ is periodic), then  $\delta_{M_{\Psi}}(n) \simeq n^2$.
		\item[\eqref{uno}] If $n^{d_1} \simeq g_{\phi_1}^{cyc}(n) \preceq g_{\phi_2}^{cyc}(n)$, then  $\delta_{M_{\Psi}}(n) \simeq n^{d_1+2}$, and likewise with the indices $1$ and $2$ interchanged.		\
		\item[\eqref{quatro}] If   $g^{cyc}_{\phi_1}(n) \simeq g_{\phi_2}^{cyc}(n) \simeq 2^n$, then $\delta_{M_{\Psi}}$  grows exponentially.
	\end{enumerate}

	For \eqref{dos},  Lemma~\ref{part1} gives $\delta_{M_{\Phi}}(n) \preceq n^{2}$, and we have   $\delta_{M_{\Phi}}(n) \succeq n^{2}$ by Lemma~\ref{Exponential}.  For \eqref{uno}, Lemma~\ref{part2} gives the required lower bound on the Dehn function and {(since $g_{\hat{\xi}_1} \simeq g^{\text{cyc}}_{\phi_1}$)} Lemma~\ref{part1} gives the upper bound.    For \eqref{quatro}, Lemma~\ref{part2} again gives the  lower bound,  and Lemma~\ref{Exponential} gives the upper bound.\end{proof}

\bibliography{MTreferences}{}
\bibliographystyle{plain}
\end{document}